\title{Toposes have an optimal noetherian form}
\author{Zurab Janelidze \and Francois van Niekerk}
\address{Department of Mathematical Sciences\\ Stellenbosch University, South Africa\\
and\\
National Institute for Theoretical and Computational Sciences (NITheCS), South Africa}
\email{zurab@sun.ac.za}
\address{Department of Mathematical Sciences\\ Stellenbosch University, South Africa\\
and\\
National Institute for Theoretical and Computational Sciences (NITheCS), South Africa}
\email{fkvn@sun.ac.za}
\subjclass[2020]{18D30, 18A32, 06B75, 08A30, 06A15, 18E13, 18B25, 18G50, 08C05}
\keywords{Abelian category, bicategory, closure operator, exact category, factorization system, form, functor, Galois connection, homomorphism theorem, isomorphism theorem, lattice, noetherian form, orean form, protomodular category, semi-abelian category, topos}
\setlist{leftmargin=1cm}
\declaretheoremstyle[
  headformat = \textcolor{red}{\NUMBER. }\NAME{}\NOTE,
  headindent=0.8cm
]{actstyle}
\theoremstyle{actstyle}
  \newtheorem{theorem}{Theorem}
  \newtheorem{lemma}[theorem]{Lemma}
  \newtheorem{remark}[theorem]{Remark}
  \newtheorem{example}[theorem]{Example}
  \newtheorem{definition}[theorem]{Definition}
  \newtheorem{convention}[theorem]{Convention}
\newcommand{\im}{\operatorname{Im}}
\renewcommand{\ker}{\operatorname{Ker}}
\newcommand{\s}{\mathsf{s}}
\newcommand{\e}{\mathsf{e}}
\renewcommand{\c}{\mathsf{c}}
\newcommand{\n}{\mathsf{n}}
\newcommand{\op}{\mathsf{op}}
\newcommand{\ccl}[1]{\overline{#1}^\c}
\newcommand{\cccl}[1]{\underline{#1}_\c}
\newcommand{\ncl}[1]{\overline{#1}^\n}
\newcommand{\nccl}[1]{\underline{#1}_\n}
\renewcommand{\le}{\leqslant}
\renewcommand{\ge}{\geqslant}
\renewcommand{\ast}{\mathsf{w}}
\newcommand{\astop}{\rotatebox[origin=c]{180}{$\mathsf{w}$}}
\begin{document}

\maketitle

\vspace{-\baselineskip}

\begin{abstract} A noetherian form is an abstract self-dual framework suitable for establishing homomorphism theorems (such as the isomorphism theorems and homological diagram lemmas) for group-like structures. In this paper we identify and carry out an axiomatic analysis of a particular class of noetherian forms which exist for both group-like structures and for sheaves. More abstractly, such noetherian forms can be produced from all semi-abelian categories, Grandis exact categories and toposes.

\end{abstract}

\makeatletter
  \def\l@subsection{\@tocline{2}{0pt}{4pc}{5pc}{}}
\makeatother

\section*{Introduction}

Noetherian forms were introduced in \cite{GJ19} as means for systematizing a self-dual approach to homomorphism theorems for groups and other group-like structures. This work builds on earlier developments (see e.g., \cite{J14,JW14,JW16,JW16b}) and takes its roots from \cite{M50}, where S.~Mac~Lane pioneered the study of such a context in the case of abelian groups (which was a precursor of the present-day concept of an abelian category).  Noetherian forms resemble doctrines in the sense of categorical logic (see e.g., \cite{doctrines1,doctrines2,doctrines3,doctrines4,doctrines5,doctrines6}). Loosely speaking, a `form' (as defined in \cite{J14,JW14}) is a category equipped with an abstractly given data of subobjects, quotients, or some other data (poset) of `clusters' associated to every given object. Such data can be formalised as a faithful functor between two categories, whose fibres are posets --- the intended posets of clusters. Doctrines are forms in this sense too, and in fact, not only doctrines but also many other categorical structures fit into such framework: monadic functors (fibres are discrete in this case), concrete categories and topological functors (where fibres are complete lattices). With noetherian forms, the idea is to view the fibres of the form as abstractions of, specifically, posets of subgroups of groups, or more generally, posets of substructures of some group-like structures. General monadic, forgetful and topological functors do not fit into this narrower intuitive framework. Doctrines come close to it, since the fibres there are often posets of subobjects in a suitable category, such as a topos. However, these subobject doctrines/forms are not noetherian, at least not for toposes. The reason for this is duality: axioms of a noetherian form are self-dual in the sense that they hold for a functor $F\colon \mathbb{C}\to\mathbb{D}$ just as well as they hold for the opposite functor between opposite categories, $F^\mathsf{op}\colon \mathbb{C}^\mathsf{op}\to\mathbb{D}^\mathsf{op}$. Already in the case of the category of sets, the subobject doctrine/form does not have a rich supply of self-dual properties. The situation is different for the subgroup form of groups, or for substructure forms of group-like structures in general, as shown in the references mentioned above. It turns out, nevertheless, that the category of sets (and more generally, any topos), does admit a form, different from the subobject form, which is not only noetherian, but it also has additional properties that are shared by (duals of) substructure forms of not only group-like structures, but also all major examples of noetherian forms: all Grandis exact categories \cite{G84,G92,G12,G13} and all Janelidze-M\'arki-Tholen semi-abelian categories \cite{JMT02}. Analysis of these properties is the subject of the present paper. They deal with decomposing an arbitrary cluster into a `normal' and a `conormal' component: 
essentially, these properties assert that clusters can be represented by a particular type of subquotients.

The notions of a Grandis exact category \cite{G84,G92,G12,G13} and of a Janelidze-M\'arki-Tholen semi-abelian category \cite{JMT02} are two different generalizations of the notion of an abelian category \cite{B55,G59}, where essential homomorphism theorems valid in an abelian category can be recovered. Recall that a semi-abelian category is the same as a pointed category with finite sums (coproducts) that is simultaneously Barr exact \cite{Bar71} and protomodular \cite{Bou91}. The notion of a Grandis exact category removes limits, colimits and pointedness (i.e., existence of a zero object) from the notion of an abelian category, but keeps factorization of morphisms into a normal epimorphism followed by a normal monomorphism (which, without pointedness, are defined relative to an ideal of null morphisms \cite{Ehr64,Kel64,Lav65}). On the other hand, the notion of a semi-abelian category, instead, weakens the factorization axiom, but keeps limits/colimits and pointedness, imposing some exactness properties expressed using them (which would have been consequences of the factorization axiom), gentle enough to allow coverage of categories of (non-abelian) group-like structures as examples (these, in general, are not examples of Grandis exact categories). Noetherian forms provide, in some sense, a unification of Grandis exact categories and semi-abelian categories: both of these types of categories are instances of neotherian forms. Consequently, noetherian forms include: module categories, the category of groups, of graded abelian groups, the categories of Lie algebras, of cocommutative Hopf algebras, the category of Heyting semilattices, of loops, the category of modular/distributive lattices and modular connections, and many others. The results of this paper show that we can add to this list duals of toposes. In fact, we show that a topos has, up to isomorphism, a unique noetherian form satisfying the additional axioms introduced in this paper, and moreover, that this noetherian is a subform of any other noetherian form over the topos: it is `optimal', in a suitable sense. Furthermore, we establish that the standard noetherian forms of semi-abelian and Grandis exact categories also satisfy these additional axioms, that they can be characterized using them, and that they are optimal too in a similar sense.

Every noetherian form gives rise to a proper factorization system on the ground category. Our additional axioms on a noetherian form ensure that the form is uniquely determined by the factorization system. The reason for uniqueness of such form for toposes is simply that a topos has exactly one proper factorization system (which is due to the fact that it is a balanced category). At the end of the paper we describe those proper factorization systems which give rise to a noetherian form satisfying the (duals of) new additional axioms. Applying this to the standard factorization system of a regular category, we obtain a new class of categories that are very close to semi-abelian categories, include all of them, and at the same time include duals of toposes.

Sections 1-4 prepares the groundwork: it develops various nuances of the theory of forms required for the rest of the paper (and also serves as a comprehensive introduction to the subject of forms). Section 5 is where the new axioms are introduced and analyzed. Section 6 is devoted to describing the corresponding factorization systems. The concluding Section 7 contains a more technical summary of the contents of the paper, which can be read before reading the rest of the paper.

\tableofcontents

\section{Forms}

\subsection*{From cluster systems to forms} We begin the paper with a merger of two alternative approaches in defining the main concept that lies at the basis of our work. The first of these (`cluster system') is useful when considering examples, while the second (`form') will be  useful for developing the theory.  

\begin{definition}
A \emph{cluster system} over a category $\mathbb{C}$ specifies for each object $X$ of $\mathbb{C}$ a set, called the set of \emph{clusters} in $X$, and for each morphism $f\colon X\to Y$ in $\mathbb{C}$ a relation $\ge_f$ between the set of clusters in $X$ and the set of clusters in $Y$ (which we write as $\ge_X$ when $f=1_X$, or simply $\ge$, when this does not cause a confusion), such that the following conditions hold:
\begin{itemize}
  \item[(F1)] For any cluster $S$ in $X$, we have $S\ge S$.

  \item[(F2)] The implication 
    \[\exists_{B\subseteq_F Y}[C\ge_g B\ge_f A]\implies[C\ge_{gf} A]\]
  holds for any sequence
    \[\xymatrix{X\ar[r]^-{f} & Y\ar[r]^-{g} & Z}\]
  of morphisms and clusters $A$ and $C$ in $X$ and $Z$, respectively.

  \item[(F3)] For any two clusters $S$ and $T$ in $X$, if $S\ge T$ and $T\ge S$, then $S=T$.
\end{itemize}
Note that we may sometimes write $A\le_f B$ for  $B\ge_f A$ (or $A\le B$ for $B\ge A$).
\end{definition}

\begin{example}\label{ExaU}
A cluster system over a single-morphism category $\mathbf{1}$ is essentially the same as a partially ordered set. The clusters are the elements of the partially ordered set and the partial order is given by the relation $\ge_{X}$, where $X$ is the unique object of $\mathbb{C}$. Axiom (F1) states reflexivity of the relation, (F2) states transitivity and (F3) states anti-symmetry. 
\end{example}

\begin{example}\label{ExaV}
  A monotone map $f\colon X\to Y$ between posets $X$ and $Y$ can be viewed as a cluster system over a category $\mathbf{2}$ with three morphisms and exactly one non-identity morphism $f\colon 1\to 2$. The clusters in $1$ and $2$ are their elements, and $b\ge_f a$ if and only if $f(a)\le b$ in the poset $Y$. The relations $\ge_{1}$ and $\ge_{2}$ are the partial orders of the posets $X$ and $Y$, respectively. More generally, a cluster system over $\mathbf{2}$ is given by two posets $P_1$ and $P_2$ and a binary relation $R$ between them satisfying $[a\le bRc\le d]\implies [aRd]$.
\end{example}

\begin{example}
  There is a cluster system implicit in the variant of sequent calculus discussed in the appendix of \cite{LR03}. The objects $X$ of the base category of this cluster system are universes where entities about which we make statements reside, while clusters in a universe $X$ are the statements about entities in that universe (in other words, the formulas). Morphisms in the base category are change of universe arrows, while $B\ge_f A$ if and only if $A\vdash Bf$, in the notation of \cite{LR03}.
\end{example}

\begin{example}\label{ExaT}
Single-sorted set-based mathematical structures of a given type usually give rise to cluster systems over the category $\mathbf{Set}$ of sets. For a given type of structures, clusters in a set $X$ are structures on $X$ of that type, while for a function $f\colon X\to Y$, we have $B\ge_f A$ if and only if $f$ defines a homomorphism $(X,A)\to (Y,B)$. For instance, when `structure' means `topology', clusters in $X$ are topologies on $X$. Then, for a function $f\colon X\to Y$ and topologies $\tau$ on $X$ and $\sigma$ on $Y$, we have $\sigma\ge_f \tau$ if and only if $f$ is a continuous function $f\colon(X,\tau)\to (Y,\sigma)$. 
\end{example}

\begin{example}\label{ExaS}
Any amnestic concrete category $(\mathbb{B},U)$ over a base category $\mathbb{C}$ (in the sense of \cite{AHS}) gives rise to a cluster system over $\mathbb{C}$, where clusters in an object $X$ are objects $B$ of $\mathbb{B}$ such that $U(B)=X$, while $B\ge_f A$ if and only if there is a morphism $A\to B$ in $\mathbb{B}$ mapped to $f$ by $U$. This example is in fact essentially a generalization of the previous one, as it will soon become apparent.
\end{example}

\begin{definition}
A \emph{form} over a category $\mathbb{C}$ is a faithful amnestic functor $F\colon\mathbb{B}\to\mathbb{C}$. 
\end{definition}

\begin{remark} A form is essentially another name for an amnestic concrete category \cite{AHS}. There is, however, a philosophical difference between the two notions. In a concrete category, it is the domain of the functor that we want to study. In a form, it is the codomain, similar to Grothendieck's original motivation for the notion of a fibration \cite{G57}. Furthermore, we will soon be considering axioms on a form that are hardly ever fulfilled by standard concrete categories.
Note that according to Example~\ref{ExaS}, any form gives rise to a cluster system.   
\end{remark}

\begin{remark} 
Recall that a functor is amnestic when the only isomorphisms mapped to the identity morphisms by the functor are the identity morphisms. Together with faithfulness, this requirement is equivalent to the requirement that the fibres of the functor are posets, rather than merely preorders, as guaranteed by faithfulness. 
\end{remark}

\begin{definition} Consider a form $F\colon\mathbb{B}\to\mathbb{C}$.
\begin{itemize}
  \item for each object $X$ of $\mathbb{C}$, the set of all objects of $\mathbb{B}$, which by the functor $F$ are mapped to $X$, is denoted by $F^{-1}(X)$ --- these objects are called \emph{clusters} in $X$ for the form $F$ (or simply, $F$-clusters in $X$) and we write $B\subseteq_F X$ when $B$ is a cluster in $X$,  
    
  \item for each morphism $f\colon X\to Y$ in $\mathbb{C}$, we define a relation $F^{-1}(Y)\to F^{-1}(X)$, which we denote by $\ge_f^F$, and by $\ge_X^F$ when $f=1_X$ (omitting $F$ in the superscript when convenient), as follows: $B\ge_f^F A$ if and only if there a morphism $A\to B$ in $\mathbb{B}$ mapped to $f$ by $F$.
\end{itemize} 
A \emph{presentation} of a form $F\colon \mathbb{B}\to\mathbb{C}$ is a cluster system over $\mathbb{C}$ such that for each object $X$ there is a bijection $\zeta_X$ from the set of clusters in $X$ (of the cluster system) to the set of $F$-clusters, such that \[B\ge_f A\quad\iff\quad \zeta_Y(B)\ge^{F}_f \zeta_X(A).\]
When $\zeta_X$ is an identity function, we call the presentation of $F$ a \emph{canonical presentation}.
\end{definition}

\begin{lemma}\label{LemO} Every form has a (canonical) presentation. 
Moreover, every cluster system is a presentation of a form.
\end{lemma}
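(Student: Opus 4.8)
The plan is to prove the two assertions by exhibiting, in each direction, the passage between forms and cluster systems and checking the relevant axioms. For the first assertion, given a form $F\colon\mathbb{B}\to\mathbb{C}$ I would take the canonical presentation to be the cluster system whose clusters in $X$ are exactly the $F$-clusters $F^{-1}(X)$, with relations $\ge_f^F$ as already defined, and verify (F1)--(F3). Axiom (F1) is immediate since $F(1_B)=1_{F(B)}$, so $1_B$ witnesses $B\ge^F_{1_X}B$. Axiom (F2) follows from composition in $\mathbb{B}$: a morphism $A\to B$ over $f$ and a morphism $B\to C$ over $g$ compose to a morphism $A\to C$ over $gf$. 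The only delicate point is (F3): if $S\ge^F_X T$ and $T\ge^F_X S$, then there are morphisms $T\to S$ and $S\to T$ in $\mathbb{B}$ both over $1_X$; their composites are endomorphisms over $1_X$, hence identities by faithfulness, so the two morphisms constitute an isomorphism over $1_X$, which by amnesticity must be an identity, forcing $S=T$. This is the step where both defining properties of a form are genuinely used.

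For the second assertion, given a cluster system over $\mathbb{C}$ I would build a category $\mathbb{B}$ whose objects are pairs $(X,S)$ with $S$ a cluster in $X$, and whose morphisms $(X,A)\to(Y,B)$ are exactly the morphisms $f\colon X\to Y$ of $\mathbb{C}$ satisfying $B\ge_f A$. Composition is well-defined precisely because of (F2): from $B\ge_f A$ and $C\ge_g B$ one obtains $C\ge_{gf}A$ by instantiating the existential in (F2) with the intermediate cluster $B$; and identity morphisms exist by (F1). Associativity and the unit laws are inherited from $\mathbb{C}$. I would then define $F\colon\mathbb{B}\to\mathbb{C}$ by $(X,S)\mapsto X$ on objects and by sending each morphism to its underlying arrow of $\mathbb{C}$.

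It remains to check that $F$ is a form and that the original cluster system is a presentation of $F$. Faithfulness is automatic, since by construction there is at most one $\mathbb{B}$-morphism over a given $f$ between fixed objects. Amnesticity is again the crux and again rests on (F3): an isomorphism $(X,A)\to(X,B)$ over $1_X$ yields $B\ge_X A$ and, via its inverse, $A\ge_X B$, whence $A=B$ by (F3), and the isomorphism is then the identity. Finally, setting $\zeta_X(S)=(X,S)$ gives a bijection from the clusters in $X$ to $F^{-1}(X)$, and by the very definition of the morphisms of $\mathbb{B}$ we have $B\ge_f A \iff \zeta_Y(B)\ge^F_f\zeta_X(A)$, so the cluster system is indeed a presentation of $F$. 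I expect the two amnesticity verifications --- both hinging on (F3) together with the uniqueness of morphisms over a fixed arrow --- to be the only non-formal parts; everything else is bookkeeping with (F1) and (F2).
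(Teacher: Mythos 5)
Your proof is correct and follows essentially the same route as the paper: verify (F1)--(F3) for the canonical presentation, then build the category of pairs $(X,A)$ with the projection functor and take $\zeta_X(A)=(X,A)$. Your treatment of (F3) is in fact slightly more careful than the paper's one-line attribution to amnesticity, since you make explicit that faithfulness is needed first (to see the two composites are identities, so the comparison morphisms form an isomorphism) before amnesticity applies.
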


\begin{proof} Let $F\colon \mathbb{B}\to\mathbb{C}$ be a form. To show that $F$ has a canonical presentation, we must check that $F$-clusters and the relations $\ge^F_f$ have the properties (F1-3). The property (F1) comes out of the presence of identity morphisms in $\mathbb{B}$ and their preservation by $F$. (F2) --- from composition of morphisms in $\mathbb{B}$ and $F$ preserving composition. (F3) is a consequence of amnesticity of $F$.

Next, for a given cluster system over $\mathbb{C}$, build a form $F\colon\mathbb{B}\to\mathbb{C}$ as follows. Objects of $\mathbb{B}$ are pairs $(X,A)$, where $X$ is an object in $\mathbb{C}$ and $A$ is a cluster in $X$. A morphism $f\colon (X,A)\to (Y,B)$ is given by a morphism $f\colon X\to Y$ in $\mathbb{C}$ such that $B\ge_f A$. Composition of morphisms in $\mathbb{B}$ is defined as in $\mathbb{C}$, which is made possible by (F2). Then the composition will necessarily be associative. Thanks to (F1), $\mathbb{B}$ has identity morphisms. Mapping $f\colon (X,A)\to (Y,B)$ to $f\colon X\to Y$ defines a functor $F\colon\mathbb{B}\to\mathbb{C}$, by construction of $\mathbb{B}$. The original cluster system is a presentation of this form, where $\zeta_X(A)=(X,A)$. 
\end{proof}

\begin{example}
The cluster system described in Example~\ref{ExaT}, which was obtained from mathematical structures of a given type, is a presentation of the forgetful functor $U$ of the concrete category $(\mathbb{B},U)$ of those mathematical structures. The canonical presentation of the same concrete category $(\mathbb{B},U)$ is the one discussed in Example~\ref{ExaS}. For example, in the case of topological spaces, in the former presentation clusters in a set $X$ are topologies $\tau$ on $X$, while clusters for the canonical presentation are topological spaces $(X,\tau)$.  
\end{example}

\subsection*{Isomorphism and duality} The notions of `isomorphism of forms' and `dual of a form' are introduced here and illustrated with some applications and examples. 

\begin{definition}\label{DefC}
Two forms $F_1\colon \mathbb{B}_1\to\mathbb{C}$ and $F_2\colon \mathbb{B}_2\to\mathbb{C}$ over the same category are said to be \emph{isomorphic} when $F_1=F_2\circ \varphi$ for some isomorphism $\varphi$ of categories (and such $\varphi$ is called an \emph{isomorphism of forms}):
  \[\xymatrix{ \mathbb{B}_1\ar[rr]^-{\varphi}\ar[dr]_-{F_1} & & \mathbb{B}_2\ar[dl]^-{F_2} \\ & \mathbb{C} & }\]
\end{definition}

\begin{lemma}
Two forms over the same base category are isomorphic if and only if they have a common presentation.  
\end{lemma}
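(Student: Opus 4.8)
The plan is to prove both implications by reducing everything to fibrewise bijections and exploiting faithfulness of the functors involved. For the forward implication, suppose $F_1 = F_2\circ\varphi$ for an isomorphism of categories $\varphi\colon\mathbb{B}_1\to\mathbb{B}_2$. I would take the canonical presentation of $F_1$ furnished by Lemma~\ref{LemO} and show it is simultaneously a presentation of $F_2$. Since $F_2\circ\varphi = F_1$, the functor $\varphi$ restricts, for each object $X$ of $\mathbb{C}$, to a bijection $F_1^{-1}(X)\to F_2^{-1}(X)$ between the fibres, and these restrictions serve as the comparison bijections $\zeta_X$ required in the definition of a presentation. It then remains to verify the equivalence $B\ge^{F_1}_f A \iff \varphi(B)\ge^{F_2}_f\varphi(A)$, which follows because $\varphi$ and $\varphi^{-1}$ transport a morphism $A\to B$ over $f$ (witnessing $B\ge^{F_1}_f A$) to a morphism $\varphi(A)\to\varphi(B)$ over the same $f$, using $F_2\circ\varphi = F_1$ and $F_1\circ\varphi^{-1} = F_2$.

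For the converse, suppose a single cluster system is a presentation of both forms, with comparison bijections $\zeta^1_X$ and $\zeta^2_X$ from the set of clusters in $X$ of the cluster system onto $F_1^{-1}(X)$ and $F_2^{-1}(X)$ respectively. I would define $\varphi\colon\mathbb{B}_1\to\mathbb{B}_2$ on objects as the fibrewise composite $\zeta^2_X\circ(\zeta^1_X)^{-1}$, which is automatically a bijection on objects and satisfies $F_2\circ\varphi = F_1$. To define $\varphi$ on morphisms I would use faithfulness: between two clusters there is at most one morphism lying over a given $f$, and the two presentation conditions give, for corresponding clusters $A_1=\varphi(A_1')$ and so on, the chain of equivalences $B_1\ge^{F_1}_f A_1 \iff B\ge_f A \iff \varphi(B_1)\ge^{F_2}_f\varphi(A_1)$. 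Hence a morphism $A_1\to B_1$ over $f$ in $\mathbb{B}_1$ exists precisely when the corresponding morphism $\varphi(A_1)\to\varphi(B_1)$ over $f$ exists in $\mathbb{B}_2$, and I would let $\varphi$ send the former (unique) morphism to the latter (unique) one.

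The remaining work, and the step I expect to require the most care, is checking that this assignment is genuinely a functor, i.e. that it preserves identities and composites. Both verifications reduce to uniqueness of lifts: $\varphi(1_{B_1})$ is the unique morphism over $1_X$ from $\varphi(B_1)$ to itself, hence equals $1_{\varphi(B_1)}$; and for composable $g$ over $f$ and $h$ over $f'$, the composite $\varphi(h)\varphi(g)$ lies over $f'f$ between the correct objects, so by faithfulness of $F_2$ it must coincide with $\varphi(hg)$. Finally, $\varphi$ is a bijection on hom-sets, because over each $f$ the correspondence between morphisms is a bijection of sets that are each either a singleton or empty; thus $\varphi$ is an isomorphism of categories with $F_1 = F_2\circ\varphi$, which is exactly an isomorphism of forms. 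A cleaner way to obtain invertibility is to note that the entire construction is symmetric in $F_1$ and $F_2$, so the analogous assignment built from $\zeta^1_X\circ(\zeta^2_X)^{-1}$ furnishes a two-sided inverse of $\varphi$ directly.
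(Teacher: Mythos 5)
Your proof is correct and takes essentially the same route as the paper: the forward direction transports the canonical presentation of $F_1$ along the fibrewise restrictions of $\varphi$, and the converse defines $\varphi$ on objects by $\zeta^2_X\circ(\zeta^1_X)^{-1}$, which is exactly the paper's formula $\varphi(A)=\zeta_{2,F_1(A)}(\zeta^{-1}_{1,F_1(A)}(A))$. Your verification that this assignment extends uniquely to a functor (identities and composites via uniqueness of lifts, guaranteed by faithfulness) merely spells out what the paper compresses into ``an isomorphism $\varphi$ of forms can be created uniquely.''
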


\begin{proof}
Let $F_1$ and $F_2$ be forms over a category $\mathbb{C}$. If they are isomorphic, it is easy to see that the canonical presentation of $F_1$ will be a presentation for the other form. The bijections $\zeta_X$ that describe the presentation will be given by $\zeta_X(A)=\varphi(A)$. Conversely, suppose that the two forms have a common presentation and consider the bijections $\zeta_{1,X}$ and $\zeta_{2,X}$ for the presentations of $F_1$ and $F_2$, respectively, given by the same cluster system. Then an isomorphism $\varphi$ of forms  can be created uniquely by setting $\varphi(A)=\zeta_{2,F_1(A)}(\zeta^{-1}_{1,F_1(A)}(A)).$
\end{proof}

\begin{convention}
We often describe a form up to an isomorphism, by describing one of its presentations. Furthermore, in later sections, we introduce notions dealing with forms and describe them in terms of the canonical presentation of forms. However, in each such case a canonical presentation can be traded with an arbitrary one, and in examples, we may switch between the two without a warning. 
\end{convention}

\begin{example}\label{ExaG}
  Any category $\mathbb{C}$ gives rise to a form $F$ as follows: define clusters in an object $X$ to be ordinary subobjects of $X$, i.e., equivalence classes of monomorphisms with codomain $X$, under the equivalence relation
    \[m\sim m'\quad\iff\quad \exists_{u,v}[[m'u=m]\wedge [mv=m']].\]
  Define $[t]\ge_f [s]$ to mean
    \[[t]\ge_f [s]\quad\iff\quad \exists_{u}[tu=fs].\]
  See \cite{JW14} for an elaboration of this example. We call a form obtained in this way the \emph{form of subobjects} of the category $\mathbb{C}$. In the case when $\mathbb{C}$ is the category of groups, for instance, the form of subobjects is isomorphic to the \emph{form of subgroups} of groups: a form where clusters in a group $X$ are its subgroups and $B\ge_f A$ means $f(A)\subseteq B$, or equivalently, $A\subseteq f^{-1}(B)$. More generally, the \emph{form of subalgebras} of universal algebras in a variety (defined similarly to the form of subgroups) is isomorphic to the form of subobjects for the variety viewed as a category. When the variety is that of sets, we get the \emph{form of subsets}.
\end{example}

\begin{definition}The \emph{dual} of a form $F\colon\mathbb{B}\to\mathbb{C}$ is the dual functor $F^\op\colon\mathbb{B}^\op\to\mathbb{C}^\op$.
\end{definition}

\begin{remark}
The dual of a form is again a form. In terms of a representation of a form, duality reverses all morphisms (and hence the order in which morphisms are composed) in the base category, and reverses the relations `$\ge_f$'. Note that the axioms (F1-3) of a cluster system are indeed self-dual for this notion of duality. In this paper we will consider further axioms on a form that are self-dual.
\end{remark}

\begin{example}\label{ExaH}
  A distinguished class $\mathcal{M}$ of monomorphisms in a category gives rise to a form, called the \emph{form of $\mathcal{M}$-subobjects}, similarly to how a form was constructed in Example~\ref{ExaG}: all monomorphisms in the construction of the form are now required to belong to the class $\mathcal{M}$. For various non-algebraic mathematical structures, their `forms of substructures' arise this way for a subclass $\mathcal{M}$ of the class of all monomorphisms. For instance, choosing $\mathcal{M}$ to be the class of regular monomorphisms in the category of topological spaces, we get a form isomorphic to the \emph{form of subspaces}, where clusters in a topological space are its subspaces. The dual construction gives us \emph{forms of $\mathcal{E}$-quotients}, for classes $\mathcal{E}$ of epimorphisms. These forms capture `forms of quotient structures' of mathematical structures. For instance, choosing $\mathcal{E}$ to be the class of regular epimorphisms in an algebraic category, we get the \emph{form of quotient algebras}, which is isomorphic to the \emph{form of congruences}, where clusters in an algebra are congruences of the algebra (in the case of the category of sets, this is the \emph{form of equivalence relations}). In this form, for a homomorphism $f\colon X\to Y$ of algebras, and congruences $A$ on $X$ and $B$ on $Y$, we have $B\ge_f A$ if and only if 
    \[x A y\quad\implies\quad f(x) B f(y)\]
for all $x,y\in X$.
\end{example} 

\begin{example}\label{ExaAC}
The form of $\mathcal{M}$-subobjects and the form of $\mathcal{E}$-quotients from the previous example can unified as follows. Let $\mathcal{M}$ be a class of monomorphisms and let $\mathcal{E}$ be a class of epimorphisms in the same category $\mathbb{C}$. Consider the following equivalence relation on the set of pairs $(e,m)$ where $e$ and $m$ have the same domain:
\[(e,m)\sim (e',m')\quad\iff\quad \exists_{s,t,u,v}[[m'u=m]\land [mv=m']\land [e'u=se]\land [ev=te']]\]
Let us write an equivalence class of a pair $(e,m)$ under this equivalence relation $[e,m]$. We call it an \emph{$(\mathcal{E},\mathcal{M})$-subquotient}. In the \emph{form of $(\mathcal{E},\mathcal{M})$-subquotients}, clusters in an object $X$ are given by $(\mathcal{E},\mathcal{M})$-subquotients $[e,m]$ where the codomain of $m$ is $X$. For a morphism $f\colon X\to Y$ we have:
\[[e',m']\ge_f [e,m]\quad\iff\quad \exists_{s,u}[[m'u=mf]\land [e'u=se]]\]
Now, it is not difficult to see that if $\mathcal{M}$ is the class of identity morphisms, then the form of $(\mathcal{E},\mathcal{M})$-subquotients is isomorphic to the form of $\mathcal{E}$-quotients, while if $\mathcal{E}$ is the class of identity morphisms, then the form of $(\mathcal{E},\mathcal{M})$-subquotients is isomorphic to the form of $\mathcal{M}$-subobjects.      
\end{example}

\subsection*{Forms vs monoidal categories} We end off the section by illustrating a striking parallelism between the notion of a form and the notion of a monoidal category, which gives one a framework for understanding the purpose of the theory of forms. 

A form is an example of a `structured category' --- a category endowed with additional structure. Among the most common structured categories are monoidal categories (see \cite{M98} for a background on monoidal categories). There is a certain analogy between these two types of structured categories, exhibited by Figure~\ref{figA}, which we explain below (following Figure~\ref{figA} from top to bottom):
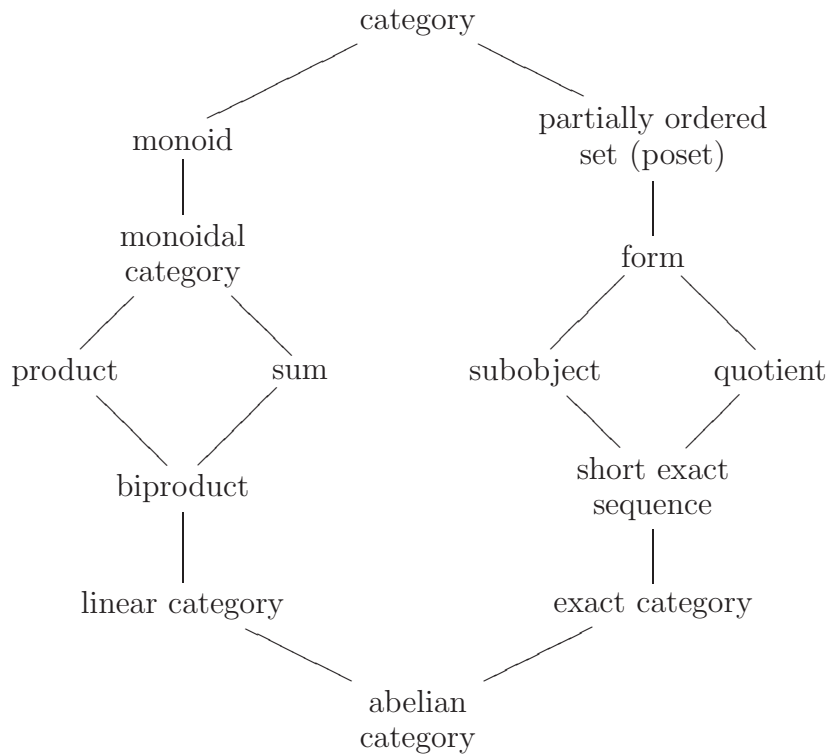
\begin{figure}
  \label{figA}
    \[\xymatrix@!=20pt{ & & & \txt{category}\ar@{-}[lld]\ar@{-}[rrd] & & & \\ & \txt{monoid}\ar@{-}[d] & & & & \txt{partially ordered\\ set (poset)}\ar@{-}[d] & \\ & \txt{monoidal\\ category}\ar@{-}[ld]\ar@{-}[rd] & & & & \txt{form}\ar@{-}[ld]\ar@{-}[rd] & \\ \txt{product}\ar@{-}[rd] & & \txt{sum}\ar@{-}[ld] & & \txt{subobject}\ar@{-}[rd] & & \txt{quotient}\ar@{-}[ld] \\ & \txt{biproduct}\ar@{-}[d] & & & & \txt{short exact\\ sequence}\ar@{-}[d] & \\ & \txt{linear category}\ar@{-}[rrd] & & & & \txt{exact category}\ar@{-}[lld] & \\ & & & \txt{abelian\\ category} & & & }\]
  \caption{Analogy between forms and monoidal category structures.}
\end{figure}
\begin{itemize}
  \item Monoids and posets can be seen as two extreme types categories: a monoid is a category with few objects --- exactly one object, to be precise, whereas a poset is a category with few morphisms --- between any two objects there is at most one morphism (in either direction). If a category is seen as a mixture of algebraic structure (composition of morphisms) and geometric structure (objects), then in some sense a monoid is a category whose geometric features are trivialized, and a poset is a category whose algebraic features are trivialized. 

  \item Now, a category can be equipped with additional structure. It can be equipped with further algebraic structure --- a monoidal structure, derived from the notion of a monoid, or further geometric structure --- a form, derived from the notion of partial order (we are now on the third line of Figure~\ref{figA}). 

  \item There are two canonical monoidal structures in a category, given by categorical product and sum (coproduct). These two structures are dual to each other. Similarly, on the form side, there are two canonical forms over a category --- that of subobjects from Example~\ref{ExaG}, and its dual, the \emph{form of quotients}. 

  \item The notion of a `biproduct' is a merger of the notions of product and sum; in fact, the product and the sum in a biproduct determine each other (via zero morphisms). Similarly, the notion of a `(short) exact sequence' is a merger of the notions of subobject and quotient: the kernel part of a short exact sequence represents a subobject and the cokernel part represents a quotient, and here too these two determine each other by the kernel-cokernel correspondence (and hence again via zero morphisms). 

  \item Moving on to the next line of Figure~\ref{figA}, `linear categories' (in the terminology of Lawvere) are pointed categories where any two objects can be combined in a biproduct. Similarly (but in some sense an opposite way), on the form side, `exact categories' (in the sense of Puppe and Mitchell \cite{P62,M65}, which are the same as pointed generalized exact categories in the sense of Grandis \cite{G13}, referred to in this paper, following \cite{JW16}, as \emph{Grandis exact categories}) can be seen as pointed categories where every morphism is a combination of two short exact sequences determined by the image and the coimage of the morphism (exactness requires that every morphism factorizes as a cokernel followed by a kernel). Moreover, similarly to how in a linear category products and sums are isomorphic, in an exact category the form of subobjects and the form of quotients are isomorphic. Furthermore, as it follows from Theorem~5.1 in \cite{JW16}, existence of such an isomorphism together with monomorphisms and epimorphisms (that represent subobjects and quotients, respectively) being part of a `(proper) factorization system' in the sense of \cite{FK72,I58}, is equivalent to exactness. This is analogous to how linearity of the category is equivalent to the matching of products and sums. 

  \item Finally, abelian categories are where the algebraic and geometric branches of Figure~\ref{figA} meet once again, as abelian categories are those categories which are simultaneously linear and exact. An example of an exact category which is not linear is the category of Dedekind groups (i.e., groups whose every subgroup is normal), while an example of a linear category which is not exact is the category of commutative monoids. It is worth noting that an exact category that has binary products is already linear and hence is an abelian category.
\end{itemize}

\section{Orean Forms}\label{secA}

\subsection*{Definition and elementary properties of orean forms} Substructures of mathematical structures admit direct images and inverse images along structure-preserving maps. Substructures also intersect and can be joined. Below we introduce axioms on a general form that conceptualize this behavior of substructures. 

\begin{definition}
  An \emph{orean form} is a form $F$ satisfying the following additional axioms:
  \begin{itemize}
    \item[(O1)] For any object $X$, the poset of clusters in $X$ (the poset structure is given by the relation $\ge_X$) is a bounded lattice (we write $\bot_X^F$ and $\top_X^F$ for the bottom and top elements of this lattice, and $\vee_X^F$ and $\wedge_X^F$ for its join and meet --- omitting subscripts and superscripts, when convenient).

    \item[(O2)] The reverse implication to the one in (F2), 
      \[\exists_{B\subseteq_F Y}[C\ge_g B\ge_f A]\impliedby [C\ge_{gf} A],\]
    holds for any sequence
      \[\xymatrix{X\ar[r]^-{f} & Y\ar[r]^-{g} & Z}\]
    of morphisms and clusters $A$ and $C$ in $X$ and $Z$, respectively.

    \item[(O3)] For any morphism $f\colon X\to Y$ and cluster $S$ in $X$, the set    \[\{T\subseteq_F Y\mid T\ge_f S\}\]
    has smallest element (with respect to the order of clusters in $Y$), called \emph{direct image} of $S$ along $f$ and written as 
      \[f\cdot S=f\cdot^F S=\mathsf{min}\{T\subseteq_F Y\mid T\ge_f S\}.\]
    Dually, for each $T\subseteq_F Y$ there exists
      \[T\cdot f=T\cdot^F f=\mathsf{max}\{S\subseteq_F X\mid T\ge_f S\},\]
    called \emph{inverse image} of $T$ along $f$.
  \end{itemize}
\end{definition}

\begin{remark}\label{RemC}
  It is easy to prove that under (O1) and (O3), axiom (O2) is equivalent to the following `associativity' law:
    \[g\cdot (f\cdot S)=(g\circ f)\cdot S.\]
  By duality, it is also equivalent to the dual of this law:
    \[(U\cdot g)\cdot f=U\cdot (g\circ f).\]
  Another equivalent expression of (O2) is the law
    \[U\cdot g\ge_f h\cdot R\quad \iff\quad  U\ge_{gfh} R.\]
  Explicit proofs of these equivalences are given in \cite{vN19}.
  Axiom (O3) gives that the relation $T\ge_f S$ constitutes a Galois connection in the sense of Ore \cite{O44}:
    \[T\ge f\cdot S\quad\iff\quad T\ge_f S \quad\iff\quad T\cdot f\ge  S.\]
  Hence, by the well-known facts about Galois connections, we have the following laws:
\begin{itemize}
    \item $(f\cdot S)\cdot f\ge S$, and dually, $T\ge f\cdot (T\cdot f)$,
    \item $f\cdot ((f\cdot S)\cdot f)=f\cdot S$, and dually, $(f\cdot (T\cdot f))\cdot f=T\cdot f$.
    \item If $S\ge S'$ then $f\cdot S\ge f\cdot S'$. Dually, if $T\ge T'$ then $T\cdot f\ge T'\cdot f$.
    \item $(f\cdot S)\cdot f\ge (f\cdot S')\cdot f$ if and only if $f\cdot S\ge f\cdot S'$. Dually, $f\cdot (T\cdot f)\ge f\cdot (T'\cdot f)$ if and only if $T\cdot f\ge T'\cdot f$. 
    \item $f\cdot \bot=\bot$, and dually, $\top\cdot f=\top$. 
    
    \item $f\cdot(S\vee S')=(f\cdot S)\vee (f\cdot S')$, and dually, $(T\wedge T')\cdot f=(T\cdot f)\wedge (T'\cdot f)$. 
    \end{itemize}
\end{remark}
\begin{remark}
  A form is orean if and only if it is a bifibration in the sense of Grothendieck \cite{G57}, whose fibres are bounded lattices. If we further require that the fibres are complete lattices, we get the notion of a \emph{topological functor} (see e.g.~\cite{Bru84}).
\end{remark}

\begin{example}
  An orean form over $\mathbf{1}$ is essentially the same as a bounded lattice, while an orean form over $\mathbf{2}$ is essentially the same as a Galois connection between bounded lattices (see Examples~\ref{ExaU}, \ref{ExaT}).
\end{example} 

\begin{example}\label{ExaF}
  The forms of substructures and forms of quotients of mathematical structures are usually orean. The form of subobjects of a category (as considered in Example~\ref{ExaG}) is an orean form if and only if monomorphisms admit pullbacks along arbitrary morphisms (this gives construction of inverse images as well as meets of subobjects), subobjects of each object admit finite joins, and any morphism $f$ factorizes as $f=me$, where $m$ is a monomorphism, so that whenever $hf=m'e'$ with $m'$ a monomorphism, we have $m'u=hm$ for some morphism $u$ (this gives construction of direct images of subobjects) --- see the discussion after Theorem~2.4 in \cite{JW14}, from which one can get a similar characterization of when is, more generally, a form of subobjects relative to a class $\mathcal{M}$ of monomorphisms (as considered in Example~\ref{ExaH}) orean, as long as $\mathcal{M}$ contains identity morphisms and is closed under composition with isomorphisms. 
\end{example}

\begin{example}
Consider a form of $(\mathcal{E},\mathcal{M})$-subquotients $F$, as described in Example~\ref{ExaAC}, where $\mathcal{M}$ is a class of monomorphisms and $\mathcal{E}$ is a class of epimorphisms. It is an easy routine to show that if the forms of $\mathcal{M}$-subobjects and $\mathcal{E}$-quotients separately are orean, then $F$ is also orean.
\end{example}

\begin{lemma}\label{LemQ}
Consider a category $\mathbb{C}$ equipped with an orean form $F$. For any isomorphism $f\colon X\to Y$ and any cluster $S$ in $X$ we have (where $f^{-1}$ denotes the inverse of $f$):
\[(f\cdot S)\cdot f=S,\quad f\cdot S=S\cdot f^{-1}, \quad f^{-1}\cdot (S\cdot f^{-1})=S.\]
Furthermore, we have: 
\[1_X\cdot S=S=S\cdot 1_X.\]
\end{lemma}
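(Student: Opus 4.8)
The plan is to reduce the whole statement to the single nontrivial identity $f\cdot S=S\cdot f^{-1}$, using the Ore Galois connection and the associativity laws recorded in Remark~\ref{RemC}. I would first dispose of the identity laws straight from (O3): since $\ge_{1_X}$ \emph{is} the order $\ge_X$, reflexivity (F1) gives $1_X\cdot S=\mathsf{min}\{T\mid T\ge S\}=S$ and $S\cdot 1_X=\mathsf{max}\{S'\mid S\ge S'\}=S$, each extremum being witnessed by $S$ itself. Combining this with the associativity laws $g\cdot(f\cdot S)=(g\circ f)\cdot S$ and $(U\cdot g)\cdot f=U\cdot(g\circ f)$ then yields the two ``round-trip'' equalities $f^{-1}\cdot(f\cdot S)=(f^{-1}\circ f)\cdot S=1_X\cdot S=S$ and $(S\cdot f^{-1})\cdot f=S\cdot(f^{-1}\circ f)=S\cdot 1_X=S$, which will carry most of the weight.

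The crux is $f\cdot S=S\cdot f^{-1}$, proved by antisymmetry (F3). For $f\cdot S\le S\cdot f^{-1}$, the Galois connection for $f^{-1}$ gives $S\ge_{f^{-1}}(f\cdot S)\iff S\ge f^{-1}\cdot(f\cdot S)=S$, which holds; hence $f\cdot S$ lies in $\{T\mid S\ge_{f^{-1}}T\}$, whose maximum is $S\cdot f^{-1}$, so $f\cdot S\le S\cdot f^{-1}$. For the reverse inequality I would show that \emph{any} $T_1$ with $S\ge_{f^{-1}}T_1$ and \emph{any} $T_2$ with $T_2\ge_f S$ satisfy $T_1\le T_2$: applying (F2) to the sequence $Y\xrightarrow{f^{-1}}X\xrightarrow{f}Y$ and the witnesses $T_2\ge_f S\ge_{f^{-1}}T_1$ produces $T_2\ge_{f\circ f^{-1}}T_1$, that is $T_2\ge_{1_Y}T_1$, i.e.\ $T_1\le T_2$. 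Taking $T_1=S\cdot f^{-1}$ and $T_2=f\cdot S$ (each a legitimate witness, by maximality and by $f\cdot S\ge_f S$) gives $S\cdot f^{-1}\le f\cdot S$, completing the crux.

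The remaining two identities then fall out formally by combining the crux with the round-trip equalities: $(f\cdot S)\cdot f=(S\cdot f^{-1})\cdot f=S$ and $f^{-1}\cdot(S\cdot f^{-1})=f^{-1}\cdot(f\cdot S)=S$. I expect the only real obstacle to be the $\ge$ direction of the crux: this is the one place where invertibility of $f$ genuinely enters, through $f\circ f^{-1}=1_Y$ inside (F2); everything else is formal Galois-connection and associativity bookkeeping, all licensed by Remark~\ref{RemC}.
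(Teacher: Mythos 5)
Your proof is correct, and it takes a genuinely different route from the paper's. You make the conjugation identity $f\cdot S=S\cdot f^{-1}$ the crux and establish it by antisymmetry: one inequality comes from the Ore adjunction of Remark~\ref{RemC} (since $S\ge f^{-1}\cdot(f\cdot S)=S$, the cluster $f\cdot S$ lies in the set of which $S\cdot f^{-1}$ is the maximum), and the other from a direct appeal to the cluster-system axiom (F2) along the composable pair $f^{-1},f$ with $S$ as the middle witness, yielding $T_2\ge_{f\circ f^{-1}}T_1=T_2\ge_{1_Y}T_1$; the outer identities $(f\cdot S)\cdot f=S$ and $f^{-1}\cdot(S\cdot f^{-1})=S$ then fall out of your round-trip equalities. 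The paper proceeds in the opposite order: it proves $(f\cdot S)\cdot f=S$ first, by a purely equational chain that inserts $1_X=f^{-1}\circ f$, reassociates, and invokes the packaged Galois-connection law $f\cdot((f\cdot S)\cdot f)=f\cdot S$ from Remark~\ref{RemC}, and then obtains the middle identity by applying $(-)\cdot f^{-1}$ to both sides, with the right identity dual. Both arguments use the same toolkit (identity laws, associativity, the Ore adjunction), so neither is stronger; the paper's chain is shorter because the law $f\cdot((f\cdot S)\cdot f)=f\cdot S$ silently performs the work of your antisymmetry step, while your version is more hands-on at the level of the axioms (F1--F3) and your reverse-inequality observation (every member of $\{T\mid S\ge_{f^{-1}}T\}$ sits below every member of $\{T\mid T\ge_f S\}$) is a transparent and slightly stronger statement. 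One minor imprecision in your closing remark: invertibility does not enter only through $f\circ f^{-1}=1_Y$ inside (F2); your round-trip equalities and the forward inequality of the crux both use $f^{-1}\circ f=1_X$, so each composite identity is genuinely needed, just in different halves of the argument.
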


\begin{proof}
We begin by proving the last statement of the lemma:
\[1_X\cdot S=\min\{T\subseteq X\mid T\ge S\}=S.\]
We get $S\cdot 1_X=S$ by duality. Now we prove the left equality in the first part of the lemma, using what we just proved, as well as Remark~\ref{RemC}. 
\begin{align*}
    (f\cdot S)\cdot f &= 1_X\cdot ((f\cdot S)\cdot f)\\
    &=(f^{-1}\circ f)\cdot ((f\cdot S)\cdot f)\\
    &=f^{-1}\cdot ( f\cdot ((f\cdot S)\cdot f))\\
    &=f^{-1}\cdot ( f\cdot S)\\
    &=(f^{-1}\circ f)\cdot S\\
    &=1_X\cdot S\\
    &=S.
\end{align*}  
The middle equality $f\cdot S=S\cdot f^{-1}$ follows with similar computation by taking inverse image of both sides of the left equality along $f^{-1}$. The right equality is dual to the left one (with $f$ and $f^{-1}$ swapped), but it can also be obtained by taking the direct image, along $f^{-1}$, of both sides of the middle equality.  
\end{proof}

\begin{remark}
Consider an orean form $F$ over a category $\mathbb{C}$. Assigning to an object $X$ of $\mathbb{C}$ its poset of clusters (which is a bounded lattice, by (O1)), we get a functor $\tilde{F}$ from $\mathbb{C}$ to the category of posets and Galois connections. Seeing an orean form as a bifibration, this is a familiar representation of $F$ related to the so-called `Grothendieck construction'. Knowing this, Lemma~\ref{LemQ} can be obtained as a consequence of the fact that $\tilde{F}$ (just as any functor) preserves isomorphisms, by using the well-known description of isomorphisms and identity morphisms in the category of posets, with Galois connections as morphisms.
\end{remark}

\subsection*{Normality and conormality of clusters} Here we introduce and explore two fundamental dual notions pertaining to orean forms, which bring together the notions of an image of a homomorphism and a kernel of a homomorphism in abstract algebra. `Normal' clusters generalize normal subgroups of groups, while `conormal' clusters correspond to arbitrary subgroups. The main result of the paper, established towards the end of the paper, will deal with forms that decompose any cluster into a normal and a conormal one. Note that in group theory, such decomposition is trivial, since any normal cluster is a conormal cluster (a normal subgroup is a subgroup). In general, this need not be the case, even in classical algebra. For instance, in the theory of rings with identity, ideals can be seen as normal clusters and subrings of rings as the conormal ones: it is no longer true that one of these include the other. 

\begin{definition} 
For a given orean form, a cluster $S$ in an object $X$ is said to be an \emph{image} of a morphism $f\colon W\to X$ if $S=f\cdot \top_W$, and dually, it is said to be a \emph{kernel} of a morphism $g\colon X\to Y$ if $S=\bot_Y\cdot g$. A cluster that is \emph{image}/\emph{kernel} of some morphism is called a \emph{conormal}/\emph{normal} cluster. We write the image and kernel of a morphism $f$ as $\im ^Ff$ and $\ker ^Ff$, respectively (omitting $F$ in the superscript when convenient).
An orean form $F$ is said to be a \emph{conormal form} when every cluster is conormal. A \emph{normal form} is defined dually. A form is \emph{binormal} if it is both normal and conormal.
\end{definition}

\begin{remark}
The terms introduced in the first part of the definition above allude to what the corresponding notions give in the case of the form of subgroups, for instance. There the terms `image', `kernel', `normal', etc., all obtain their usual meaning. The only new term is `conormal' (used already in \cite{M50}, but with a slightly different meaning). We do not encounter the notion of a `conormal subgroup' in classical group theory because for the form of subgroups of groups, every cluster (subgroup) is conormal. Note that its dual property does not hold: not every subgroup is normal. Thus, the form of subgroups is conormal, but not normal.
\end{remark}

\begin{example}\label{ExaAD}
  Forms of subobjects relative to a class $\mathcal{M}$ of monomorphisms as in Example~\ref{ExaF} are conormal. So forms of substructures of mathematical structures are usually conormal. Similarly, forms of quotients of mathematical structures are normal.
\end{example}

\begin{example}\label{ExaN}
  Recall that an \emph{ideal $\mathcal{N}$ of null morphisms} (see \cite{Ehr64,Kel64,Lav65}) is a class of morphisms in a category which is closed under left and right composition with arbitrary morphisms. A particular case of this is the class of null morphisms in a pointed category. The notion of kernel and cokernel straightforwardly generalize from the pointed context to that of a category equipped with an arbitrary fixed ideal $\mathcal{N}$ of null morphisms. When all morphisms have kernels and cokernels relative to $\mathcal{N}$, it can be easily checked that the form of $\mathcal{M}$-subobjects (see Example~\ref{ExaH}) is a binormal orean form, where $\mathcal{M}$ is the class of kernels; it is actually isomorphic to the form of $\mathcal{E}$-quotients, where $\mathcal{E}$ is the class of cokernels. These observations take us to \cite{G13} and the earlier work of Marco Grandis cited there, where the reader may find a rich supply of examples that fit in this context. The forms of submodules of modules of a ring $R$, and more generally, the forms of subobjects in abelian categories, provide classical instances of binormal orean forms arising from an ideal of null morphisms. It is worth noting that an ideal $\mathcal{N}$ of null morphisms admitting kernels and cokernels can be actually recovered from the corresponding form as the class of those morphisms for which the direct image of the top cluster in the domain is the bottom cluster in the codomain (see \cite{JW14}).    
\end{example}

\begin{definition}
Given an orean form $F$, its subform (in the sense of Definition~\ref{DefF}) of conormal $F$-clusters is called the \emph{conormal hull} of $F$ and is denoted by $F_\c$. The \emph{normal hull} of $F$ is defined dually as the subform of normal $F$-clusters and is denoted by $F_\n$.
\end{definition}

\begin{convention}
We often denote a form by a letter with a subscript, such as in $F_\c$. In such cases we use only the subscript to reference the corresponding form notation, e.g., instead of $A\vee_X^{F_\c} B$ we write $A\vee_X^{\c} B$.
\end{convention}

\begin{definition} Let $F$ be an orean form.
For an $F$-cluster $A$, we write $\cccl{A}$ to denote the largest conormal $F$-cluster below $A$, when it exists, and call it the \emph{conormal interior} of $A$, while $\ccl{A}$ denotes the smallest conormal $F$-cluster above $A$, when the latter exists, called the \emph{conormal exterior} of $A$. Dually, the \emph{normal exterior} of $A$ is the smallest normal cluster $\ncl{A}$ above $A$, when it exists, and the \emph{normal interior} of $A$ is the largest normal cluster $\nccl{A}$ below $A$, when the latter exists.
\end{definition}

\begin{remark}
Note that the construction $\cccl{A}$ is dual to $\ncl{A}$, while $\ccl{A}$ is dual to $\nccl{A}$. 
\end{remark}

\begin{example}
Let $F$ be the form of equivalence relations on sets. For an equivalence relation $E$ on a set $X$, we have:
\begin{itemize}
    \item $\cccl{E}$ exists if and only if $E$ is conormal, i.e., $E$ has at most one non-singleton equivalence class (in which case, $\cccl{E}=E$). 
    
    \item $\ccl{E}$ is obtained from $E$ by joining all of its non-singleton equivalence classes into one equivalence class.
    
    \item $\ncl{E}=\nccl{E}=E$ since $E$ is always normal. 
\end{itemize}
\end{example}

\begin{lemma}\label{LemM}
  Given an orean form $F$, the form $F_\c$ is orean if and only if $F$ admits the following constructions: 
  \begin{itemize}
    \item $\ccl{\bot^F_X}$, for any object $X$,
    \item $\ccl{A\lor B}$ and $\cccl{A\land B}$ for any two conormal $F$-clusters $A$ and $B$ in an object $X$,
    \item $\cccl{A\cdot f}$ for any morphism $f\colon X\to Y$ and conormal $F$-cluster $A$ in $Y$. 
  \end{itemize}
  Moreover, when $F_\c$ is orean, $F_\c$ is conormal and we have:
  \begin{align*}
    \top_X^\c&=\top_X^F=\ccl{\top_X^F}=\cccl{\top_X^F}, &
    \bot_X^\c &=\ccl{\bot_X^F},\\
    A\land^\c B&=\cccl{A\land^F B}, &
    A\lor^\c B&=\ccl{A\lor^F B},\\
    A\cdot^\c f&= \cccl{A\cdot^F f}, &
    f\cdot^\c A &= f\cdot^F A=\ccl{f\cdot^F A}=\cccl{f\cdot^F A}.
  \end{align*}
\end{lemma}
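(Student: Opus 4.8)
The plan is to prove the two directions by translating each orean axiom for $F_\c$ into one of the listed closure constructions for $F$, using the Galois-connection and associativity laws collected in Remark~\ref{RemC}. The single most useful fact will be that \emph{the direct image of a conormal cluster is again conormal}: if $A=\im^F h=h\cdot^F\top^F_W$, then $f\cdot^F A=(f\circ h)\cdot^F\top^F_W=\im^F(f\circ h)$ by associativity. In particular $\top^F_X=1_X\cdot^F\top^F_X$ is conormal by Lemma~\ref{LemQ}, so it is already the largest conormal cluster in $X$.

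For the forward direction, I would assume $F_\c$ orean and read each construction off its lattice and image structure. Since $\bot^F_X$ lies below every cluster, every conormal cluster automatically lies above $\bot^F_X$, so the bottom $\bot^\c_X$ of the fibre of $F_\c$ is the least conormal cluster, namely $\ccl{\bot^F_X}$. Because $A\lor^F B$ and $A\land^F B$ are the join and meet in the full fibre, a conormal cluster lies above (resp.\ below) both $A$ and $B$ iff it lies above $A\lor^F B$ (resp.\ below $A\land^F B$); hence the join and meet of $A,B$ in $F_\c$ are $\ccl{A\lor^F B}$ and $\cccl{A\land^F B}$. Finally, the Galois connection $A\ge_f S\iff S\le A\cdot^F f$ identifies the inverse image of $A$ in $F_\c$ --- the largest conormal $S$ with $A\ge_f S$ --- with $\cccl{A\cdot^F f}$.

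For the converse I would check (O1)--(O3) for $F_\c$ directly from the four constructions. Axiom (O1) is immediate once I verify, exactly as above, that $\top^F_X$, $\ccl{\bot^F_X}$, $\ccl{A\lor^F B}$ and $\cccl{A\land^F B}$ are the top, bottom, join and meet among conormal clusters. For (O3), the direct image in $F_\c$ is $f\cdot^F S$, which is conormal by the highlighted fact and hence already the least conormal cluster above $S$, while the inverse image is $\cccl{A\cdot^F f}$, which exists by hypothesis and is maximal by the Galois connection. The crux is (O2): given $f\colon X\to Y$, $g\colon Y\to Z$ with conormal $A,C$ and $C\ge^F_{gf}A$, I would take $B:=f\cdot^F A$, which is conormal and satisfies $B\ge_f A$, and then use associativity $(g\circ f)\cdot A=g\cdot(f\cdot A)$ together with the Galois connection to convert $C\ge^F_{gf}A$ into $C\ge^F_g B$, so that $B$ is the required conormal intermediary.

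I expect (O2) to be the main obstacle, as it is the only axiom for which the intermediate cluster provided by (O2) for $F$ need not be conormal; the fix is to replace it by the direct image $f\cdot^F A$ and invoke associativity. Once (O1)--(O3) hold, conormality of $F_\c$ and all displayed equalities follow formally: images in $F_\c$ coincide with images in $F$ because $\top^\c_W=\top^F_W$ and $f\cdot^\c=f\cdot^F$ on conormal clusters, so any $F_\c$-cluster $S=\im^F h$ equals $\im^{F_\c}h$ and is conormal in $F_\c$; and the equalities for $\top^\c$, $\bot^\c$, $\land^\c$, $\lor^\c$, $\cdot^\c f$ and $f\cdot^\c$ are precisely the identifications made in the two directions, with $\ccl{\top^F_X}=\top^F_X=\cccl{\top^F_X}$ and $\ccl{f\cdot^F A}=f\cdot^F A=\cccl{f\cdot^F A}$ holding because those clusters are already conormal.
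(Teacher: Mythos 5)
Your proposal is correct and fills in exactly the ``easy routine'' the paper leaves to the reader, resting on the same two key facts the paper cites for the second part: the top cluster is always conormal, and direct images of conormal clusters are conormal (via the associativity law of Remark~\ref{RemC}). Your treatment of (O2) --- replacing the intermediate cluster by $f\cdot^F A$ and converting $C\ge_{gf}A$ into $C\ge_g f\cdot^F A$ through the Galois connection --- is precisely the right way to handle the one axiom that does not transfer verbatim, so nothing is missing.
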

\begin{proof}
  It is an easy routine to get the first part of the lemma. The second part follows from it together with the fact that the largest cluster is always conormal and the direct image of a conormal cluster is conormal.  
\end{proof}

\begin{definition} An orean form $F$ is said to be \emph{strongly orean} if $F_\c$ and $F_\n$ are both orean.
\end{definition}

\begin{example}\label{ExaAB}
The form $F$ of additive subgroups of rings is an example of a strongly orean form over the category $\mathbf{Rng}_1$ of rings with identity. It can be obtained as a pullback of the form of subgroups over the category $\mathbf{Ab}$ of abelian groups, along the forgetful functor $\mathbf{Rng}_1\to \mathbf{Ab}$, which assigns to a ring its underlying additive group. In this case, $F_\c$ is the form of subrings (which is isomorphic to the form of subobjects of $\mathbf{Rng}_1$), while $F_\n$ is the form of ideals (which is isomorphic to the form of quotient rings).
\end{example}

\subsection*{Noetherian Forms} Here we introduce a concept that lies at the basis of this paper. A `noetherian form' is an orean form with sufficient properties to give a unified self-dual context for establishing homomorphism theorems (in particular, isomorphism theorems and diagram lemmas of homological algebra). 

Consider the form of subgroups of groups. In this form, a subgroup $S$ of a group $G$ is a cluster and not an object in the category of groups. However, it defines at the same time an object too, the subgroup $S$ viewed as a group, along with a morphism $S\to G$ --- the subgroup inclusion map. On the other hand, every normal subgroup $N$ of $G$ can also be realized as the quotient group $G/N$, which also comes with a morphism, the quotient map $G\to G/N$. These two constructions of realizing a cluster in the ground category can in fact be described by dual universal properties detailed in the following definition. 

\begin{definition} For a given orean form, an \emph{embedding} of a conormal cluster $S$ is a morphism $f\colon W\to X$ whose image is $S$, and which has the property that any morphism $f'\colon W'\to X$ whose image is smaller than $S$ factors through $f$ uniquely, i.e., $fg=f'$ for a unique morphism $g$.
  \[\xymatrix{\bullet\ar[r]^-{f} & \bullet \\ \bullet\ar@{..>}[u]|-{\exists!}\ar[ur]_-{f'} & }\quad \xymatrix{\bullet\ar[r]^-{g}\ar[rd]_-{g'} & \bullet\ar@{..>}[d]|-{\exists!} \\ & \bullet }\]
Dually, a \emph{quotient} of a normal cluster $S$ is a morphism $g\colon X\to Y$ whose kernel is $S$ and is such that any morphism $g'\colon X\to Y'$ whose kernel is larger than $S$ factors through $g$ uniquely. A morphism that is a quotient of some kernel is also called a \emph{projection}. 
\end{definition}

\begin{remark}\label{RemJ}
The following observations will be useful:
\begin{itemize}
\item It is easy to see that an embedding is always a monomorphism and a quotient is always an epimorphism.

\item If $f$ is an embedding of a conormal cluster $S$, then $f'$ is also an embedding of $S$ if and only if $f'=fi$ for an isomorphism $i$. Dually, if $g$ is a quotient of a normal cluster $S$, then $g'$ is also a quotient of $S$ if and only if $g'=ig$ for an isomorphism $i$. 

\item In the definition of an embedding, we could have required $S\ge \im f$ instead of $S=\im f$, since by applying the universal property to a morphism $f'$ such that $S=\im f'$, we would get $\im f\ge f\im g=\im (fg)=\im f'=S$.

\item For an orean form $F$, if every conormal $F$-cluster has an embedding, then $F_\c$ is isomorphic to the form of $\mathcal{M}$-subobjects of Example~\ref{ExaH}, where $\mathcal{M}$ is the class of $F$-embeddings. The isomorphism assigns to each conormal cluster the $\mathcal{M}$-subobject represented by its embeddings.
\end{itemize}
\end{remark}

\begin{convention}
We write $\iota_A$ to denote an embedding of a conormal cluster $A$. In light of Remark~\ref{RemJ}, $\iota_A$ is not uniquely determined by $A$. We assume that at the time of first usage of the notation, one embedding has been arbitrarily chosen. If this first usage involves an additional condition, we assume that it is possible to choose $\iota_A$ in such a way as to satisfy that condition. The dual notion, for a quotient, is $\pi_S$. When we need to refer to the form $F$ with respect to which we are considering an embedding and a projection, we write $\iota^F_A$ and $\pi^F_S$, respectively.  
\end{convention}

\begin{lemma}\label{LemV}
Consider an orean form $F$ over a category $\mathbb{C}$. For a morphism $f\colon X\to Y$ in $\mathbb{C}$, the following conditions are equivalent: 
\begin{itemize}
    \item $f$ is an isomorphism.
    
    \item $f$ is an embedding of the top $F$-cluster in $Y$, i.e., $f=\iota_{\top_Y}$.
    
    \item $f$ is a quotient of the bottom $F$-cluster in $X$, i.e., $f=\pi_{\bot_X}$.
\end{itemize}
\end{lemma}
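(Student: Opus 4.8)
The plan is to exploit the self-duality of the axioms of an orean form: the equivalence between $f$ being an isomorphism and $f$ being an embedding of $\top_Y$ is, under the duality of forms, precisely the equivalence between $f$ being an isomorphism and $f$ being a quotient of $\bot_X$. So it suffices to prove the first equivalence and then invoke duality for the second. The tools I would use are Lemma~\ref{LemQ} (behaviour of direct and inverse images along isomorphisms and identities) together with the Galois-connection identities collected in Remark~\ref{RemC}, in particular $\top\cdot f=\top$.

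For the implication that an isomorphism $f\colon X\to Y$ is an embedding of $\top_Y$, I would first compute its image. By Lemma~\ref{LemQ}, $\im f=f\cdot\top_X=\top_X\cdot f^{-1}$, and applying $\top\cdot g=\top$ with $g=f^{-1}$ gives $\top_X\cdot f^{-1}=\top_Y$; hence $\im f=\top_Y$. Note that $\top_Y$ is conormal, being the image $1_Y\cdot\top_Y$ of the identity, so speaking of its embeddings is legitimate. The universal property is then immediate: any $f'\colon W'\to Y$ has image below $\top_Y$, and $g=f^{-1}f'$ is the unique morphism with $fg=f'$ because $f$ is invertible. Thus $f=\iota_{\top_Y}$.

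For the converse, suppose $f$ is an embedding of $\top_Y$. Since $\im(1_Y)=1_Y\cdot\top_Y=\top_Y\le\top_Y$, the universal property applied to $1_Y\colon Y\to Y$ yields a (unique) morphism $g\colon Y\to X$ with $fg=1_Y$, giving a right inverse. To obtain $gf=1_X$ I would use the uniqueness clause: the morphism $f$ itself has image $\top_Y\le\top_Y$, so it factors through $f$ uniquely; since $f\circ(gf)=(fg)\circ f=f=f\circ 1_X$, both $gf$ and $1_X$ are such factorizations, whence $gf=1_X$. Therefore $f$ is an isomorphism.

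This last point is the only genuinely delicate step: recognizing that it is the uniqueness part of the embedding property, applied to $f$ viewed as factoring through itself, that forces the left inverse (the image computations being routine applications of the cited facts). The remaining equivalence, between isomorphisms and quotients of $\bot_X$, then follows by dualizing the argument, since \emph{isomorphism} is a self-dual condition and the dual of \emph{embedding of the top cluster} is \emph{quotient of the bottom cluster}.
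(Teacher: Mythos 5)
Your proof is correct, and it follows the same basic strategy as the paper: reduce the quotient statement to the embedding statement by self-duality of the orean axioms, and settle the embedding statement using Lemma~\ref{LemQ} together with the law $\top\cdot f=\top$ from Remark~\ref{RemC}. The only difference is one of packaging. The paper's proof is shorter because it routes through the second bullet point of Remark~\ref{RemJ}: since embeddings of a fixed conormal cluster form a single isomorphism class ($f'=\iota_S$ iff $f'=\iota_S\circ i$ with $i$ an isomorphism), it suffices to check that $1_X$ is an embedding of $\top_X$, which follows from $1_X\cdot\top_X=\top_X$; the class of embeddings of $\top_Y$ is then exactly the class of isomorphisms, giving both implications at once. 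You instead verify the two implications by hand: the forward direction via the image computation $\operatorname{Im}f=f\cdot\top_X=\top_X\cdot f^{-1}=\top_Y$ and the trivial factorization $g=f^{-1}f'$, and the converse by extracting a right inverse from the universal property applied to $1_Y$ and then a left inverse from the uniqueness clause applied to $f$ factoring through itself (one could also get $gf=1_X$ directly from the fact that embeddings are monomorphisms, per the first bullet of Remark~\ref{RemJ}). What your version buys is self-containedness — it does not lean on the unproved observation in Remark~\ref{RemJ}, effectively inlining the relevant part of it — at the cost of a somewhat longer argument; your identification of the uniqueness clause as the step forcing the left inverse is exactly the right delicate point.
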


\begin{proof}
Applying the second bullet point of Remark~\ref{RemJ}, as well as duality, it is sufficient to show that an identity morphism $1_X\colon X\to X$ is an embedding of $\top_X$. This follows easily from the fact that $1_X\cdot \top_X=\top_X$, which we have by Lemma~\ref{LemQ}. 
\end{proof}

The following lemma was established in \cite{vN19}. We include the proof for the sake of completeness. The following is a consequence of Proposition 3.2.2 in \cite{vN19}, but we include a proof nevertheless.

\begin{lemma}\label{LemP}
  Given an orean form, a conormal cluster $B$ in an object $Y$ whose embedding is a morphism $\iota_B=m\colon M\to Y$, and given also a morphism $f\colon X\to Y$ such that $B\cdot f$ is conormal, we have: a morphism $m'\colon M'\to X$ is an embedding of $B\cdot f$, i.e., $m'=\iota_{B\cdot f}$, if and only if $m'$ is a pullback of $m$ along $f$.     
\end{lemma}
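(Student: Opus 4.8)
The plan is to reduce the biconditional to a single claim: that the chosen embedding $m_0=\iota_{B\cdot f}$ (which exists precisely because $B\cdot f$ is assumed conormal) is itself a pullback of $m$ along $f$. Once this is established, both directions follow formally. Indeed, embeddings of a fixed conormal cluster are unique up to precomposition with an isomorphism (Remark~\ref{RemJ}), pullbacks of a fixed cospan are unique up to a compatible isomorphism, and precomposing a pullback cone with an isomorphism again yields a pullback cone. Hence, if $m'$ is any embedding of $B\cdot f$, then $m'=m_0 i$ for an isomorphism $i$, and since $m_0$ is a pullback so is $m'$; conversely, if $m'$ is a pullback of $m$ along $f$, then $m'=m_0 i$ for an isomorphism $i$ by uniqueness of pullbacks, and Remark~\ref{RemJ} makes $m'$ an embedding of $B\cdot f$.

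\textbf{Constructing the comparison morphism.} To prove the claim I first build the top edge of the candidate square. Writing $\im m_0=B\cdot f$ by definition of an embedding, the associativity of direct images (Remark~\ref{RemC}) gives $\im(f m_0)=f\cdot(B\cdot f)$, and the Galois connection of Remark~\ref{RemC} yields $f\cdot(B\cdot f)\le B=\im m$. Thus $f m_0$ is a morphism into $Y$ whose image is at most $B$, so the universal property of the embedding $m=\iota_B$ produces a unique $f'$ with $m f'=f m_0$.

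\textbf{Verifying the pullback property.} Given $a\colon T\to X$ and $b\colon T\to M$ with $fa=mb$, I compute $f\cdot \im a=\im(fa)=\im(mb)=m\cdot\im b\le m\cdot\top=B$, using associativity of direct images and $\im m=B$; by the Galois connection this gives $\im a\le B\cdot f$. The embedding universal property of $m_0$ then supplies a unique $u$ with $m_0 u=a$, and the remaining identity $f' u=b$ follows because $m$ is a monomorphism (Remark~\ref{RemJ}): from $m(f'u)=(f m_0)u=fa=mb$ we cancel $m$. Uniqueness of $u$ is immediate since $m_0$ is monic. This exhibits the square as a pullback and completes the claim.

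\textbf{Main obstacle.} The chief difficulty is not any single calculation but getting the logical architecture right: recognising that the clean route is to prove the one embedding $\iota_{B\cdot f}$ is a genuine pullback and then transport both implications through the up-to-isomorphism uniqueness of the two universal constructions, rather than verifying each implication by hand. Within the claim, the only delicate points are keeping straight the direction of the adjunction $f\cdot S\le B\iff S\le B\cdot f$ and correctly invoking ``image at most $S$'' (rather than equal to $S$) in the embedding's universal property, as licensed by Remark~\ref{RemJ}.
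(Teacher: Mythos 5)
Your forward direction is sound and coincides with the paper's argument: given an embedding of $B\cdot f$, your construction of the comparison morphism $f'$ via $\im(fm_0)=f\cdot(B\cdot f)\le B$ and your verification of the pullback property are exactly the paper's computations. The genuine gap is in the backward direction, which you route through the assertion that $m_0=\iota_{B\cdot f}$ ``exists precisely because $B\cdot f$ is assumed conormal''. In a general orean form --- which is all the lemma assumes --- conormality of a cluster does \emph{not} guarantee that it admits an embedding. Existence of embeddings for conormal clusters is an extra axiom: it is precisely what distinguishes a prenoetherian form (Definition~\ref{DefA}) from a bare orean form, and Example~\ref{ExaAA} exhibits the failure concretely: in the form of equivalence relations, the discrete equivalence relations are conormal (images of empty and constant maps) yet admit no embeddings. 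So when you transport ``pullback $\Rightarrow$ embedding'' through an isomorphism $m'=m_0i$, the morphism $m_0$ you compare against may simply not exist, and in that case your argument proves nothing. Note that the backward implication \emph{is} the existence statement: it asserts that any pullback of $m$ along $f$ is an embedding of $B\cdot f$, thereby producing one.

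The repair is to prove the backward direction directly, as the paper does: given a pullback square $mf'=fm'$, commutativity gives $B\cdot f\ge \im m'$; for any $r$ with $B\cdot f\ge \im r$ one gets $B\ge \im(fr)$, hence $fr=ms$ for some $s$ by the universal property of $m=\iota_B$, and the pullback property yields $m'w=r$, with $w$ unique because $m'$ is a monomorphism (being a pullback of the monomorphism $m$). Conormality of $B\cdot f$ enters exactly here, through the third bullet of Remark~\ref{RemJ}: it licenses the relaxed condition $B\cdot f\ge\im m'$ in place of $B\cdot f=\im m'$, upgrading the universal property to full embedding status. Your closing observation about ``image at most $S$ rather than equal to $S$'' is the right instinct --- but it must carry the converse direction on its own, rather than being delegated to transport along an embedding whose existence is unjustified.
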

\begin{proof}
  Suppose first $m'=\iota_{B\cdot f}$. Then $\im m'=B\cdot f$. This implies $B\ge f\cdot \im m'=\im (fm')$. We obtain a morphism $f':M'\to M$ making the following diagram commute:
    $$\xymatrix{M'\ar[r]^-{f'}\ar[d]_-{m'} & M\ar[d]^-{m}\\ X\ar[r]_-{f} & Y}$$
  To prove that this diagram is a pullback, consider its commutative extension:
    $$\xymatrix{W\ar@/_15pt/[ddr]_{r}\ar@/^15pt/[rrd]^-{s} & & \\ & M'\ar[r]^-{f'}\ar[d]_-{m'} & M\ar[d]^-{m}\\ & X\ar[r]_-{f} & Y}$$
  Since $B=\im m\ge \im (ms)=\im (fr)=f\cdot \im r$, we have $B\cdot f\ge \im r$. This produces a morphism $w\colon W\to M'$ such that $m'w=r$. Every embedding is a monomorphism (Remark~\ref{RemJ}), so in particular, both $m$ and $m'$ are monomorphisms. As $m$ is a monomorphism, we get $f'w=s$. Since $m'$ is a monomorphism, $w$ is a unique morphism such that $m'w=r$ and $f'w=s$, proving the pullback property:
    $$\xymatrix{W\ar@{..>}[dr]^-{w}\ar@/_15pt/[ddr]_{r}\ar@/^15pt/[rrd]^-{s} & & \\ & M'\ar[r]^-{f'}\ar[d]_-{m'} & M\ar[d]^-{m}\\ & X\ar[r]_-{f} & Y}$$
  Suppose now we have a pullback given by the square two diagrams above the last diagram. We want to prove $m'=\iota_{B\cdot f}$. Commutativity of the square gives $B= \im m \ge \im (mf')=\im (fm')= f\im m'$, and so $B\cdot f\ge \im m'$. Let $r$ be a morphism $r\colon W\to X$ such that $B\cdot f\ge \im r$. Then $B\ge f\im r=\im (fr)$. Since $m=\iota_B$, there is a morphism $s\colon W\to M$ such that $ms=fr$. The pullback property gives a factorization $m'w=r$. Such $w$ is unique, since being a pullback of a monomorphism $m$, the morphism $m'$ is itself a monomorphism.  
\end{proof}

\begin{example}\label{ExaW}
  For a class $\mathcal{M}$ of monomorphisms as in Example~\ref{ExaF}, a morphism $m$ is an $F$-embedding of a cluster $M$, where $F$ is the form of $\mathcal{M}$-subobjects, if and only if $m$ is one of the monomorphisms from the class $\mathcal{M}$ that represents the subobject $M$. 
\end{example}

\begin{remark}\label{RemE}
  As established in Corollary 3.2 in \cite{JW14}, binormal orean forms arising from an ideal of null morphisms admitting kernels and cokernels, as described in Example~\ref{ExaN}, are precisely those binormal orean forms (up to isomorphism of forms) where every conormal cluster admits an embedding and every normal cluster admits a quotient (see also \cite{JW16}). 
\end{remark}

\begin{definition}\label{DefA}
  A \emph{prenoetherian form} is an orean form where every conormal cluster admits an embedding and every normal cluster admits a quotient. A \emph{noetherian form} is an orean form satisfying the following additional axioms (notice that (N2) subsumes the defining requirement of a prenoetherian form):
  \begin{itemize}
    \item[(N1)] For any $S\subseteq_F X$ and $f\colon X\to Y$, we have
      \[(f\cdot S)\cdot f=S\vee_X \ker f,\]
  and dually, for any $T\subseteq_F Y$,
      \[f\cdot (T\cdot f)=T\wedge_Y\im f.\]

    \item[(N2)] Any morphism $f\colon X\to Y$ factorizes as $f=\iota_{\im f}\circ \pi_{\ker f}$.

    \item[(N3)] Join of two normal clusters is normal, and dually, meet of two conormal clusters is conormal.
  \end{itemize}
\end{definition}

\begin{example}\label{ExaAA} The form of subsets and the form of equivalence relations over the category of sets are both orean forms. Moreover, they both satisfy (N3). However, (N2) fails for both forms. The form of subsets admits embeddings --- these are injective functions. The form of equivalence relations admits quotients, which are the surjective functions. Since in the form of subsets the empty subsets are the only normal clusters, quotients for this form are bijections. Any embedding in an orean form is a monomorphism, so if (N2) were true for the form of subsets, then any function would have been an injection. For the form of equivalence relations, all maps $f$ satisfy the requirements of (N2) except the empty maps and the constant maps. The images of such maps are the discrete equivalence relations and these do not admit embeddings. (N1) does hold for the form of equivalence relations, but the join formula in (N1) fails for the form of subsets (the meet formula, on the other hand, holds). Overall, neither the form of subsets nor the form of equivalence relations are noetherian forms.
\end{example}

\begin{example}\label{Ex0}
  Let $\mathbf{Grp}$ denote the category of groups, and let $\mathbf{Grp}_2$ denote the category of `pairs of groups', in the sense of algebraic topology. Its objects are pairs $(G,H)$ where $G$ is a group and $H$ is a subgroup of $G$. A morphism between pairs, $f\colon (G,H)\to (G',H')$ is a group homomorphism $f\colon G\to G'$ such that $f(x)\in H'$ for any $x\in H$. The functor $\mathbf{Grp}_2\to\mathbf{Grp}$ which maps each $f\colon (G,H)\to (G',H')$ to $f\colon G\to G'$ is a from that is isomorphic to the form of subgroups, which itself is isomorphic to the form of subobjects in the category of groups. This form is a noetherian form (see \cite{GJ19}). In particular, (N2) holds because of the first Noether isomorphism theorem. Groups in this example can be replaced with algebras in any variety having a unique constant $0$, as long as the variety contains the Ursini terms \cite{U73}, i.e., binary terms $d_1,\dots,d_n$ and an $(n+1)$-ary term $p$ satisfying
    \[\left\{\begin{array}{l} d_1(x,x)=0,\dots,d_n(x,x)=0,\\ p(d_1(x,y),\dots,d_n(x,y),y)=x.\end{array}\right.\] 
  These are precisely those varieties whose categories of algebras are semi-abelian in the sense of \cite{JMT02} (see \cite{BouJan03}). As it follows from Theorem 9.1 in \cite{J14}, for any pointed category $\mathbb{C}$ having finite limits and colimits, the form of subobjects is noetherian if and only if $\mathbb{C}$ is a semi-abelian category (see also \cite{vN19}). Thus, the forms of subobjects of categories of the following algebraic structures are noetherian: modules, groups, rings without identity, Heyting meet semi-lattices, Lie algebras, and essentially all other pointed group-like structures. Counterexamples include monoids and meet semi-lattices. As shown in \cite{GSV19} (see also \cite{GKV16}), the category of cocommutative Hopf algebras over any field is a semi-abelian category; hence, its form of subobjects is noetherian. 
\end{example}

\begin{remark}\label{RemB}
  The join formula in (N1) is equivalent to the requirement that it holds when $S\ge_X \ker  f$, and dually, the meet formula is equivalent to the requirement that it holds when $ \im  f\ge_Y T$ (see \cite{J14}). So the formulas in (N1) are each equivalent to the following implications: 
    \[S\ge_X \ker  f\;\implies\;(f\cdot S)\cdot f=S,\quad \im  f\ge_Y T \;\implies\; f\cdot (T\cdot f)=T.\]
  These implications state that direct and inverse image maps constitute an isomorphism of lattices between the upset of the kernel of $f$ and the downset of the image of $f$. This is known in group theory as the `lattice isomorphism theorem' (see e.g.~\cite{MB99}).
\end{remark}

\begin{remark}\label{RemD}
  Under (N1) and (N2), axiom (N3) has the following equivalent reformulation (see \cite{GJ19}): direct image of a normal cluster under a quotient is normal.
\end{remark}

\begin{remark}\label{RemF}
As it follows from the results contained in \cite{JW16,G13}, a binormal orean form is noetherian if and only if (N2) holds (notice that (N3) in particular is a trivial consequence of binormality), and moreover, isomorphism classes of binormal noetherian forms over a category $\mathbb{C}$ are given by ideals $\mathcal{N}$ of null morphisms such that the pair $(\mathbb{C}, \mathcal{N})$ is a Grandis exact category \cite{G84,G92,G12,G13}. In other words, conceptually, a binormal noetherian form is the same as a Grandis exact category, just as a binormal orean form where every conormal cluster admits an embedding and every normal cluster admits a quotient is the same as a category equipped with an ideal of null morphisms admitting kernels and cokernels (see Remark~\ref{RemE}). Recall from \cite{G12,G13} that when $\mathbb{C}$ is a pointed category, there is exactly one $\mathcal{N}$ such that $(\mathbb{C}, \mathcal{N})$ is a Grandis exact category, and it is given by null morphisms of the pointed category. We recall also from \cite{G12} that pointed categories $\mathbb{C}$ such that the class $\mathcal{N}$ of null morphisms is such are precisely the Puppe-Mitchell categories, which, in the case of existence of finite products and sums, are the same as abelian categories. 
\end{remark}

\section{Formal Operators}\label{SecI}

\subsection*{Definition and examples} Morphisms of forms are called `operators'. Here we define them and provide some illustrative examples.

\begin{definition}\label{DefF}
Given two forms $F\colon\mathbb{A}\to\mathbb{C}$ and $G\colon\mathbb{B}\to\mathbb{C}$ over the same category $\mathbb{C}$, we define:
\begin{itemize}
\item an \emph{operator} $\tau\colon F\to G$ is a functor $\tau\colon\mathbb{A}\to\mathbb{B}$ such that $G\circ\tau=F$:
  \[\xymatrix{ \mathbb{A}\ar[rr]^-{\tau}\ar[dr]_-{F} & & \mathbb{B}\ar[dl]^-{G} \\ & \mathbb{C} & }\]
It amounts to mapping each $F$-cluster $A$ in an object $X$ to a $G$-cluster $\tau A$ in $X$ so that the implication
  \[A_2\ge_f^F A_1\quad\implies\quad \tau A_2\ge_f^G \tau A_1\]
holds. We call this implication \emph{monotonicity} of $\tau$. 

\item When the reverse implication also holds, $\tau$ is said to be a \emph{full operator} (it is equivalent to $\tau$ being full as a functor $\mathbb{A}\to\mathbb{B}$). 

\item When $\tau$ is injective on objects, we say that $\tau$ is an \emph{injective operator}. 

\item When in addition to being a full operator, we have $\tau A=A$ for all $A\in\mathbb{A}$, we say that $F$ is a \emph{subform} of $G$ and $\tau$ is then called \emph{subform inclusion}. 

\item When $F=G$ and $\tau\circ \tau=\tau$, we say that $\tau$ is \emph{idempotent}. 

\item Suppose $F=G$. We say that a cluster $A$ is $\tau$-\emph{closed} (or, closed relative to $\tau$) when $\tau A=A$. We write $F_\tau$ for the subform of $F$ consisting of $\tau$-closed clusters. 

\item When $\tau$ is an identity functor we say that it is an \emph{identity operator}.
\end{itemize}
\end{definition}

\begin{remark} We note that:
\begin{itemize}
\item Let $F,G$ be forms over the same category. Thanks to faithfulness of $G$, as a functor, an operator $\tau\colon F\to G$ is fully described by its action on $F$-clusters: two operators $\tau,\tau'\colon F\to G$ are equal if and only if we have $\tau A=\tau' A$ for any $F$-cluster $A$. 

\item Any full operator is injective. 

\item When one orean form $G$ is a subform of another orean form $F$, this does not mean that $G$ is closed in $F$ under orean constructions (top/bottom clusters, meets/joins of clusters, direct/inverse images of clusters along morphisms).  
\end{itemize}
\end{remark}

\begin{remark}
Note that forms over a given category along with operators form a category, under the obvious notion of composition of operators. Isomorphisms in this category are the same as isomorphisms of forms in the sense of Definition~\ref{DefC}. 
\end{remark}

\begin{lemma}\label{LemY}
An operator $\tau\colon F\to G$ from a form $F$ to a form $G$ is full if and only if the mapping $A\mapsto \tau A$ defines an isomorphism between $F$ and the subform of $G$ consisting of clusters of the type $\tau A$.  
\end{lemma}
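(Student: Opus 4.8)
The plan is to unwind both sides of the equivalence into statements about clusters and the relations $\ge_f$, and then observe that they match once a single automatic fact is in place. Write $H$ for the subform of $G$ whose clusters are the $G$-clusters of the form $\tau A$ (for $F$-clusters $A$), and let $\mathcal{S}$ denote this set of clusters. Since the subform inclusion $H\to G$ is a full operator, the relations of $H$ are exactly the restrictions of those of $G$; that is, for $B_1,B_2\in\mathcal{S}$ we have $B_2\ge_f^H B_1\iff B_2\ge_f^G B_1$. Because $G\circ\tau=F$, each $\tau A$ is a cluster in the same object as $A$, so the assignment $A\mapsto\tau A$ corestricts $\tau$ to an operator $\tau'\colon F\to H$ (morphisms of $\mathbb{A}$ land in $H$'s base category as that subcategory is full in $G$'s). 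I would then record three facts about $\tau'$ as a functor: it is surjective on clusters onto $\mathcal{S}$ by construction; it is faithful, since $H\circ\tau'=F$ with $F$ faithful; and it is full as a functor (equivalently, a full operator) if and only if $\tau$ is full, precisely because the relations of $H$ are the restrictions of those of $G$.

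For the forward implication, suppose $\tau$ is full. It remains only to see that $A\mapsto\tau A$ is injective, for then $\tau'$ is a fully faithful functor that is bijective on objects, hence an isomorphism of categories, i.e.\ an isomorphism of forms $F\cong H$. So suppose $\tau A=\tau A'$ for $F$-clusters $A,A'$; applying $G$ gives $F(A)=F(A')$, so $A$ and $A'$ are clusters in a common object $X$. From $\tau A=\tau A'$ and reflexivity (F1) we obtain $\tau A\ge_X^G\tau A'$ and $\tau A'\ge_X^G\tau A$, whence fullness of $\tau$ yields $A\ge_X^F A'$ and $A'\ge_X^F A$; antisymmetry (F3) of $F$ then forces $A=A'$.

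For the reverse implication, suppose $A\mapsto\tau A$ is an isomorphism $F\cong H$. Then in particular $\tau'$ is a full operator, and by the third recorded fact this is exactly the fullness of $\tau$. The only step carrying any real content is the observation in the forward direction that fullness, together with antisymmetry (F3), already forces the cluster map to be injective, so that the ``bijective on objects'' half of being an isomorphism comes for free; everything else amounts to matching the definition of a full operator against reflection of the relations $\ge_f$ and using that a subform's relations are restrictions of the ambient ones.
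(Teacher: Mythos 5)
Your proof is correct: the paper itself omits the argument as ``rather straightforward,'' and your unwinding --- corestricting $\tau$ to the subform $H$ (whose relations are restrictions of those of $G$, since subform inclusions are full), getting injectivity on clusters from fullness plus (F1) and (F3) (matching the paper's remark that any full operator is injective), and concluding via the fact that a fully faithful functor bijective on objects is an isomorphism --- is exactly the routine verification the authors intend. No gaps; the reverse direction via fullness of the isomorphism restricted through $H$ is also sound.
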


\begin{proof}[The proof of this lemma is rather straightforward.]  
\end{proof}

\begin{remark}
Mapping a form to its dual form determines an isomorphism of the category of forms over a given category $\mathbb{C}$ and the category of forms over the dual category $\mathbb{C}^\mathsf{op}$, which extends duality of forms to operators. The dual of an operator $\tau:F\to G$ is given by $\tau^\mathsf{op}:F^\mathsf{op}\to G^\mathsf{op}$, where $\tau^\mathsf{op}$ is the dual functor $\tau^\mathsf{op}\colon \mathbb{A}^\mathsf{op}\to\mathbb{B}^\mathsf{op}$. In terms of their effect on clusters, there is of course no difference between $\tau$ and $\tau^\mathsf{op}$ (in general, a functor and the dual functor act the same way both on objects and on morphisms).  
\end{remark}

\begin{example}
Any selection of clusters for a form $F$, per object $X$ of the ground category, defines a (unique) subform of $F$. For example, the form of subgroups has a subform given by subgroups of a fixed order $k$.
\end{example}

\begin{example}\label{ExaX}
For a finite set $X$, let $\mathsf{P}X$ denote the power-set of $X$. Define a \emph{palette} in a finite set $X$ to be a non-empty subset $P$ of $\mathsf{P}X$ such that
\[\forall_{A_1,A_2\in P}[[A_1\subseteq A_2]\implies [A_1=A_2]].\]
Note that any non-empty subset $P$ of $\mathsf{P}X$ can be turned into a palette $P^*$ by selecting the maximal elements of $P$, relative to the subset inclusion order.
Given a function $f\colon X\to Y$, define $Q\ge_f P$ to mean
\[\forall_{A\in P}\exists_{B\in Q}[\{f(x)\mid x\in A\}\subseteq B].\]
We get an orean form $F$, which we will call the \emph{form of palettes of finite sets}, where:
\begin{itemize}
    \item The bottom cluster in a set $X$ is given by $\bot=\{\varnothing\}$.
    
    \item The top cluster in $X$ is given by $\top=\{X\}$.
    
    \item The join of two clusters is given by 
    $P_1\vee P_2=(P_1\cup P_2)^*$.

    \item The meet of two clusters is given by 
    $P_1\wedge P_2=\{A\cap B\mid [A\in P_1] \& [B\in P_2]\}^*$.

    \item $f\cdot P=\{\{f(x)\mid x\in A\}\mid A\in P\}^*$.
    
    \item $P\cdot f =\{\{x\in X\mid f(x)\in B\}\mid B\in P\}^*$.
\end{itemize}
We consider some operators associated to the form of palettes:
\begin{itemize}
\item Assigning to each cluster $P$ the union $\bigcup \sigma$ of its elements, 
\[\gamma\colon P\mapsto \bigcup P=\{x\in X\mid \exists_{A\in P}[x\in A]\},\]
we get an operator $\gamma\colon F\to G$, where $G$ denotes the form of subsets of finite sets. This operator is not injective, and hence it is not full either. 

\item Similarly, we have an operator $\lambda\colon F\to G$, assigning to each cluster $\sigma$, the intersection $\bigcap \sigma$ of its elements,
\[\lambda\colon P\mapsto \bigcap P=\{x\in X\mid \forall_{A\in P}[x\in A]\},\]
which, again, is not injective. 

\item We also have an operator $\tau\colon G\to F$ defined by $A\mapsto \{A\}$. This operator is full and hence injective. The clusters in the subform of $F$ that $G$ is isomorphic to by Lemma~\ref{LemY} are in fact the conormal palettes. Note that both $\gamma$ and $\lambda$ are left inverses of $\tau$, and hence the composites $\tau\gamma$ and $\tau\lambda$ are both idempotent operators. In both cases, closed clusters are the conormal palettes. 

\item We define an operator $\omega\colon F\to F$ as follows. Given a palette $P$, consider the equivalence relation on $P$ generated by the relation $A\cap B\neq\varnothing$. This will partition $P$. Now, take the union of each equivalence class. The set of these unions will result in a palette $\omega P$ that is \emph{separated}, i.e., any two distinct elements of $\omega P$ are disjoint. It is not difficult to see that this construction is monotone, and so $\omega$ is an operator $F\to F$. Moreover, it is not difficult to see either that $\omega$ is idempotent. $\omega$-closed clusters are precisely the separated palettes. $\omega$ is not injective on objects, and hence neither it is full.

\item Something that is not an operator: if instead of unions of equivalence classes in definition of $\omega$, we took their intersection, then monotonicity would fail and so we would not get an operator $F\to F$.
\end{itemize}
\end{example}

\subsection*{Product of forms} Next, we introduce a notion of cartesian product of forms.

\begin{definition}
Consider a pair $(F_1 ,F_2 )$ of forms. Their \emph{product} $F_1 \times F_2 $ is defined as the diagonal functor in the pullback 
  \[\xymatrix@=18pt{ & \mathbb{B}_1\!\times_\mathbb{C}\mathbb{B}_2\ar[dl]_-{\varrho_1}\ar[dr]^-{\varrho_2}\ar[dd]|-{F_1 \times F_2 } & \\ \mathbb{B}_1\ar[dr]_-{F_1 } & & \mathbb{B}_2\ar[dl]^-{F_2 } \\ & \mathbb{C} & }\]
The operators $\varrho_1$ and $\varrho_2$ are called, respectively, the first and the second \emph{product projections}.
Given two operators $\tau_1\colon G\to F_1$ and $\tau_2\colon G\to F_2$, we write $(\tau_1,\tau_2)$ for the unique operator $\tau\colon G\to F_1\times F_2$ such that $\varrho_1\tau=\tau_1$ and $\varrho_2\tau=\tau_2$.
\end{definition}

\begin{remark}
A product of two forms over a category $\mathbb{C}$ in the sense of the definition above is of course the usual product of these forms as objects in the category of forms over $\mathbb{C}$.
\end{remark}

\begin{convention}
The cluster system that we usually work with when considering a product $F_1\times F_2$ of forms is given by:
\begin{itemize}
    \item Clusters in an object $X$ are pairs $(A_1,A_2)$, where $A_1$ is an $F_1$-cluster in $X$ and $A_2$ is an $F_2$-cluster in $X$.
    
    \item For a morphism $f\colon X\to Y$ we have $(B_1,B_2)\ge_f (A_1,A_2)$ if and only if $B_1\ge_f A_1$ and $B_2\ge_f A_2$.
\end{itemize}
That this cluster system is a presentation of $F_1\times F_2$ follows easily from the well-known description of pullbacks of functors.
\end{convention}

\begin{example}\label{ExaY}
Let $F,G,\gamma,\lambda$ be as in Example~\ref{ExaX}. The operator $(\gamma,\lambda)\colon F\to G\times G$ is defined by $(\gamma,\lambda)P=(\gamma P,\lambda P)$. We have an operator $\delta\colon G\times G\to F$ defined by $\delta(A,B)=\{A,B\}^*$, where $\{A,B\}^*$ equals $\{A,B\}$ when $A$ and $B$ are not subsets of each other, and it equals $\{A\cup B\}$ otherwise. The composite $\varepsilon=(\gamma,\lambda)\delta\colon G\times G\to G\times G$ is not idempotent. However, we do have $\varepsilon\varepsilon\varepsilon=\varepsilon\varepsilon$. Closed clusters relative to $\varepsilon$ are clusters of the form $(A,A)$. The other composite $\delta(\gamma,\lambda)\colon F\to F$ is, on the other hand, equal to the composite $\tau\lambda$, where $\tau$ is as in Example~\ref{ExaX}, and so $\delta(\gamma,\lambda)$ is idempotent (which explains why we have $\varepsilon\varepsilon\varepsilon=\varepsilon\varepsilon$).      
\end{example}

\begin{remark}\label{RemL}
When $F_1 $ and $F_2 $ are orean, so is their product $F_1 \times F_2 $, in which all orean constructions (meet, direct images, etc.)~are done component-wise. In fact, it is not difficult to show that the converse is also true: if the product of two forms is orean then both forms are also orean. 
\end{remark}

\begin{remark}
Consider two orean forms $F$ and $G$. Since the orean structure of the product $F\times G$ is component-wise, we have:
  \[(F\times G)_\c=F_\c\times G_\c\textrm{, and dually, }(F\times G)_\n=F_\n\times G_\n,\]
From this and the fact that the product of two forms is orean if and only if each of the two forms are orean (Remark~\ref{RemL}), follows that the product of two forms is strongly orean if and only if each of the two forms are strongly orean.
\end{remark}

\begin{example}\label{ExaO} 
There are a number of operators associated with the product of forms: 
  \begin{itemize}
    \item $\varrho_1\colon F_1\times F_2 \to F_1$ and $\varrho_2\colon F_1\times F_2 \to F_2$ defined by $\varrho_1(A_1,A_2)=A_1$ and $\varrho_2(A_1,A_2)=A_2$.
    
    \item For an orean form $F$, the mappings $(A_1,A_2)\mapsto A_1\lor A_2$ and $(A_1,A_2)\mapsto A_1\land A_2$ define operators $F\times F\to F$.

    \item For orean forms $F,G$, we have operators $F\to F\times G$ and $G\to F\times G$ defined by $A\mapsto (\bot,A)$ and $B\mapsto (\top,B)$, for instance.

    \item There is an isomorphism $F_1\times F_2\to F_2\times F_1$ given by the mapping $(A_1,A_2)\mapsto (A_2,A_1)$.
  \end{itemize}
\end{example}

\subsection*{Closure operators} Many concepts pertaining to posets naturally generalize to forms. One of them is the concept of a closure operator. It will be a useful instrument in this paper, so we take some time to explore it. 

\begin{remark}
The so-called `categorical closure operators' are defined for subforms of the form of subobjects (see \cite{DG87,DT95,T11}). As shown in \cite{DAGJ17}, the following more general notion has similar features and new uses. We note also that the notion of a dual closure operator considered in \cite{DT15} is a special case of our notion of a co-closure operator introduced below, which is formal dual of the notion of a closure operator.
\end{remark}

\begin{definition}
A \emph{closure operator} on a form $F$ over a category $\mathbb{C}$ is an operator $\kappa\colon F\to F$ such that $\kappa S\ge_X S$ for each cluster $S$ in each object $X$. The dual of the notion of a closure operator on a form is the notion of a \emph{co-closure operator}. A co-closure operator is thus given by an operator $\delta\colon F\to F$ such that $S\ge_X \delta S$ for each cluster $S$ in each object $X$.
\end{definition}

\begin{example}
  The form $F$ of subspaces of topological spaces has an idempotent closure operator $\kappa$, given by topological closure of the underlying set of the subspace inside the ambient space. $F_\kappa$ is then the form of closed subspaces of topological spaces. 
\end{example}

\begin{remark}\label{RemK}
Let $F$ be a form. The following can be established thanks to well-known facts about idempotent closure operators on posets applied to fibres of $F$:
\begin{itemize}
    \item[(i)] For any idempotent closure operator $\kappa$ on $F$ and any cluster $S$, the cluster $\kappa S$ is the smallest closed cluster above $S$.
    
    \item[(ii)] Let $G$ be a subform of $F$ such that the subform inclusion $G\to F$ has a left inverse $\nu$. Assume furthermore that $\nu S\ge S$ for each cluster $S$. Then $\nu$ is a unique operator having these properties. Furthermore, the mapping $S\mapsto \nu S$ defines an idempotent closure operator $\kappa$ on $F$ with $F_{\kappa}=G$.
    
    \item[(iii)] More generally, given an operator $\tau\colon G\to F$ between forms, it has at most one left inverse $\nu\colon F\to G$ having the property that $\tau \nu S\ge S$. If such operator $\nu$ exists, then the composite $\kappa=\tau \nu$ is an idempotent closure operator on $F$ and $\tau$ restricts to an isomorphism $G\to F_{\tau \nu}$ of forms. Furthermore, when $\nu$ exists, $\tau$ preserves all joins that exist in the fibres of $G$.   
    
    \item[(iv)] For any idempotent closure operator $\kappa$ on $F$, the subform inclusion $F_\kappa\to F$ has a left inverse $\tau$ given by $\tau S=\kappa S$.
    
    \item[(v)] In the case when the fibres of $F$ have binary joins, for any idempotent closure operator $\kappa$ on $F$, the following conditions are equivalent: (a) $F_\kappa$ is closed in $F$ under binary joins; (b) $F_\kappa$ has binary joins and the subform inclusion $F_\kappa\to F$ preserves them; (c) $\kappa$ preserves binary joins.
\end{itemize}
\end{remark}

\begin{example} Consider from Example~\ref{ExaX} the form $F$ of palettes of finite sets, the form $G$ of subsets of finite sets, and the operators $\gamma, \lambda, \tau$ defined there.
For a palette $P$ we have: \[\tau\lambda P=\left\{\bigcap P\right\}\subseteq \sigma\subseteq\left\{\bigcup P\right\}=\tau\gamma P.\]
By Remark~\ref{RemK}, $\tau\gamma$ is then an idempotent closure operator on $F$ and by the dual of Remark~\ref{RemK}, $\tau\lambda$ is an idempotent co-closure operator on $F$. We note also that $\omega$ from Example~\ref{ExaX} is an idempotent closure operator. The operator $\varepsilon$ from Example~\ref{ExaY} is neither a closure operator nor a co-closure operator. However, $\varepsilon\varepsilon$ is a closure operator, as it is given by $\varepsilon\varepsilon\colon (A,B)\mapsto (A\cup B,A\cup B)$. 
\end{example}

\begin{theorem}\label{ThmN}
  Any orean form has at least two closure operators and two co-closure operators. The form of subsets of sets has just one additional closure operator, which fixes the empty subsets and maps each nonempty subset to the full set. The same form has exactly two co-closure operators. The form of equivalence relations of sets has exactly two closure operators and exactly two co-closure operators.
\end{theorem}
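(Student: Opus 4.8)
The plan is to treat each assertion of Theorem~\ref{ThmN} separately, beginning with the universal statement that every orean form possesses at least two closure and two co-closure operators. For this first part I would simply exhibit the two canonical closure operators: the identity operator $\kappa S = S$, which is trivially a closure operator since $S \ge_X S$ by (F1), and the operator $\kappa S = \top_X$ for every cluster $S$ in each object $X$, which is a closure operator because $\top_X \ge_X S$ always and because $\top$ is preserved by direct images (indeed, by Lemma~\ref{LemQ} and the orean laws, assigning $\top$ to every cluster is monotone). Dually, the identity operator and the operator $\delta S = \bot_X$ furnish two co-closure operators; that $\delta$ is an operator uses $f \cdot \bot = \bot$ from Remark~\ref{RemC}. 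These four operators are genuinely operators (monotone and functorial over $\mathbb{C}$) precisely because of the orean identities for $\top$, $\bot$ along morphisms, so I would cite those identities rather than recompute them.

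Next I would handle the form of subsets. The claim is that beyond the identity and the ``top'' operator there is exactly one further closure operator, namely the one fixing $\varnothing$ and sending every nonempty subset $A \subseteq X$ to $X$. The strategy is to analyze what a closure operator $\kappa$ can do fibrewise subject to monotonicity along \emph{all} functions $f\colon X \to Y$. The key constraint is that for a one-element set $\{*\}$ the only clusters are $\varnothing$ and $\{*\}$, and naturality forces $\kappa$ to act coherently: using the functions that pick out individual points and the functions that collapse subsets, I would show that $\kappa A$ for nonempty $A$ must be either $A$ itself or all of $X$, and that if $\kappa$ enlarges some nonempty subset at all it is forced (by direct images along surjections onto smaller sets and inclusions) to enlarge every nonempty subset to the whole set. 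The empty set must be fixed because $\kappa \varnothing \ge \varnothing$ and, via the inverse-image/monotonicity argument along maps from the empty set, it cannot be enlarged. This enumeration is the main obstacle: it requires carefully tracking the monotonicity implication $\kappa B \ge_f \kappa A$ across enough morphisms to pin down the possibilities, and I expect the delicate point to be ruling out ``partial'' closure operators that enlarge some but not all nonempty subsets.

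For the co-closure operators of the form of subsets, the assertion is that there are exactly two, so the only ones are the identity and $\delta S = \bot = \varnothing$. Here the argument is the dual enumeration: a co-closure operator must shrink clusters, and by the same naturality analysis across all functions one shows that once $\delta$ sends some nonempty subset to $\varnothing$ it must do so for all of them (by transporting along bijections and injections), and that there is no intermediate option surviving monotonicity. Finally, for the form of equivalence relations the claim is exactly two of each; since this form is \emph{normal} (every equivalence relation is a kernel), the ``top'' closure operator $E \mapsto \top_X$ coincides in behaviour with collapsing everything and one must check it does not reduce to the identity, while the dual analysis uses that the discrete equivalence relations play the role of $\bot$. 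I would run the same morphism-chasing enumeration, now over functions between sets viewed as homomorphisms of the cluster system of Example~\ref{ExaH}, to show no third closure or co-closure operator can exist; again the crux is excluding operators that act non-uniformly across fibres, which monotonicity along collapsing and separating maps forbids.
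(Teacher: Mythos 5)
Your overall strategy coincides with the paper's: exhibit the identity and the top/bottom operators, then classify the remaining (co-)closure operators fibrewise by chasing the monotonicity constraint $\kappa B\ge_f\kappa A$ along well-chosen functions. However, two of the steps as you describe them would fail. First, your treatment of the empty subset is wrong: maps out of the empty set impose no constraint at all, since the only cluster in $\varnothing$ is $\varnothing$ itself and the relation $B\ge_f\varnothing$ holds for every $B$ in the codomain, so nothing forces $\kappa\varnothing$ to stay put. Indeed the claim that the empty subset ``cannot be enlarged'' is false --- the top operator, which you yourself exhibit in the first part, enlarges it --- so carried out literally your plan is internally inconsistent. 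The correct statement is a dichotomy, proved in the paper with constant maps: if $\kappa\bot_{X'}\neq\bot_{X'}$ for some $X'$, pick $x'\in\kappa\bot_{X'}$ and map all of $X'$ constantly to a point $y$ of an arbitrary nonempty $Y'$; monotonicity then puts $y$ in $\kappa\bot_{Y'}$, so every empty subset closes to the full set and one recovers the top operator. Otherwise all empty subsets are fixed, giving the one additional operator.

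Second, for the co-closure operators of the form of subsets, your stated toolkit of ``bijections and injections'' is insufficient, and you only address operators that already send some nonempty subset to $\varnothing$. You must also rule out operators that merely shrink: for instance, $\delta S=S$ for $S$ infinite and $\delta S=\varnothing$ for $S$ finite is compatible with monotonicity along all injections and bijections, and is only killed by non-injective (collapsing) maps. The paper's key move is: if $y\in T\setminus\delta T$ for some $T\subseteq Y$, then for \emph{any} set $X$ the constant map $f\colon X\to Y$ at $y$ satisfies $T\ge_f X$, whence $f(\delta X)\subseteq\delta T$ forces $\delta X=\varnothing$, and then $\delta S=\varnothing$ for every $S\subseteq X$ by monotonicity within the fibre. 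The same device --- and not the uniformity-across-fibres argument you gesture at --- is what closes the main enumeration for nonempty subsets (the paper's trick is the map sending every point to $y_1$ except one chosen $x\in\kappa S\setminus S$, which goes to $y_2$) and the equivalence-relation case (functions constant on equivalence classes, respectively functions separating two chosen points), both of which your proposal leaves entirely schematic, flagging them as ``the main obstacle'' rather than resolving them.
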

\begin{proof}
  The two (co-)closure operators of any orean form are given by the identity operator and the one that maps every cluster to the top cluster in case of a closure operator and to the bottom cluster in case of a co-closure operator. 

  It is easy to that the mapping described in the theorem is indeed a closure operator on the form of subsets of sets, different from the ones described above. In particular, its monotonicity is a consequence of the fact that the image of a nonempty subset is nonempty. Let $\kappa$ be a closure operator on the form of subsets of sets, which is not the identity operator. Then there is a set $X$ and a subset $S\subseteq X$ such that we have strict subset inclusion $S\subset \kappa S$. Pick any $x\in\kappa S\setminus S$. Consider a set $Y$ having at least two elements. For each pair of distinct elements $y_1\neq y_2$ in $Y$, consider the function $f\colon X\to Y$ mapping all elements of $X$ to $y_1$, except $x$, which $f$ maps to $y_2$. Then $\{y_1\}\ge_f S$ and so $\kappa\{y_1\}\ge_f \kappa S$, which implies that $y_2\in \kappa\{y_1\}$. We then get that the closure of any subset of $Y$ containing some element $y_1$ must contain any other element $y_2$. So, closure of any nonempty subset is the entire set. We must now show that either all empty subsets get fixed by $\kappa$ or they map to the entire sets as well. Suppose there is a set $X'$ whose empty subset does not get fixed. Then $X'$ has at least one element $x'$ in the closure of the empty subset of $X'$. For any nonempty set $Y'$, consider the function $f'\colon X'\to Y'$ that maps every element of $X'$ to some fixed element $y\in Y'$. Since we must have $\kappa\bot_{Y'}\ge_f \kappa\bot_{X'}$, we get $y\in \kappa\bot_{Y'}$. So the closure of the empty subset of any set $Y'$ contains every single element of $Y'$ --- it is thus the entire set $Y'$.

  Consider now a co-closure operator $\delta$ on the form of subsets of sets. Suppose there is some set $Y$ which has a subset $T$ such that there is a strict inclusion $\delta T\subset T$. Pick an element $y\in T\setminus \delta T$. For any set $X$, consider the function $f\colon X\to Y$ that maps all elements of $X$ to $y$. Then by monotonicity of $\delta$, elements of $\delta X$ would have to map somewhere else than $y$ and so $\delta X$ must be empty. This implies that $\delta S$ too is empty, for every subset $S$ of $X$. We have thus shown that if a co-closure operator of the form of subsets of sets is not the identity operator, then it maps every subset to the empty one.

  Similar arguments give the desired result for the form of equivalence relations. We only sketch them here. First, we consider the case of closure operators. If there is an equivalence relation that is different from its closure then the closure must merge at least two of its equivalence classes. Consider a function to any other set having at least two elements, which is constant on each equivalence class and maps elements from the two equivalence classes to any two distinct elements. Then the closure of the smallest equivalence relation on that set must merge those two elements in the same equivalence class. This gives that the closure of any equivalence relation on that set must be the top equivalence relation. Next, we consider the case of co-closure operators. If there is an equivalence relation whose co-closure is different from it, then there must be at least one equivalence class which has two elements that have been separated by the smaller equivalence relation. Consider any set and any two distinct elements in it. We can find a suitable function from this set to the first one to show that the co-closure of the top equivalence relation on the second set separates the two given elements. So it separates all pairs of elements and hence must be the bottom equivalence relation. Hence the co-closure of every equivalence relation is the bottom one.
\end{proof}
 
\begin{theorem}\label{ThmA}
  For an orean form $F$ and any idempotent closure operator $\kappa$ on $F$, the form $F_\kappa$ is also orean. In addition, $F_\kappa$ is closed in $F$ under inverse images and finite meets of clusters in the same object (and so they are computed in $F_\kappa$ the same way as in $F$); furthermore, finite joins and direct images of clusters for the form $F_\kappa$ are given by the closure of their finite joins and direct images, respectively, for the form $F$.  
\end{theorem}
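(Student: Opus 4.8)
The plan is to verify the three orean axioms for the subform $F_\kappa$ one axiom at a time, reading off its orean structure from two facts: that $\kappa$ is a monotone idempotent operator with $\kappa S\ge S$, and that, by Remark~\ref{RemK}(i), $\kappa S$ is the smallest $\kappa$-closed cluster above $S$. Since $F_\kappa\to F$ is a full subform inclusion, the relations $\ge_f$ restrict, so I may compute $\ge_f$ in $F_\kappa$ exactly as in $F$. Throughout I would freely use the Galois connection $T\ge f\cdot S\iff T\ge_f S\iff T\cdot f\ge S$ supplied by (O3) for the ambient orean form $F$ (Remark~\ref{RemC}).

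For (O1), I would first note that $\top_X$ is closed, since $\kappa\top_X\ge\top_X$ forces equality, so it remains the top of $F_\kappa$. If $A,B$ are closed then $A\wedge_X B\le A,B$ gives $\kappa(A\wedge_X B)\le\kappa A\wedge_X\kappa B=A\wedge_X B$ by monotonicity, while $\kappa(A\wedge_X B)\ge A\wedge_X B$ always holds; hence $A\wedge_X B$ is closed and is the meet in $F_\kappa$, so $F_\kappa$ is closed under finite meets, computed as in $F$. The smallest closed cluster is $\kappa\bot_X$ (every closed cluster lies above $\bot_X$, hence above $\kappa\bot_X$), giving the bottom of $F_\kappa$. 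For the join of closed $A,B$, the cluster $\kappa(A\vee_X B)$ is closed and above both, and any closed $C\ge A,B$ satisfies $C\ge A\vee_X B$, hence $C=\kappa C\ge\kappa(A\vee_X B)$; so $\kappa(A\vee_X B)$ is the join in $F_\kappa$. This gives (O1) and the stated meet/join formulas.

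For (O3), I claim inverse images of closed clusters are closed: given closed $T$ and $f\colon X\to Y$, apply the operator $\kappa$ to $T\ge_f T\cdot f$ to get $T=\kappa T\ge_f\kappa(T\cdot f)$, whence $T\cdot f\ge\kappa(T\cdot f)$ by the Galois connection, the reverse inequality being automatic; so $T\cdot f$ is closed and serves as the inverse image in $F_\kappa$ (its maximality among closed clusters is inherited from $F$). Dually, for closed $S$ the smallest closed cluster $T$ with $T\ge_f S$ is $\kappa(f\cdot S)$: it satisfies $\kappa(f\cdot S)\ge f\cdot S\ge_f S$, hence $\kappa(f\cdot S)\ge_f S$ by (F2), and any closed $T\ge_f S$ obeys $T\ge f\cdot S$, so $T=\kappa T\ge\kappa(f\cdot S)$. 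Thus direct images in $F_\kappa$ are the closures of direct images in $F$, as required.

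Finally, for (O2) I would invoke Remark~\ref{RemC}: under (O1) and (O3) it suffices to verify the associativity law in $F_\kappa$, i.e., writing $\cdot^\kappa$ for the $F_\kappa$ direct image and using the formulas just obtained, that $\kappa\bigl(g\cdot\kappa(f\cdot S)\bigr)=\kappa\bigl(g\cdot(f\cdot S)\bigr)$. Setting $P=f\cdot S$, the inequality $\ge$ is immediate from monotonicity of $\kappa$ and of direct image; the essential point, and the main obstacle, is the reverse inequality, for which it is enough to prove $g\cdot\kappa P\le\kappa(g\cdot P)$ (then apply $\kappa$ and use idempotence). This in turn I would obtain by applying the operator $\kappa$ to the relation $\kappa(g\cdot P)\ge_g P$ (which holds because $\kappa(g\cdot P)\ge g\cdot P\ge_g P$): idempotence yields $\kappa(g\cdot P)\ge_g\kappa P$, and the Galois connection then gives $\kappa(g\cdot P)\ge g\cdot\kappa P$. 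With the associativity law established, (O2) follows, so $F_\kappa$ is orean with exactly the claimed description of its operations.
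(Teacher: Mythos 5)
Your proof is correct and its overall structure coincides with the paper's: the same formulas for the orean structure of $F_\kappa$ (top clusters are closed, bottom is $\kappa\bot_X$, meets of closed clusters are closed and computed as in $F$, joins are $\kappa(A\vee_X B)$, direct images are $\kappa(f\cdot S)$, and inverse images of closed clusters are closed, hence unchanged). The one place where you diverge is the verification of (O2). The paper exploits the fact that inverse images in $F_\kappa$ are computed exactly as in $F$ and checks the \emph{dual} associativity law $(U\cdot g)\cdot f=U\cdot(g\circ f)$ from Remark~\ref{RemC}, which then reduces immediately to the same law in $F$ with nothing left to prove. You instead check the direct-image law $g\cdot^\kappa(f\cdot^\kappa S)=(gf)\cdot^\kappa S$, which forces you to establish the genuinely nontrivial inequality $g\cdot\kappa P\le\kappa(g\cdot P)$; your derivation of it --- applying the operator $\kappa$ (i.e., its functoriality/monotonicity in the sense of Definition~\ref{DefF}) to the relation $\kappa(g\cdot P)\ge_g P$, using idempotence, and then the Galois connection --- is correct, and the inequality is a fact of independent interest (it is the usual continuity-type condition for closure operators). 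Both routes are legitimate since Remark~\ref{RemC} makes either associativity law equivalent to (O2) once (O1) and (O3) hold; the paper's choice simply short-circuits the computation you had to carry out.
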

\begin{proof}
  A top $F$-cluster is always closed and so it is a top $F_\kappa$-cluster. A bottom $F_\kappa$-cluster is given by the closure of a bottom $F$-cluster. For the form $F$, meet of closed clusters is closed and the join of closed clusters is given by
    \[S\vee^{\kappa}_X T=\kappa(S\vee^F_X T).\]
  So (O1) holds. For (O3), we can set:
    \[f\cdot^{\kappa} S=\kappa(f\cdot^{F} S)\textrm{ and }T\cdot^{\kappa} f=T\cdot^{F} f.\]
  The second of these equalities is possible since, if $T$ is closed, then so is $T\cdot^{F} f$ (this is an easy consequence of monotonicity of $\kappa$). To confirm (O2), it is sufficient to verify the second law in Remark~\ref{RemC} for $F_\kappa$, which reduces to the same law for $F$.
\end{proof} 

\subsection*{Conormal and normal operators} These two types of operators are central to the theme of the present paper. They are operators that preserve image and kernel of a homomorphism. Intuitively, they allow passage from one form to another without significant loss of information.   

\begin{definition}
An operator $\tau\colon F\to G$ between orean forms is said to be \emph{conormal} when the formula $\tau(\im ^F f)=\im ^G f$ holds. It is said to be \emph{normal} when, dually, the formula $\tau(\ker ^F g)=\ker ^G g$ holds and  \emph{binormal} when it is both conormal and normal. 
\end{definition}

\begin{remark}\label{RemN}
Note that a conormal operator preserves top clusters, since in any orean form, a top cluster is image of an identity morphism. Dually, a normal operator preserves bottom clusters.
\end{remark}

\begin{lemma}\label{LemAA}
Given a subform inclusion $F\to G$ of orean forms, the following conditions are equivalent:
\begin{itemize}
\item[(i)] All conormal $G$-clusters are $F$-clusters.

\item[(ii)] Images are computed in $F$ the same way as in $G$.

\item[(iii)] The inclusion $F\to G$ is conormal.

\item[(iv)] All conormal $G$-clusters are conormal $F$-clusters.
\end{itemize}
\end{lemma}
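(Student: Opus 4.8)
The plan is to prove the four conditions equivalent via the cycle (i)$\Rightarrow$(ii)$\Rightarrow$(iii)$\Rightarrow$(iv)$\Rightarrow$(i), exploiting the fact that two of these links are essentially bookkeeping. Throughout I would use that a subform inclusion $\tau\colon F\to G$ acts as the identity on clusters ($\tau A=A$) and is full, so that for $F$-clusters the relations $\ge_f^F$ and $\ge_f^G$ coincide and $F$-clusters sit inside $G$-clusters as a sub-poset of each fibre. I would also record the standing identities $\im^G 1_X=\top_X^G$ and $\im^F 1_X=\top_X^F$ (from $1_X\cdot S=S$ in Lemma~\ref{LemQ}), together with the fact that conormal clusters are exactly the images $\im f=f\cdot\top$.

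The two near-definitional links come first. Since $\tau(\im^F f)=\im^F f$, conormality of the inclusion (iii) says precisely that $\im^F f=\im^G f$ for every $f$, which is exactly statement (ii); thus (ii)$\Leftrightarrow$(iii) is just unwinding the definition of a conormal operator. Likewise a conormal $F$-cluster is in particular an $F$-cluster, so (iv)$\Rightarrow$(i) is immediate; and once (iii) holds, each conormal $G$-cluster $\im^G f$ equals $\im^F f$, a conormal $F$-cluster, giving (iii)$\Rightarrow$(iv). This leaves the single substantive implication (i)$\Rightarrow$(ii).

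For (i)$\Rightarrow$(ii) I would first show that the top clusters agree. The cluster $\top_X^G=\im^G 1_X$ is conormal in $G$, hence an $F$-cluster by (i); being an $F$-cluster it lies below $\top_X^F$, while $\top_X^F$ is a $G$-cluster and so lies below $\top_X^G$, forcing $\top_X^F=\top_X^G=:\top_X$. With tops identified, the images become $\im^F f=f\cdot^F\top_W$ and $\im^G f=f\cdot^G\top_W$ for the \emph{same} top cluster $\top_W$. I would then compare these as minima: the set $\{T\subseteq_F X\mid T\ge_f\top_W\}$ is contained in $\{T\subseteq_G X\mid T\ge_f\top_W\}$ (same order, fewer clusters), so $f\cdot^G\top_W\le f\cdot^F\top_W$. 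Conversely, by (i) the cluster $f\cdot^G\top_W$ is itself an $F$-cluster satisfying $f\cdot^G\top_W\ge_f\top_W$, so it belongs to the first set, whence $f\cdot^F\top_W\le f\cdot^G\top_W$. Hence $\im^F f=\im^G f$, which is (ii).

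The main obstacle is this final direct-image comparison: one must be careful that the direct image computed in the smaller form $F$ is a minimum over a \emph{smaller} set of candidate clusters than the corresponding one for $G$, so that a priori $f\cdot^F\top_W$ could be strictly larger than $f\cdot^G\top_W$. The role of hypothesis (i) is exactly to certify that the $G$-minimum already lands among the $F$-clusters, collapsing the two. Everything else reduces to unwinding definitions and the elementary squeeze showing that the tops coincide.
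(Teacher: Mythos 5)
Your proof is correct and follows essentially the same route as the paper's: the equivalences (ii)$\iff$(iii), (iii)$\implies$(iv), (iv)$\implies$(i) are handled as near-definitional, and the substantive step (i)$\implies$(ii) proceeds exactly as in the paper by first identifying the top clusters and then observing that the $G$-direct image of the top, being a conormal $G$-cluster and hence an $F$-cluster by (i), is forced to coincide with the $F$-direct image. Your write-up merely fills in the details that the paper's sketch leaves implicit (fullness of the subform inclusion and the minimum-over-a-smaller-set comparison), and does so correctly.
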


\begin{proof}
Suppose all conormal $G$-clusters are $F$-clusters. Then the top $G$-clusters must be the top $F$-clusters. Furthermore, direct images of those relative to $G$ must be $F$-clusters, which will force them to match with direct images relative to $F$. So (i)$\implies$(ii). The equivalence (ii)$\iff$(iii) is immediate from the definition of a conormal operator. The implication (iii)$\implies$(iv) is easy and (iv)$\implies$(i) is trivial.
\end{proof}

\begin{remark}\label{RemO}
We note that:
\begin{itemize}
    \item The classes of conormal/normal/binormal operators are each closed under composition.
    
    \item An isomorphism of orean forms preserves all structural components of an orean form: top/bottom clusters, meets/joins, and direct/inverse images of clusters. 
    
    \item Thus, an isomorphism of orean forms is a binormal operator.
    
    \item Also, the above gives that if two forms are isomorphic and one of them is orean, then so is the other one.
\end{itemize}
\end{remark}

\begin{lemma}\label{LemT}
Both product projections of a product of two orean forms are binormal.   
\end{lemma}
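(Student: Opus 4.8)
The plan is to reduce everything to the component-wise description of the orean structure on a product of forms, recorded in Remark~\ref{RemL}, and then simply unwind the definitions of image and kernel. Recall that in any orean form the image of a morphism $f\colon W\to X$ is $\im f=f\cdot\top_W$ and the kernel of a morphism $g\colon X\to Y$ is $\ker g=\bot_Y\cdot g$. Working in the standard cluster system for $F_1\times F_2$, clusters are pairs and the top, bottom, and direct/inverse image are all computed coordinatewise. So the only real content is to observe that image and kernel, being built solely from these orean data, must also split coordinatewise.

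Concretely, the first step is to compute, for a morphism $f\colon W\to X$,
\[
\im^{F_1\times F_2} f = f\cdot(\top_W^{F_1},\top_W^{F_2}) = (f\cdot^{F_1}\top_W^{F_1},\, f\cdot^{F_2}\top_W^{F_2}) = (\im^{F_1} f,\, \im^{F_2} f),
\]
where the middle equality is exactly the coordinatewise formula for direct images from Remark~\ref{RemL} and the coordinatewise formula for the top cluster. Applying the projection $\varrho_1$ then gives $\varrho_1(\im^{F_1\times F_2} f)=\im^{F_1} f$, which is precisely conormality of $\varrho_1$; the same computation with $\varrho_2$ handles the second projection. So both projections are conormal.

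For normality, rather than repeating the argument I would invoke duality. The notions of normal and conormal operator are formal duals, and the dual of a product projection $\varrho_i\colon F_1\times F_2\to F_i$ is again a product projection, this time of the product $F_1^\op\times F_2^\op$ of orean forms. Hence the statement ``every product projection of orean forms is conormal'', once established for all orean forms, dualizes to ``every product projection of orean forms is normal''. Combining the two yields binormality of $\varrho_1$ and $\varrho_2$, as required.

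I do not anticipate a genuine obstacle here: the proof is a direct consequence of the fact (Remark~\ref{RemL}) that all orean constructions in a product are performed in each coordinate separately, together with the observation that image and kernel are definable from those constructions. The only point requiring minor care is the bookkeeping of the superscripts indicating which form each operation is taken in, to make transparent that $\im$ and $\ker$ in the product are genuinely the coordinatewise images and kernels of the factor forms.
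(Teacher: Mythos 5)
Your proof is correct and follows essentially the same route as the paper, whose entire proof is a one-line appeal to Remark~\ref{RemL}: you have simply unwound that remark's component-wise description of images (and, via duality, kernels) in the product. The duality step for normality is sound, since $(F_1\times F_2)^\op$ is the product $F_1^\op\times F_2^\op$ and normal/conormal operators are formal duals.
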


\begin{proof}
This follows from Remark~\ref{RemL}.  
\end{proof}

\begin{lemma}\label{LemN}
  Given two orean forms $F$ and $G$, if $F$ is conormal then there is at most one conormal operator $\alpha\colon F\to G$, and when $\alpha$ exists, it preserves direct images, i.e.: 
  $$\alpha(f\cdot^F A)=f\cdot^F \alpha(A).$$
  Dually, if $F$ is normal, then there is at most one normal operator $\beta\colon F\to G$, and when $\beta$ exists, it preserves inverse images.
  This implies that there is at most one isomorphism between two conormal orean forms, as well as between two normal orean forms. 
\end{lemma}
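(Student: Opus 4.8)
The plan is to exploit conormality of $F$ to represent every cluster as an image, which then pins down the value of any conormal operator. First I would note that, since $F$ is conormal, each $F$-cluster $A$ in an object $X$ has the form $A=\im^F e$ for some morphism $e\colon W\to X$. If $\alpha\colon F\to G$ is conormal, then by definition $\alpha(\im^F e)=\im^G e$, so $\alpha(A)=\im^G e$ is forced. Hence any two conormal operators $F\to G$ agree on every cluster, and since an operator is determined by its action on clusters (faithfulness of $G$), they coincide; this gives uniqueness.

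For preservation of direct images, I would fix $A=\im^F e$ with $e\colon W\to X$ and a morphism $f\colon X\to Y$. Writing $\im^F e=e\cdot^F\top_W$ and using the associativity law $g\cdot(h\cdot S)=(gh)\cdot S$ from Remark~\ref{RemC}, one computes $f\cdot^F A=f\cdot^F(e\cdot^F\top_W)=(fe)\cdot^F\top_W=\im^F(fe)$. Applying $\alpha$ and invoking conormality then gives $\alpha(f\cdot^F A)=\im^G(fe)$. On the other side $\alpha(A)=\im^G e=e\cdot^G\top_W$, so the same associativity law, this time in $G$, yields $f\cdot^G\alpha(A)=(fe)\cdot^G\top_W=\im^G(fe)$. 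The two sides agree (the direct image on the right being taken in $G$), which is the desired formula.

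The normal half is the formal dual: normality of $F$ lets one write every cluster as $\ker^F g$, a normal operator must send it to $\ker^G g$, and preservation of inverse images follows from the dual associativity law $(U\cdot g)\cdot f=U\cdot(gf)$ together with the identity $\ker g=\bot\cdot g$. For the last assertion, I would use that an isomorphism of orean forms is binormal (Remark~\ref{RemO}), hence in particular conormal; so when $F$ and $G$ are conormal, the already-established uniqueness of conormal operators forces at most one isomorphism $F\to G$, and the normal case is again dual.

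I do not expect a serious obstacle: the whole argument rests on the single observation that conormality of $F$ makes the representation $A=\im^F e$ available for every cluster, after which uniqueness is immediate and the direct-image formula is a two-line calculation with associativity. The one point deserving a moment's care is well-definedness --- different representations $A=\im^F e=\im^F e'$ ought to produce the same output --- but this is automatic once $\alpha$ is assumed to exist, since $\alpha(A)$ is then a single well-defined cluster.
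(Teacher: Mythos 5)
Your proof is correct and follows essentially the same route as the paper's: conormality forces $\alpha(\im^F e)=\im^G e$ on every cluster, the direct-image formula is the same associativity computation via $\im^F(fe)=f\cdot^F\im^F e$, and the final statement comes from Remark~\ref{RemO} exactly as in the paper. Your parenthetical observation that the direct image on the right-hand side is taken in $G$ is also the correct reading of the formula as stated.
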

\begin{proof}
  When $F$ is conormal, any cluster is of the form $\im ^F g$ and so the formula $\alpha(\im ^F g)=\im ^G g$ can define at most one conormal operator $F\to G$. Furthermore, we have:
  \begin{align*}
    \alpha(f\cdot^F \im ^F g) &=\alpha(\im ^F fg)\\
    &=\im ^G fg\\
    &=f\cdot^G \im ^G g\\
    &=f\cdot^G \alpha(\im ^F g).
  \end{align*}
  The last statement of the lemma follows from the first two statements and Remark~\ref{RemO}.
\end{proof}

\begin{example}\label{ExaL}
  Let $F$ be the form of subsets of sets and let $G$ be the form of equivalence relations on sets. Each subset $S$ of a set $X$ determines an equivalence relation with singleton equivalence classes except the equivalence classes of elements of $S$ which are all equal to $S$. This correspondence determines a non-normal conormal operator $F\to G$. The operator $G\to F$ that maps an equivalence relation $E$ on a set $X$ to the empty subset $\varnothing$ of $X$ is normal, but not conormal.
\end{example}

\begin{lemma}\label{LemZ}
For an orean form $F$ and an operator $\kappa\colon F\to F$, the following conditions are equivalent: 
\begin{itemize}
    \item[(i)] $\kappa$ is conormal.
    
    \item[(ii)] Conormal clusters are $\kappa$-closed.
\end{itemize}
When, in addition, $F_\kappa$ is orean, the conditions above are further equivalent to:
\begin{itemize}
    \item[(iii)] The subform inclusion $F_\kappa\to F$ is  conormal.   
\end{itemize}
\end{lemma}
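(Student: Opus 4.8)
The plan is to treat the two equivalences separately, observing that (i)$\iff$(ii) is essentially just unwinding the definitions, while (ii)$\iff$(iii) is a direct application of Lemma~\ref{LemAA}.

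For (i)$\iff$(ii) I would recall that a cluster is conormal precisely when it has the form $\im^F f$ for some morphism $f$, and that (taking $G=F$ in the definition of a conormal operator, so that $\im^G f=\im^F f$) the operator $\kappa$ is conormal exactly when $\kappa(\im^F f)=\im^F f$ for every $f$. Since a cluster $A$ is $\kappa$-closed by definition when $\kappa A=A$, the assertion ``$\kappa(\im^F f)=\im^F f$ for all $f$'' is literally the assertion ``every conormal cluster is $\kappa$-closed''. So both implications are a transparent translation between two phrasings of the same condition, with no content beyond the definitions.

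For (ii)$\iff$(iii), under the extra hypothesis that $F_\kappa$ is orean, I would apply Lemma~\ref{LemAA} to the subform inclusion $F_\kappa\to F$, letting $F_\kappa$ play the role of the subform and $F$ that of the ambient orean form. The clusters of $F_\kappa$ are by definition exactly the $\kappa$-closed $F$-clusters. Under this identification, condition~(i) of Lemma~\ref{LemAA} (every conormal $F$-cluster is an $F_\kappa$-cluster) is precisely our condition~(ii), and condition~(iii) of Lemma~\ref{LemAA} (the inclusion $F_\kappa\to F$ is conormal) is our condition~(iii). Lemma~\ref{LemAA} asserts these are equivalent, which yields the claim.

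The only point requiring genuine care — the ``obstacle'', such as it is — is that Lemma~\ref{LemAA} is stated only for subform inclusions \emph{of orean forms}, so it cannot be invoked until $F_\kappa$ is known to be orean; this is exactly why condition~(iii) is listed only under that additional hypothesis. Beyond that, I would simply make sure the assignment of roles (which form is the subform and which is the ambient one) in Lemma~\ref{LemAA} is made in the correct direction, since reversing it would compare the wrong families of clusters.
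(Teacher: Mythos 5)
Your proposal is correct and matches the paper's own proof essentially verbatim: the paper likewise observes that conormality of $\kappa$ means $\kappa(\im^F f)=\im^F f$ for all $f$, which is literally condition (ii), and then obtains (ii)$\iff$(iii) by invoking Lemma~\ref{LemAA} for the subform inclusion $F_\kappa\to F$ once $F_\kappa$ is known to be orean. Your closing remark about why the orean hypothesis on $F_\kappa$ is indispensable, and about orienting the roles in Lemma~\ref{LemAA} correctly, is exactly the right point of care.
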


\begin{proof}
$\kappa$ is conormal if and only if for any morphism $f\colon X\to Y$, we have $\kappa(\mathsf{Im}f)=\mathsf{Im}f$. This is exactly the same as to say that conormal clusters are closed, so (i)$\iff$(ii). 
When $F_\kappa$ is orean, the equivalence (ii)$\iff$(iii) is given by Lemma~\ref{LemAA}.
\end{proof}

\begin{example}
The operators $\gamma,\lambda,\omega$ from Example~\ref{ExaX} are binormal. The operators $\delta$, $(\gamma,\lambda)$ and $\varepsilon$ from Example~\ref{ExaY} are binormal too, thanks to Remark~\ref{RemL}.
\end{example}

\begin{lemma}\label{LemU}
Consider two orean forms $F$ and $G$ over the same category $\mathbb{C}$, such that $G$ is a subform of $F$. The following conditions are equivalent:
\begin{itemize} 
\item[(i)] $G$ is a conormal form and the subform inclusion $G\to F$ is conormal.

\item[(ii)] Every $G$-cluster is a conormal $F$-cluster and the subform inclusion $G\to F$ is conormal.

\item[(iii)] $G$ is a subform of $F$ consisting of all conormal $F$-clusters.
\end{itemize}
When these conditions hold, if $\tau$ is a left inverse of the subform inclusion $G\to F$, then $\tau$ is conormal.
\end{lemma}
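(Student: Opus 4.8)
The plan is to route every equivalence through Lemma~\ref{LemAA}, applied to the subform inclusion $G\to F$ (so that, in the notation of that lemma, $G$ plays the role of the smaller form and $F$ the larger one). Under this substitution Lemma~\ref{LemAA} tells us that the three statements ``the inclusion $G\to F$ is conormal'', ``images are computed in $G$ exactly as in $F$'', i.e.\ $\im^G f=\im^F f$ for every morphism $f$, and ``every conormal $F$-cluster is already a $G$-cluster'' are mutually equivalent. This dictionary is what the whole argument rests on, so I would record it first and then use it silently. The cleanest organisation is to take (ii) as a hub and prove (ii)$\iff$(iii) and (i)$\iff$(ii).

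For (ii)$\iff$(iii), I would observe that condition~(iii), asserting that the $G$-clusters are \emph{precisely} the conormal $F$-clusters, splits into the two containments ``every $G$-cluster is a conormal $F$-cluster'' and ``every conormal $F$-cluster is a $G$-cluster''. The first containment is exactly the first clause of (ii), while the second is, by the dictionary above, another formulation of the assertion that the inclusion $G\to F$ is conormal, i.e.\ the second clause of (ii). Hence (ii) and (iii) amount to the same pair of assertions and the equivalence is immediate.

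For (i)$\iff$(ii), the crucial point is that once the inclusion $G\to F$ is conormal---the common second clause of both (i) and (ii)---the dictionary supplies $\im^G f=\im^F f$ for all $f$, so for a $G$-cluster the property of being a conormal $G$-cluster and that of being a conormal $F$-cluster coincide: if $A=\im^G f$ is a $G$-cluster then $A=\im^F f$ is a conormal $F$-cluster, and conversely if $A=\im^F f$ is a conormal $F$-cluster then $A=\im^G f$ is a conormal $G$-cluster. Consequently the first clause of (i) (``$G$ is a conormal form'') and the first clause of (ii) (``every $G$-cluster is a conormal $F$-cluster'') are interchangeable, which yields (i)$\iff$(ii) and completes the chain of equivalences.

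Finally, for the closing assertion I would assume the three conditions hold and let $\tau\colon F\to G$ be a left inverse of the inclusion, so $\tau A=A$ for every $G$-cluster $A$. To see $\tau$ is conormal I must check $\tau(\im^F f)=\im^G f$ for each morphism $f$. Now $\im^F f$ is a conormal $F$-cluster, hence a $G$-cluster by (iii), so $\tau$ fixes it and $\tau(\im^F f)=\im^F f$; since the inclusion is conormal, $\im^F f=\im^G f$, and combining gives the required equality. There is no genuine obstacle here: the only thing to watch throughout is the bookkeeping of \emph{which} form the word ``conormal'' refers to---as a $G$-cluster versus as an $F$-cluster---together with the fact that Lemma~\ref{LemAA} is stated for the opposite inclusion direction. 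Once the image-agreement $\im^G f=\im^F f$ is in hand these two notions merge and everything reduces to unwinding definitions.
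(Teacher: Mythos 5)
Your proof is correct and rests on the same essential mechanism as the paper's: the agreement of images, $\im^G f=\im^F f$, once the subform inclusion $G\to F$ is conormal. The only difference is organizational. The paper proves the lemma as a direct cycle (i)$\implies$(ii)$\implies$(iii)$\implies$(i) without citing Lemma~\ref{LemAA}, and in the step (iii)$\implies$(i) it re-derives inline exactly the content of Lemma~\ref{LemAA} (top $F$-clusters must be top $G$-clusters, their $F$-direct images must be $G$-clusters and hence $G$-direct images, forcing conormality of the inclusion); you instead outsource that hardest step to Lemma~\ref{LemAA} applied to the inclusion $G\to F$, and arrange the equivalences as two biconditionals through the hub (ii). Since $G$ is assumed orean in the hypotheses, the appeal to Lemma~\ref{LemAA} is legitimate (and you correctly flag the swap of roles relative to that lemma's notation), so your version is a mild streamlining of the paper's argument rather than a different proof. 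Your treatment of the final claim about $\tau$ also fills in the detail the paper compresses into ``follows from (ii)'': $\im^F f$ is a $G$-cluster, so $\tau$ fixes it, and conormality of the inclusion converts $\im^F f$ into $\im^G f$.
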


\begin{proof}
(i)$\implies$(ii): Suppose (i) holds. Since $G$ is conormal, every $G$-cluster is of the form $\im^G f$ and, since the subform inclusion $G\to F$ is conormal, we have $\im^G f=\im^F f$. Hence, every $G$-cluster is a conormal $F$-cluster.

(ii)$\implies$(iii): Suppose now (ii) holds. Every conormal $F$-cluster is of the form $\mathsf{Im}^F f$ for some morphism $f$. By conormality of the subform inclusion $G\to F$, we get $\im^G f=\im^F f$. This shows that every conormal $F$-cluster is a $G$-cluster. We then have (iii).

(iii)$\implies$(i): Suppose (iii) holds. Then all top $F$-clusters are $G$-clusters and so they must be top $G$-clusters. Then, also, direct images of those clusters relative to the form $F$ are $G$-clusters, and so, they must be direct images relative to $G$. This gives that the subform inclusion $G\to F$ must be conormal. Since $G$ has no other $F$-clusters except conormal ones, it also gives that every $G$-cluster is conormal relative to $G$. We then get (i).  

This last part of the lemma follows from (ii).
\end{proof}

\begin{lemma}\label{LemJ}
  Let $F$ and $G$ be orean forms. If there is an injective conormal operator $F\to G$, then for the $G$-image of a morphism $f\colon X \to Y$, its $G$-embedding  $\iota^G_{\im^G f}$ (when the latter exists) is at the same time an $F$-embedding for the $F$-image of $f$, i.e., $\iota^G_{\im^G f}=\iota^F_{\im^F f}$. The converse is also true provided the operator is full: for the $F$-image of a morphism $f\colon X \to Y$, its $F$-embedding $\iota^F_{\im^F f}$ (the latter exists) is at the same time a $G$-embedding for the $G$-image of $f$, i.e., $\iota^F_{\im^F f}=\iota^G_{\im^G f}$.  
\end{lemma}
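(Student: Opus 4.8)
The plan is to treat the two directions separately, writing $\tau\colon F\to G$ for the given injective conormal operator. In each direction I take the candidate morphism $m$ and verify the two clauses defining an embedding: that $m$ has the correct image, and that $m$ enjoys the required universal factorization property. The key observation guiding everything is that an image-comparison $\im f'\le\im f$ is an inequality inside a single fibre, so the relevant instances of monotonicity and fullness are those for the identity morphism.

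For the forward direction, set $m=\iota^G_{\im^G f}$. First I would check the image clause $\im^F m=\im^F f$: conormality gives $\tau(\im^F m)=\im^G m=\im^G f=\tau(\im^F f)$, and since $\tau$ is injective on clusters this forces $\im^F m=\im^F f$. Next, for the $F$-embedding universal property, take any $f'$ into $Y$ with $\im^F f'\le\im^F f$; monotonicity of $\tau$ together with conormality yields $\im^G f'=\tau(\im^F f')\le\tau(\im^F f)=\im^G f$, so the $G$-embedding property of $m$ produces the unique factorization of $f'$ through $m$. This is exactly the factorization and uniqueness demanded of $\iota^F_{\im^F f}$, so $m=\iota^F_{\im^F f}$.

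For the converse, assume in addition that $\tau$ is full (hence, by the earlier remark, automatically injective) and set $m=\iota^F_{\im^F f}$. The image clause $\im^G m=\im^G f$ is immediate from conormality, since $\im^G m=\tau(\im^F m)=\tau(\im^F f)=\im^G f$. For the $G$-embedding universal property, take $f'$ into $Y$ with $\im^G f'\le\im^G f$, i.e.\ $\tau(\im^F f')\le\tau(\im^F f)$; here fullness supplies the reverse of monotonicity and gives $\im^F f'\le\im^F f$, whence the $F$-embedding property of $m$ yields the unique factorization of $f'$ through $m$. Thus $m=\iota^G_{\im^G f}$.

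The computations are short; the only thing requiring care is matching each of the three operator properties to the clause that actually needs it. Injectivity is what lets a $G$-image equality descend to an $F$-image equality (the image clauses); monotonicity pushes an $F$-inequality forward to a $G$-inequality (the forward universal property); and fullness pulls a $G$-inequality back to an $F$-inequality (the converse universal property, which is the single place the converse genuinely uses more than the forward direction). I expect the main obstacle to be purely conceptual rather than technical: correctly seeing that the universal property transports across $\tau$ in one direction by monotonicity and in the other by fullness, with injectivity reserved solely for the image equalities. Once that bookkeeping is in place, no further difficulty arises.
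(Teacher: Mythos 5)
Your proof is correct and follows essentially the same route as the paper: the image clause via conormality plus injectivity ($\tau(\im^F m)=\im^G m=\im^G f=\tau(\im^F f)$), and the universal property transported by monotonicity in the forward direction and by fullness in the converse. The paper only writes out the forward direction, so your explicit converse argument (including the observation that fullness is precisely the reverse of monotonicity within a fibre, and that it subsumes injectivity) fills in exactly what the paper leaves to the reader.
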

\begin{proof} 
  This can be proved by a direct use of the definition of an embedding. We give the argument only for the first part of the lemma. Suppose there is a conormal operator $\alpha\colon F\to G$. Let $m$ be a $G$-embedding of $\im^G f$. Since $\alpha(\im^F m)=\im^G m=\im^G f=\alpha(\im^F f)$ and the operator $\alpha$ is injective, we have $\im^F m=\im^F f$. Suppose now $\im^F m\ge^F_Y\im^F m'$. Then $\im^G m=\alpha(\im^F m)\ge^G_Y\alpha(\im^F m')=\im^G m'$ and so $m'=mu$ for a unique morphism $u$. 
\end{proof}

\begin{theorem}\label{ThmB}
  Let $F$ be an orean form and let $\kappa$ be a binormal idempotent closure operator on $F$. Then:
  \begin{itemize}
    \item[(i)] If $F$ satisfies (N3) then $F_\kappa$ satisfies (N3).

    \item[(ii)] $F$ satisfies (N2) if and only if $F_\kappa$ satisfies (N2).

    \item[(iii)] $F_\kappa$ satisfies (N1) if and only if the following implications hold for $F$:
    \begin{align*} 
      S=\kappa S\ge \ker  f\quad&\implies\quad\kappa(f\cdot S)\cdot f =S,\\
      \im  f\ge T=\kappa T\quad&\implies\quad\kappa(f\cdot(T\cdot f))=T.
    \end{align*} 

    \item[(iv)] If $F$ satisfies (N1), for $F_\kappa$ to satisfy (N1) it is necessary and sufficient for the form $F$, direct image under each morphism $f\colon X\to Y$ of a closed $S\ge_X \ker f$ to be closed.
  \end{itemize}
\end{theorem}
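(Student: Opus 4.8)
The plan is to first build a dictionary relating the orean structure of $F$ to that of $F_\kappa$. By Theorem~\ref{ThmA}, $F_\kappa$ is orean, with meets and inverse images computed exactly as in $F$, while joins and direct images are $\kappa$-closures of those in $F$, i.e.\ $A\vee^\kappa B=\kappa(A\vee^F B)$ and $f\cdot^\kappa A=\kappa(f\cdot^F A)$. Using binormality of $\kappa$, Lemma~\ref{LemZ} and its dual show that every conormal and every normal $F$-cluster is $\kappa$-closed; combined with Theorem~\ref{ThmA} this gives $\top^\kappa_X=\top^F_X$, $\bot^\kappa_X=\bot^F_X$, and, most importantly, $\im^\kappa f=\kappa(\im^F f)=\im^F f$ together with $\ker^\kappa g=\ker^F g$. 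In particular $F$ and $F_\kappa$ have the same conormal clusters and the same normal clusters. Assembling these identities is routine but is what makes the three noetherian axioms translate between the two forms.

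For (i), normal $F_\kappa$-clusters $A,B$ are normal $F$-clusters, and their $F_\kappa$-join $\kappa(A\vee^F B)$ equals $A\vee^F B$ by (N3) for $F$ (a normal cluster is closed); this is normal, hence normal in $F_\kappa$. The conormal half is dual, using that meets agree. For (ii), the key point is that embeddings and quotients coincide for the two forms: the subform inclusion $F_\kappa\to F$ is injective, full and, by binormality, both conormal and normal, so Lemma~\ref{LemJ} and its dual yield $\iota^\kappa_{\im^\kappa f}=\iota^F_{\im^F f}$ and $\pi^\kappa_{\ker^\kappa f}=\pi^F_{\ker^F f}$ whenever either side exists (and one side exists iff the other does). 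Since images and kernels also coincide, the factorization $f=\iota_{\im f}\circ\pi_{\ker f}$ holds in $F$ precisely when it holds in $F_\kappa$.

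For (iii), I would apply the implication form of (N1) from Remark~\ref{RemB} to the form $F_\kappa$: its join formula is equivalent to $[\,S\ge^\kappa\ker^\kappa f\Rightarrow(f\cdot^\kappa S)\cdot^\kappa f=S\,]$ for closed $S$, and dually for the meet formula. Substituting the dictionary — $\ker^\kappa f=\ker^F f$, $\im^\kappa f=\im^F f$, $f\cdot^\kappa S=\kappa(f\cdot^F S)$, $T\cdot^\kappa f=T\cdot^F f$, and the fact that $\ge^\kappa$ is the restriction of $\ge^F$ — turns these two implications into exactly the two displayed implications for $F$, giving the equivalence.

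Finally, (iv) is where the real work lies. Assuming (N1) for $F$, part (iii) reduces the question to the two displayed implications. The meet implication is automatic: (N1) for $F$ gives $f\cdot^F(T\cdot^F f)=T$ for $T\le\im^F f$, so $\kappa(f\cdot^F(T\cdot^F f))=\kappa T=T$. Everything thus hinges on the join implication $[\,S=\kappa S\ge\ker f\Rightarrow\kappa(f\cdot S)\cdot f=S\,]$. For sufficiency, if $f\cdot^F S$ is always closed then $\kappa(f\cdot S)=f\cdot S$ and this collapses to (N1) for $F$. The main obstacle is necessity: from $\kappa(f\cdot S)\cdot f=S$ I must recover that $f\cdot S$ is closed. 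The crucial observation is that $\kappa(f\cdot S)\le\im^F f$ — indeed $f\cdot S\le\im^F f$, and $\im^F f$ is conormal hence closed, so monotonicity of $\kappa$ gives $\kappa(f\cdot S)\le\kappa(\im^F f)=\im^F f$. Applying $f\cdot^F(-)$ to $\kappa(f\cdot S)\cdot f=S$ and using the meet formula of (N1) for $F$ then yields $f\cdot S=f\cdot(\kappa(f\cdot S)\cdot f)=\kappa(f\cdot S)\wedge\im^F f=\kappa(f\cdot S)$, so $f\cdot S$ is closed, completing the equivalence.
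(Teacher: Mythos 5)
Your proposal is correct and follows essentially the same route as the paper: the same dictionary derived from Theorem~\ref{ThmA} and Lemmas~\ref{LemZ}/\ref{LemAA} (conormal and normal clusters closed, images/kernels coinciding, binormal subform inclusion), Lemma~\ref{LemJ} for (ii), the Remark~\ref{RemB} translation for (iii), and for (iv) the identical key computation $f\cdot S=f\cdot(\kappa(f\cdot S)\cdot f)=\kappa(f\cdot S)$ using $\kappa(f\cdot S)\le\im f$. Your explicit remark that the meet implication in (iv) is automatic is a small clarification the paper leaves implicit, but the argument is the same.
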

\begin{proof}
  Let $F$ be an orean form and let $\kappa$ be a binormal idempotent closure operator on $F$. By Theorem~\ref{ThmA}, the form $F_\kappa$ is an orean form. By Lemmas~\ref{LemZ} and \ref{LemAA} and their duals, we have:
  \begin{itemize}
  \item Conormal and normal clusters are closed.
  \item Images and kernels in the form $F_\kappa$ are computed the same way as in $F$.
  \item The subform inclusion $F_\kappa\to F$ is binormal.
  
  \item Conormal and normal $F_\kappa$-clusters are the same as conormal and normal $F$-clusters, respectively.
  \end{itemize}
  With Lemma~\ref{LemJ} we then immediately get (ii). We also get (i) easily. For the first part of (i), note that if join of two normal clusters is normal, then their $F_\kappa$-join will be forced to match with their $F$-join. We can get the second part dually, or even more easily by the fact that meet in $F_\kappa$ is computed the same way as in $F$ by Theorem~\ref{ThmA}.
  
  The implications in (iii) are the implications of Remark~\ref{RemB} applied to the form $F_\kappa$ and written out for the form $F$ (based on the bullet points above and Theorem~\ref{ThmA}). It remains to prove (iv). Suppose both $F$ and $F_\kappa$ satisfy (N1). Then by (iii), for a closed $S\ge \ker f$, we have:
    \[\kappa(f\cdot S)\cdot f =S.\]
  Since $\im f\ge f\cdot S$ and $\im f$ is closed (by the first bullet-point above), also $\im f\ge \kappa(f\cdot S)$. Then, thanks to Remark~\ref{RemB} applied to $F$, 
    \[\kappa(f\cdot S)=f\cdot (\kappa(f\cdot S)\cdot f)=f\cdot S.\]
  Conversely, suppose $F$ satisfies (N1) and for the form $F$, direct image under each morphism $f\colon X\to Y$ of a closed $S\ge_X \ker f$ is closed. Then the second implication under (iii) holds (thanks to Remark~\ref{RemB} and Theorem~\ref{ThmA}). If $S\ge \ker f$ where $S$ is closed, then, thanks to Remark~\ref{RemB},
    \[\kappa(f\cdot S)\cdot f=(f\cdot S)\cdot f=S,\]
  which proves the first implication under (iii). Therefore, by (iii), $F_\kappa$ satisfied (N1). 
\end{proof}

\begin{example}\label{Ex1}
  Additive subgroups of rings constitute a noetherian form $F$ over the category $\mathbf{Rng}$ of rings without identity. Conormal clusters for this form are subrings, while normal clusters are ideals. Generating a subring from an additive subgroup of a ring is an idempotent closure operator $\kappa$ on this form. Since ideals are subrings, we can apply Theorem~\ref{ThmB} to conclude that $F_\c$, the form of subrings, is noetherian --- it is in fact isomorphic to the form of subobjects of the semi-abelian category  $\mathbf{Rng}$ (see Example~\ref{Ex0}).  
\end{example}

\begin{theorem}
If a form $F$ is isomorphic to a noetherian form, then $F$ itself is noetherian.
\end{theorem}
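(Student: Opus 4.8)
The plan is to let $G$ denote a noetherian form isomorphic to $F$ and let $\varphi\colon F\to G$ be an isomorphism of forms, and then to transfer each defining property of a noetherian form across $\varphi$. The essential tool is Remark~\ref{RemO}: an isomorphism of orean forms is a binormal operator preserving every structural component (top and bottom clusters, meets and joins, direct and inverse images), and consequently both $\varphi$ and $\varphi^{-1}$ preserve images, kernels, and the properties of being conormal or normal. First I would invoke the last bullet of Remark~\ref{RemO} to conclude that $F$ is itself orean (since it is isomorphic to the orean form $G$), so that all orean notions appearing in (N1)--(N3) make sense for $F$, and so that $\varphi$ qualifies as a binormal operator in the first place.

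For (N1), I would take $S\subseteq_F X$ and $f\colon X\to Y$ and apply $\varphi$ to both sides of the desired equation $(f\cdot^F S)\cdot^F f=S\vee_X\ker^F f$. Using preservation of inverse and direct images, joins, and kernels, the left side becomes $(f\cdot^G\varphi S)\cdot^G f$ and the right side becomes $\varphi S\vee_X\ker^G f$; these are equal precisely because $G$ satisfies (N1). Since $\varphi$ is injective on clusters, the original $F$-equation follows, and the dual meet formula is handled identically. For (N3), I would take two normal $F$-clusters $A$ and $B$; then $\varphi A$ and $\varphi B$ are normal $G$-clusters, their $G$-join is normal by (N3) for $G$, and since $\varphi$ preserves joins while $\varphi^{-1}$ preserves normality, $A\vee_X^F B=\varphi^{-1}(\varphi A\vee_X^G\varphi B)$ is normal in $F$. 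The dual statement about meets of conormal clusters is symmetric.

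The main work, and the step I expect to be the genuine obstacle, is (N2), since it concerns existence of embeddings and projections together with a factorization in the base category rather than a mere fibrewise equation. Here I would appeal to Lemma~\ref{LemJ}: an isomorphism of forms is in particular an injective, full, conormal operator, so $G$-embeddings and $F$-embeddings coincide, i.e.\ $\iota^F_{\im^F f}=\iota^G_{\im^G f}$, and dually (applying the normal version of the lemma, valid since $\varphi$ is binormal) $\pi^F_{\ker^F f}=\pi^G_{\ker^G f}$. Because $G$ is noetherian it is in particular prenoetherian, so every conormal $G$-cluster admits an embedding and every normal one a quotient; transferring these across $\varphi$ via Lemma~\ref{LemJ} and its dual shows that $F$ is prenoetherian as well.

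Finally, the factorization $f=\iota_{\im f}\circ\pi_{\ker f}$ is an equation of morphisms in the common base category $\mathbb{C}$, which is shared by $F$ and $G$. Since it holds in $G$ and the embedding $\iota^G_{\im^G f}$ and projection $\pi^G_{\ker^G f}$ agree with $\iota^F_{\im^F f}$ and $\pi^F_{\ker^F f}$ by the previous step, the very same equation holds for $F$. This establishes (N2) and, together with the transfers of (N1) and (N3) above, shows that $F$ is noetherian. I expect no further subtleties beyond keeping track of which structural components $\varphi$ and $\varphi^{-1}$ preserve, all of which are supplied by Remark~\ref{RemO} and Lemma~\ref{LemJ}.
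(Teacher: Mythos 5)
Your proof is correct and follows essentially the same route as the paper, whose entire proof is the citation ``This follows from Remark~\ref{RemO} and Lemma~\ref{LemJ}'' --- precisely the two results you invoke. Your write-up simply makes explicit the details the paper leaves implicit: transferring (N1) and (N3) fibrewise via the structure-preservation of Remark~\ref{RemO}, and obtaining (N2) from the coincidence of embeddings and quotients across the isomorphism via Lemma~\ref{LemJ} and its dual.
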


\begin{proof}
This follows from Remark~\ref{RemO} and Lemma~\ref{LemJ}.  
\end{proof}

\begin{lemma}\label{LemA}
  Let $F$ be an orean form and let $\kappa$ be an idempotent closure operator on $F$. The operator 
    \[F\to F_\kappa,\quad S\mapsto \kappa S\]
  is conormal; it is binormal if and only if in the form $F$, taking inverse image of a bottom cluster permutes with closure: $\kappa(\bot^F\cdot^F f)=\kappa(\bot^F)\cdot^F f$. In particular, this is the case when $\kappa$ is normal.
\end{lemma}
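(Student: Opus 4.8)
The plan is to read everything off Theorem~\ref{ThmA}, which already records how the orean structure of $F_\kappa$ relates to that of $F$. Write $\nu\colon F\to F_\kappa$ for the operator $S\mapsto\kappa S$ (this is the left inverse of the subform inclusion from Remark~\ref{RemK}(iv), so it is genuinely an operator). First I would establish conormality, which I expect to be automatic. For a morphism $f\colon W\to X$, the image $\im^{F_\kappa}f$ equals $f\cdot^\kappa\top^\kappa_W$; since the top cluster is always closed we have $\top^\kappa_W=\top^F_W$, and since direct images in $F_\kappa$ are the closures of direct images in $F$ (both facts from Theorem~\ref{ThmA}), we get
\[\im^{F_\kappa}f=f\cdot^\kappa\top^F_W=\kappa(f\cdot^F\top^F_W)=\kappa(\im^F f)=\nu(\im^F f).\]
Thus $\nu$ is conormal unconditionally.

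Because conormality always holds, $\nu$ is binormal exactly when it is normal, i.e.\ when $\nu(\ker^F g)=\ker^{F_\kappa}g$ for every $g\colon X\to Y$. On the left, $\nu(\ker^F g)=\kappa(\bot^F_Y\cdot^F g)$. On the right, the bottom $F_\kappa$-cluster is $\kappa(\bot^F_Y)$ and inverse images for $F_\kappa$ agree with those for $F$ on closed clusters (again Theorem~\ref{ThmA}), so
\[\ker^{F_\kappa}g=\kappa(\bot^F_Y)\cdot^\kappa g=\kappa(\bot^F_Y)\cdot^F g.\]
Hence normality of $\nu$ is precisely the identity $\kappa(\bot^F\cdot^F g)=\kappa(\bot^F)\cdot^F g$ holding for all $g$, which is the stated condition. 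The point worth stressing—and the only place requiring genuine care—is the asymmetry between the two halves: conormality is free because the top cluster is closed, whereas normality is not free because the bottom cluster of $F_\kappa$ is the \emph{closure} $\kappa(\bot^F)$ of the bottom cluster of $F$, and the condition is exactly what forces this closure to commute with passing to inverse images.

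Finally, for the ``in particular'' clause I would argue as follows. If $\kappa$ is a normal operator, then by the dual of Remark~\ref{RemN} it preserves bottom clusters, so $\kappa(\bot^F)=\bot^F$; and by the dual of Lemma~\ref{LemZ} every normal cluster is $\kappa$-closed. Since $\bot^F_Y\cdot^F g=\ker^F g$ is a normal cluster, it is $\kappa$-closed, giving $\kappa(\bot^F_Y\cdot^F g)=\bot^F_Y\cdot^F g$. Both sides of the required identity therefore reduce to $\ker^F g$, so the condition holds and $\nu$ is binormal. Throughout, the one piece of bookkeeping to keep straight is which constructions in $F_\kappa$ carry an extra closure (joins and direct images) and which do not (meets, inverse images, and the top cluster); once that distinction is maintained, every step is routine.
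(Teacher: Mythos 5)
Your proposal is correct and takes essentially the same approach as the paper's own proof: both read everything off Theorem~\ref{ThmA}, deriving conormality from closedness of the top cluster together with the formula $f\cdot^\kappa S=\kappa(f\cdot^F S)$, reducing binormality to the identity $\kappa(\bot^F\cdot^F g)=\kappa(\bot^F)\cdot^F g$ via the fact that inverse images and bottom clusters in $F_\kappa$ are, respectively, computed as in $F$ and given by $\kappa(\bot^F)$, and settling the final clause by observing that normality of $\kappa$ makes both sides reduce to $\ker^F g$. The only difference is cosmetic (the paper inserts $\kappa\top^F$ midway in the image computation where you use $\top^\kappa=\top^F$ directly), so there is nothing to correct.
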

\begin{proof} We use here Theorem~\ref{ThmA}.
  The first part of the lemma is established by the following computation: 
    \[\kappa(\im ^F f)
    =\kappa(f\cdot^F\top^F)
    =\kappa(f\cdot^F\kappa\top^F)=f\cdot^\kappa\kappa\top^F
    =f\cdot^\kappa\top^\kappa=\im ^\kappa f.\]
  The second part is obtained  from the following equalities:
    \[\kappa(\ker ^F f)=\kappa(\bot^F\cdot^F f),\quad\ker ^\kappa f=\bot^\kappa\cdot^\kappa f=\kappa(\bot^F)\cdot^\kappa f=\kappa(\bot^F)\cdot^F f.\]
    The third part of the lemma follows from the second part after noting that when $\kappa$ is normal, we have: $\kappa(\bot^F\cdot^F f)=\kappa(\mathsf{Ker}^F f)=\mathsf{Ker}^F f=\bot^F\cdot^F f=\kappa(\bot^F)\cdot^F f$. 
\end{proof}

\begin{remark}
An idempotent closure operator $\kappa$ on an orean form $F$, has two other operators associated to it: the subform inclusion $\tau\colon F_\kappa\to F$ and the operator $\sigma\colon F\to F_\kappa$ described in Lemma~\ref{LemA}. By Theorem~\ref{ThmA},  Lemma~\ref{LemZ} (together with its dual) and Lemma~\ref{LemA}, we have:
\begin{itemize}
    \item $\sigma$ is always conormal. It is normal if and only if the following holds: $$\kappa(\mathsf{Ker}^F (fg))=\kappa(\mathsf{Ker}^F f)\cdot^F g.$$
    
    \item $\kappa$ is conormal if and only if so is $\tau$, and if and only if conormal $F$-clusters are $\kappa$-closed.
    
    \item $\kappa$ is normal if and only if so is $\tau$, and if and only if normal $F$-clusters are $\kappa$-closed.
    
    \item If $\kappa$ is normal, then so is $\sigma$.
    
    \item All three operators $\kappa,\tau,\sigma$ are binormal if and only if $\kappa$ is binormal and if and only if conormal $F$-clusters as well as normal $F$-clusters are all $\kappa$-closed. 
\end{itemize}
\end{remark}

\subsection*{Antinormality and Isoforms} Here we spend some time on useful trivialities. We omit all proofs since they are rather elementary/straightforward.

\begin{definition}\label{DefE}
For an orean form $F$, we write $F^\bot$ to denote its subform of bottom clusters and we write $F^\top$ to denote its subform of top clusters. $F$ is said to be 
\begin{itemize}
    \item \emph{antinormal} when $F^\bot$ is its subform of normal clusters,
    
    \item \emph{anticonormal} when $F^\top$ is its subform of conormal clusters,
    
    \item \emph{antibinormal} when it is both antinormal and anticonormal,
    
    \item an \emph{isoform} when, as a functor, it is an isomorphism.
\end{itemize}
\end{definition}

\begin{remark}
Any isomorphism of categories is an orean form and hence an isoform.
\end{remark}

\begin{example}\label{ExaZ}
The form of subsets of sets is an example of a conormal antinormal form. The form of palettes of finite sets from Example~\ref{ExaX} is antinormal, but not conormal. An identity functor is an isoform. Every isoform is antibinormal. A form over a category $\mathbb{C}$ that is not an isoform, but is antibinormal, can be obtained by setting clusters in an object $X$ to be elements of some fixed bounded lattice $P$, with $y \ge_f x$ whenever $x\le y$ in $P$. This is an antibinormal orean form, where $f\cdot x=x$ and $y\cdot f=y$. It is an isoform if and only if $P$ is a lattice (i.e., it has only one element).  
\end{example}

\begin{lemma}\label{LemAB}
For an orean form $F$ the following conditions are equivalent:
\begin{itemize}
    \item $F$ is antinormal.
    
    \item For any orean form $G$ there exists a normal operator $G\to F$.
    
    \item There exist an isoform $G$ and a normal operator $G\to F$.
    
    \item There exist an antinormal form $G$ and a normal operator $G\to F$.
\end{itemize}
\end{lemma}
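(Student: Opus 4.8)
The plan is to prove the equivalence by the cycle $(1)\Rightarrow(2)\Rightarrow(3)\Rightarrow(4)\Rightarrow(1)$, where I number the four conditions $(1)$--$(4)$ in the order listed. The recurring technical facts I will lean on are: that a normal operator preserves bottom clusters (Remark~\ref{RemN}); that $f\cdot\bot=\bot$ in any orean form (Remark~\ref{RemC}); and that every bottom cluster is itself normal, since for $1_X\colon X\to X$ we have $\ker 1_X=\bot_X^F\cdot 1_X=\bot_X^F$ by Lemma~\ref{LemQ}. Consequently, normal clusters always contain the bottom clusters, and antinormality of $F$ is exactly the assertion that \emph{every} normal $F$-cluster is a bottom cluster.

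For $(1)\Rightarrow(2)$, assuming $F$ antinormal, I would exhibit an explicit normal operator $\tau\colon G\to F$ for an arbitrary orean form $G$: the constant assignment sending every $G$-cluster $A$ in an object $X$ to $\bot_X^F$. Monotonicity reduces to $\bot_Y^F\ge_f\bot_X^F$, which holds because $f\cdot\bot_X^F=\bot_Y^F$ gives $\bot_Y^F\ge f\cdot\bot_X^F$. To see $\tau$ is normal, note that $\tau(\ker^G g)=\bot_X^F$ by construction, while $\ker^F g$ is a normal $F$-cluster and hence, by antinormality, also equals $\bot_X^F$; thus $\tau(\ker^G g)=\ker^F g$.

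The middle implications are bookkeeping. For $(2)\Rightarrow(3)$ I would take $G$ to be an isoform, for instance the identity functor on the ground category, which is an isoform and in particular an orean form, so that the operator provided by $(2)$ witnesses $(3)$. For $(3)\Rightarrow(4)$ I would invoke the fact (Example~\ref{ExaZ}) that every isoform is antibinormal, hence antinormal, so that the isoform supplied by $(3)$ already serves as the antinormal form demanded by $(4)$.

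The only implication carrying genuine content is $(4)\Rightarrow(1)$, and even it is short. Given an antinormal form $G$ and a normal operator $\tau\colon G\to F$, I would take an arbitrary normal $F$-cluster $S=\ker^F g$ with $g\colon X\to Y$ and trace it back: normality of $\tau$ gives $S=\tau(\ker^G g)$, antinormality of $G$ forces $\ker^G g=\bot_X^G$, and preservation of bottom clusters by the normal operator $\tau$ (Remark~\ref{RemN}) then yields $S=\tau(\bot_X^G)=\bot_X^F$. Hence every normal $F$-cluster is a bottom cluster, which is antinormality of $F$. I do not anticipate any real obstacle; consistent with the paper's remark that these are straightforward, the only point deserving a moment's care is confirming that the constant-bottom assignment in $(1)\Rightarrow(2)$ is genuinely monotone, and hence an operator, rather than merely a map on clusters.
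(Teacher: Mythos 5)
Your proof is correct: the paper states this lemma without proof (the subsection explicitly omits all proofs as elementary), and your cyclic argument supplies exactly the intended verification — the constant-bottom assignment as the normal operator for (1)$\Rightarrow$(2), the identity isoform and Example~\ref{ExaZ} for the middle steps, and preservation of bottom clusters (Remark~\ref{RemN}) together with antinormality of $G$ for (4)$\Rightarrow$(1). Your preliminary observation that bottom clusters are always normal (via $\ker 1_X=\bot_X\cdot 1_X=\bot_X$), so that antinormality reduces to ``every normal cluster is a bottom cluster,'' is precisely the point that makes the whole cycle go through, and your check that the constant-bottom map is monotone is the right care to take.
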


\begin{lemma}\label{LemX} For a form $F$ over a category $\mathbb{C}$, the following conditions are equivalent:
\begin{itemize}
    \item $F$ is an isoform.
    
    \item $F$ is an orean form whose every top $F$-cluster is a bottom $F$-cluster (i.e., $F^\top=F_\bot$).
    
    \item $F$ is an orean form such that in every object there is a unique cluster.
    
    \item $F$ is an orean form that is both conormal and anticonormal.
    
    \item $F$ is an orean form that is both normal and antinormal.
    
    \item $F$ is an orean form that is both binormal and antibinormal.
    
    \item $F$ is the terminal object in the category of forms (over $\mathbb{C}$): for any orean form $G$ over $\mathbb{C}$ there is a unique operator $G\to F$.
    
    \item For any orean form $G$ over $\mathbb{C}$ there is a binormal operator $G\to F$.
\end{itemize}
\end{lemma}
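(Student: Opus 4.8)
The plan is to route every condition through the single statement that each fibre of $F$ is a one-element lattice, which is essentially condition (c); rather than a long cycle, I would prove that (c) is a ``hub'' equivalent to each of the others. First I record the elementary lattice fact that a bounded lattice in which $\top=\bot$ has exactly one element. This makes (b)$\iff$(c) immediate, since (b) asserts precisely $\top^F_X=\bot^F_X$ in every fibre. For (c)$\Rightarrow$(a) the only nonformal point is fullness: given $f\colon X\to Y$ in $\mathbb{C}$ and the unique clusters $u_X,u_Y$, axiom (O3) guarantees that $f\cdot u_X$ exists, so the set $\{T\mid T\ge_f u_X\}$ is non-empty; as $u_Y$ is the only cluster over $Y$ we get $u_Y\ge_f u_X$, i.e.\ a morphism over $f$ in $\mathbb{B}$. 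Together with faithfulness and bijectivity on objects (singleton fibres) this exhibits $F$ as an isomorphism of categories. Conversely (a)$\Rightarrow$(c) holds because any isomorphism of categories is an orean form (as noted after Definition~\ref{DefE}) with singleton, hence trivial, fibres.

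Next I would dispatch the (co)normality conditions against (c). If $F$ is conormal then its subform of conormal clusters is all of $F$, so anticonormality ($F_\c=F^\top$) forces every cluster to be a top cluster, giving singleton fibres; this is (d)$\Rightarrow$(c), while (e)$\Rightarrow$(c) is its dual and (f)$\Rightarrow$(d) is trivial. For (c)$\Rightarrow$(d),(e),(f) I would note that in a singleton fibre the unique cluster is simultaneously top, bottom, the image of the identity (hence conormal, using that a top cluster is the image of an identity as in Remark~\ref{RemN}) and the kernel of the identity (hence normal), whence conormal, anticonormal, normal and antinormal all hold at once.

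It remains to connect the categorical conditions (g) and (h), both read with $F$ orean. For (c)$\Rightarrow$(g) and (c)$\Rightarrow$(h) I would observe that when $F$ has singleton fibres an operator $G\to F$ is forced on objects (each $G$-cluster in $X$ must map to the unique $F$-cluster $u_X$); it is automatically monotone because $F$, being an isoform, has a unique lift of every morphism, and it is binormal because images and kernels in $F$ all coincide with $u$. Uniqueness yields (g) and binormality yields (h). For the converses I would use two ``constant'' operators available on any orean form: the top operator $S\mapsto\top$ and the bottom operator $S\mapsto\bot$, whose monotonicity follows from $\top_Y\ge_f\top_X$ and from $f\cdot\bot=\bot$ (Remark~\ref{RemC}) respectively. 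Then (g)$\Rightarrow$(c): applying terminality to $G=F$ forces the identity operator and the top operator on $F$ to coincide, so $S=\top$ for every cluster, i.e.\ singleton fibres. And (h)$\Rightarrow$(b): applying (h) to an isoform $G$ (for instance the identity functor, an isoform by Example~\ref{ExaZ}) produces a binormal $\alpha\colon G\to F$; since the unique $G$-cluster over $X$ is both $\top^G_X$ and $\bot^G_X$, binormality gives $\top^F_X=\alpha(\top^G_X)=\alpha(\bot^G_X)=\bot^F_X$.

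The main obstacle I anticipate is not a single deep step but the bookkeeping of the ``operator exists'' verifications: checking that the constant top and bottom assignments are genuinely operators, i.e.\ that they satisfy monotonicity, and securing fullness in (c)$\Rightarrow$(a) from (O3) rather than assuming it (indeed, a singleton-fibre faithful amnestic functor need not be an isomorphism without (O3)). Once these small verifications are in place, every implication becomes a one-line consequence of the singleton-fibre description, consistent with the authors' remark that these facts are straightforward.
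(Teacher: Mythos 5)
The paper offers no proof of this lemma (the subsection explicitly omits all proofs as elementary), so there is nothing to compare against; your reconstruction is correct and complete. Routing everything through the singleton-fibre condition (c) works, and you correctly handle the only two points where something could go wrong: fullness in (c)$\Rightarrow$(a) really does need (O3) to guarantee $u_Y\ge_f u_X$, and the constant top/bottom assignments are genuine operators (monotonicity via $\top_Y\ge f\cdot\top_X$ and $f\cdot\bot=\bot$ from Remark~\ref{RemC}), which is exactly what the $G=F$ uniqueness argument for (g)$\Rightarrow$(c) and the isoform argument for (h)$\Rightarrow$(b) require. Your reading of (g) and (h) as presupposing $F$ orean is also the right one, since binormality of an operator is only defined between orean forms and the top operator used in (g)$\Rightarrow$(c) needs fibres with top elements.
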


\begin{lemma}\label{LemR} There is a one-to-one correspondence between subforms of an orean form $F\colon \mathbb{B}\to \mathbb{C}$ that are isoforms and right inverses $G$ of the functor $F$. The subform of $F$ corresponding to $G$ is given by the values of $G$. Both $F^\bot$ and $F^\top$ are isoforms, and so they are both binormal and antibinormal. The corresponding right inverses of $F$ are a right adjoint and a left adjoint of $F$, respectively.
\end{lemma}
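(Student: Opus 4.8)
The plan is to handle the three assertions separately: the bijection between isoform subforms and right inverses, the identification of $F^\bot$ and $F^\top$ as isoforms (with the attendant normality properties), and finally the two adjunctions.

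\emph{The correspondence.} First I would unwind what a subform of $F$ that is an isoform amounts to. By Lemma~\ref{LemX}, an isoform is an orean form with exactly one cluster in each object; applied to a subform inclusion $H\to F$, this says the subform selects a single $F$-cluster $G(X)$ in each object $X$, and fullness of the inclusion forces, for every $f\colon X\to Y$ of $\mathbb{C}$, that $G(Y)\ge_f^F G(X)$. This is precisely the data of a functor $G\colon\mathbb{C}\to\mathbb{B}$ with $F\circ G=1_\mathbb{C}$: faithfulness of $F$ makes $G$ on arrows the unique lift of each $f$, and functoriality is then automatic. Conversely, a right inverse $G$ has distinct values (since $F(G(X))=X$ forces $G$ injective on objects), so its values select one cluster per object; corestricting $G$ to the full subcategory on those values produces an inverse to the restriction of $F$, hence an isoform subform. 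These two passages are mutually inverse and send $G$ to the subform on its values, which is exactly the stated bijection.

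\emph{$F^\bot$ and $F^\top$ are isoforms.} By (O1) each fibre is a bounded lattice, so $\bot_X^F$ and $\top_X^F$ exist and $F^\bot$, $F^\top$ each select a single cluster per object. To invoke the previous paragraph it remains only to verify the arrow condition. For $F^\bot$: the law $f\cdot^F\bot_X^F=\bot_Y^F$ of Remark~\ref{RemC} together with $f\cdot^F S\ge_f^F S$ gives $\bot_Y^F\ge_f^F\bot_X^F$. Dually $\top_Y^F\cdot^F f=\top_X^F$ gives $\top_Y^F\ge_f^F\top_X^F$. Hence both are isoform subforms, so by Lemma~\ref{LemX} (equivalently, Example~\ref{ExaZ} combined with Remark~\ref{RemO}) each is simultaneously binormal and antibinormal.

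\emph{The adjunctions.} Write $G_\bot$ and $G_\top$ for the right inverses with $G_\bot(X)=\bot_X^F$ and $G_\top(X)=\top_X^F$. Since $F\circ G=1_\mathbb{C}$ for any right inverse, one of the unit and counit of a prospective adjunction between $F$ and such a $G$ is forced to be the identity, so exhibiting the adjunction reduces to producing the remaining natural family of fibre morphisms and observing that the triangle identities collapse. For $G_\bot$ this family is supplied by the universal role of the least cluster in each fibre, with naturality furnished by $f\cdot^F\bot^F=\bot^F$; this exhibits $G_\bot$ as a right adjoint of $F$. Dually, $\top^F\cdot^F f=\top^F$ exhibits $G_\top$ as a left adjoint of $F$, matching the assertion for $F^\bot$ and $F^\top$ respectively.

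The genuinely delicate point, and the step I expect to be the main obstacle, is the handedness bookkeeping in this last paragraph: deciding which of the unit and counit is the identity and in which variance the canonical fibre morphisms run, so that the triangle identities hold and each adjunction comes out on the asserted side. Everything else reduces to a direct unwinding of the definitions of subform, isoform and right inverse, together with the bottom/top preservation laws of Remark~\ref{RemC}.
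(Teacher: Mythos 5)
Your first two paragraphs are sound, and since the paper explicitly omits all proofs in this subsection (``useful trivialities''), there is no official argument to compare against; your unwinding of an isoform subform as a selection of a single cluster $G(X)$ per object subject to $G(Y)\ge_f G(X)$ for every $f\colon X\to Y$ (with faithfulness of $F$ giving unique lifts and hence functoriality of $G$), and your use of the laws $f\cdot \bot=\bot$ and $\top\cdot f=\top$ from Remark~\ref{RemC} to verify this condition for $F^\bot$ and $F^\top$, is exactly the intended routine.

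The genuine gap is in your adjunction paragraph, at precisely the point you flagged as delicate: the handedness you assert is the one that cannot be proved. For a cluster $A$ over $X=F(A)$, the fibre morphisms that actually exist are $\bot_X\to A$ and $A\to\top_X$ (over $1_X$), not their reverses. Taking $\varepsilon_A\colon G_\bot F(A)=\bot_{FA}\to A$ as counit (with identity unit) exhibits $G_\bot\dashv F$, so the bottom section $G_\bot$ is a \emph{left} adjoint of $F$; dually $\eta_A\colon A\to\top_{FA}$ is a unit exhibiting $F\dashv G_\top$, so the top section $G_\top$ is a \emph{right} adjoint. Your claim that $G_\bot$ is a right adjoint would require a natural bijection between morphisms $FA\to Y$ in $\mathbb{C}$ and morphisms $A\to\bot_Y$ in $\mathbb{B}$; but a morphism $A\to\bot_Y$ over $f$ exists if and only if $\bot_Y\ge_f A$, i.e., $f\cdot A=\bot_Y$, which fails already for the form of subgroups: with $A=\top_X$ there is no morphism $(X,X)\to(Y,\{1\})$ over any $f$ with nontrivial image, while $\mathrm{Hom}(X,Y)$ is generally larger. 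The same failure, dualized, blocks $G_\top$ from being a left adjoint. This is the familiar discrete/indiscrete pattern for topological functors (cf.\ the citation of \cite{Bru84}): the discrete (bottom) structure is the left adjoint section and the indiscrete (top) structure the right adjoint section. So the pairing that a proof can actually deliver is $(F^\bot,F^\top)\mapsto(\textrm{left adjoint},\textrm{right adjoint})$ --- the transpose of what your paragraph asserts --- and the construction you gesture at (``universal role of the least cluster'') produces the counit for $G_\bot\dashv F$, not the data for $F\dashv G_\bot$. Once the direction is fixed this way, the triangle identities do collapse as you predicted, using amnesticity to see that the unique endomorphism of $\bot_X$ or $\top_X$ over $1_X$ is the identity.
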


\begin{lemma}\label{LemS} For orean forms $F$ and $G$, if $G$ is an isoform, then the first projection of the form $F\times G$ and the second projection of the form $G\times F$ are both isomorphisms. Moreover, in each case, the converse is also true: if either the first projection of $F\times G$ or the second projection of $G\times F$ is an isomorphism, then $G$ is an isoform. 
\end{lemma}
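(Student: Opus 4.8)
The plan is to work with the cluster-system presentation of the product introduced in the Convention following the definition of $F_1\times F_2$, where a cluster of $F\times G$ in an object $X$ is a pair $(A,B)$ with $A$ an $F$-cluster and $B$ a $G$-cluster in $X$, and $(B_1,B_2)\ge_f(A_1,A_2)$ holds exactly when $B_1\ge_f A_1$ and $B_2\ge_f A_2$. The first projection $\varrho_1\colon F\times G\to F$ is then the operator $(A,B)\mapsto A$. I will treat only this projection; the statement about the second projection of $G\times F$ follows at once by composing with the swap isomorphism $G\times F\to F\times G$, $(B,A)\mapsto(A,B)$, from Example~\ref{ExaO}, so that the second projection of $G\times F$ factors as this isomorphism followed by $\varrho_1$.

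For the forward implication there are two routes, and I would present the short one. If $G$ is an isoform, then by Lemma~\ref{LemX} it is the terminal object in the category of forms over $\mathbb{C}$, and $F\times G$ is the categorical product in that category. Since in any category with finite products the projection from a product with a terminal object onto the remaining factor is an isomorphism, $\varrho_1$ is an isomorphism of forms. Concretely, one checks this by exhibiting the inverse $A\mapsto(A,B_X)$, where $B_X$ is the unique $G$-cluster in $X$ provided by the `unique cluster in every object' characterization of isoforms in Lemma~\ref{LemX}; fullness of this inverse rests on the observation that for an isoform $G$ the relation $B_{X'}\ge_f B_X$ holds for every $f\colon X\to X'$, since $G$, being an isomorphism of categories, lifts $f$ to a (unique) morphism $B_X\to B_{X'}$.

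For the converse, suppose $\varrho_1\colon F\times G\to F$ is an isomorphism of forms. Then, as a functor, $\varrho_1$ is bijective on objects, i.e.\ the assignment $(A,B)\mapsto A$ is a bijection from the set of $(F\times G)$-clusters in $X$ onto the set of $F$-clusters in $X$, for every object $X$. The domain of this map is the Cartesian product of $F^{-1}(X)$ and $G^{-1}(X)$. Since $F$ is orean, its fibre $F^{-1}(X)$ is a bounded lattice by (O1) and in particular is nonempty; hence bijectivity of $(A,B)\mapsto A$ forces $G^{-1}(X)$ to be a singleton. Thus $G$ has exactly one cluster in every object and, being orean, is an isoform by Lemma~\ref{LemX}.

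The only genuinely delicate point is contained in the second paragraph: verifying that the candidate inverse of $\varrho_1$ is a full operator (equivalently, that $\varrho_1$ reflects the relations $\ge_f$), which reduces to the automatic validity of $B_{X'}\ge_f B_X$ for isoforms. Everything else is bookkeeping with the product presentation together with the characterizations of isoforms already collected in Lemma~\ref{LemX}. I expect the cluster-counting argument in the converse to be entirely routine once the nonemptiness of the $F$-fibres (from orean-ness) is invoked.
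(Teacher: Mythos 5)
Your proof is correct. The paper itself omits the proof of this lemma (the surrounding subsection states that all proofs there are elementary and are omitted), and your argument is precisely the routine verification the authors had in mind: the forward direction via terminality of isoforms from Lemma~\ref{LemX} (or, concretely, the inverse $A\mapsto(A,B_X)$), the converse by fibrewise cluster counting using nonemptiness of the fibres of an orean form from (O1), and the reduction of the $G\times F$ case to the $F\times G$ case via the swap isomorphism of Example~\ref{ExaO}. One cosmetic remark: what you call ``fullness of the inverse'' is really the prior issue of the inverse being an operator at all (monotonicity, i.e., functoriality over $\mathbb{C}$); fullness is then automatic for a two-sided inverse of a functor. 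But the verification in either phrasing is the same inequality $B_{X'}\ge_f B_X$, which you correctly derive from $G$ being an isomorphism of categories, so there is no gap.
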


\section{Orean Factorizations}\label{secH}

\subsection*{Definition and basic facts} Every noetherian form gives rise to a proper factorization system of the underlying category. We develop here a form-theoretic language for working with proper factorization systems. The resulting notion is that of an `orean factorization'. It corresponds to a proper factorization system which allows for various natural constructions not included in the definition of the latter. 

As it follows from Theorem 2.4 in \cite{JW14} (where an `embedding' is called a `left universalizer'), conormal orean forms admitting embeddings are uniquely determined (up to an isomorphism) by the classes $\mathcal{M}$ of their embeddings. A conormal orean form can be recovered (up to an isomorphism) from its class $\mathcal{M}$ of embeddings as the form of $\mathcal{M}$-subobjects. As attested by Corollary~2.5 in \cite{JW14} and the discussion preceding it, for such form the following conditions are equivalent (with $\mathcal{M}$ denoting the class of its embeddings):
\begin{itemize}
  \item $\mathcal{M}$ is closed under composition.
  \item $\mathcal{M}$ is part of a factorization system $(\mathcal{E},\mathcal{M})$ in the sense of \cite{FK72}.
  \item for any $m\in\mathcal{M}$, the formula $(m\cdot A)\cdot m=A$ holds.
\end{itemize}
Furthermore, in the $(\mathcal{E},\mathcal{M})$-factorization $f=me$ of a morphism, $m=\iota_{\im f}$. 

Dually, normal orean forms admitting quotients are uniquely determined (up to an isomorphism) by their classes $\mathcal{E}$ of quotients (as forms of $\mathcal{E}$-quotients) and we have equivalence of the following conditions:
\begin{itemize}
  \item $\mathcal{E}$ is closed under composition;
  \item $\mathcal{E}$ is part of a proper factorization system $(\mathcal{E},\mathcal{M})$;
  \item for any $e\in\mathcal{E}$, the formula $e\cdot (E\cdot e)=E$ holds;
\end{itemize}
and in the $(\mathcal{E},\mathcal{M})$-factorization $f=me$ of a morphism, $e=\pi_{\mathsf{Ker} f}$. It then follows that given a conormal orean form $F_\s $ admitting embeddings and a normal orean form $F_\e $ admitting quotients, requiring that the class $\mathcal{M}$ of $F_\s$-embeddings and the class $\mathcal{E}$ of $F_\e$-quotients are part of the same factorization system $(\mathcal{E},\mathcal{M})$ implies that any morphism $f$ decomposes as $f=\iota^\s_{\im^\s f}\circ \pi^\e_{\ker^\e f}$. This brings us to the following notion:

\begin{definition}\label{DefD}
  An \emph{orean factorization} of a category $\mathbb{C}$ is an ordered pair $(F_\s,F_\e)$ of forms over $\mathbb{C}$, where $F_\s$ is a conormal orean form and $F_\e$ is a normal orean form, such that every morphism $f$ decomposes as $f=\iota^\s_{\im^\s f}\circ \pi^\e_{\ker^\e f}$. Two orean factorizations $(F_\s,F_\e)$ and $(F'_\s,F'_\e)$ are said to be \emph{component-wise isomorphic} when there are isomorphisms $F_\s\approx F'_\s$ and $F_\e\approx F'_\e$.
\end{definition}

\begin{theorem}\label{ThmS}
  If $(F_\s,F_\e)$ is an orean factorization, then any pair $(G,H)$ of forms such that there are isomorphisms $F_\s\approx G$ and $F_\e\approx H$, is an orean factorization.
\end{theorem}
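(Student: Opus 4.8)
The plan is to verify directly that the pair $(G,H)$ meets the three requirements of Definition~\ref{DefD}: that $G$ is a conormal orean form, that $H$ is a normal orean form, and that every morphism $f$ decomposes as $f=\iota^G_{\im^G f}\circ\pi^H_{\ker^H f}$. Throughout I fix isomorphisms of forms $\varphi\colon F_\s\to G$ and $\psi\colon F_\e\to H$ witnessing the hypotheses.

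First I would dispose of the two structural conditions. By Remark~\ref{RemO}, a form isomorphic to an orean form is itself orean, so both $G$ and $H$ are orean; the same remark records that an isomorphism of orean forms is binormal, hence in particular conormal. Thus $\varphi(\im^\s g)=\im^G g$ for every morphism $g$. Since $F_\s$ is conormal, every $F_\s$-cluster is of the form $\im^\s g$, and since $\varphi$ is a bijection on clusters, every $G$-cluster is accordingly of the form $\im^G g$; that is, $G$ is conormal. The dual argument applied to $\psi$ shows that $H$ is normal.

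The heart of the matter is the factorization, where the key tool is Lemma~\ref{LemJ}. The isomorphism $\varphi$ is a full, injective, conormal operator, so Lemma~\ref{LemJ} applies in both directions and gives that a morphism is an $F_\s$-embedding of $\im^\s f$ exactly when it is a $G$-embedding of $\im^G f$; dually, $\psi$ being a full, injective, normal operator, a morphism is an $F_\e$-quotient of $\ker^\e f$ exactly when it is an $H$-quotient of $\ker^H f$. Now for an arbitrary morphism $f$, the assumption that $(F_\s,F_\e)$ is an orean factorization supplies the decomposition $f=\iota^\s_{\im^\s f}\circ\pi^\e_{\ker^\e f}$. The two class equalities just established permit me to adopt this same $F_\s$-embedding and $F_\e$-quotient as legitimate choices of representatives for $\iota^G_{\im^G f}$ and $\pi^H_{\ker^H f}$, and hence $f=\iota^G_{\im^G f}\circ\pi^H_{\ker^H f}$, as required.

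The only genuine subtlety I anticipate is that embeddings and quotients are determined by their clusters only up to isomorphism (Remark~\ref{RemJ}), so the factorization clause must be read as asserting the \emph{existence} of suitable representatives rather than a literal equality of canonically named morphisms. Lemma~\ref{LemJ} is tailored precisely to this point: it identifies the actual classes of embedding morphisms across the isomorphism, so that transferring the $(F_\s,F_\e)$-decomposition to a $(G,H)$-decomposition is immediate once the lemma and its dual are in hand, with no separate re-verification of the embedding/quotient universal properties needed.
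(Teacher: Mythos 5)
Your proof is correct and takes essentially the same route as the paper's own (very terse) argument: Remark~\ref{RemO} to conclude that $G$ and $H$ are, respectively, a conormal and a normal orean form, and Lemma~\ref{LemJ} together with its dual to transfer the embedding--quotient factorization across the isomorphisms. Your explicit handling of the up-to-isomorphism choice of $\iota^G_{\im^G f}$ and $\pi^H_{\ker^H f}$ merely spells out what the paper leaves implicit.
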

\begin{proof}
  If there are such isomorphisms, then both $G$ and $H$ are necessarily orean forms. Moreover, it follows easily from Rem~\ref{RemO} that $G$ is a conormal orean form and $H$ is a normal orean form. The rest follows from Lemma~\ref{LemJ}.   
\end{proof}

\begin{remark}
Note that we have duality for orean factorizations: $(F_\s,F_\e)$ is an orean factorization of a category $\mathbb{C}$ if and only if $(F_\e^\mathsf{op},F_\s^\mathsf{op})$ is an orean factorization of $\mathbb{C}^\mathsf{op}$.
\end{remark}

\begin{lemma}\label{LemK}
  Let $(F_\s,F_\e)$ be an orean factorization. A morphism is an $F_\s$-embedding if and only if its $F_\e$-kernel is a bottom cluster, and dually, a morphism is an $F_\e$-quotient if and only if its $F_\s$-image is a top cluster. Moreover, composite $\iota^\s_A\circ \iota^\s_B$ of two $F_\s$-embeddings is an $F_\s$-embedding and $\iota^\s_A\circ \iota^\s_B=\iota^\s_{\iota^\s_A\cdot^\s B}$, and dually, composite $\pi^\e_S\circ \pi^\e_R$ of two $F_\e$-quotients is an $F_\e$-quotient and $\pi^\e_S\circ \pi^\e_R=\pi^\e_{S\cdot^\e \pi^\e_R}$. 
  Finally, the $F_\s$-kernel of an $F_\s$-embedding $f$ is a bottom cluster and  (N1) holds for $f$, relative to the form $F_\s$, and dually, the $F_\e$-image of any $F_\e$-quotient $f$ is a top cluster and (N1) holds for any $F_\e$-quotient $f$, relative to the form $F_\e$.
\end{lemma}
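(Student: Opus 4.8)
The plan is to prove all the embedding-side assertions for $F_\s$ and then deduce the quotient-side assertions by the duality between orean factorizations recorded after Theorem~\ref{ThmS}. The technical backbone will be a single characterization from which everything else falls out.

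First I would establish: a morphism $f$ is an $F_\s$-embedding if and only if $\ker^\e f$ is a bottom cluster. For the forward direction, write the factorization $f=\iota^\s_{\im^\s f}\circ\pi^\e_{\ker^\e f}$; since $\iota^\s_{\im^\s f}$ is also an embedding of $\im^\s f$, Remark~\ref{RemJ} gives $f=\iota^\s_{\im^\s f}\circ i$ for an isomorphism $i$, and cancelling the monomorphism $\iota^\s_{\im^\s f}$ forces $\pi^\e_{\ker^\e f}=i$ to be an isomorphism, whence $\ker^\e f=\bot$ by Lemma~\ref{LemV}. For the converse, if $\ker^\e f$ is bottom then $\pi^\e_{\ker^\e f}$ is an isomorphism (Lemma~\ref{LemV}), so $f$ is an embedding followed by an isomorphism and hence an embedding (Remark~\ref{RemJ}).

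Next I would use this characterization to obtain closure under composition. Given $F_\s$-embeddings $m\colon M\to X$ and $n\colon N\to M$, the dual associativity law of Remark~\ref{RemC} gives $\ker^\e(mn)=(\bot\cdot^\e m)\cdot^\e n$; since $m$ and $n$ are embeddings their $\e$-kernels vanish, so this equals $\bot\cdot^\e n=\bot$, and $mn$ is an $F_\s$-embedding by the characterization just proved. Being an embedding, it is an embedding of its own image, and the associativity of direct images yields $\im^\s(mn)=m\cdot^\s\im^\s n=\iota^\s_A\cdot^\s B$, which is exactly the asserted identity $\iota^\s_A\circ\iota^\s_B=\iota^\s_{\iota^\s_A\cdot^\s B}$.

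Finally, closure of the class of $F_\s$-embeddings under composition activates the equivalence recorded before Definition~\ref{DefD}, giving $(m\cdot^\s A)\cdot^\s m=A$ for every $F_\s$-embedding $m$ and every cluster $A$ in its domain. Taking $A=\bot$ and using $m\cdot^\s\bot=\bot$ shows $\ker^\s m=\bot\cdot^\s m=\bot$, so the first formula of (N1) reads $(m\cdot^\s S)\cdot^\s m=S\vee\bot=S$, which is the activated identity. The hard part is the second formula of (N1), namely $m\cdot^\s(T\cdot^\s m)=T\wedge\im^\s m$: the inequality $\le$ is immediate from the Galois connection, while for $\ge$ I would set $V=T\wedge\im^\s m$, use conormality of $F_\s$ to form the embedding $\iota^\s_V$ and factor it through $m$ (possible since $\im^\s\iota^\s_V=V\le\im^\s m$), read off $V=m\cdot^\s S$ for $S$ the image of the factoring morphism, and conclude $V=m\cdot^\s(V\cdot^\s m)$ by the adjunction; since inverse image preserves meets and $(m\cdot^\s\top)\cdot^\s m=\top$, one gets $V\cdot^\s m=T\cdot^\s m$, which closes the argument. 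The quotient-side statements then follow by duality.
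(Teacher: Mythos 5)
Your proof is correct, and it is worth separating where it coincides with the paper from where it diverges. For the kernel characterization of embeddings and for closure under composition you argue essentially as the paper does: factor $f=\iota^\s_{\im^\s f}\circ\pi^\e_{\ker^\e f}$, use Remark~\ref{RemJ} and Lemma~\ref{LemV} to force the quotient part to be an isomorphism, and then compute $\ker^\e(mn)=(\bot\cdot^\e m)\cdot^\e n=\bot$ --- the paper dismisses this second step as ``an easy consequence of the first part'', and your computation is exactly the intended one. The genuine divergence is in the (N1) part. The paper proves the key identity $(f\cdot^\s S)\cdot^\s f=S$ and the meet formula geometrically via Lemma~\ref{LemP}: with $m=\iota^\s_S$, the composite $fm$ is an embedding by the second part, its pullback along the monomorphism $f$ is $m$ itself, so inverse images along embeddings are computed by pullbacks; and $f\cdot^\s(T\cdot^\s f)=\im^\s f\wedge^\s T$ is obtained by pulling back $\iota^\s_T$ along $f$. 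You instead invoke the equivalence recorded before Definition~\ref{DefD} (Corollary~2.5 of \cite{JW14}), by which closure of the class of embeddings under composition yields $(m\cdot^\s A)\cdot^\s m=A$ outright --- note this requires $F_\s$ to admit embeddings of all its clusters, which does hold since $F_\s$ is conormal and the factorization supplies embeddings of images, a point you should state --- and you then derive the meet formula by a purely order-theoretic argument: factor $\iota^\s_V$ through $m$ using the universal property of the embedding, conclude $V=m\cdot^\s(V\cdot^\s m)$ by the adjunction, and finish with meet-preservation of inverse images and $\im^\s m\cdot^\s m=(m\cdot^\s\top)\cdot^\s m=\top$. Your route buys a pullback-free proof resting only on Galois-connection laws and the universal property, at the cost of leaning on the cited external equivalence; the paper's route is self-contained modulo Lemma~\ref{LemP} and records along the way the pullback description of inverse images of embeddings, which it reuses later (e.g., in Lemma~\ref{LemAC} and Theorem~\ref{ThmAF}). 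A small bonus of your version: the derivation $\ker^\s m=\bot\cdot^\s m=(m\cdot^\s\bot)\cdot^\s m=\bot$ makes explicit the claim about the $F_\s$-kernel of an embedding, which the paper's proof leaves implicit in the specialization $S=\bot$ of $(f\cdot^\s S)\cdot^\s f=S$.
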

\begin{proof}
  Suppose the $F_\e$-kernel of a morphism $f\colon X\to Y$ is a bottom cluster. Factorise $f$ as $f=me$, where $e$ is an $F_\e$-quotient of $\ker^\e f$ and $m$ is an $F_\s$-embedding of $\im^\s f$. If $\ker^\e f$ is a bottom cluster, then $e$ is an isomorphism and hence $f$ is an $F_\s$-embedding. If $f$ is an $F_\s$-embedding, then it is an $F_\s$-embedding of $\im^\s f$ and so the morphism $e$ in a similar factorization as before is an isomorphism. It then follows that $\ker^\e f$ is a bottom cluster.
  
  The second part of the lemma is an easy consequence of the first part.
  
  We prove the third part of the lemma. The join formula of (N1) for an $F_\s$-embedding $f$ is easily implied by the formula $(f\cdot^\s S)\cdot^\s f=S$. We prove this latter formula. Let $m$ denote an embedding of $S$. Then $f\cdot^\s S$ is the image of the composite $fm$. It follows from the second part of the lemma that $fm$ is an embedding. So a pullback of $fm$ along $f$ gives an embedding for $(f\cdot^\s S)\cdot^\s f$, by Lemma~\ref{LemP}. But since $f$ is a monomorphism (Remark~\ref{RemJ}), $m$ is such pullback. Hence $(f\cdot^\s S)\cdot^\s f=\im  ^\s m=S$. To prove the meet formula, consider a conormal cluster $T$ in the codomain $Y$ of $f$. Let $m$ denote an embedding of $T$ and let $m'$ be a pullback of $m$ along $f$. Thanks to Lemma~\ref{LemP} again, $m'$ is an embedding of $T\cdot^\s f$. The composite $fm'$ is an embedding of $\im^\s(fm')= f\cdot^\s (T\cdot^\s f)$, by the second part of the lemma. At the same time, it can be verified by an easy routine that $\im^\s(fm')=\im^\s f\wedge^\s T$. So $f\cdot^\s (T\cdot^\s f)=\mathsf{Im}^\s f\wedge^\s T$, as desired.
\end{proof}

\begin{remark}
Recall that a proper factorization system in the sense of \cite{FK72,I58} is a factorization system $(\mathcal{E},\mathcal{M})$ where $\mathcal{E}$ is a class of epimorphisms and $\mathcal{M}$ is a class of monomorphisms. The first part of the lemma above will be used to establish the following theorem, which essentially states that orean factorizations are conceptually the same as proper factorization systems. The second part of the lemma above could be concluded as a corollary of this theorem, and the third bullet point at the start of this section.
\end{remark}

\begin{theorem}\label{ThmV}
For a category $\mathbb{C}$, there is a bijection between classes of component-wise isomorphic orean factorizations of $\mathbb{C}$ and proper factorization systems $(\mathcal{E},\mathcal{M})$ in $\mathbb{C}$ such that pushouts of morphisms along those in  $\mathcal{E}$ and pullbacks of morphisms along those in $\mathcal{M}$, as well as finite meets and joins of $\mathcal{E}$-quotients and $\mathcal{M}$-subobjects exist. For a factorization system $(\mathcal{E},\mathcal{M})$ and an orean factorization $(F_\s,F_\e)$ from the corresponding class of component-wise isomorphic orean factorizations, we have:
\begin{itemize} 
\item[(i)] $F_\s$ is isomorphic to the form of $\mathcal{M}$-subobjects and  $\mathcal{M}$ is the class of $F_\s$-embeddings.  

\item[(ii)] $F_\e$ is isomorphic to the form of $\mathcal{E}$-quotients and  $\mathcal{E}$ is the class of $F_\e$-quotients.  
\end{itemize}
\end{theorem}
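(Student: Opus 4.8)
The plan is to exhibit the bijection by constructing a map in each direction and verifying they are mutually inverse, with (i) and (ii) emerging in the process. For the forward map, given an orean factorization $(F_\s,F_\e)$, I would set $\mathcal{M}$ to be the class of $F_\s$-embeddings and $\mathcal{E}$ to be the class of $F_\e$-quotients. Conormality of $F_\s$ makes every cluster an image, so $F_\s$ admits embeddings, and dually $F_\e$ admits quotients; by Lemma~\ref{LemK} both classes are closed under composition. The equivalences recalled at the start of this section (following \cite{JW14}) then make $\mathcal{M}$ part of a factorization system $(\mathcal{E}',\mathcal{M})$ and $\mathcal{E}$ part of a proper factorization system $(\mathcal{E},\mathcal{M}')$.

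The crux is to show these coincide. Using the decomposition $f=\iota^\s_{\im^\s f}\circ\pi^\e_{\ker^\e f}$ and the recalled fact that the $\mathcal{M}$-part of a factorization in $(\mathcal{E}',\mathcal{M})$ is $\iota^\s_{\im^\s f}$, cancelling this monomorphism forces the $\mathcal{E}'$-part to be $\pi^\e_{\ker^\e f}$; hence every $F_\e$-quotient lies in $\mathcal{E}'$ and $\mathcal{E}\subseteq\mathcal{E}'$. Conversely, for $e'\in\mathcal{E}'$ its $\mathcal{M}$-part $\iota^\s_{\im^\s e'}$ is an isomorphism (as $e'$ lies in $\mathcal{E}'$), so $\im^\s e'=\top$ by Lemma~\ref{LemV}, whence $e'$ is an $F_\e$-quotient by the first assertion of Lemma~\ref{LemK}; thus $\mathcal{E}'\subseteq\mathcal{E}$. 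With $\mathcal{E}'=\mathcal{E}$, orthogonality forces $\mathcal{M}'=\mathcal{M}$, giving one proper factorization system (proper since embeddings are monomorphisms and quotients epimorphisms by Remark~\ref{RemJ}). The listed existence conditions then follow from $F_\s$ and $F_\e$ being orean: pullbacks along $\mathcal{M}$ are the embeddings $\iota^\s_{A\cdot f}$ of Lemma~\ref{LemP}, pushouts along $\mathcal{E}$ are dual, and the finite meets and joins are the bounded-lattice operations in the fibres guaranteed by (O1).

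For the reverse map, given such a factorization system I would take $F_\s$ to be the form of $\mathcal{M}$-subobjects and $F_\e$ the form of $\mathcal{E}$-quotients; these are conormal and normal by Example~\ref{ExaAD}, and by Example~\ref{ExaF} and its dual the existence conditions are exactly what make them orean. The decomposition of Definition~\ref{DefD} is then the $(\mathcal{E},\mathcal{M})$-factorization $f=me$, since the recalled facts give $m=\iota^\s_{\im^\s f}$ and $e=\pi^\e_{\ker^\e f}$. That the two passages are mutually inverse (and the content of (i)--(ii)) follows from Example~\ref{ExaW}, which identifies the $F_\s$-embeddings with $\mathcal{M}$ and the $F_\e$-quotients with $\mathcal{E}$, and from the \cite{JW14} uniqueness result recovering a conormal orean form from its embeddings up to isomorphism; well-definedness on component-wise isomorphism classes uses that isomorphic forms share the same embeddings (Lemma~\ref{LemJ}, an isomorphism being a full injective conormal operator by Remark~\ref{RemO}).

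The main obstacle I anticipate is the reconciliation step in the second paragraph: a priori the embeddings of $F_\s$ and the quotients of $F_\e$ generate two unrelated factorization systems, and everything hinges on proving them equal. This is exactly where the first assertion of Lemma~\ref{LemK} --- tying $F_\s$-embeddings to triviality of the $F_\e$-kernel and $F_\e$-quotients to triviality of the $F_\s$-image --- supplies the missing link between the two forms.
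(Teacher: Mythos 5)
Your proposal is correct and takes essentially the same approach as the paper: both delegate the two constructions, the existence conditions, and the well-definedness on component-wise isomorphism classes to the recalled results of \cite{JW14} together with Remark~\ref{RemO}, Lemma~\ref{LemJ} and Example~\ref{ExaW}, and both concentrate the genuinely new work on showing that the $F_\s$-embeddings and $F_\e$-quotients of an orean factorization assemble into a single proper factorization system, by comparing $\mathcal{E}$ with the class $\mathcal{E}'$ completing $\mathcal{M}$ via Lemmas~\ref{LemK} and~\ref{LemV}. The only difference is cosmetic: you establish $\mathcal{E}\subseteq\mathcal{E}'$ by monomorphism cancellation against the orean decomposition and $\mathcal{E}'\subseteq\mathcal{E}$ by noting the $\mathcal{M}$-part of $e'\in\mathcal{E}'$ is an isomorphism, whereas the paper routes the same two inclusions through the dual micro-arguments, using the same lemmas.
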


\begin{proof}
The following parts of the theorem are straightforward consequences of the results contained in \cite{JW14}:
\begin{itemize}
    \item Any proper factorization system $(\mathcal{E},\mathcal{M})$ in the sense of \cite{FK72,I58}, such that pushouts of morphisms along those in  $\mathcal{E}$ and pullbacks of morphisms along those in $\mathcal{M}$, as well as finite meets and joins of $\mathcal{E}$-quotients and $\mathcal{M}$-subobjects exist, determines an orean factorization $(F_\s,F_\e)$ of the underlying category for which (i) and (ii) hold.
    
    \item Given an orean factorization $(F_\s,F_\e)$, if we set $\mathcal{E}$ to be the class of $F_\e$-quotients and $\mathcal{M}$ the class of $F_\s$-embeddings, then pushouts of morphisms along those in $\mathcal{E}$ and pullbacks of morphisms along those in $\mathcal{M}$, as well as finite meets and joins of $\mathcal{E}$-quotients and $\mathcal{M}$-subobjects will exist.
\end{itemize}
By Remark~\ref{RemO} and Lemma~\ref{LemJ}, isomorphic orean forms will have the same embeddings and quotients. This, together with the first bullet point above, gives us that there is a one-to-one mapping assigning to each proper factorization system $(\mathcal{E},\mathcal{M})$ the class of component-wise isomorphic orean factorizations satisfying (i) and (ii). 

To establish the bijection in the theorem, it remains to show that the $\mathcal{E}$ and the $\mathcal{M}$ in the second bullet point above constitute a proper factorization system $(\mathcal{E},\mathcal{M})$. We know that every morphism in $\mathcal{E}$ must be an epimorphism and that every morphism in $\mathcal{M}$ must be a monomorphism (Remark~\ref{RemJ}). It follows from Lemma~\ref{LemK} that both $\mathcal{E}$ and $\mathcal{M}$ are closed under composition. Therefore, $\mathcal{M}$ is part of some factorization system $(\mathcal{E}',\mathcal{M})$ (see the first paragraph of this section). We want to establish that $\mathcal{E}'=\mathcal{E}$. Let $e'\in \mathcal{E}'$. Then $e'$ decomposes as $e'=me$, where $e\in \mathcal{E}$ and $m\in \mathcal{M}$, by the definition of an orean factorization. This implies that $m\in \mathcal{E}'$ and hence $m$ is an isomorphism, which in turn implies $e'\in\mathcal{E}$. Conversely, let $e\in \mathcal{E}$. Consider a decomposition $e=me'$ where $m\in \mathcal{M}$ and $e'\in\mathcal{E}'$. From the first paragraph of this section we know that $m$ is an $F_\s$-embedding of the $F_\s$-image of $e$. By Lemma~\ref{LemK}, the $F_\s$-image of $e$ is a top cluster. This implies that $m$ is an isomorphism. Hence $e\in \mathcal{E}'$.  
\end{proof}

\begin{example}\label{ExaJ}
Given a proper factorization system $(\mathcal{E},\mathcal{M})$ on a category, the requirements on the classes $\mathcal{E}$ and $\mathcal{M}$ from Theorem~\ref{ThmV} are automatically fulfilled once all finite limits and finite colimits exist in the category. Thus, orean factorizations are quite common for categories of mathematical structures. The category of topological spaces, for instance, has two natural orean factorizations: in the first one, $F_\s$ is the form of subspaces --- with $\mathcal{E}$ the class of epimorphisms, while in the second one, $F_\e$ is the \emph{form of quotient spaces} --- with $\mathcal{M}$ the class of monomorphisms. The form $F_\s$ of subspaces witnesses that the implication in the second part of Lemma~\ref{LemK} cannot be reversed: morphisms $f$ such that the $F_\s$-kernel of $f$ is a bottom cluster and the join formula of (N1) holds for $f$ are the injective continuous functions (i.e., the monomorphisms in the category of topological spaces), which is a wider class than the class of $F_\s$-embeddings. An injective continuous function $f$ is an $F_\s$-embedding if and only if every open set in its domain is an inverse image of some open set in its codomain.
\end{example}

\begin{remark}\label{RemH} 
  As it follows from basic facts about factorization systems, in the description of orean factorizations in terms of pairs $(\mathcal{E},\mathcal{M})$ given in Theorem~\ref{ThmV}, the requirements on the pair $(\mathcal{E},\mathcal{M})$ can be equivalently changed to: $\mathcal{M}$ is a class of monomorphisms that is closed under composition and contains all isomorphisms, while $\mathcal{E}$ is a class of epimorphisms having the same two properties, such that the forms of $\mathcal{M}$-subobjects and $\mathcal{E}$-quotients are both orean and any morphism $f$ in $\mathbb{C}$ factorizes uniquely (up to an isomorphism) as $f=me$ with $e\in\mathcal{E}$ and $m\in\mathcal{M}$.
\end{remark}

\begin{theorem}\label{ThmAE}
Let $(F_\s,F_\e)$ be an orean factorization of a category $\mathbb{C}$. With $F=F_\s$ in (N1), we have: (N1) holds for all morphisms $f$ as soon as it holds for all quotients. 
\end{theorem}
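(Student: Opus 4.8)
The plan is to exploit the factorization supplied by the orean factorization itself: write $f=m\circ e$ with $m=\iota^\s_{\im^\s f}$ an $F_\s$-embedding and $e=\pi^\e_{\ker^\e f}$ an $F_\e$-quotient. Here ``quotient'' means an $F_\e$-quotient (a projection), so by hypothesis (N1) holds (relative to $F_\s$) for $e$, while (N1) holds for the embedding $m$ by Lemma~\ref{LemK}. The goal is then to propagate (N1) across the composite $f=me$. Throughout I would reduce to the restricted forms of Remark~\ref{RemB}: the join formula to the implication $S\ge^\s\ker^\s f\implies (f\cdot^\s S)\cdot^\s f=S$, and I would verify the meet identity $f\cdot^\s(T\cdot^\s f)=T\wedge^\s\im^\s f$ directly. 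The two associativity laws of Remark~\ref{RemC}, $g\cdot(f\cdot S)=(g\circ f)\cdot S$ and $(U\cdot g)\cdot f=U\cdot(g\circ f)$, will be the workhorses for splitting images and inverse images along $f=me$.

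First I would treat the join formula. The opening observation is that $\ker^\s f=\ker^\s e$: indeed $\ker^\s f=\bot\cdot^\s(m\circ e)=(\bot\cdot^\s m)\cdot^\s e$ by inverse-image associativity, and $\bot\cdot^\s m=\ker^\s m$ is a bottom cluster by Lemma~\ref{LemK}, leaving $\bot\cdot^\s e=\ker^\s e$. Then, for $S\ge^\s\ker^\s f=\ker^\s e$, I would compute $(f\cdot^\s S)\cdot^\s f=\bigl((m\cdot^\s(e\cdot^\s S))\cdot^\s m\bigr)\cdot^\s e$ using both associativity laws. The embedding cancels, since $(m\cdot^\s R)\cdot^\s m=R$ for every $R$ (the restricted join formula for $m$, whose kernel is bottom), leaving $(e\cdot^\s S)\cdot^\s e$, which equals $S$ by the join formula for the quotient $e$. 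This establishes the join part of (N1) for $f$.

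Next the meet formula. Splitting along $f=me$ gives $f\cdot^\s(T\cdot^\s f)=m\cdot^\s\bigl(e\cdot^\s((T\cdot^\s m)\cdot^\s e)\bigr)$, and applying the meet formula for $e$ to the inner expression yields $m\cdot^\s\bigl((T\cdot^\s m)\wedge^\s\im^\s e\bigr)$. The key move is that, since $m$ is an embedding with $\ker^\s m=\bot$, axiom (N1) for $m$ makes the map $m\cdot^\s(-)$ a lattice isomorphism onto the downset of $\im^\s m$ (Remark~\ref{RemB}), hence meet-preserving. Distributing gives $(m\cdot^\s(T\cdot^\s m))\wedge^\s(m\cdot^\s\im^\s e)=(T\wedge^\s\im^\s m)\wedge^\s\im^\s f$, where I use the meet formula for $m$ and the identity $m\cdot^\s\im^\s e=(m\circ e)\cdot^\s\top=\im^\s f$. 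Finally $\im^\s f=m\cdot^\s\im^\s e\le^\s m\cdot^\s\top=\im^\s m$ by monotonicity of direct image, so the middle term is absorbed and the result collapses to $T\wedge^\s\im^\s f$, as required.

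I expect the meet formula to be the main obstacle. Unlike the join case, where the images play no role and the embedding simply cancels once $\ker^\s f=\ker^\s e$ is noted, the meet case requires invoking meet-preservation of the embedding's direct-image map — a property that genuinely uses that an embedding has trivial kernel (so its direct image is a lattice isomorphism, not merely join-preserving as in the generic orean laws of Remark~\ref{RemC}) — followed by absorbing the surplus image term through $\im^\s f\le^\s\im^\s m$. The join formula, by contrast, is essentially immediate.
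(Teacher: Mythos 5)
Your proof is correct and is essentially the paper's own argument: decompose $f=\iota^\s_{\im^\s f}\circ\pi^\e_{\ker^\e f}$, get (N1) for the embedding from Lemma~\ref{LemK}, use the hypothesis for the quotient, and splice the two halves together via the associativity laws of Remark~\ref{RemC}. Your one deviation --- in the meet case, distributing $m\cdot^\s(-)$ over the meet via the lattice isomorphism of Remark~\ref{RemB} and then absorbing the surplus $\im^\s m$ --- is valid but unnecessary: the other half of Lemma~\ref{LemK} says an $F_\e$-quotient has top $F_\s$-image, so $(T\cdot^\s m)\wedge^\s\im^\s e=T\cdot^\s m$ outright, and the meet formula collapses just as immediately as the join formula, contrary to your closing prediction that it would be the main obstacle.
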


\begin{proof}
Assume (N1) holds for all $F_\e$-quotients. Consider a morphism $f\colon X\to Y$ and decompose it as $f=me$, where $e$ is a quotient of its $F_\e$-kernel and $m$ is an embedding of its $F_\s$-image. Thanks to Lemma~\ref{LemK} and that (N1) holds for $e$ by our assumption, given an $F_\s$-cluster $B$ in $Y$, we have: \[f\cdot^\s (B\cdot^\s f)=m\cdot^\s (e\cdot^\s ((B\cdot^\s m)\cdot^\s e))=m\cdot^\s (B\cdot^\s m)=\im^\s m\land^\s B=\im^\s f\land^\s B.\]
Similarly, given an $F_\s$-cluster $A$ in $X$, we have:
\[(f\cdot^\s A)\cdot^\s f=((m\cdot^\s (e\cdot^\s A))\cdot^\s m)\cdot^\s e=(e\cdot^\s A)\cdot^\s e=\ker^\s e\lor^\s A=\ker^\s f\lor^\s A.\qedhere\]
\end{proof}

\begin{theorem}\label{ThmT}
  For a strongly orean form $F$, the subform inclusion $F_\c\to F$ is conormal and the subform inclusion $F_\n\to F$ is normal. Consequently, a morphism is an $F$-embedding/quotient of a conormal/normal $F$-cluster if and only if it is an $F_\c$-embedding/$F_\n$-quotient. This furthermore gives that (N2) holds for $F$ if and only if $(F_\c,F_\n)$ is an orean factorization.
\end{theorem}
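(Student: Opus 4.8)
The plan is to derive all three statements from the two transfer results for conormal operators, Lemmas~\ref{LemU} and \ref{LemJ}, so that essentially no new computation is required. I would begin with the inclusions. Because $F$ is strongly orean, $F_\c$ is an orean form, and by its very definition $F_\c$ is the subform of $F$ consisting of \emph{all} conormal $F$-clusters. This is exactly condition (iii) of Lemma~\ref{LemU} (taking $G=F_\c$), so that lemma's equivalence delivers condition (i): $F_\c$ is a conormal orean form and the subform inclusion $F_\c\to F$ is conormal. The claim that $F_\n\to F$ is normal is the formal dual.

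Next, for the embeddings, I would observe that a subform inclusion is by definition a full operator, hence injective, and that $F_\c\to F$ is conormal by the previous step. Feeding this full injective conormal operator into Lemma~\ref{LemJ} gives that $\iota^F_{\im^F f}$ exists precisely when $\iota^{F_\c}_{\im^{F_\c}f}$ does, and that the two are the same morphism when they exist. Since conormality of the inclusion forces images to be computed identically in $F_\c$ and in $F$ (Lemma~\ref{LemAA}), and since every conormal $F$-cluster is the image of some morphism, this says precisely that a morphism is an $F$-embedding of a conormal cluster if and only if it is an $F_\c$-embedding. The dual argument, using normality of $F_\n\to F$, handles quotients of normal clusters versus $F_\n$-quotients.

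Finally, for the equivalence with (N2), I would simply rewrite the two conditions side by side. Axiom (N2) for $F$ asserts that every $f$ factorizes as $f=\iota^F_{\im^F f}\circ\pi^F_{\ker^F f}$; whereas, since $F_\c$ is conormal orean and $F_\n$ is normal orean, $(F_\c,F_\n)$ being an orean factorization (Definition~\ref{DefD}) asserts that every $f$ factorizes as $f=\iota^{F_\c}_{\im^{F_\c}f}\circ\pi^{F_\n}_{\ker^{F_\n}f}$. By the previous step, together with the identification of images and kernels across the inclusions, these two factorizations are built from the very same morphisms, so they are literally the same requirement; moreover the existence of the relevant embeddings and quotients transfers between the two settings. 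Hence (N2) holds for $F$ exactly when $(F_\c,F_\n)$ is an orean factorization.

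I expect the only point needing care to be this last bookkeeping of existence: one must confirm that whenever (N2) guarantees the data $\iota^F_{\im^F f}$ and $\pi^F_{\ker^F f}$, the matching $F_\c$- and $F_\n$-embeddings and quotients exist as well, and conversely. This is exactly what the biconditional existence clauses in Lemma~\ref{LemJ} and its dual provide, so the obstacle is only apparent and no genuinely new argument is required.
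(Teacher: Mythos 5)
Your proof is correct and takes essentially the same route as the paper: the paper's own proof obtains the conormality/normality of the inclusions $F_\c\to F$ and $F_\n\to F$ from Lemma~\ref{LemM} and its dual, and then, exactly as you do, settles the embedding/quotient identification and the (N2) equivalence by Lemma~\ref{LemJ} and its dual. Your substitution of Lemma~\ref{LemU}(iii)$\implies$(i) for the first step is legitimate (strong oreanness supplies the hypothesis that $F_\c$ is orean, which Lemma~\ref{LemU} requires) and is interchangeable with the paper's citation, so there is no genuine difference in approach and no gap.
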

\begin{proof}
  The first part of the lemma follows immediately from Lemma~ \ref{LemM} and its dual. To get the rest, apply Lemma~\ref{LemJ}.
\end{proof}

\subsection*{Semiexact pairs} Next, we explore a generalization of kernel-cokernel correspondence, in a very general form-theoretic setting of which every orean factorization will turn out to be an example. 

\begin{definition}
  A \emph{semiexact pair} is a pair $(F_\s ,F_\e )$, where $F_\s$ is a conormal orean form and $F_\e$ is a normal orean form, such that there exists a conormal operator $\alpha\colon F_\s \to F_\e $ and a normal operator $\beta\colon F_\e \to F_\s $. A \emph{left/right exact pair} is a semiexact pair such that $\alpha$ is a left/right inverse of $\beta$. A \emph{biexact pair} is a semiexact pair that is both a left and a right exact pair. 
\end{definition}

\begin{lemma}\label{LemAH}
For a left exact pair $(F,F_\e)$, the normal operator $\beta\colon F_\e\to F$ restricts to an isomorphism $F_\e\to F_\n$. Consequently, the conormal operator $\alpha\colon F\to F_\e$ is normal and the pair $(F,F_\n)$ is left exact as well.
\end{lemma}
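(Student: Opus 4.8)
The plan is to produce an explicit two-sided inverse for the corestriction of $\beta$ and then transport the remaining structure along the resulting isomorphism. First I would check that $\beta$ does corestrict to $F_\n$. Since $F_\e$ is a normal form, every $F_\e$-cluster has the shape $\ker^{F_\e} g$, and normality of $\beta$ gives $\beta(\ker^{F_\e} g)=\ker^{F} g$, a normal $F$-cluster. Hence $\beta$ factors through the subform inclusion $j\colon F_\n\to F$, yielding an operator $\beta'\colon F_\e\to F_\n$ with $\beta=j\circ\beta'$. As the candidate inverse I would take $\alpha'\colon F_\n\to F_\e$, namely $\alpha$ precomposed with $j$ (the restriction of $\alpha$ along the inclusion).

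Next I would verify that $\alpha'$ and $\beta'$ are mutually inverse as operators. Because an operator is completely determined by its action on clusters, it suffices to check that the two composites agree with the identity operators on clusters. The relation $\alpha'\beta'=\mathrm{id}_{F_\e}$ is immediate from the hypothesis $\alpha\beta=\mathrm{id}_{F_\e}$. For $\beta'\alpha'=\mathrm{id}_{F_\n}$ I would write an arbitrary normal cluster as $N=\ker^{F} g$; normality of $\beta$ rewrites it as $N=\beta(\ker^{F_\e}g)$, so $\alpha'(N)=\alpha\beta(\ker^{F_\e}g)=\ker^{F_\e}g$ and therefore $\beta'\alpha'(N)=\ker^{F}g=N$. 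The crux of this step is the interplay of $F_\e$ being normal (every cluster a kernel) with normality of $\beta$: together they force $\beta'$ to be surjective onto the normal $F$-clusters. With both composites equal to identities, $\beta'$ is an isomorphism of forms $F_\e\to F_\n$.

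The remaining assertions then follow formally. Normality of $\alpha$ is the one-line computation $\alpha(\ker^{F} g)=\alpha\beta(\ker^{F_\e}g)=\ker^{F_\e}g$. Since $\beta'$ is an isomorphism it is binormal (Remark~\ref{RemO}); consequently $F_\n\cong F_\e$ is a normal orean form, and the subform inclusion $j=\beta\circ(\beta')^{-1}$ is a composite of a normal and a binormal operator, hence normal. To exhibit $(F,F_\n)$ as a left exact pair I would take the conormal operator $\beta'\circ\alpha\colon F\to F_\n$ (conormal, as a composite of the conormal $\alpha$ with the binormal $\beta'$) together with the normal operator $j\colon F_\n\to F$; their composite $(\beta'\circ\alpha)\circ j=\beta'\circ\alpha'=\mathrm{id}_{F_\n}$ provides exactly the required left-inverse relation.

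The single point demanding care — the main obstacle — is that the normal hull $F_\n$ is not \emph{a priori} orean or normal; such properties are not automatic for hulls in general and are only obtained here by transport along $\beta'$. This is precisely why the heart of the argument is upgrading the cluster-wise bijection to a genuine isomorphism of forms, rather than the subsequent formal deductions.
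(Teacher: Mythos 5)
Your proposal is correct and takes essentially the same route as the paper's (deliberately terse) proof: you corestrict $\beta$ to $F_\n$ using normality of $F_\e$ and of $\beta$, obtain the inverse from $\alpha\beta=\mathrm{id}_{F_\e}$, deduce normality of $\alpha$ from $\alpha(\ker^F g)=\alpha\beta(\ker^{F_\e}g)=\ker^{F_\e}g$, and transport oreanness and normality to $F_\n$ along the isomorphism via Remark~\ref{RemO} --- exactly the steps the paper compresses into its final two sentences. Your write-up simply makes explicit the details (the factorization $\beta=j\circ\beta'$, the cluster-wise verification of the two composites, and the construction of the conormal and normal operators for the pair $(F,F_\n)$) that the paper leaves implicit.
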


\begin{proof}
Suppose $(F,F_\e)$ is a left exact pair. Then $F$ is conormal and $F_\e$ is normal. Moreover, the conormal operator $\alpha\colon F\to F_\e$ is a left inverse of the normal operator $\beta\colon F_\e\to F$. Then $\alpha$ is normal and moreover, $\beta$ restricts to an isomorphism between $F_\e$ and $F_\n$. That $(F,F_\n)$ is left exact as well now follows from Remark~\ref{RemO}.
\end{proof}

\begin{lemma}
  \label{LemW}
  Let $G$ be an isoform. For a conormal form $F$, the following conditions are equivalent:
\begin{enumerate}[label=(\roman*)]
  \item $F$ is antinormal.    
  \item $(F,G)$ is a semiexact pair.    
  \item There exists a normal operator $G\to F$.
\end{enumerate}
\end{lemma}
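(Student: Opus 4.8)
The plan is to establish the cycle of equivalences by leaning on two earlier results: Lemma~\ref{LemAB} delivers the equivalence of (i) and (iii) almost verbatim, while Lemma~\ref{LemX} supplies the one nontrivial ingredient that makes condition (ii) collapse onto (iii). Throughout I use that an isoform is in particular an orean form, and that the given form $F$, being conormal, is orean as well.

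For (i) $\Leftrightarrow$ (iii) I would simply invoke Lemma~\ref{LemAB}, which lists among the conditions equivalent to ``$F$ is antinormal'' both ``for any orean form $G$ there exists a normal operator $G\to F$'' and ``there exist an isoform $G$ and a normal operator $G\to F$''. Applying the first of these to our fixed (orean) isoform $G$ gives (i)$\Rightarrow$(iii), while the second gives (iii)$\Rightarrow$(i); so this half of the statement is essentially a citation.

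The substance is therefore (ii) $\Leftrightarrow$ (iii). The forward direction is immediate, since a semiexact pair $(F,G)$ includes by definition a normal operator $\beta\colon G\to F$, which is exactly (iii). For the converse the key point --- and the only place an idea is needed --- is that the conormal operator $\alpha\colon F\to G$ demanded by the definition of a semiexact pair comes for free: because $G$ is an isoform, Lemma~\ref{LemX} tells us that $G$ is terminal in the category of forms over $\mathbb{C}$ and that there is a binormal operator $F\to G$; uniqueness of the operator $F\to G$ then forces it to be that binormal one, hence in particular conormal. Granting this, and recalling that $F$ is conormal orean while $G$ (being an isoform) is normal orean, the sole outstanding requirement for $(F,G)$ to be a semiexact pair is precisely the normal operator $G\to F$ of (iii). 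I do not expect a genuine obstacle here: the entire content is the observation that, when the codomain is an isoform (a terminal form), the conormal half of the semiexact-pair data is automatic, after which the argument reduces to bookkeeping with the type constraints in the definition of a semiexact pair.
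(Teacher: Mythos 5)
Your proof is correct and takes essentially the same approach as the paper: both reduce to Lemma~\ref{LemAB} for the antinormality--normal-operator equivalence and treat the conormal operator into the isoform as automatic, the paper extracting it from the dual of Lemma~\ref{LemAB} while you extract the same binormal (hence conormal) operator $F\to G$ from Lemma~\ref{LemX}. The only cosmetic differences are the orientation of the argument ((i)$\Leftrightarrow$(iii) and (ii)$\Leftrightarrow$(iii) in your version versus the cycle (i)$\Rightarrow$(ii)$\Rightarrow$(iii)$\Rightarrow$(i) in the paper) and your appeal to uniqueness of the operator $F\to G$, which is harmless but unnecessary since the existence of the binormal operator already suffices.
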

\begin{proof}
(i)$\implies$(ii) follows from Lemma~\ref{LemAB} and its dual. (ii)$\implies$(iii) is trivial. (iii)$\implies$(i) follows again from Lemma~\ref{LemAB}.
\end{proof}

\begin{theorem}\label{ThmF}
  Any orean factorization $(F_\s,F_\e)$ of a category $\mathbb{C}$ is a semiexact pair, with the conormal operator $\alpha\colon F_\s\to F_\e$ and the normal operator $\beta\colon F_\e\to F_\s$ defined by $\alpha(A)=\im^\e \iota^\s_A$ and $\beta(R)=\ker^\s \pi^\e_R$, respectively.
\end{theorem}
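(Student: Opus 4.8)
The plan is to check the two operators directly on clusters. Since $F_\s$ is a conormal orean form and $F_\e$ is a normal orean form by the definition of an orean factorization, the substance of the theorem is just to verify that $\alpha(A)=\im^\e\iota^\s_A$ is a conormal operator $F_\s\to F_\e$ and that $\beta(R)=\ker^\s\pi^\e_R$ is a normal operator $F_\e\to F_\s$. These two assertions are formal duals of one another, so I would carry out the argument for $\alpha$ in full and then read off $\beta$ by duality. A preliminary remark is that $\alpha$ is independent of the chosen embedding: if $\iota'_A=\iota_A i$ for an isomorphism $i$ (Remark~\ref{RemJ}), then $\im^\e\iota'_A=\iota_A\cdot^\e(i\cdot^\e\top)=\iota_A\cdot^\e\top=\im^\e\iota_A$, so the assignment is canonical.

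The heart of the matter is conormality of $\alpha$, i.e. $\alpha(\im^\s f)=\im^\e f$ for every morphism $f$. Here I would use the defining factorization $f=me$ with $m=\iota^\s_{\im^\s f}$ an $F_\s$-embedding and $e=\pi^\e_{\ker^\e f}$ an $F_\e$-quotient. The orean associativity law gives $\im^\e f=m\cdot^\e(e\cdot^\e\top)=m\cdot^\e\im^\e e$, and the one nontrivial input is Lemma~\ref{LemK}, by which the $F_\e$-image of a quotient is the top cluster; hence $\im^\e f=m\cdot^\e\top=\im^\e m=\alpha(\im^\s f)$. This is the step I expect to be the only real obstacle, and it dissolves entirely once one invokes that quotients have top image.

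It remains to see that $\alpha$ is genuinely monotone. Single-fibre monotonicity is immediate from the universal property of embeddings: if $A\ge^\s A'$ then $\iota^\s_{A'}$ factors as $\iota^\s_A g$, so $\alpha A'=\iota^\s_A\cdot^\e\im^\e g\le^\e\iota^\s_A\cdot^\e\top=\alpha A$. For a general relation $B\ge_f^\s A$ I would first record the naturality identity $f\cdot^\e\alpha A=\im^\e(f\circ\iota^\s_A)=\alpha(f\cdot^\s A)$, which uses only the conormality just proved together with the standard law $\im^\e(f\circ\iota^\s_A)=f\cdot^\e\im^\e\iota^\s_A$ and $\im^\s(f\circ\iota^\s_A)=f\cdot^\s A$; then, rewriting $B\ge_f^\s A$ as $B\ge^\s f\cdot^\s A$ via the Galois connection (Remark~\ref{RemC}) and applying single-fibre monotonicity yields $\alpha B\ge^\e\alpha(f\cdot^\s A)=f\cdot^\e\alpha A$, i.e. $\alpha B\ge_f^\e\alpha A$. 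Thus $\alpha$ is a conormal operator. The operator $\beta(R)=\ker^\s\pi^\e_R$ is treated by the formal dual of this argument: normality $\beta(\ker^\e g)=\ker^\s g$ follows from $g=me$ and the dual clause of Lemma~\ref{LemK} that the $F_\s$-kernel of an embedding is bottom, giving $\ker^\s g=\ker^\s e=\beta(\ker^\e g)$, and dual monotonicity is established as above. With both operators in hand, $(F_\s,F_\e)$ is a semiexact pair.
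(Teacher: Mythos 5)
Your proof is correct and follows the same route as the paper: define $\alpha(A)=\im^\e\iota^\s_A$, check independence of the chosen embedding via Remark~\ref{RemJ}, and lean on Lemma~\ref{LemK} (an $F_\e$-quotient has top $F_\e$-image; an $F_\s$-embedding has bottom $F_\s$-kernel) together with the defining factorization $f=\iota^\s_{\im^\s f}\circ\pi^\e_{\ker^\e f}$ to get conormality, with $\beta$ handled by the duality of orean factorizations. If anything, your write-up is more complete than the paper's own proof, which stops after well-definedness and preservation of top clusters; your verification of conormality on all morphisms, the naturality identity $\alpha(f\cdot^\s A)=f\cdot^\e\alpha(A)$ used to pass from single-fibre monotonicity to monotonicity along arbitrary $f$ via the Galois connection of Remark~\ref{RemC}, and the dual argument for $\beta$ supply exactly the details the paper leaves implicit.
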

\begin{proof}
  We first prove that any orean factorization $(F_\s,F_\e)$ of a category $\mathbb{C}$ is a semiexact pair. For $A\subseteq_\s X $, define $\alpha(A)$ to be the $F_\e $-image of an $F_\s $-embedding of $A$. This definition does not depend on the choice of an $F_\s $-embedding, since:
  \begin{itemize}
    \item by the universal property of an $F_\s $-embedding, it is unique up to an isomorphism,

    \item top clusters are preserved under isomorphisms (this follows from the fact that in an orean form, direct image along an isomorphism is the same as inverse image along its inverse). 
  \end{itemize}
   $1_X$ is an $F_\s $-embedding of the top cluster in $X$, and since direct image along $1_X$ does not change a cluster, we can conclude that $\alpha$ preserves top clusters. 
\end{proof}
\begin{remark}
Note that in the theorem above, the formula $\alpha(A)=\im^\e\iota^\s_A$ is forced by conormality of $\alpha$: $\alpha(A)=\alpha(\im^\s\iota^\s_A)=\im^\e\iota^\s_A$. When using this formula, we may interchangeably refer either directly to \ref{ThmF} or re-derive it from conormality of $\alpha$. Similar remarks apply to the dual formula for $\beta$. 
\end{remark}

\begin{convention}
Thanks to the theorem above, for each orean factorization $(F_\s,F_\e)$ there is a conormal operator $F_\s\to F_\e$ and a normal operator $F_\e\to F_\s$. We will write $\alpha$ and $\beta$, respectively, to denote these operators. When necessary, we may also write $\alpha^F$ and $\beta^F$ instead of $\alpha$ and $\beta$. Note that by Lemma~\ref{LemN}, $\alpha$ and $\beta$ are uniquely determined by an orean factorization $(F_\s,F_\e)$.  
\end{convention}

\begin{remark}\label{RemV} In the theorem above, when $\mathbb{C}$ has a zero object $Z$ (i.e.,~when $\mathbb{C}$ is a pointed category), for the factorization system $(\mathcal{E},\mathcal{M})$ corresponding to the given orean factorization, the maps $\mathcal{M}\to \mathcal{E}$ and $\mathcal{E}\to \mathcal{M}$ corresponding to the conormal operator $\alpha\colon F_\s \to F_\e $ and the normal operator $\beta\colon F_\e \to F_\s $ are given by $Z$-cokernel and $Z$-kernel (that is, the usual categorical cokernel and kernel in a pointed category), respectively. To prove this, let $A\subseteq_\s Y$. Consider the $F_\s $-embedding $m\colon X\to Y$ of $A$ and let $Z$ be a zero object. The unique morphism $X\to Z$ is a split epimorphism, and hence it must be an $F_\e $-quotient (this is thanks to the fact that $(F_\s,F_\e)$ is an orean factorization). Moreover, $Z$ has only one cluster for the form $F_\e $ (since $Z$ cannot have nontrivial $F_\e $-quotients, as its any $F_\e $-quotient will be a slit monomorphism and hence an isomorphism), which implies that $X\to Z$ is the $F_\e $-quotient of the top cluster $\top^\e $ in $X$. We have $\alpha(A)=m\cdot^\e  \top^\e $. By Lemma~\ref{LemP}, the $F_\e $-quotient of $m\cdot^\e  \top^\e $ is obtained by a pushout of $X\to Z$ along $m$. Such pushout is exactly the cokernel $c$ of $m$. This proves that the map $\mathcal{M}\to\mathcal{E}$ corresponding to $\alpha$ is given by $Z$-cokernels.  Dually, we have that the map $\mathcal{E}\to\mathcal{M}$ corresponding to $\beta$ sends each morphism to its $Z$-kernel.
\end{remark}

\begin{example}\label{ExaK} 
The pair of operators described in Example~\ref{ExaL} is an example of one coming out of Theorem~\ref{ThmF} in the case when a zero object is absent. The orean factorization in this case is the one corresponding to the usual factorization system $(\mathcal{E},\mathcal{M})$ of the category of sets, where $\mathcal{E}$ is the class of epimorphisms and $\mathcal{M}$ is the class of monomorphisms.
\end{example}

\begin{example}
Over the category of topological spaces, let $F_\s$ be the form of subspaces of topological spaces and let $F_\e$ be the form of quotient spaces of topological spaces. Then $(F_\s,F_\e)$ is a semiexact pair, but not an orean factorization. The easiest way to break the factorization property of Definition~\ref{DefD}, is to consider an identity map $f\colon X\to Y$ between two topological spaces $X$ and $Y$ having identical underlying sets, such that $X$ has more open sets than $Y$. The fact that $(F_\s,F_\e)$ is a semiexact pair is an easy consequence of a similar fact for the category of sets. 
\end{example}

\begin{remark}
  The following natural question arises: when are the operators of Theorem~\ref{ThmF} binormal? It is not difficult to verify that in the context of a pointed category, the conormal operator $F_\s\to F_\e$ is normal if and only if every morphism in the class $\mathcal{E}$ is a cokernel, and dually, $F_\e\to F_\s$ is conormal if and only if every morphism in the class $\mathcal{M}$ is a kernel. From this it follows that for a pointed category these operators are binormal if and only if the category is Puppe-Mitchell exact \cite{P62,M65}; in this case $\mathcal{E}$ coincides with the class of epimorphisms and $\mathcal{M}$ coincides with the class of monomorphisms. In the general case (i.e., when the category is not necessarily pointed), if both $\alpha$ and $\beta$ are binormal, then so will be their composites. By Lemma~\ref{LemN}, these composites must be identity operators, and so we get that $F_\s$ is isomorphic to $F_\e$. The converse is true too; if $F_\s$ is isomorphic to $F_\e$, then the isomorphism will necessary be binormal (with binormal inverse) by Remark~\ref{RemO}, and furthermore, the isomorphism and its inverse then must be given by $\alpha$ and $\beta$, by Lemma~\ref{LemN}. The fact that the form of normal subobjects in a Grandis exact category \cite{G84,G92,G12,G13} is orean (moreover, it is noetherian, by Remark~\ref{RemF}), along with Theorem~5.1 of \cite{JW16}, shows that an orean factorization for which $\alpha$ and $\beta$ are binormal is the same as an orean factorization corresponding to a factorization system $(\mathcal{E},\mathcal{M})$ of a Grandis exact category $(\mathbb{C},\mathcal{N})$, where $\mathcal{E}$ is the class of cokernels relative to $\mathcal{N}$ and $\mathcal{M}$ is the class of kernels relative to $\mathcal{N}$. Note that, as recalled in the Introduction, in the pointed case, a Grandis exact category becomes a Puppe-Mitchell exact category, so we recover fully the earlier result.
\end{remark}

\begin{remark}\label{RemM} The following statements are easy to verify:
\begin{itemize}
\item In an orean factorization $(F_\s,F_\e)$ of a category $\mathbb{C}$, the form $F_\e$ is an isoform if and only if in the corresponding factorization system $(\mathcal{E},\mathcal{M})$ (see Theorem~\ref{ThmV}), the class $\mathcal{E}$ is the class of all isomorphisms. 

\item In this case, $\mathcal{M}$ is the class of all morphisms and every morphism in the category is a monomorphism. 

\item Such orean factorization $(F_\s,F_\e)$ exists if and only if every morphism is a monomorphism and in addition, finite joins and meets of subobjects exist in the category. 

\item Finally, a category $\mathbb{C}$ is a groupoid if and only if it has an orean factorization $(F_\s,F_\e)$ with both $F_\e$ and $F_\s$ being isoforms.  
\end{itemize}
\end{remark}

\begin{lemma}\label{LemAD}
For an orean factorization $(F_\s,F_\e)$, the conormal operator $\alpha$ preserves bottom clusters if and only if for every object $X$, we have: $\bot^\s_X=\top^\s_X$ implies $\bot^\e_X=\top^\e_X$. 
\end{lemma}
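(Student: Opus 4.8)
The plan is to prove the two implications separately: the forward one is a one-line computation, while the backward one rests on identifying a single object at which both forms collapse simultaneously.

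For the forward implication I would assume $\alpha$ preserves bottom clusters and fix an object $X$ with $\bot^\s_X=\top^\s_X$. Since $\alpha$ is conormal it preserves top clusters (Remark~\ref{RemN}), so that $\bot^\e_X=\alpha(\bot^\s_X)=\alpha(\top^\s_X)=\top^\e_X$, using the hypothesis $\alpha(\bot^\s_X)=\bot^\e_X$ together with the assumption $\bot^\s_X=\top^\s_X$.

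For the backward implication I would fix an arbitrary object $X$ and take $m=\iota^\s_{\bot^\s_X}\colon M\to X$, an $F_\s$-embedding of the bottom cluster, so that $\im^\s m=m\cdot^\s\top^\s_M=\bot^\s_X$. The main obstacle, and the heart of the argument, is to show that the domain $M$ satisfies $\bot^\s_M=\top^\s_M$. For this I would invoke Lemma~\ref{LemK}, which tells us that $\ker^\s m=\bot^\s_M$ and that (N1) holds for $m$ relative to $F_\s$. The join formula $(m\cdot^\s S)\cdot^\s m=S\vee^\s\ker^\s m=S$ taken at $S=\top^\s_M$ gives $(m\cdot^\s\top^\s_M)\cdot^\s m=\top^\s_M$; the left-hand side is $\bot^\s_X\cdot^\s m$, which is by the definition of kernel exactly $\ker^\s m=\bot^\s_M$. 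Hence $\top^\s_M=\bot^\s_M$.

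The conclusion is then immediate: applying the hypothesis to the object $M$ yields $\bot^\e_M=\top^\e_M$, whence
\[\alpha(\bot^\s_X)=\im^\e m=m\cdot^\e\top^\e_M=m\cdot^\e\bot^\e_M=\bot^\e_X,\]
where the first equality is the formula for $\alpha$ from Theorem~\ref{ThmF} and the last uses that direct images preserve bottom clusters (Remark~\ref{RemC}). Since $X$ is arbitrary, $\alpha$ preserves bottom clusters.
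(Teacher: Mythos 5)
Your proof is correct and follows essentially the same route as the paper: the backward direction is the paper's argument verbatim in structure (embedding $m$ of $\bot^\s_X$, Lemma~\ref{LemK} to force $\bot^\s_M=\top^\s_M$, then the hypothesis and $m\cdot^\e\bot^\e_M=\bot^\e_X$), with the ``not difficult to show'' step of the paper usefully spelled out via the join formula of (N1). Your forward direction is a slight streamlining, invoking Remark~\ref{RemN} ($\alpha$ preserves top clusters) instead of the paper's explicit computation through Lemmas~\ref{LemQ} and~\ref{LemV}, but this is the same underlying idea.
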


\begin{proof}
Suppose $\alpha$ preserves bottom clusters. Let $X$ be an object such that $\bot^\s_X=\top^\s_X$. Then
\begin{align*}
    \top^\e_X &= 1_X\cdot^\e \top^\e_X & \textrm{(Lemma~\ref{LemQ})}\\
    &= \iota^\s_{\top^\s_X}\cdot^\e \top^\e_X & \textrm{(Lemma~\ref{LemV})}\\
    &= \iota^\s_{\bot^\s_X}\cdot^\e \top^\e_X\\
    &=\im^\e \iota^\s_{\bot^\s_X}\\
    &=\alpha(\bot^\s_X) & \textrm{(Theorem~\ref{ThmF})}\\
    &=\bot^\e_X
\end{align*}
Conversely, suppose $\bot^\s_X=\top^\s_X$ always implies $\bot^\e_X=\top^\e_X$. Consider an object $Y$ and the bottom $F_\s$-cluster $\bot_Y^\s$ in $Y$. Let $m\colon X\to Y$ be an embedding of $\bot_Y^\s$. By applying Lemma~\ref{LemK}, it is not difficult to show that $\bot^\s_X=\top^\s_X$. Then $\bot^\e_X=\top^\e_X$ and so $\alpha(\bot_Y^\s)=m\cdot^\e \top^\e_X=m\cdot^\e \bot^\e_X=\bot^\e_Y$.
\end{proof}

\subsection*{Wyler joins} The construction of `Wyler join' describes the effect of a normal cluster on a conormal one, even when these two clusters do not necessarily belong to the same form. It plays a key role for constructing the main noetherian form in this paper. After defining this notion, we prove a variety of highly technical facts about Wyler joins, which are needed for arriving to the main results of the paper.

\begin{definition}
Consider an orean factorization $(F_\s ,F_\e )$ of a category $\mathbb{C}$. When $A\subseteq_\s X$ and for $R\subseteq_\e X$, we write $A\ast R$ for
  \[A\ast R=(\pi_{R}^\e  \cdot^\s  A)\cdot^\s  \pi_{R}^\e \] 
  and call it a \emph{Wyler join} of $A$ and $R$.
  We write $A\astop R$ for the dual construction, and call it \emph{Wyler meet}: $A\astop R$ is the same as $R\ast A$ relative to $(F_\e^\mathsf{op},F_\s^\mathsf{op})$, and hence,
\[A\astop R=  \iota_{A}^\s\cdot^\e(R \cdot^\e  \iota_{R}^\s).\]
\end{definition}

\begin{remark} $A\ast R$ is independent of the choice of a particular $F_\e$-quotient $\pi^\e_R$, which is an easy consequence of Lemma~\ref{LemQ} and Remark~\ref{RemJ}. The $A\ast R$ construction is an adaptation of the construction from \cite{W71,JW16b}, to the context of an orean factorization. 
\end{remark}

\begin{lemma}\label{LemL}
  For an orean factorization $(F_\s ,F_\e )$, the values of the normal operator $\beta\colon F_\e\to F_\s$ are given by $\beta(R)=\bot^\s\ast R$. Furthermore, the following formula holds:
  \[(\beta(R)\lor^\s \beta(S))\ast (R\lor^\e  S)=\beta(R\lor^\e  S).\]
\end{lemma}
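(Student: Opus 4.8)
The plan is to treat the two assertions separately, deriving the first directly from the orean laws and then bootstrapping it into the second. For the first assertion I would simply unfold the definition of the Wyler join and compute
\[\bot^\s\ast R=(\pi_R^\e\cdot^\s\bot^\s)\cdot^\s\pi_R^\e=\bot^\s\cdot^\s\pi_R^\e=\ker^\s\pi_R^\e=\beta(R),\]
using that the direct image of a bottom cluster is again a bottom cluster (Remark~\ref{RemC}) for the first equality, the definition of the $F_\s$-kernel for the third, and Theorem~\ref{ThmF} for the last. So this part is immediate.

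For the second assertion, write $Q=R\lor^\e S$ and $q=\pi_Q^\e$, so that by the first part $\beta(Q)=\bot^\s\ast Q$ and the goal reduces to
\[\bigl(q\cdot^\s(\beta(R)\lor^\s\beta(S))\bigr)\cdot^\s q=\bot^\s\cdot^\s q.\]
The key structural input will be that $q$ factors through each of $\pi_R^\e$ and $\pi_S^\e$: since the $F_\e$-kernel of $q=\pi_Q^\e$ is $Q=R\lor^\e S$, which is larger than $R$, the universal property of the $F_\e$-quotient $\pi_R^\e$ yields a morphism $p$ with $q=p\circ\pi_R^\e$ (and symmetrically for $S$). Granting this, I would show $q\cdot^\s\beta(R)=\bot^\s$ via
\[q\cdot^\s\beta(R)=p\cdot^\s(\pi_R^\e\cdot^\s\beta(R))=p\cdot^\s\bot^\s=\bot^\s,\]
where the middle equality uses associativity of direct image (Remark~\ref{RemC}) together with the Galois law $\bot^\s\ge\pi_R^\e\cdot^\s(\bot^\s\cdot^\s\pi_R^\e)=\pi_R^\e\cdot^\s\beta(R)$, which forces $\pi_R^\e\cdot^\s\beta(R)=\bot^\s$, and the last equality uses direct image of bottom once more.

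The computation then closes quickly. Because direct image preserves binary joins (Remark~\ref{RemC}),
\[q\cdot^\s(\beta(R)\lor^\s\beta(S))=(q\cdot^\s\beta(R))\lor^\s(q\cdot^\s\beta(S))=\bot^\s\lor^\s\bot^\s=\bot^\s,\]
whence $(\beta(R)\lor^\s\beta(S))\ast Q=\bot^\s\cdot^\s q=\bot^\s\ast Q=\beta(Q)$, as required. I expect the main obstacle to be the factorization step $q=p\circ\pi_R^\e$: the remaining manipulations are routine applications of the Galois-connection laws, but it is here that the hypothesis relating $R$ to $Q$ must be converted, through the universal property of $F_\e$-quotients, into an actual morphism $p$ that lets the associativity law collapse $q\cdot^\s\beta(R)$ down to the bottom cluster.
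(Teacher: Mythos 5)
Your proposal is correct and follows essentially the same route as the paper's proof: the first part is the identical direct computation $\bot^\s\ast R=\bot^\s\cdot^\s\pi^\e_R=\ker^\s\pi^\e_R=\beta(R)$, and for the second part both arguments hinge on the factorization $\pi^\e_{R\lor^\e S}=p\circ\pi^\e_R$ obtained from the universal property of the quotient, together with preservation of joins by direct images, to collapse $\pi^\e_{R\lor^\e S}\cdot^\s(\beta(R)\lor^\s\beta(S))$ to $\bot^\s$. The only cosmetic difference is that you establish $\pi^\e_R\cdot^\s\beta(R)=\bot^\s$ via the counit law $T\ge f\cdot(T\cdot f)$ and substitute directly, where the paper routes the same collapse through the idempotency law $f\cdot((f\cdot S)\cdot f)=f\cdot S$; these are interchangeable instances of the same Galois-connection facts from Remark~\ref{RemC}.
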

\begin{proof}
  $\bot^\s\ast R=(\pi_R^\e\cdot^\s\bot^\s)\cdot^\s \pi_R^\e=\bot^\s\cdot^\s\pi_R^\e=\ker^\s \pi_R^\e=\beta(\ker^\e \pi_R^\e)=\beta(R)$. This proves the first part of the lemma. It also shows that the proving the formula stated in the second part is equivalent to proving that
  \[(\beta(R)\lor^\s \beta(S))\ast (R\lor^\e  S)=\bot^\s\ast(R\lor^\e  S).\]
  By the general properties of a Galois connection (Remark~\ref{RemC}), this in turn is equivalent to
  \[\pi^\e_{R\lor^\e S}\cdot^\s(\beta(R)\lor^\s \beta(S))=\bot^\s.\]
 Since direct images preserve joins (Remark~\ref{RemC}), the equality above is further equivalent to
 $\pi^\e_{R\lor^\e S}\cdot^\s \beta(R)=\bot^\s$ (and a similar formula for $\beta(S)$). We can get this by the factorization $\pi^\e_{R\lor^\e S}=u\circ \pi^\e_{R}$ arising from the universal property of $\pi^\e_R$:
 \begin{align*}
     \pi^\e_{R\lor^\e S}\cdot^\s \beta(R) &=\pi^\e_{R\lor^\e S}\cdot^\s (\bot^\s \ast R)\\
     &=\pi^\e_{R\lor^\e S}\cdot^\s ((\pi_R^\e\cdot^\s \bot^\s)\cdot^\s \pi_R^\e)\\
     &=(u\circ \pi^\e_{R})\cdot^\s ((\pi_R^\e\cdot^\s \bot^\s)\cdot^\s \pi_R^\e)\\
     &=u\cdot^\s  (\pi^\e_{R}\cdot^\s ((\pi_R^\e\cdot^\s \bot^\s)\cdot^\s \pi_R^\e))\\
     &=u\cdot^\s  (\pi^\e_{R}\cdot^\s  \bot^\s)\\
     &=\pi^\e_{R\lor^\e S}\cdot^\s \bot^\s\\
     &=\bot^\s.
     \qedhere 
 \end{align*}
 \end{proof}
 
\begin{remark}\label{RemP}
Applying the standard properties of Galois connections (see Remark~\ref{RemC}), we have the following laws: $A\ast R\ge A$, $(A\ast R)\ast R=A\ast R$. We also have the law $A\ast\bot=A$ thanks to Lemmas~\ref{LemV} and \ref{LemQ}. Thanks to Lemma~\ref{LemL}, we also have the law $\beta(R)=\beta(R)\ast R$.
\end{remark}
  
\begin{theorem}\label{ThmR}
  For an orean factorization $(F_\s ,F_\e )$, the mapping $(A,R)\mapsto A\ast R$ defines an operator $F_\s\times F_\e\to F_\s$.    
\end{theorem}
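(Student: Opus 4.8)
The plan is to use the cluster-level description of operators: since an operator $F_\s\times F_\e\to F_\s$ is determined by its action on clusters together with monotonicity, and since $A\ast R=(\pi^\e_R\cdot^\s A)\cdot^\s\pi^\e_R$ is manifestly an $F_\s$-cluster in the same object $X$ as the pair $(A,R)$, it suffices to verify monotonicity. Recalling the cluster system of the product $F_\s\times F_\e$, this means: given a morphism $f\colon X\to Y$ together with $(A,R)\subseteq_{F_\s\times F_\e}X$ and $(B,S)\subseteq_{F_\s\times F_\e}Y$ satisfying $(B,S)\ge_f(A,R)$ --- that is, $B\ge^\s_f A$ and $S\ge^\e_f R$ --- I must show $B\ast S\ge^\s_f A\ast R$. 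Using the Galois connection of Remark~\ref{RemC}, the hypotheses read $B\ge^\s f\cdot^\s A$ and $S\cdot^\e f\ge^\e R$, and the goal is equivalent to $(B\ast S)\cdot^\s f\ge^\s A\ast R$.

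The key step is to produce a commutative square relating the two quotients. First I compute, via the associativity law $(U\cdot g)\cdot f=U\cdot(g\circ f)$ of Remark~\ref{RemC},
\[\ker^\e(\pi^\e_S\circ f)=\bot^\e\cdot^\e(\pi^\e_S\circ f)=(\bot^\e\cdot^\e\pi^\e_S)\cdot^\e f=\ker^\e\pi^\e_S\cdot^\e f=S\cdot^\e f\ge^\e R=\ker^\e\pi^\e_R.\]
Since $\pi^\e_R$ is a quotient of $R$, any morphism out of $X$ whose $F_\e$-kernel is larger than $R$ factors through $\pi^\e_R$ uniquely; applying this to $\pi^\e_S\circ f$ yields a unique morphism $g$ with $g\circ\pi^\e_R=\pi^\e_S\circ f$. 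I expect this construction of the induced map $g$ between the quotient objects to be the main obstacle, in that it is the one genuinely structural move; everything after it is bookkeeping with the Galois-connection laws.

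With the square in hand I abbreviate $A'=\pi^\e_R\cdot^\s A$ and $B'=\pi^\e_S\cdot^\s B$, so that $A\ast R=A'\cdot^\s\pi^\e_R$ and $B\ast S=B'\cdot^\s\pi^\e_S$. Using associativity of inverse images and the square, I rewrite
\[(B\ast S)\cdot^\s f=(B'\cdot^\s\pi^\e_S)\cdot^\s f=B'\cdot^\s(\pi^\e_S\circ f)=B'\cdot^\s(g\circ\pi^\e_R)=(B'\cdot^\s g)\cdot^\s\pi^\e_R,\]
so by monotonicity of inverse image along $\pi^\e_R$ it is enough to establish $B'\ge^\s g\cdot^\s A'$. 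For this I run the dual (direct-image) associativity together with the square:
\[g\cdot^\s A'=g\cdot^\s(\pi^\e_R\cdot^\s A)=(g\circ\pi^\e_R)\cdot^\s A=(\pi^\e_S\circ f)\cdot^\s A=\pi^\e_S\cdot^\s(f\cdot^\s A).\]
Finally, the hypothesis $B\ge^\s f\cdot^\s A$ and monotonicity of direct image along $\pi^\e_S$ give $B'=\pi^\e_S\cdot^\s B\ge^\s\pi^\e_S\cdot^\s(f\cdot^\s A)=g\cdot^\s A'$, which closes the argument. Well-definedness of $A\ast R$ independently of the chosen quotient $\pi^\e_R$ is already recorded in the remark following the definition, and functoriality of the cluster assignment is automatic from faithfulness, so monotonicity is all that is required.
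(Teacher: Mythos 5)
Your proof is correct and follows essentially the same route as the paper's: both hinge on the induced morphism $g$ (the paper's $u$) between the quotient objects, obtained from the universal property of $\pi^\e_R$, followed by Galois-connection bookkeeping. The only cosmetic difference is that you verify the transposed inequality $(B\ast S)\cdot^\s f\ge^\s A\ast R$ on the inverse-image side, whereas the paper establishes the equivalent $B\ast S\ge^\s f\cdot^\s(A\ast R)$ with direct images (and, incidentally, you spell out the kernel computation justifying the factorization, which the paper leaves implicit).
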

\begin{proof}
  Note that if $S\ge^\e_f R$ then we have a commutative diagram
    \[\xymatrix{X\ar[r]^-{f}\ar[d]_-{\pi^\e _R} & Y\ar[d]_-{\pi^\e_S } \\ X/R\ar@{..>}[r]_-{u} & X/S}\]
  by the universal property of the quotient $\pi^\e _R$. If $(B,S)\ge_f (A,R)$, then 
  \begin{align*}
    B\ast S &= (\pi^\e _S \cdot^\s  B)\cdot^\s \pi^\e _S  \\ 
    &\ge (\pi^\e _S \cdot^\s  (f\cdot^\s  A))\cdot^\s \pi^\e _S   \\ 
    &=(u\cdot^\s  (\pi^\e _R\cdot^\s  A))\cdot^\s \pi^\e _S  \\ 
    &=(u\cdot^\s  (\pi^\e _R\cdot^\s ((\pi^\e _R\cdot^\s  A)\cdot^\s \pi^\e _R)))\cdot^\s \pi^\e _S  \\ 
    &=(u\cdot^\s  (\pi^\e _R\cdot^\s (A\ast R)))\cdot^\s \pi^\e _S  \\
    &=(\pi^\e _S \cdot^\s  (f\cdot^\s (A\ast R)))\cdot^\s \pi^\e _S  \\ 
    &\ge f\cdot^\s (A\ast R)\\
    &\ge_f A\ast R. \qedhere
  \end{align*}  
\end{proof}

\begin{lemma}\label{alphaastlemma}
  For any orean factorization $(F_\s , F_\e )$, the following holds:
    \[R\ge^\e \alpha(A) \quad\iff\quad R\ge^\e \alpha(A\ast R).\]
Further, if the join formula from (N1) holds for $F_\e$, then we have the law
$$\alpha(A)\lor^\e R=\alpha(A\ast R)\lor^\e R.$$
Also, then the law $B=B\ast\alpha(B)$ is equivalent to the following law:
$$A\ast R=A\ast (\alpha(A)\vee^\e R).$$
Finally, if (N1) holds for $F_\e$, then the following law holds: \[R\ge^\e \ker^\e f\quad\implies\quad ((f\cdot^\s  A)\ast (f\cdot^\e  R))\cdot^\s f=A\ast R.\]
\end{lemma}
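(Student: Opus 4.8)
The plan is to reduce the identity to a single statement about an induced morphism on quotients. Write $e=\pi^\e_R\colon X\to X/R$, set $R'=f\cdot^\e R$ (a normal $F_\e$-cluster in $Y$, since $F_\e$ is normal, so it admits a quotient), and write $e'=\pi^\e_{R'}\colon Y\to Y/R'$. The heart of the argument is to produce a morphism $\bar f\colon X/R\to Y/R'$ with $\bar f\circ e=e'\circ f$ and to show that $\bar f$ is an $F_\s$-embedding. Once this is established, the claimed equality follows by a purely formal manipulation of the defining formula $A\ast R=(\pi^\e_R\cdot^\s A)\cdot^\s\pi^\e_R$, using only the associativity laws of direct and inverse images from Remark~\ref{RemC}.

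First I would construct $\bar f$ from the universal property of the quotient $e=\pi^\e_R$. For this I compute the $F_\e$-kernel of $e'\circ f$: by associativity of inverse images and $\ker^\e e'=R'=f\cdot^\e R$, one gets $\ker^\e(e'\circ f)=(\bot^\e\cdot^\e e')\cdot^\e f=(f\cdot^\e R)\cdot^\e f$, which by the join formula of (N1) for $F_\e$ together with the hypothesis $R\ge^\e\ker^\e f$ equals $R\vee^\e\ker^\e f=R$. Since $\ker^\e(e'\circ f)=R=\ker^\e e$, the universal property of $\pi^\e_R$ yields the required $\bar f$ with $\bar f\circ e=e'\circ f$.

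The main obstacle is showing that $\bar f$ is an $F_\s$-embedding; by Lemma~\ref{LemK} this is equivalent to $\ker^\e\bar f=\bot^\e$. From $\bar f\circ e=e'\circ f$ I get $\ker^\e(\bar f)\cdot^\e e=\ker^\e(\bar f\circ e)=\ker^\e(e'\circ f)=R=\ker^\e e=\bot^\e\cdot^\e e$, so $\ker^\e\bar f$ and $\bot^\e$ have the same inverse image along $e$. The delicate point is now to cancel $e$: since $e$ is an $F_\e$-quotient, its $F_\e$-image is the top cluster and (N1) holds for it (Lemma~\ref{LemK}), so the meet formula gives $e\cdot^\e(T\cdot^\e e)=T\wedge^\e\top^\e=T$ for every $T$. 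Applying this to $T=\ker^\e\bar f$ and to $T=\bot^\e$ turns the previous equality into $\ker^\e\bar f=e\cdot^\e(\bot^\e\cdot^\e e)=\bot^\e\wedge^\e\top^\e=\bot^\e$. Hence $\bar f$ is an $F_\s$-embedding, so $\ker^\s\bar f=\bot^\s$ and (N1) holds for $\bar f$ relative to $F_\s$ (again Lemma~\ref{LemK}); in particular $(\bar f\cdot^\s C)\cdot^\s\bar f=C\vee^\s\ker^\s\bar f=C$ for every $F_\s$-cluster $C$ in $X/R$.

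It then remains only to assemble the computation. Writing $C=e\cdot^\s A$, so that $A\ast R=C\cdot^\s e$, I expand the left-hand side as $((e'\cdot^\s(f\cdot^\s A))\cdot^\s e')\cdot^\s f$ and use associativity of inverse images with $e'\circ f=\bar f\circ e$ to rewrite it as $((e'\cdot^\s(f\cdot^\s A))\cdot^\s\bar f)\cdot^\s e$. Associativity of direct images and the same commutation give $e'\cdot^\s(f\cdot^\s A)=(e'\circ f)\cdot^\s A=(\bar f\circ e)\cdot^\s A=\bar f\cdot^\s C$, so the expression becomes $((\bar f\cdot^\s C)\cdot^\s\bar f)\cdot^\s e=C\cdot^\s e=A\ast R$, where the middle step invokes the embedding property of $\bar f$. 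This closes the argument; the only genuinely substantive step is the identification of $\bar f$ as an embedding, everything else being bookkeeping with Galois-connection identities.
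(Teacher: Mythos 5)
Your argument for the final implication is correct, and it is essentially the paper's own proof of that part: the paper likewise obtains a (unique) morphism $g$ with $\pi^\e_{f\cdot^\e R}\circ f=g\circ\pi^\e_R$, shows $\ker^\e g=\bot^\e$ using the join formula of (N1) for $f$ together with the meet formula for the quotient $\pi^\e_R$ (which is automatic by Lemma~\ref{LemK}), concludes via Lemma~\ref{LemK} that $g$ is an $F_\s$-embedding, and then cancels with $(g\cdot^\s C)\cdot^\s g=C$. The only cosmetic difference is that the paper verifies $\ker^\e g=\bot^\e$ by starting from $\top^\e\wedge^\e\ker^\e g=\im^\e\pi^\e_R\wedge^\e\ker^\e g$, whereas you cancel $\pi^\e_R$ at the end; the underlying identities invoked are the same, and your bookkeeping with the associativity laws of Remark~\ref{RemC} is sound.

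The genuine gap is one of coverage: the lemma asserts four things, and your proposal proves only the last. Unaddressed are: (a) the equivalence $R\ge^\e\alpha(A)\iff R\ge^\e\alpha(A\ast R)$, which holds for \emph{any} orean factorization --- its backward direction is just monotonicity from $A\ast R\ge^\s A$, but the forward direction requires conormality of $\alpha$ via Lemma~\ref{LemN} (preservation of direct images) to obtain $(\pi^\e_R\cdot^\e\alpha(A))\cdot^\e\pi^\e_R=\alpha(\pi^\e_R\cdot^\s A)\cdot^\e\pi^\e_R\ge^\e\alpha(A\ast R)$ and then cancel as you did; (b) the law $\alpha(A)\lor^\e R=\alpha(A\ast R)\lor^\e R$ under the join formula of (N1), which follows from the same inequality together with $(\pi^\e_R\cdot^\e\alpha(A))\cdot^\e\pi^\e_R=\alpha(A)\lor^\e R$; and (c) the equivalence of $B=B\ast\alpha(B)$ with $A\ast R=A\ast(\alpha(A)\vee^\e R)$ --- one direction is the specialization $A=B$, $R=\bot^\e$, but the other needs the quotient-composition identity $\pi^\e_{\alpha(A)\lor^\e R}=\pi^\e_{\alpha(\pi^\e_R\cdot^\s A)}\circ\pi^\e_R$ (via Lemmas~\ref{LemN} and~\ref{LemK}) and a computation applying the hypothesis $B=B\ast\alpha(B)$ to $B=\pi^\e_R\cdot^\s A$. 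None of these three claims follows from your embedding argument, so as written the proposal is an incomplete proof of the stated lemma.
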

\begin{proof}
  The backward implication in the equivalence follows from the fact that $A\ast R\ge ^\e A$. For the forward implication, first note that by Lemma~\ref{LemN}, we have:
  \begin{align*}
  (\pi_R^\e\cdot^\e \alpha(A))\cdot^\e \pi_R^\e &=\alpha(\pi_R^\e\cdot^\s A)\cdot^\e \pi_R^\e\\
  &\ge^\e \alpha((\pi_R^\e\cdot^\s A)\cdot^\s \pi_R^\e)\\
  &=\alpha(A\ast R)
  \end{align*} 
  So, if $R\ge^\e \alpha(A)$, then $R=(\pi_R^\e\cdot^\e R)\cdot^\e \pi_R^\e\ge^\e (\pi_R^\e\cdot^\e \alpha(A))\cdot^\e \pi_R^\e\ge^\e \alpha(A\ast R)$.
  Suppose now the join formula from (N1) holds for $F_\e$. By the computation above, we have:
  \[\alpha(A)\lor^\e R=(\pi^\e_R\cdot^\e\alpha(A))\cdot^\e\pi^\e_R\ge^\e \alpha(A\ast R).\]
  Consequently, $\alpha(A)\lor^\e  R=\alpha(A\ast R)\lor^\e R$. 
  The law $A\ast R=A\ast (\alpha(A)\vee^\e R)$ easily implies the law $B=B\ast\alpha(B)$ (just set $A=B$ and $R=\bot^\e$). For the converse, we first note that by Lemmas~\ref{LemN} and \ref{LemK}, we have:
    \[\pi^\e_{\alpha(A)\lor^\e R}=\pi^\e_{(\pi^\e_R\cdot^\e\alpha(A))\cdot^\e \pi^\e_R}=\pi^\e_{\alpha(\pi^\e_R\cdot^\s A)\cdot^\e \pi^\e_R}=\pi^\e_{\alpha(\pi^\e_R\cdot^\s A)}\circ \pi^\e_R.\]   
  We now carry out the following computation, where in the fourth equality we use $B\ast\alpha(B)=B$ for $B=\pi_R^\e\cdot^\s A$:
  \begin{align*}
  A\ast(\alpha(A)\vee^\e R) &= (\pi^\e_{\alpha(A)\lor^\e R}\cdot^\s A)\cdot^\s \pi^\e_{\alpha(A)\lor^\e R}\\
  &=((\pi^\e_{\alpha(\pi^\e_R\cdot^\s A)}\cdot^\s(\pi^\e_R \cdot^\s A))\cdot^\s \pi^\e_{\alpha(\pi^\e_R\cdot^\s A)})\cdot^\s \pi^\e_R\\
  &=((\pi^\e_R \cdot^\s A)\ast\alpha(\pi^\e_R \cdot^\s A))\cdot^\s \pi^\e_R\\
  &=(\pi^\e_R \cdot^\s A)\cdot^\s \pi^\e_R\\
  &=A\ast R.
  \end{align*}
  Suppose now both the meet and the join formula from (N1) hold for $F_\e$. Note that given $f$ and $R$, we have $\pi^\e_{f\cdot^\e R}\circ f=g\circ \pi^\e_R$ for a (unique) $g$. If $R\ge^\e \ker^\e f$, then we have (thanks to Lemma~\ref{LemK}):
  \begin{align*}
      \mathsf{Ker}^\e g &= \top^\e\wedge^\e \mathsf{Ker}^\e g\\
      &=\im^\e \pi^\e_R\wedge^\e \mathsf{Ker}^\e g\\
      &=  \pi^\e_R\cdot^\e(\mathsf{Ker}^\e g\cdot^\e \pi^\e_R)\\
      &=\pi^\e_R\cdot^\e((f\cdot^\e R)\cdot^\e f)\\
     &=\pi^\e_R\cdot^\e(\ker^\e f \vee^\e R)\\
      &=\pi^\e_R\cdot^\e R\\
      &=\bot^\e 
  \end{align*}
  So, by Lemma~\ref{LemK}, $g$ is then an $F_\s$-embedding. Furthermore, we have:
  \begin{align*}
      ((f\cdot^\s A)\ast(f\cdot^\e R))\cdot^\s f&=((\pi^\e_{f\cdot^\e R}\cdot^\s (f\cdot^\s A))\cdot^\s \pi^\e_{f\cdot^\e R})\cdot^\s f\\
      &=(g\cdot^\s (\pi_R^\e\cdot^\s A))\cdot^\s g)\cdot^\s \pi_R^\e\\
      &=(\pi_R^\e\cdot^\s A)\cdot^\s \pi_R^\e\\
      &=A\ast R.\qedhere
  \end{align*}
\end{proof}

\begin{example}\label{ExaR}
  The pair $(F_\s,F_\e)$, where $F_\s$ is the form of subsets of sets and $F_\e$ is the form of equivalence relations of sets, is an orean factorization of the category of sets arising from its epi-mono factorization system (see Example~\ref{ExaK}). In this case, $A\ast R$ is the union of those equivalence classes for $R$ that intersect $A$,
  \[A\ast R=\bigcup\{[x]_R\mid x\in A\}.\]
  As already remarked in Example~\ref{ExaAA}, (N1) holds for the form $F_\e$. 
  The two sides of the equality $\alpha(A)\lor R=\alpha(A\ast R)\lor R$ in Lemma~\ref{alphaastlemma}  evaluate to an equivalence relation $E$ obtained from $R$ by merging all equivalence classes of elements of $A$ into a single equivalence class. In this example, the law $B=B\ast\alpha(B)$ from Lemma~\ref{alphaastlemma} does hold. Essentially, it says that when we turn a non-mepty set $B$ into an equivalence relation $E$ by generating one from the relation $B\times B$, the set $B$ will be one of the equivalence classes of $E$. This would not be necessarily true for the orean factorization $(F_\s,F_\e)$ of, say, the category of groups, where $F_\s$ is the form of subgroups and $F_\e$ is the form of normal subgroups. In this case, for a subgroup $B$ of a group $G$, we would have that $B\ast\alpha(B)$ is the normal subgroup generated by $B$, and hence $B=B\ast\alpha(B)$ if and only if $B$ is normal.  
\end{example}

\begin{lemma}\label{LemAC}
  Consider an orean factorization $(F_\s,F_\e)$. Let $A$ be an $F_\s$-cluster and let $R$ be an $F_\e$-cluster in an object $X$. We have $A\ast R=A$ and $R\ge^\e\alpha(A)$ if and only if there is a pullback
    \[\xymatrix{ \bullet\ar[d]_-{\iota^s_A}\ar[r]^-{\pi^\e_{\top^\e}} & \bullet\ar[d]^-{\iota^\s_{\pi^\e_R\cdot^\s A}}\\ \bullet\ar[r]_-{\pi^\e_R} & \bullet }\]
  Furthermore, when $A\ast R=A$ and $R\ge^\e\alpha(A)$, for an $F_\s$-cluster $B$ such that $A\ge^\s B$, we have $B\ast R=A$ if and only if the composite $\pi^\e_{\top^\e}\circ \iota^\s_{B\cdot^\s\iota^\s_A}$ is an $F_\e$-quotient. 
\end{lemma}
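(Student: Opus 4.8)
The plan is to reduce both assertions to the $(F_\s,F_\e)$-factorization of the single composite $\pi^\e_R\circ\iota^\s_A$ together with Lemma~\ref{LemP}. Using the associativity laws of Remark~\ref{RemC}, its $F_\s$-image is $\pi^\e_R\cdot^\s(\iota^\s_A\cdot^\s\top^\s)=\pi^\e_R\cdot^\s A$ and its $F_\e$-kernel is $(\bot^\e\cdot^\e\pi^\e_R)\cdot^\e\iota^\s_A=R\cdot^\e\iota^\s_A$, so that
\[\pi^\e_R\circ\iota^\s_A=\iota^\s_{\pi^\e_R\cdot^\s A}\circ\pi^\e_{R\cdot^\e\iota^\s_A}.\]
Moreover, the Galois connection of Remark~\ref{RemC} gives $R\cdot^\e\iota^\s_A=\top^\e$ exactly when $R\ge^\e\iota^\s_A\cdot^\e\top^\e=\im^\e\iota^\s_A=\alpha(A)$; thus the right-hand factor is an $F_\e$-quotient of the top cluster precisely under the hypothesis $R\ge^\e\alpha(A)$.

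For the first assertion I would apply Lemma~\ref{LemP} to the conormal cluster $C=\pi^\e_R\cdot^\s A$ along $\pi^\e_R$: a morphism is an embedding of $C\cdot^\s\pi^\e_R=A\ast R$ if and only if it is a pullback of $\iota^\s_C$ along $\pi^\e_R$. If $A\ast R=A$ and $R\ge^\e\alpha(A)$, then $\iota^\s_A$ (being an embedding of $A\ast R$) is such a pullback, and, since $\iota^\s_C$ is monic, the displayed factorization forces the companion (top) edge of the resulting square to equal $\pi^\e_{R\cdot^\e\iota^\s_A}=\pi^\e_{\top^\e}$; this is exactly the asserted pullback square. Conversely, given such a square, Lemma~\ref{LemP} reads its left edge $\iota^\s_A$ as an embedding of $A\ast R$, whence $A=A\ast R$, while monicity of $\iota^\s_C$ forces the top edge to equal $\pi^\e_{R\cdot^\e\iota^\s_A}$; comparing $F_\e$-kernels against the label $\pi^\e_{\top^\e}$ yields $R\cdot^\e\iota^\s_A=\top^\e$, i.e.\ $R\ge^\e\alpha(A)$.

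For the second assertion I would test the quotient condition through Lemma~\ref{LemK}: the composite $g=\pi^\e_{\top^\e}\circ\iota^\s_{B\cdot^\s\iota^\s_A}$ is an $F_\e$-quotient if and only if its $F_\s$-image is top. Here $\im^\s g=\pi^\e_{\top^\e}\cdot^\s(B\cdot^\s\iota^\s_A)$; pushing this along $\iota^\s_C$ using the commutativity $\iota^\s_C\circ\pi^\e_{\top^\e}=\pi^\e_R\circ\iota^\s_A$ of the square from the first part, together with the meet formula $\iota^\s_A\cdot^\s(B\cdot^\s\iota^\s_A)=B\wedge^\s A=B$ for the embedding $\iota^\s_A$ (Lemma~\ref{LemK}, valid since $B\le^\s A$), gives $\iota^\s_C\cdot^\s\im^\s g=\pi^\e_R\cdot^\s B$. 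As $\iota^\s_C$ is an embedding, $\iota^\s_C\cdot^\s(-)$ is injective, so $\im^\s g=\top^\s$ if and only if $\pi^\e_R\cdot^\s B=\pi^\e_R\cdot^\s A$. Finally, both of these are $F_\s$-direct images along $\pi^\e_R$, hence fixed by $S\mapsto\pi^\e_R\cdot^\s(S\cdot^\s\pi^\e_R)$ by the Galois identity $f\cdot((f\cdot S)\cdot f)=f\cdot S$; therefore they agree if and only if $(\pi^\e_R\cdot^\s B)\cdot^\s\pi^\e_R=(\pi^\e_R\cdot^\s A)\cdot^\s\pi^\e_R$, that is, if and only if $B\ast R=A\ast R=A$, which is the claim.

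The anticipated difficulty is bookkeeping rather than conceptual: one must track in which of the two forms each image, kernel, and direct or inverse image is computed, and invoke the isomorphism-theorem identities of (N1) only where Lemma~\ref{LemK} permits them — for the embeddings $\iota^\s_A$ and $\iota^\s_C$ — and never for the general morphism $\pi^\e_R$, for which only the Galois-connection laws of Remark~\ref{RemC} are available. The genuinely delicate point is the final reduction: what is used is not injectivity of $(-)\cdot^\s\pi^\e_R$ on all clusters, but only its injectivity on the $F_\s$-direct images along $\pi^\e_R$, and this is precisely what converts the image-equality $\pi^\e_R\cdot^\s B=\pi^\e_R\cdot^\s A$ into the Wyler-join equality $B\ast R=A$.
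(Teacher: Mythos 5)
Your proof is correct and takes essentially the same route as the paper's: both parts rest on the $(F_\s,F_\e)$-factorization of $\pi^\e_R\circ\iota^\s_A$, the equivalence $R\ge^\e\alpha(A)\iff R\cdot^\e\iota^\s_A=\top^\e$, Lemma~\ref{LemP} for the pullback characterization, and Lemma~\ref{LemK} (top image characterizes quotients, (N1) and cancellation of direct images along embeddings) together with the Galois law $f\cdot((f\cdot S)\cdot f)=f\cdot S$ for the second part. The only difference is cosmetic: you run the paper's chain of equivalences in reverse, comparing $\pi^\e_R\cdot^\s B$ with $\pi^\e_R\cdot^\s A$ via injectivity of $\iota^\s_C\cdot^\s(-)$ rather than via $\iota^\s_{\pi^\e_R\cdot^\s A}$-images of top clusters, which is the same computation repackaged.
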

\begin{proof}
  First we observe that thanks to Theorem~\ref{ThmF}, $R\ge\alpha(A)$ is equivalent to $R\ge^\e \iota^\s_A\cdot^\e\top^\e$, which is further equivalent to $R\cdot^\e \iota^\s_A=\top^\e$.

  Suppose first the named pullback exists. Then, thanks to Lemma~\ref{LemP}, \[A=(\pi^\e_R\cdot^\s A)\cdot^\s\pi^\e_R=A\ast R.\]  Commutativity of the pullback square and Lemma~\ref{LemK} gives $R\cdot^\e \iota^\s_A=\top^\e$.   

  Suppose now we have $A\ast R=A$ and $R\cdot^\e \iota^\s_A=\top^\e$. The equality $A\ast R=A$ along with Lemma~\ref{LemP} and the definition of orean factorization, gives rise to the following pullback.
    \[\xymatrix{ \bullet\ar[d]_-{\iota^s_A}\ar[r]^-{\pi^\e_{R\cdot^\e\iota^\s_A}} & \bullet\ar[d]^-{\iota^\s_{\pi^\e_R\cdot^\s A}}\\ \bullet\ar[r]_-{\pi^\e_R} & \bullet }\]
  Thanks to the equality $R\cdot^\e \iota^\s_A=\top^\e$, the pullback above is exactly the desired one. 

  Consider now an $F_\s$-cluster $B$ such that $A\ge_\s B$, along with the given pullback. We can then extend the pullback diagram to the following commutative diagram, thanks to Lemma~\ref{LemK}.
  \[\xymatrix{
    \bullet
      \ar[r]^-{\iota^\s_{B\cdot^\s\iota^\s_A}}
      \ar[dr]_-{\iota^\s_B} &
    \bullet
      \ar[d]^-{\iota^s_A}
      \ar[r]^-{\pi^\e_{\top^\e}} &
    \bullet
      \ar[d]^-{\iota^\s_{\pi^\e_R\cdot^\s A}}\\ &
    \bullet
      \ar[r]_-{\pi^\e_R} & 
    \bullet }\]
  Thanks to Lemma~\ref{LemK}, we have the following sequence of equivalences (for readability, unnecessary superscripts $\s$ and $\e$ were left out):  
  \begin{align*}
    && A\ast R=A&=B\ast R\\
    \iff && (\pi_R\cdot A)\cdot \pi_R&=(\pi_R\cdot B)\cdot \pi_R\\
    \iff && \pi_R\cdot A&=\pi_R\cdot B\\
    \iff && \iota_{\pi_R\cdot A}\cdot \top^\s&=\pi_R\cdot\iota_B\cdot \top^\s=\pi_R\cdot\iota_A\cdot \iota_{B\cdot\iota_A}\cdot\top^\s=\iota_{\pi_R\cdot A}\cdot\pi_{\top^\e}\cdot\iota_{B\cdot\iota_A}\cdot \top^\s\\
    \iff && \top^\s&=\pi_{\top^\e}\cdot\iota_{B\cdot\iota_A}\cdot\top^\s=\im(\pi_{\top^\e}\circ\iota_{B\cdot\iota_A})\\
    \iff  && 
    \multispan{2}{\quad$\pi_{\top^\e}\circ\iota_{B\cdot\iota_A}$ is an $F_\e$-quotient.\hfill\qedhere}
  \end{align*}
\end{proof}

\begin{remark}\label{RemQ}
The proof of the next technical lemma requires the so-called `restricted modular law' (Lemma~5.1 in \cite{GJ19}), which holds in any noetherian form (and more generally, in any orean form satisfying (N1), as it is evident from the proof given in \cite{GJ19}). It states that for any clusters $X$, $Y$ and $Z$ in the same object, if $X\le Z$ then we have
  \[(X\lor Y)\land Z=X\lor (Y\land Z),\]
provided that $X$ is normal and $Y$ conormal, or $Y$ is normal and $Z$ conormal.
\end{remark}

\begin{lemma}
  \label{inverseimageinteriorlem}
  Consider a form $F$ that is both strongly orean and noetherian. For a conormal cluster $A$ and a normal cluster $R$, we have
  \[A\ast R=\cccl{A\vee^F R},\]
  where $A\ast R$ is relative to the orean factorization $(F_\c,F_\n)$. Furthermore, for any two pairs of conormal-normal $F$-clusters $(A,R)$ and $(B,S)$, if $A=A\ast R$, $R\ge\alpha(A)$, $B=B\ast S$ and $S\ge\alpha(B)$, where $\alpha$ denotes the conormal operator $F_\c\to F_\n$, then
    \[A\lor^F R\le B\lor^F S\quad\implies\quad A\le B\textrm{ and }R\le S.\]
\end{lemma}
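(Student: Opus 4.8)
The plan is as follows. Since $F$ is strongly orean and noetherian, (N2) holds, so Theorem~\ref{ThmT} makes $(F_\c,F_\n)$ an orean factorization, which is what gives meaning to the Wyler join $\ast$ here. I would first prove the identity $A\ast R=\cccl{A\vee^F R}$. Writing $q=\pi^\n_R$ and unfolding $\ast$ relative to $(F_\c,F_\n)$ gives $A\ast R=(q\cdot^\c A)\cdot^\c q$. By Lemma~\ref{LemM} the direct image of the conormal cluster $A$ agrees in $F_\c$ and in $F$, so $q\cdot^\c A=q\cdot^F A$; the inverse-image clause of the same lemma, applied to this conormal cluster, gives $(q\cdot^F A)\cdot^\c q=\cccl{(q\cdot^F A)\cdot^F q}$. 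Finally (N1) for $F$ gives $(q\cdot^F A)\cdot^F q=A\vee^F\ker^F q$, and $\ker^F q=R$ since $q$ is a quotient of the normal cluster $R$ (Theorem~\ref{ThmT}); hence $A\ast R=\cccl{A\vee^F R}$.

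For the second part, the hypotheses $A=A\ast R$ and $B=B\ast S$ become $A=\cccl{A\vee^F R}$ and $B=\cccl{B\vee^F S}$. The inequality $A\le B$ is then immediate from monotonicity of the conormal interior: the largest conormal cluster below $A\vee^F R$ is conormal and lies below $B\vee^F S$, hence below the largest conormal cluster there, so $A=\cccl{A\vee^F R}\le\cccl{B\vee^F S}=B$.

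The harder inequality $R\le S$ I would obtain by pushing the relation through the quotient $q'=\pi^\n_S$. Taking direct images along $q'$ in $A\vee^F R\le B\vee^F S$, using that direct image preserves joins (Remark~\ref{RemC}) and that $q'\cdot^F S=\bot^F$ (as $S=\ker^F q'$), yields $q'\cdot^F R\le^F q'\cdot^F B$. The cluster $q'\cdot^F R$ is normal, since the direct image of a normal cluster under a quotient is normal (Remark~\ref{RemD}). So it suffices to prove that $\bot^F$ is the only normal cluster below $q'\cdot^F B$: then $q'\cdot^F R=\bot^F$, and the Galois connection gives $R\le^F\bot^F\cdot^F q'=\ker^F q'=S$.

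That last claim is the step I expect to be the main obstacle, and it is where the noetherian axioms genuinely interact. I would study the composite $g=q'\circ\iota^\c_B\colon M_B\to X/S$, with $\iota^\c_B$ an embedding of $B$ on the object $M_B$. Its kernel computes as $\ker^F g=\ker^F q'\cdot^F\iota^\c_B=S\cdot^F\iota^\c_B$ (Remark~\ref{RemC}), and the hypothesis $S\ge\alpha(B)=\im^\n\iota^\c_B$ (Theorem~\ref{ThmF}) forces, through the Galois connection of $\iota^\c_B$ in $F_\n$, the equality $S\cdot^\n\iota^\c_B=\top^\n_{M_B}$; thus $\ker^F g$ is exactly the \emph{largest} normal cluster of $M_B$. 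Now for any normal $N\le^F q'\cdot^F B=\im^F g$, the inverse image $N\cdot^F g$ is normal (dual of Lemma~\ref{LemM}) and satisfies $N\cdot^F g\ge^F\ker^F g=\top^\n_{M_B}$, so by maximality $N\cdot^F g=\ker^F g$; since $N\le^F\im^F g$, the meet formula of (N1) gives $N=g\cdot^F(N\cdot^F g)=g\cdot^F\ker^F g=\bot^F$, as required. The delicate point throughout is that $\top^\n$ is the largest normal cluster rather than the top $F$-cluster (these need not coincide), which is precisely why the naive estimate $\alpha(A)\ge A$ is unavailable. I note that Remark~\ref{RemQ} anticipates extracting $R\le S$ directly inside $F$ via the restricted modular law; the quotient argument above seems to me a cleaner packaging of the same content, concentrating all the difficulty in the single computation $\ker^F g=\top^\n_{M_B}$.
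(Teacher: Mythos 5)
Your proof is correct, and while the first identity and the inequality $A\le B$ follow the paper's own argument essentially verbatim (Theorem~\ref{ThmT}, Lemma~\ref{LemM}, (N1), and monotonicity of the conormal interior), your treatment of the hard inequality $R\le S$ takes a genuinely different route. The paper stays inside the fibre over $X$: from $S\ge\alpha(B)$ it derives $S\cdot^\n\iota_B=\top^\n=(R\vee^F S)\cdot^\n\iota_B$, hence $B\wedge^F S=B\wedge^F(R\vee^F S)$, and then concludes $S=R\vee^F S$ by the restricted modular law of Remark~\ref{RemQ}, using (N3) to know $R\vee^F S$ is normal. You instead transport the inequality along $q'=\pi^\n_S$: direct images preserve joins and kill $S$, so $q'\cdot^F R\le q'\cdot^F B$ with $q'\cdot^F R$ normal by Remark~\ref{RemD}, and you reduce everything to showing that $\im^F g$ has no nonzero normal cluster below it when $g=q'\circ\iota^\c_B$ satisfies $\ker^F g=\top^\n_{M_B}$ --- which you verify correctly via maximality of $\top^\n_{M_B}$ among normal clusters together with the meet formula of (N1). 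Both arguments pivot on the same key computation $S\cdot^\n\iota^\c_B=\top^\n$, extracted from $S\ge\alpha(B)$ via Theorem~\ref{ThmF}, and both invoke (N3) in some guise (the paper directly, you through its equivalent reformulation in Remark~\ref{RemD}); but your version replaces the restricted modular law --- which the paper only quotes from \cite{GJ19} --- by the lattice-isomorphism content of (N1) applied in the codomain of the quotient, making the argument more self-contained at the cost of introducing the auxiliary morphism $g$. One presentational nit: in the step ``thus $\ker^F g$ is exactly the largest normal cluster'' you silently use $S\cdot^F\iota^\c_B=S\cdot^\n\iota^\c_B$ (dual of Lemma~\ref{LemM}); this is worth stating explicitly, since it is precisely where strong oreanness of $F$ enters that computation.
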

\begin{proof}
  Firstly, note that by Theorem~\ref{ThmT}, the pair $(F_\c,F_\n)$ is indeed an orean factorization. By Theorem~\ref{ThmT}, $\pi_R^F=\pi_R^\n$ and so by Lemma~\ref{LemM} and (N1),
    \begin{align*}
    A\ast R &=(\pi_R^\n\cdot^\c A)\cdot^\c \pi_R^\n \\ 
    &=(\pi_R^F\cdot^\c A)\cdot^\c \pi_R^F\\ &=\cccl{(\pi_R^F\cdot^F A)\cdot^F \pi_R^F}\\ &=\cccl{A\vee^F R}.
    \end{align*}
  This proves the first part of the lemma.

  Consider now pairs of conormal-normal $F$-clusters $(A,R)$ and $(B,S)$, such that $A=A\ast R$, $R\ge\alpha(A)$, $B=B\ast S$,  $S\ge\alpha(B)$, and $A\lor^F R\le B\lor^F S$. By the first part of the lemma, we have:
  \[A=A\ast R=\cccl{A\vee^F R}\le \cccl{B\vee^F S}=B\ast S= B.\] 
  We now want to prove $R\le S$. First, note that
    \[B\lor^F R\lor^F S=B\lor^F A\lor^F R\lor^F S=B\lor^F S.\]
  We write $\iota_B$ for $\iota^\c_B$, which by Theorem~\ref{ThmT}, equals $\iota^F_B$. From $S\ge \alpha(B)=\iota_B\cdot^\n \top^\n_B$ we get $S\cdot^\n\iota_B= \top^\n_B$, and consequently, $(R\vee^F S)\cdot^\n\iota_B= \top^\n_B$ (note that $R\vee^F S$ is normal by (N3)).  We then have:
  \begin{align*}
    B\land^F S &=\iota_B\cdot^F (S\cdot^F \iota_B) & \textrm{(N1)}\\ &=\iota_B\cdot^F (S\cdot^\n \iota_B) & \textrm{(dual of Lemma~\ref{LemM})} \\ &=\iota_B\cdot^F \top^\n_B & \\ &\le\ncl{\iota_B\cdot^F \top^\n_B}  \\&= \iota_B\cdot^\n \top^\n_B & \textrm{(dual of Lemma~\ref{LemM})} \\ &=\alpha(B).
  \end{align*}    
Similarly, we get  $B\land^F (R\lor^F S) =\alpha(B)$ (just replace $S$ in the computation above with $R\lor^F S$).
Therefore, $B\land^F S=B\land^F(R\lor^F S)$. Using the restricted modular law (Remark~\ref{RemQ}), we have the following (where all joins and meets are performed in the form $F$):
  \begin{align*}
    S&=S\lor(B\land S)\\
     &=S\lor(B\land(R\lor S))\\
     &=(S\lor B)\land (R\lor S)\\
     &=(B\lor (R\lor S))\land(R\lor S)\\
     &=R\lor S.
  \end{align*}
  Thus $R\le S$.
\end{proof}

\section{Decompositions of orean forms}\label{SecG}

\subsection*{General decompositions} 
We can view an orean factorization $(F_\s,F_\e)$ of a category as a decomposition of the category in two subcategories, the subcategory of $F_\s$-embeddings and the subcategory of $F_\e$-quotients, as any morphism in the category is a composite of a morphism from one subcategory with a morphism from the other subcategory. We will now introduce a concept of decomposing an orean form into a pair $(F_\s,F_\e)$.

Let $\mathfrak{t}$ be a binary term in the algebraic theory of bounded lattices. We have the following cases:
\begin{tasks}[label=\empty, after-item-skip=4pt](4)
   \task $\mathfrak{t}_\top(x,y)=\top$
   \task $\mathfrak{t}_1(x,y)=x$
   \task* $\mathfrak{t}_\lor(x,y)=x\lor y$
   \task $\mathfrak{t}_\bot(x,y)=\bot$
   \task $\mathfrak{t}_2(x,y)=y$
   \task $\mathfrak{t}_\land(x,y)=x\land y$
\end{tasks}
For an orean form $F$, we write $\mathfrak{t}^F_X$ to denote the interpretation of $\mathfrak{t}$ in the fibre $F^{-1}(X)$.  

\begin{definition}\label{defB}
A \emph{decomposition} of an orean form $F$ is given by a pair $(\kappa_\s ,\kappa_\e )$ of idempotent operators on $F$ such that 
\begin{itemize}
\item[(D1)] $F_\s=F_{\kappa_\s}$ is a conormal orean form and $F_\e=F_{\kappa_\e}$ is a normal orean form,

\item[(D2)] there is a term $\mathfrak{t}$, called a \emph{decomposing term}, such that
$S=\mathfrak{t}^F_X(\kappa_\s(S),\kappa_\e(S))$ for any object $X$ and cluster $S\subseteq_F X$.
\end{itemize}
In a decomposition, we call the restrictions $F\to F_\s$ and $F\to F_\e$ of $\kappa_\s$ and $\kappa_\e$, respectively, \emph{decomposition projections}, and denote them by $\tau_\s$ and $\tau_e$, respectively.
When $\tau_\s $ and $\tau_\e $ are binormal and the pair $(F_\s,F_\e)$ is a semiexact pair, we say that the decomposition is \emph{semiexact}. If furthermore the subform inclusion $F_\s \to F$ is conormal and the subform inclusion $F_\e \to F$ is normal, then we say that the decomposition is \emph{exact}.
\end{definition}

\begin{convention}
We will use the following short-hand names for a decomposition, depending on its decomposing term.
\begin{itemize}
    \item When the decomposing term is $\mathfrak{t}_\bot$ or $\mathfrak{t}_\top$: \emph{nullary decomposition}.

    \item When the decomposing term is $\mathfrak{t}_1$ or $\mathfrak{t}_2$: \emph{unary decomposition}.
    
    \item When the decomposing term is $\mathfrak{t}_1$: \emph{left decomposition}.
    
    \item When the decomposing term is $\mathfrak{t}_2$: \emph{right decomposition}.
    
    \item When the decomposing term is $\mathfrak{t}_\land$ or $\mathfrak{t}_\lor$: \emph{binary decomposition}.
    
    \item When the decomposing term is $\mathfrak{t}_\land$: \emph{meet decomposition}.
    
    \item When the decomposing term is $\mathfrak{t}_\lor$: \emph{join decomposition}.
\end{itemize}
\end{convention}

\begin{remark}
The notion of (semiexact/exact) decomposition of an orean form $F$ is self-dual: $(\kappa_\s,\kappa_\e)$ is a (semiexact/exact) decomposition if and only if $(\kappa^\mathsf{op}_\e,\kappa^\mathsf{op}_\s)$ is a (semiexact/exact) decomposition of $F^\mathsf{op}$. So duality swaps the roles of $\kappa_\s$ and $\kappa_\e$. Accordingly, it swaps the roles of $F_\s$ and $F_\e$, and the roles of $\tau_\s$ and $\tau_\e$. Note also that duality swaps the decomposing terms in the following pairs: $\mathfrak{t}_\top$ vs $\mathfrak{t}_\bot$; $\mathfrak{t}_1$ vs $\mathfrak{t}_2$; and $\mathfrak{t}_\wedge$ vs $\mathfrak{t}_\vee$. So the notion of
\begin{itemize}
    \item nullary/unary/binary decomposition is self-dual,
    
    \item left decomposition is dual to right decomposition,
    
    \item meet decomposition is dual to join decomposition.
\end{itemize}
\end{remark}

\begin{remark}\label{RemT}
In a decomposition, the requirements of $\tau_\s $ and $\tau_\e $ being binormal with the subform inclusion $F_\s \to F$ being conormal and the subform inclusion $F_\e \to F$ being normal together imply that the pair $(F_\s , F_\e )$ is a semiexact pair, by virtue of Remark~\ref{RemO} (the conormal operator $\alpha\colon F_\c\to F_\n$ is then given by $\alpha(A)=\tau_\e(A)$ and the normal operator $\beta\colon F_\n\to F_\c$ is given by $\beta(R)=\tau_\s(R)$). So a decomposition is exact if and only if it satisfies the former requirements. We note also that when an orean form $F$ has exact decomposition $(\kappa_\s,\kappa_\e)$, necessarily $F_\s=F_\c$ and $F_\e=F_\n$ and so $F$ is strongly orean (thanks to Lemma~\ref{LemU}). We will later show that an orean form $F$ can have at most one exact decomposition.
\end{remark}

\begin{remark}\label{RemW}
A semiexact decomposition of $F$ gives rise to the diagram
\[\xymatrix{ & F\ar@/_10pt/[ld]_-{\tau_\s}\ar@/^10pt/[rd]^-{\tau_\e} & \\ F_\s\ar@<+3pt>[rr]^-{\alpha}\ar[ur] & & F_\e\ar@<+3pt>[ll]^-{\beta}\ar[ul] }\]
where: 
\begin{itemize}
\item $\tau_\s$ is a left inverse of the subform inclusion $F_\s\to F$ and $\tau_\e$ is a left inverse of the subform inclusion $F_\e\to F$.

\item $\alpha$ is conormal and $\beta$ is normal.

\item Both $\tau_\s$ and $\tau_\e$ are binormal.
\end{itemize}
By Remark~\ref{RemO} and Lemma~\ref{LemN}, in the case of exact decomposition, we furthermore have that the composite of $\tau_\e$ with the subform inclusion $F_\s\to F$ is $\alpha$ and the composite of $\tau_\s$ with the subform inclusion $F_\e\to F$ is $\beta$.
\end{remark}

\subsection*{Unary and nullary decompositions} Here we establish characterizations of unary and nullary decompositions.

\begin{theorem}\label{ThmAR}\label{ThmAI}
A pair
$(\kappa_\s,\kappa_\e)$ of operators is a left decomposition of an orean form $F$ if and only if $F$ is conormal, $\kappa_\s$ is an identity operator of $F$ and $\kappa_\e$ is an idempotent operator $F\to F$ such that $F_\e=F_{\kappa_\e}$ is a normal orean form. When $(\kappa_\s,\kappa_\e)$ is a left decomposition of $F$, the following conditions are equivalent:
\begin{itemize}
    \item[(i)] $(\kappa_\s,\kappa_\e)$ is a semiexact decomposition of $F$.
    
    \item[(ii)] $(\kappa_\s,\kappa_\e)$ is an exact decomposition of $F$.
    
    \item[(iii)] $\tau_\e$ is binormal and $F_\e=F_\n$.
    
    \item[(iv)] $\tau_\e$ is conormal and $F_\e=F_\n$.
\end{itemize}
An orean form $F\colon \mathbb{B}\to\mathbb{C}$ has a left exact decomposition if and only if $(F,F_\n)$ is a left exact pair. Moreover, when $F$ has a left exact decomposition, it is necessarily unique: it is the pair $(1_\mathbb{B},\beta\alpha)$, where $\beta$ is the subform inclusion $F_\n\to F$ and $\alpha$ is the unique conormal operator $F\to F_\n$. Furthermore, in this case, both $\alpha$ and $\beta$ are normal. 
\end{theorem}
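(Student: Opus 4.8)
The plan is to treat the three assertions in turn, reducing everything to the hypothesis $\kappa_\s=1_{\mathbb B}$ and to the machinery of semiexact pairs.

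First, the characterization of left decompositions is pure bookkeeping. For a left decomposition the decomposing term is $\mathfrak t_1(x,y)=x$, so (D2) reads $S=\kappa_\s(S)$ for every cluster, i.e.\ $\kappa_\s$ is the identity operator and hence $F_\s=F_{\kappa_\s}=F$. Feeding this into (D1) leaves exactly the conditions that $F$ be conormal and that $F_\e=F_{\kappa_\e}$ be a normal orean form; conversely these, together with $\kappa_\s=1_{\mathbb B}$ and $\mathfrak t=\mathfrak t_1$, manifestly give a left decomposition.

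For the equivalence of (i)--(iv) I would first record that, since $\kappa_\s=1_{\mathbb B}$, the projection $\tau_\s$ and the subform inclusion $F_\s\to F$ are identity operators, hence binormal and conormal respectively (Remark~\ref{RemO}); so by Remark~\ref{RemT} the decomposition is semiexact iff $\tau_\e$ is binormal and $(F,F_\e)$ is a semiexact pair, and exact iff in addition the inclusion $F_\e\to F$ is normal. The implication (ii)$\Rightarrow$(i) is then immediate and (iii)$\Rightarrow$(iv) is trivial. For (iii)$\Rightarrow$(ii): if $F_\e=F_\n$ then, as $F_\c=F$ and $F_\n=F_\e$ are both orean, $F$ is strongly orean, so Theorem~\ref{ThmT} makes the inclusion $F_\n=F_\e\to F$ normal; taking $\alpha=\tau_\e$ and $\beta$ this inclusion exhibits the semiexact pair and verifies exactness. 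For (iv)$\Rightarrow$(iii) I would use the same strong-oreanness and Theorem~\ref{ThmT} to get the inclusion normal, and then invoke the dual of Lemma~\ref{LemU}, whose last clause upgrades a left inverse of a normal subform inclusion to a normal operator, so that the conormal $\tau_\e$ becomes binormal.

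The heart of the proof is (i)$\Rightarrow$(iii). Here $\tau_\e$ is binormal and $(F,F_\e)$ is a semiexact pair. Since $F$ is conormal, Lemma~\ref{LemN} forces the conormal operator of the pair to be $\tau_\e$; since $F_\e$ is normal, the dual of Lemma~\ref{LemN} forces the normal operator $\tau_\e\beta\colon F_\e\to F_\e$ to be the identity, so $(F,F_\e)$ is in fact a \emph{left} exact pair. Lemma~\ref{LemAH} then says $\beta$ restricts to an isomorphism $F_\e\to F_\n$ and that $(F,F_\n)$ is left exact; in particular $F_\n\cong F_\e$ is orean and $F$ is strongly orean. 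The remaining and genuinely delicate task is to promote this isomorphism to the equality $F_\e=F_\n$ of subforms of $F$. The plan is to show that the unique normal operator $\beta\colon F_\e\to F$ \emph{is} the subform inclusion: both $\beta$ and the inclusion are sections of $\tau_\e$, and using binormality of $\tau_\e$ with the identity $\tau_\e\beta=1$ (and the compatibility of $\beta$ with the decomposition diagram of Remark~\ref{RemW}) one identifies $\beta$ with the inclusion, whence the inclusion $F_\e\to F$ is normal and the dual of Lemma~\ref{LemU} yields $F_\e=F_\n$. I expect this identification of $\beta$ with the subform inclusion to be the main obstacle, because a priori $\beta$ need only be \emph{some} normal operator; the argument must genuinely use that the decomposition data pins $\beta$ down, not merely that a semiexact pair exists.

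Finally, for the last statement: if $F$ has a left exact decomposition then (iii) gives $F_\e=F_\n$ with $\tau_\e\colon F\to F_\n$ a binormal left inverse of the inclusion, and setting $\alpha=\tau_\e$ (the unique conormal $F\to F_\n$ by Lemma~\ref{LemN}) and $\beta$ the inclusion, the identity $\alpha\beta=1_{F_\n}$ shows $(F,F_\n)$ is a left exact pair. Conversely, given such a pair with $\alpha\beta=1$, put $\kappa_\e=\beta\alpha$; it is idempotent and, fixing exactly the values of $\beta$, has $F_{\kappa_\e}=F_\n$, so $(1_{\mathbb B},\beta\alpha)$ is a left decomposition with $F_\e=F_\n$, and since $\alpha$ is normal by Lemma~\ref{LemAH}, $\tau_\e=\alpha$ is binormal and (iii) makes it exact. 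Uniqueness is then forced: any left exact decomposition has $\kappa_\s=1_{\mathbb B}$, $F_\e=F_\n$, $\tau_\e=\alpha$ the unique conormal operator, and hence $\kappa_\e=\beta\alpha$. The asserted normality of $\beta$ is Theorem~\ref{ThmT} and that of $\alpha$ is Lemma~\ref{LemAH}.
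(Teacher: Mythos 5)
Most of your proposal tracks the paper's own proof closely: the bookkeeping for the first statement, the trivial implications (ii)$\Rightarrow$(i) and (iii)$\Rightarrow$(iv), the direction (iv)$\Rightarrow$(iii) via the last clause of the dual of Lemma~\ref{LemU}, your direct (iii)$\Rightarrow$(ii) through Theorem~\ref{ThmT} (the paper routes this through (iv)$\Rightarrow$(i) with the dual of Lemma~\ref{LemU}, but your variant is sound), and your treatment of the final statement, including the construction $(1_{\mathbb B},\beta\alpha)$ and uniqueness, which matches the paper's use of the duals of Lemmas~\ref{LemAA}, \ref{LemM} and \ref{LemN} --- there the identification of $\beta$ with the subform inclusion is unproblematic, because the inclusion $F_\n\to F$ is automatically normal once $F_\n$ is orean (kernels in $F_\n$ are computed as in $F$ by the dual of Lemma~\ref{LemM}).

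The genuine gap is exactly where you flag it: (i)$\Rightarrow$(iii). You correctly extract $\alpha=\tau_\e$ from Lemma~\ref{LemN}, $\tau_\e\beta=1_{F_\e}$ from its dual applied to normal operators $F_\e\to F_\e$, and the isomorphism $F_\e\cong F_\n$ from Lemma~\ref{LemAH}; but the promotion of this isomorphism to the equality $F_\e=F_\n$ --- equivalently, the claim that $\beta$ \emph{is} the subform inclusion --- is never carried out, and the ingredient you cite for it is not available: Remark~\ref{RemW} identifies $\beta$ with the composite of $\tau_\s$ and the inclusion only for \emph{exact} decompositions, which is precisely what is being proved, so invoking that ``compatibility'' here is circular. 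The paper closes this step by a one-line appeal to the dual of Lemma~\ref{LemN}: since $F=F_\c$ and $F_\e$ is normal, the normal operator $F_\e\to F_\c$ supplied by semiexactness is unique, and the paper reads this as making the subform inclusion $F_\e\to F$ normal. Your worry that ``a priori $\beta$ need only be some normal operator'' is well founded and shows your sketch cannot be completed from the listed data alone: for instance, take $F$ to be the form of subsets of sets (conormal and antinormal) and $\kappa_\e$ the operator sending every cluster to the top one; then $(1_{\mathbb B},\kappa_\e)$ is a left decomposition with $F_\e=F^\top$, the operator $\tau_\e$ is binormal, $\alpha=\tau_\e$ is conormal, and $\beta(\top_X)=\bot_X$ is a normal operator $F^\top\to F$ with $\tau_\e\beta=1$, yet $\beta$ is not the inclusion and $F^\top\neq F_\n=F^\bot$. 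So binormality of $\tau_\e$ plus the bare existence of a semiexact pair $(F,F_\e)$ does not pin $\beta$ down to the inclusion; any correct completion of (i)$\Rightarrow$(iii) must exploit more than your proposal (or a bare uniqueness citation) supplies, and as written your proof of the central implication is incomplete.
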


\begin{proof}
The first statement of the theorem is immediate from the definition of left decomposition. We prove equivalence of conditions in the second statement.

(i)$\implies$(ii): Since $F=F_\c$ and by semiexactness there is a normal operator $F_\e\to F_\c$, thanks to the dual of Lemma~\ref{LemN} we get that the subform inclusion $F_\e\to F$ is normal. Then, the decomposition must be exact. 

(ii)$\implies$(iii) and (iv)$\implies$(i) are thanks to the dual of Lemma~\ref{LemU}. 

(iii)$\implies$(iv) is trivial. 

To prove the third statement of the theorem, suppose an orean form $F$ has a left exact decomposition. Then the left diagonal operators in the diagram of Remark~\ref{RemW} are identity operators. Combining Remarks~\ref{RemT} and \ref{RemW}, we then get that $(F,F_\n)$ is a left exact pair. From the same remarks, the fourth and the fifth statements of the theorem follow too. Suppose now $(F,F_\n)$ is a left exact pair, where $F$ is an orean form $F\colon\mathbb{B}\to\mathbb{C}$. Then $F_\n$ is orean, and so, by the dual of Lemma~\ref{LemAA}, the subform inclusion $F_\n\to F$ is normal. Moreover, by the dual of Lemma~\ref{LemM}, $F_\n$ is normal, and so, by the dual of Lemma~\ref{LemN}, the subform inclusion $F_\n\to F$ is the unique normal operator $\beta\colon F_\n\to F$. So the conormal operator $\alpha\colon F\to F_\n$ is its left inverse. It then follows easily from the dual of Lemma~\ref{LemM} that $\alpha$ is binormal. So a left exact decomposition of $F$ is given by $(1_\mathbb{B},\beta\alpha)$.
\end{proof}

\begin{remark}
Theorem~\ref{ThmAI} together with Lemma~\ref{LemAH} give that orean forms admitting left exact decomposition are precisely those which are part of a left exact pair $(F,F_\e)$.
\end{remark}

\begin{theorem}\label{ThmAJ}
For an orean form $F\colon\mathbb{B}\to\mathbb{C}$, the following conditions are equivalent:
\begin{itemize}
    \item[(i)] $F$ has a decomposition with $\mathfrak{t}_\bot$ as a decomposing term.
    
    \item[(ii)] $F$ has a nullary decomposition.
    
    \item[(iii)] $F$ is an isoform.
    
    \item[(iv)] $(1_{\mathbb{B}},1_{\mathbb{B}})$ is a decomposition of $F$ which is nullary, unary and binary.
\end{itemize}
Every isoform has a unique decomposition, which is an exact decomposition. 
\end{theorem}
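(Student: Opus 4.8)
The plan is to anchor the whole equivalence at condition (iii), using Lemma~\ref{LemX}, which tells us that $F$ is an isoform precisely when each fibre $F^{-1}(X)$ is a singleton (equivalently, when every top $F$-cluster is a bottom $F$-cluster, so that the unique cluster in each object is simultaneously $\bot^F_X$ and $\top^F_X$). I would prove the cycle $(\mathrm{i})\Rightarrow(\mathrm{ii})\Rightarrow(\mathrm{iii})\Rightarrow(\mathrm{iv})\Rightarrow(\mathrm{i})$, noting first that $(\mathrm{i})\Rightarrow(\mathrm{ii})$ is immediate since $\mathfrak{t}_\bot$ is by convention a nullary decomposing term, and that $(\mathrm{iv})\Rightarrow(\mathrm{ii})$ is equally immediate since (iv) asserts the existence of a nullary decomposition.

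For $(\mathrm{ii})\Rightarrow(\mathrm{iii})$ I would use (D2): if the decomposing term is $\mathfrak{t}_\bot$ (resp.\ $\mathfrak{t}_\top$), then for every cluster $S$ in every object $X$ we get $S=\mathfrak{t}^F_X(\kappa_\s(S),\kappa_\e(S))=\bot^F_X$ (resp.\ $=\top^F_X$), so each fibre collapses to a single element and Lemma~\ref{LemX} yields that $F$ is an isoform. For $(\mathrm{iii})\Rightarrow(\mathrm{iv})$ I would take $\kappa_\s=\kappa_\e=1_\mathbb{B}$: this is idempotent with $F_{1_\mathbb{B}}=F$, and by Lemma~\ref{LemX} an isoform is both conormal and normal, so (D1) holds; moreover the unique cluster $S$ in each fibre satisfies $\mathfrak{t}^F_X(S,S)=S$ for every one of the six basic terms (each term returns its common argument when evaluated at the sole element $S$), so (D2) holds simultaneously for a nullary, a unary and a binary term, giving (iv). To close with $(\mathrm{iv})\Rightarrow(\mathrm{i})$ I would route through the implications already in hand: (iv) yields (ii), hence (iii), and since an isoform has its unique cluster equal to $\bot^F_X$, the pair $(1_\mathbb{B},1_\mathbb{B})$ is in particular a decomposition with decomposing term $\mathfrak{t}_\bot$, which is (i).

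For the final sentence I would treat existence, uniqueness, and exactness of the decomposition of an isoform $F$ separately. Existence is exactly the decomposition $(1_\mathbb{B},1_\mathbb{B})$ produced in $(\mathrm{iii})\Rightarrow(\mathrm{iv})$. For uniqueness, given any decomposition $(\kappa_\s,\kappa_\e)$, each operator sends a cluster to a cluster in the same object; as the fibres are singletons this forces $\kappa_\s S=S=\kappa_\e S$ for every $S$, and since (as noted after Definition~\ref{DefF}) an operator is determined by its action on clusters, while faithfulness and amnesticity of $F$ promote a cluster-fixing operator to the identity functor, we conclude $\kappa_\s=\kappa_\e=1_\mathbb{B}$. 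For exactness I would invoke Remark~\ref{RemT}: here $F_\s=F_\e=F$, the decomposition projections $\tau_\s,\tau_\e$ are identity operators and the subform inclusions $F_\s\to F$, $F_\e\to F$ are identities, all of which are binormal (hence conormal, resp.\ normal) by Remark~\ref{RemO}, while $(F,F)$ is a semiexact pair with $\alpha=\beta=1_\mathbb{B}$; thus the decomposition is exact.

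The steps are all elementary, so the only genuine care is bookkeeping, and that is where I place the main obstacle: ``nullary'' permits either $\mathfrak{t}_\bot$ or $\mathfrak{t}_\top$, so $(\mathrm{iv})\Rightarrow(\mathrm{i})$ cannot be read off directly and must be routed through the isoform condition, and the uniqueness argument must honestly upgrade ``fixes every cluster'' to ``equals the identity functor $1_\mathbb{B}$'' via faithfulness, rather than merely recording that it is the identity on objects.
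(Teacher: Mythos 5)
Your proof is correct and takes essentially the same route as the paper: both anchor everything at Lemma~\ref{LemX}, use (D2) in the nullary cases to collapse each fibre to a singleton, and settle existence, uniqueness and exactness of the isoform's decomposition via the identity pair $(1_{\mathbb{B}},1_{\mathbb{B}})$. Your additional bookkeeping --- routing (iv)$\Rightarrow$(i) through the isoform condition and upgrading a cluster-fixing operator to the identity functor via faithfulness --- merely spells out steps the paper's proof records as trivial (there, as the observation that the only idempotent operator on an isoform is the identity).
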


\begin{proof}
(i)$\implies$(ii) is trivial.

(ii)$\implies$(iii): if $F$ has a nullary decomposition with, say, $\mathfrak{t}_\bot$ as a decomposing term (the other case is dual to this), then every cluster is a bottom cluster and hence $F$ is an isoform (Lemma~\ref{LemX}).

(iii)$\implies$(iv) is obvious after Lemma~\ref{LemX}.

(iv)$\implies$(i) is trivial.

If $F$ is an isoform, then the only idempotent operator on $F$ is the identity operator. It is obvious that $(1_{\mathbb{B}},1_{\mathbb{B}})$ is an exact decomposition. 
\end{proof}

\subsection*{Binary decompositions} Here we establish characterization theorems dealing with binary decompositions of orean forms. We also explore some examples of such decompositions.

\begin{theorem}\label{LemE}\label{ThmZ}
Let $F$ be an orean form.
\begin{itemize}
  \item[(i)] In a join decomposition $(\kappa_\s,\kappa_\e)$ of $F$, each $\kappa_\s S$ is the largest $F_\s $-cluster below $S$, and each $\kappa_\e S$ is the largest $F_\e $-cluster below $S$. In other words, $\kappa_\s$ and $\kappa_\e$ are idempotent co-closure operators on $F$. This implies, in particular, that every join decomposition of an orean form $F$ is uniquely determined by the pair $(F_\s,F_\e)$ of its subforms. 
  
  \item[(ii)] A pair $(F_\s,F_\e)$ of subforms of $F$ gives rise to a join decomposition of $F$ if and only if $F_\s$ is conormal, $F_\e$ is normal, the subform inclusion $F_\s\to F$ has a left inverse $\tau_\s$, the subform inclusion $F_\e\to F$ has a left inverse $\tau_\e$ and for each $S\subseteq_F X$ we have $S =\tau_\s S\lor_X\tau_\e S$. The operators $\tau_\s$ and $\tau_\e$ are then the decomposition projections. 
  
  \item[(iii)] For a join decomposition $(\kappa_\s,\kappa_\e)$ of $F$, the operator $\tau=(\tau_\s,\tau_\e)\colon F\to F_\s \times F_\e $ has a left inverse $\nu\colon F_\s \times F_\e \to F$, 
  \[\nu\colon (A,R)\mapsto A\vee R.\] Moreover, the composite $\tau\nu$ is an idempotent closure operator on $F_\s \times F_\e $ and the mapping $S\mapsto \tau S$ gives an isomorphism $F\approx(F_\s \times F_\e )_{\tau\nu}$. 
\end{itemize}
\end{theorem}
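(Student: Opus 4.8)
The plan is to handle the three parts in order, since (iii) is built on (i) and the converse direction of (ii) reuses the same co-closure observations. Throughout, the only genuine inputs are the closure/co-closure machinery of Remark~\ref{RemK}, its dual, and Theorem~\ref{ThmA}; the task is mostly to recognise each claim as an instance of these.

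For (i), I would begin from the defining identity of a join decomposition (Definition~\ref{defB} with decomposing term $\mathfrak{t}_\lor$), namely $S=\kappa_\s S\vee_X\kappa_\e S$. Since a join dominates each of its arguments, this at once gives $S\ge_X\kappa_\s S$ and $S\ge_X\kappa_\e S$, so both $\kappa_\s$ and $\kappa_\e$ are co-closure operators; idempotence is already built into Definition~\ref{defB}. To see that $\kappa_\s S$ is the \emph{largest} $F_\s$-cluster below $S$, I take an arbitrary $F_\s$-cluster $A$ with $S\ge_X A$; since $A$ is $\kappa_\s$-closed and $\kappa_\s$ is monotone as an operator, $\kappa_\s S\ge_X\kappa_\s A=A$, so $\kappa_\s S$ dominates every such $A$. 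The symmetric argument handles $\kappa_\e$. Uniqueness is then immediate: $\kappa_\s S$ is pinned down purely by $F_\s$ (largest $F_\s$-cluster below $S$), and likewise $\kappa_\e$ by $F_\e$, so the pair $(F_\s,F_\e)$ determines the whole decomposition.

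For (ii), the forward implication is read off Definition~\ref{defB}: (D1) supplies conormality of $F_\s$ and normality of $F_\e$, the decomposition projections $\tau_\s,\tau_\e$ are left inverses of the subform inclusions (each $F_\s$-cluster is $\kappa_\s$-closed), and (D2) is precisely $S=\tau_\s S\vee_X\tau_\e S$. For the converse I would again use $S=\tau_\s S\vee_X\tau_\e S$ to get the co-closure inequalities $S\ge_X\tau_\s S$ and $S\ge_X\tau_\e S$, and then invoke the dual of Remark~\ref{RemK}(ii): a left inverse of a subform inclusion satisfying the co-closure inequality yields an idempotent co-closure operator $\kappa_\s$ with $F_{\kappa_\s}=F_\s$, and similarly $\kappa_\e$ with $F_{\kappa_\e}=F_\e$. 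The single fact not handed to us by hypothesis is that $F_\s$ and $F_\e$ are orean, which I extract from the dual of Theorem~\ref{ThmA} (the fixed-point subform of an idempotent co-closure operator is orean). Together with the assumed conormality/normality this gives (D1), while (D2) is the given join identity, so $(\kappa_\s,\kappa_\e)$ is a join decomposition with $\tau_\s,\tau_\e$ as projections.

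For (iii), I would first verify that $\nu\colon(A,R)\mapsto A\vee R$ is an operator $F_\s\times F_\e\to F$: monotonicity follows because $(B,S)\ge_f(A,R)$ in the product unravels (subform inclusions being full) to $B\ge_f^F A$ and $S\ge_f^F R$, whence $B\vee S\ge f\cdot A\vee f\cdot R=f\cdot(A\vee R)$ by distributivity of direct image over joins (Remark~\ref{RemC}). Next, $\nu$ is a left inverse of $\tau=(\tau_\s,\tau_\e)$, since $\nu\tau S=\tau_\s S\vee\tau_\e S=S$ by the join-decomposition identity. Finally I must check the closure inequality $\tau\nu(A,R)\ge(A,R)$, that is $\tau_\s(A\vee R)\ge A$ and $\tau_\e(A\vee R)\ge R$; this is exactly where part (i) does the work, because $\tau_\s(A\vee R)$ is the largest $F_\s$-cluster below $A\vee R$ while $A$ is an $F_\s$-cluster below $A\vee R$. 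With these three facts, Remark~\ref{RemK}(iii), applied to the operator $\tau\colon F\to F_\s\times F_\e$ and its left inverse $\nu$, yields simultaneously that $\tau\nu$ is an idempotent closure operator on $F_\s\times F_\e$ and that $S\mapsto\tau S$ is an isomorphism $F\approx(F_\s\times F_\e)_{\tau\nu}$. I expect the main obstacle to be organisational rather than computational: the delicate point is lining up the \emph{directions} of the operators correctly when invoking Remark~\ref{RemK}(iii), and the only non-formal step is the pair of inequalities in (iii), which is precisely why it is essential to establish (i) first and reuse its ``largest cluster below'' characterisation.
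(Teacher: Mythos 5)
Your proposal is correct and follows essentially the same route as the paper: the join identity yields the co-closure inequalities, Remark~\ref{RemK} and its dual (with the dual of Theorem~\ref{ThmA} supplying oreanness of $F_\s$ and $F_\e$ in the converse of (ii)) do the structural work, and Remark~\ref{RemK}(iii) applied to $\tau=(\tau_\s,\tau_\e)$ and $\nu$ gives the closure operator and the isomorphism $F\approx(F_\s\times F_\e)_{\tau\nu}$. The only cosmetic difference is that you check monotonicity of $\nu$ by a direct computation with Remark~\ref{RemC}, where the paper factors $\nu$ through the subform inclusion $F_\s\times F_\e\to F\times F$ and the join operator of Example~\ref{ExaO}; both are fine.
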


\begin{proof}
In (i), since $S=\kappa_\s S\vee \kappa_\e S=\tau_\s S\vee \tau_\e S$, it is certainly true that $\kappa_\s S$ is an $F_\s$-cluster below $S$ and $\kappa_\e S$ is an $F_\e$-cluster below $S$. The rest following by duality from Remark~\ref{RemK}. For similar reasons, we have (ii). 

In (iii), that $\nu$ is an operator follows from the fact that it is the composite of the subform inclusion $F_\s\times F_\e\to F\times F$ and the join operator $F\times F\to F$ from Example~\ref{ExaO}. The formula $S=\tau_\s S\vee \tau_\e S$ of join decomposition is equivalent to $\nu$ being a left inverse of $\tau$. To show that $\tau\nu$ is a closure operator on $F_\s\times F_\e$, we need to establish the inequalities $\tau_\s(A\vee R)\ge A$ and $\tau_\e(A\vee R)\ge R$. They follow from (i). The last claim of (iii) then follows from the third bullet point of Remark~\ref{RemK}.  
\end{proof}

\begin{definition}
We call a closure operator on a product $F_\s \times F_\e $ described in Theorem~\ref{ThmZ}(iii) a \emph{join decomposition closure operator}.
\end{definition}

\begin{convention}
By virtue of Theorem~\ref{LemE}(i), to specify a join decomposition $(\kappa_\s,\kappa_\e)$ of an orean form $F$, it is sufficient to specify the subforms $F_\s$ and $F_\e$ of $F$. We then write $F=F_\s\lor F_\e $ to name a join decomposition of $F$. Dually, a meet decomposition will be named by writing $F=F_\s\land F_\e $.
\end{convention}

\begin{theorem}\label{ThmY}
  For any pair $(F_\s ,F_\e )$ of orean forms, where $F_\s$ is conormal and $F_\e$ is normal, there is a join decomposition:
  \[F_\s\times F_\e=(F_\s\times F_\e^\bot)\vee (F_\s^\bot\times F_\e).\]
  The corresponding decomposition projections $\tau_\s$ and $\tau_\e$ are binormal and so this decomposition is semiexact if and only if the pair $(F_\s ,F_\e )$ is a semiexact pair. It is an exact decomposition if and only if the form $F_\e$ is an isoform and $F_\s$ is antinormal.
\end{theorem}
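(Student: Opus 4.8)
The plan is to treat the three assertions---existence of the join decomposition, binormality of the projections together with the semiexactness criterion, and the exactness criterion---in turn, writing $F=F_\s\times F_\e$, $G_\s=F_\s\times F_\e^\bot$ and $G_\e=F_\s^\bot\times F_\e$, and using throughout that all orean structure in a product is computed componentwise (Remark~\ref{RemL}). In the product cluster system the clusters of $F$ are pairs $(A,R)$, those of $G_\s$ are the $(A,\bot^\e)$, and those of $G_\e$ are the $(\bot^\s,R)$.

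For existence I would apply the criterion of Theorem~\ref{ThmZ}(ii). Since $F_\e^\bot$ and $F_\s^\bot$ are isoforms (Lemma~\ref{LemR}), hence conormal and normal (Lemma~\ref{LemX}), and since $(F\times G)_\c=F_\c\times G_\c$ and dually, the forms $G_\s$ and $G_\e$ are respectively conormal and normal. The largest $G_\s$-cluster below $(A,R)$ is $(A,\bot^\e)$ and the largest $G_\e$-cluster below $(A,R)$ is $(\bot^\s,R)$; these give the required left inverses $\tau_\s,\tau_\e$ of the two subform inclusions, and $(A,\bot^\e)\vee(\bot^\s,R)=(A,R)$ is the identity $S=\tau_\s S\vee\tau_\e S$. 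This exhibits $F=G_\s\vee G_\e$ as a join decomposition with $\tau_\s,\tau_\e$ as its projections.

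For the second assertion I would note that the product projection $\varrho_1\colon G_\s\to F_\s$ is an isomorphism because $F_\e^\bot$ is an isoform (Lemma~\ref{LemS}), while the projection $\varrho_1^F\colon F\to F_\s$ is binormal (Lemma~\ref{LemT}); from $\varrho_1\circ\tau_\s=\varrho_1^F$ we get $\tau_\s=\varrho_1^{-1}\circ\varrho_1^F$, a composite of binormal operators and hence binormal (Remark~\ref{RemO}), and dually for $\tau_\e$ via $\varrho_2\colon G_\e\to F_\e$. As the projections are binormal, the decomposition is semiexact precisely when $(G_\s,G_\e)$ is a semiexact pair. Conjugating by the binormal isomorphisms $\varrho_1\colon G_\s\to F_\s$ and $\varrho_2\colon G_\e\to F_\e$ carries a conormal $F_\s\to F_\e$ and a normal $F_\e\to F_\s$ to a conormal $G_\s\to G_\e$ and a normal $G_\e\to G_\s$ and back (closure under composition, Remark~\ref{RemO}); so $(G_\s,G_\e)$ is semiexact iff $(F_\s,F_\e)$ is, which is the claim.

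For exactness I would invoke Remark~\ref{RemT}: with the projections already binormal, the decomposition is exact iff the inclusion $G_\s\to F$ is conormal and $G_\e\to F$ is normal. By Lemma~\ref{LemAA}, $G_\s\to F$ is conormal iff every conormal $F$-cluster lies in $G_\s$; since the conormal $F$-clusters are the $(A,R)$ with $R$ a conormal $F_\e$-cluster, this says every conormal $F_\e$-cluster equals $\bot^\e$. The decisive point is that $\top^\e$ is always conormal (a top cluster is the image of an identity, Remark~\ref{RemN}), so this condition forces $\top^\e=\bot^\e$ in every object, which by Lemma~\ref{LemX} means exactly that $F_\e$ is an isoform (the converse being immediate, as then $G_\s=F$). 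Dually, since bottom clusters are always normal, the dual of Lemma~\ref{LemAA} gives that $G_\e\to F$ is normal iff every normal $F_\s$-cluster equals $\bot^\s$, i.e.\ iff $F_\s$ is antinormal (Definition~\ref{DefE}). Hence the decomposition is exact iff $F_\e$ is an isoform and $F_\s$ is antinormal. I expect this reduction of the inclusion-conormality condition to the isoform condition, via the observation that top clusters are conormal, to be the only genuinely delicate step; everything else is componentwise bookkeeping.
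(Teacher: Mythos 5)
Your proof is correct and follows essentially the same route as the paper's: the same pair of subforms with the same projections $\tau_\s(A,R)=(A,\bot^\e)$ and $\tau_\e(A,R)=(\bot^\s,R)$, the join identity checked componentwise via Remark~\ref{RemL}, binormality of the projections deduced from Lemma~\ref{LemT} together with the isomorphisms of Lemma~\ref{LemS} (precisely the deduction the paper itself records), and the exactness criterion reduced via Lemma~\ref{LemAA} and its dual to the isoform/antinormal conditions, with the asymmetry correctly traced to the fact that top clusters are always conormal while bottom clusters are always normal. One small imprecision is harmless: the conormal clusters of $F_\s\times F_\e$ are the pairs $(\im^\s f,\im^\e f)$ for a \emph{single} morphism $f$, not all pairs $(A,R)$ with $R$ conormal; but since $F_\s$ is conormal the first component imposes no constraint, so the condition that all conormal $F$-clusters lie in $F_\s\times F_\e^\bot$ still reduces exactly to ``every conormal $F_\e$-cluster is a bottom cluster'', which, as you observe, forces $\top^\e=\bot^\e$ and hence the isoform condition via Lemma~\ref{LemX}.
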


\begin{proof} To prove the lemma, it suffices to show that:
\begin{itemize}
    \item[(i)] $F_\s\times F_\e^\bot$ and $F_\s^\bot\times F_\e$ are subforms of $F_\s\times F_\e$, and the corresponding subform inclusions have left inverses given by $\tau_\s(A,R)=(A,\bot^\e)$ and $\tau_\e(A,R)=(\bot^\s,R)$, respectively. 
    \item[(ii)] $F_\s\times F_\e^\bot$ is a conormal orean form and $F_\s^\bot\times F_\e$ is a normal orean form.
    \item[(iii)] For any $F_\s$-cluster $A$ and $F_\e$-cluster $R$ in an object $X$, we have: $$(A,R)=(A,\bot_X^{\e})\vee^{F_\s\times F_\e} (\bot_X^{\s},R).$$
    \item[(iv)] The operators $\tau_\s$ and $\tau_\e$ are binormal.
    \item[(v)] The subform inclusion $F_\s\times F_\e^\bot\to F_\s\times F_\e$ is conormal if and only if $F_\e$ is an isoform. 
    \item[(vi)] The subform inclusion $F_\s^\bot\times F_\e\to F_\s\times F_\e$ is normal if and only if $F_\s$ is antinormal.
\end{itemize}
(i) is easy to see. (ii) follows from the fact that $F_\s$ is a conormal orean form and $F_\e$ is a normal orean form (apply Lemmas~\ref{LemR}, \ref{LemS} and Remark~\ref{RemO}). We have (iii) by Remark~\ref{RemL}. The same Remark~\ref{RemL} can be used to easily verify (iv-vi). It is also possible to deduce (iv) from Lemma~\ref{LemT}, according to which all product projections are binormal. Note that the operator $\tau_\s$ composed with the first product projection $F_\s\times F_\e^\bot\to F_\s$ is the first product projection $F_\s\times F_\e\to F_\s$. Since the former of these product projections is an isomorphism (thanks to Lemmas~\ref{LemR} and~\ref{LemS}), we get that $\tau_\s$ is itself a product projection. Similarly, $\tau_\e$ is a product projection.
\end{proof}

\begin{definition}
For a pair $(F_\s ,F_\e )$ of orean forms, where $F_\s$ is conormal and $F_\e$ is normal, the join decomposition of the product $F_\s \times F_\e $ given by Theorem~\ref{ThmY} will be referred to as the \emph{canonical} join decomposition of $F_\s \times F_\e$.  
\end{definition}

\begin{theorem}\label{ThmM}
An orean form $F$ has an exact join decomposition if and only if it has the following three properties:
  \begin{itemize}
    \item[(i)] Every cluster $S$ decomposes as a join $S=C\vee N$, where $C$ is the largest conormal cluster below $S$ while $N$ is the largest normal cluster below $S$.

    \item[(ii)] Inverse image of the largest normal cluster below a cluster $S$, along a morphism $f$, is the largest normal cluster below the inverse image of $S$ along $f$.

    \item[(iii)] Direct image of the largest normal cluster in an object along a morphism $f$ is the largest normal cluster below the image of $f$. 
  \end{itemize}
  Moreover, when an orean form $F$ has an exact join decomposition, it is necessarily unique, is given by $F=F_\c\vee F_\n$ and we have:
  \begin{itemize}
      \item direct images of normal clusters are normal, i.e., $F_\n$ is closed in $F$ under direct images, 
    
       \item joins of conormal clusters are conormal, and joins of normal clusters are normal, i.e., both $F_\c$ and $F_\n$ are closed in $F$ under joins.
  \end{itemize}
\end{theorem}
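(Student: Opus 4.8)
The plan is to prove necessity and sufficiency separately and then read off the ``moreover'' clause from the constructions. The single fact organizing everything is Theorem~\ref{ThmZ}(i): in a join decomposition the projections $\kappa_\s,\kappa_\e$ are idempotent co-closure operators with $\kappa_\s S=\cccl S$ and $\kappa_\e S=\nccl S$, so that property (i) is literally the defining identity $S=\kappa_\s S\vee\kappa_\e S$ of a join decomposition whose subforms are $F_\c$ and $F_\n$. I will use repeatedly the \emph{dual} of Theorem~\ref{ThmA}: for an idempotent co-closure operator $\kappa$ on an orean form, $F_\kappa$ is closed under direct images and finite joins, while its meets and inverse images are the co-closures (interiors) of those computed in $F$.

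For \emph{necessity}, suppose $F$ has an exact join decomposition. By Remark~\ref{RemT} this forces $F_\s=F_\c$, $F_\e=F_\n$, makes $F$ strongly orean, and (a join decomposition being determined by its subforms, Theorem~\ref{ThmZ}(i)) is the unique such one, namely $F=F_\c\vee F_\n$; this yields (i) and the uniqueness assertion. The two closure statements of the ``moreover'' clause are immediate from the dual of Theorem~\ref{ThmA} applied to $\kappa_\s,\kappa_\e$, which makes $F_\c$ and $F_\n$ closed under joins and direct images. For (ii) I would invoke the isomorphism $\tau=(\kappa_\s,\kappa_\e)\colon F\to(F_\c\times F_\n)_{\tau\nu}$ of Theorem~\ref{ThmZ}(iii): an isomorphism of orean forms preserves inverse images (Remark~\ref{RemO}), inverse images in the closed subform are computed as in the product (Theorem~\ref{ThmA}), hence component-wise (Remark~\ref{RemL}), and reading the second component gives $\nccl{S\cdot^F f}=\nccl S\cdot^F f$. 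Finally (iii) follows by combining conormality of $\tau_\e$ (part of exactness, Remark~\ref{RemT}) with the closure of $F_\n$ under direct images: $f\cdot^F\top_X^\n=\im^\n f=\tau_\e(\im^F f)=\nccl{\im^F f}$.

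For \emph{sufficiency}, assume (i)--(iii) and set $\kappa_\s S=\cccl S$, $\kappa_\e S=\nccl S$, which exist for every $S$ by (i). These are co-closure operators on each fibre; the delicate point is that they be operators, i.e.\ monotone across morphisms. For $\kappa_\s$ this is automatic, since the direct image of a conormal cluster is conormal. For $\kappa_\e$ the symmetric argument fails (direct images of normal clusters need not be normal a priori), and here (ii) is exactly what is needed: from $S_1\le S_2\cdot^F f$ one gets $\nccl{S_1}\le\nccl{S_2\cdot^F f}=\nccl{S_2}\cdot^F f$. Once $\kappa_\s,\kappa_\e$ are idempotent co-closure operators, the dual of Theorem~\ref{ThmA} makes $F_\c=F_{\kappa_\s}$ and $F_\n=F_{\kappa_\e}$ orean (and recovers the closure properties), Lemma~\ref{LemM} and its dual give that $F_\c$ is conormal and $F_\n$ normal, and Theorem~\ref{ThmZ}(ii) assembles them, with (i), into a join decomposition $F=F_\c\vee F_\n$ with decomposition projections $\tau_\s,\tau_\e$.

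It remains to prove exactness, which by Remark~\ref{RemT} means the subform inclusions are conormal/normal and both projections are binormal; this is the main obstacle. The inclusions $F_\c\to F$ and $F_\n\to F$ are conormal, respectively normal, by Lemma~\ref{LemAA} and its dual, and Lemma~\ref{LemU} (with its dual) then supplies $\tau_\s$ conormal and $\tau_\e$ normal. The two remaining halves of binormality are precisely what (iii) and (i) provide, and matching these somewhat asymmetric hypotheses to the two binormality defects is the crux. For $\tau_\e$ conormal, condition (iii) identifies $f\cdot^F\top_X^\n$ with $\nccl{\im^F f}$, which (using closure of $F_\n$ under direct images) rewrites as $\tau_\e(\im^F f)=\im^\n f$. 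For $\tau_\s$ normal I would first observe the easily-overlooked fact that (i) alone forces the bottom cluster to be conormal --- since $\cccl{\bot^F}$ must exist and the only cluster below $\bot^F$ is $\bot^F$ itself --- so $\bot^\c=\bot^F$; then Lemma~\ref{LemM} gives $\ker^\c g=\bot^\c\cdot^\c g=\cccl{\bot^F\cdot^F g}=\tau_\s(\ker^F g)$. With both projections binormal, Remark~\ref{RemT} certifies that $F=F_\c\vee F_\n$ is an exact join decomposition, completing the proof.
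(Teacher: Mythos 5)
Your proposal is correct and follows essentially the same route as the paper's proof: both directions rest on Remark~\ref{RemT}, Theorem~\ref{ThmZ} and the dual of Theorem~\ref{ThmA}, with the co-closure operators given by conormal/normal interiors, monotonicity of the normal-interior operator secured by (ii), and conormality of $\tau_\e$ secured by (iii), exactly as in the paper. The only cosmetic deviations are local: you derive (ii) by passing through the isomorphism $F\approx(F_\c\times F_\n)_{\tau\nu}$ of Theorem~\ref{ThmZ}(iii) where the paper argues in two lines from the fact that inverse images of normal clusters are normal, and you unwind the dual of Lemma~\ref{LemA} by hand (via the observation $\cccl{\bot^F}=\bot^F$ together with the inverse-image formula of Lemma~\ref{LemM}) where the paper simply cites that lemma.
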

\begin{proof}
  We first prove the `only if part' of the first claim of the theorem. Let $F=F_\s \vee F_\e $ be an exact join decomposition of $F$. By Remark~\ref{RemT}, $F_\s=F_\c$ and $F_\e=F_\n$ (and so we get that the exact join decomposition is given by $F=F_\s \vee F_\e $, as claimed in the second part of the theorem). Then, by Theorem~\ref{LemE}, $\tau_\s S$ is the largest conormal cluster below $S$ and $\tau_\e S$ is the largest normal cluster below $S$. Then, the formula $S=\tau_\s S\vee \tau_\e S$ from exact join decomposition gives (i). To show (ii), consider a cluster $S$ in $Y$ and a morphism $f\colon X\to Y$. We want to show that $(\tau_\e S)\cdot^F f=\tau_\e(S\cdot^F f)$. We have $(\tau_\e S)\cdot^F f\ge^F_X \tau_\e(S\cdot^F f)$ by monotonicity of $\tau_\e$. Since inverse image of a normal cluster is normal, and since $\tau_\e(S\cdot^F f)$ is the largest normal cluster below $S\cdot^F f$, we also have $\tau_\e(S\cdot^F f)\ge^F_X (\tau_\e S)\cdot^F f$. This proves (ii). To prove the third property (iii), we first remark that by Theorem~\ref{LemE}, we have an idempotent co-closure operator $\kappa$ on $F$ given by the mapping $S\mapsto \tau_\e S$, for which $F_\kappa=F_\e$. We can then get (iii) by applying the dual of Lemma~\ref{LemA} with the operator $\kappa$ playing the role of the first operator in Lemma~\ref{LemA} (also written as $\kappa$ in Lemma~\ref{LemA}), and the operator $\tau_\e$ playing the role of the second (note that by definition of exact join decomposition, the operator $\tau_\e$ is binormal).  

  We now prove the `if' part of the theorem. Suppose $F$ is an orean form having the properties (i-iii). Let $F_\s$ denote its subform of conormal clusters and let $F_\e$ denote its subform of normal clusters. Assigning to a cluster the largest conormal cluster below it (whose existence is guaranteed by (i)) defines an operator $F\to F_\s$ (monotonicity is guaranteed by the fact that direct images of conormal clusters are conormal). Assigning to a cluster the largest normal cluster below it defines an operator $F\to F_\e$, whose monotonicity is thanks to (ii). It is easy to see that these operators are left inverses of the subform inclusions $F_\s\to F$ and $F_\e\to F$, respectively. Therefore, by the dual of the second bullet point in Remark~\ref{RemK}, they give rise to idempotent co-closure operators on $F$, for which closed clusters are given by the forms $F_\s$ and $F_\e$, respectively. We can then apply (the dual of) Theorem~\ref{ThmA} to get that the forms $F_\s$ and $F_\e $ are orean. It follows by Lemma~\ref{LemU} and its dual that $F_\s$ is a conormal form and $F_\e$ is a normal form. All of this, together with  (i), guarantee that we have a join decomposition $F=F_\s \vee F_\e $. Note that by Lemma~\ref{LemU} and its dual, we furthermore have that the subform inclusion $F_\s \to F$ is conormal and the subform inclusion $F_\e \to F$ is normal. By the dual of Lemma~\ref{LemA}, both $\tau_\s $ and $\tau_\e $ are normal. Moreover, $\tau_\s $ is obviously conormal. By the dual of Lemma~\ref{LemA} and (iii), $\tau_\e $ is conormal. So both $\tau_\s$ and $\tau_\e$ are binormal and we finally get that $F=F_\s \vee F_\e $ is an exact join decomposition of $F$.

  We note that the first claim of the theorem, thanks to Theorem~\ref{LemE}, implies that when $F$ has an exact decomposition, an $F$-cluster $S$ is conormal if and only if $S=\tau_\s S$ and $S$ is normal if and only if $S=\tau_\e S$. We will use this for the proof of the remaining, last part of the theorem. If $F$ has an exact join decomposition, then normal clusters in $F$ are the closed clusters for an idempotent co-closure operator on $F$ (as we have already remarked in the proof of the `if part'), and so direct images as well as joins of normal clusters are normal by the dual of Theorem~\ref{ThmA}. Consider a join $A\lor^F B$ of two conormal clusters. Then 
    \[A\lor^F B\ge^F \tau_\s (A\lor^F B)\ge^F\tau_\s(A)\lor^F\tau_\s( B)=A\lor^F B\]
  and so $A\lor^F B=\tau_\s(A\lor^F B)$, proving that $A\lor^F B$ is conormal.
\end{proof}

\begin{example}
  The form $F$ of subsets of sets has a join decomposition $F=F\vee F^\bot$, where $F^\bot$ is its subform of empty subsets. This decomposition is a particular instance of the one described in Theorem~\ref{ThmW}. By that theorem, it is an exact decomposition. In fact, combining Theorem~\ref{ThmN} and Theorem~\ref{LemE}, we easily get that this decomposition is the unique join decomposition of $F$. We can similarly get that $F$ has unique meet decomposition --- the one given by the dual of Theorem~\ref{ThmX}, which, by the same theorem, is a semiexact decomposition, but not an exact decomposition. 
\end{example}

\begin{example}
  It follows from Theorem~\ref{ThmN} and Theorem~\ref{LemE} that the form of equivalence relations has only one join decomposition, the one given by Theorem~\ref{ThmX}. By that theorem, this decomposition is not semiexact. Similarly, the form of equivalence relations has only one meet decomposition, the one given by the dual of Theorem~\ref{ThmW}, from which it follows that this decomposition is not semiexact.
\end{example}

\begin{example}\label{ExP}
  Consider the form $F$ of additive subgroups of rings from Example~\ref{Ex1}. The conormal hull $F_\c$ of $F$ is the form of subrings of rings, while the normal hull $F_\n$ of $F$ is the form of ideals of rings. Both of these forms are orean. Therefore, $F$ is strongly orean. At the same time, by Theorem~\ref{ThmM}, if $F$ had exact join/meet decomposition, then it would be possible to express every additive subgroup of every ring as a join/meet of a subring of that ring and an ideal in the ring. This is clearly not the case, as, for instance, the ring $\mathbb{Q}$ of rational numbers has many additive subgroups that are not subrings of $\mathbb{Q}$, while it only has two ideals --- $\{0\}$ and $\mathbb{Q}$.
\end{example}

\subsection*{Mixed decompositions of orean forms} Next, we study decompositions that are simultaneously of several different kinds.

\begin{lemma}\label{LemAG}
For a pair $(\kappa_\s,\kappa_\e)$ of operators and an orean form $F$ the following conditions are equivalent:
\begin{itemize}
\item $(\kappa_\s,\kappa_\e)$ is a left join decomposition of $F$.

\item $(\kappa_\s,\kappa_\e)$ is a join decomposition of $F$ and $F=F_\s$ (consequently, $F$ is conormal).

\item $F$ is conormal, $\kappa_\s$ is an identity operator of $F$ and $\kappa_\e$ is an idempotent co-closure operator on $F$ such that $F_\e$ is a normal orean form.   
\end{itemize}
\end{lemma}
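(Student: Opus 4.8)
The plan is to show the three conditions are equivalent by proving the cycle of implications (first)$\Rightarrow$(second)$\Rightarrow$(third)$\Rightarrow$(first). The two results I will lean on are Theorem~\ref{ThmAI}, which characterises left decompositions, and Theorem~\ref{LemE}(i), according to which the projections $\kappa_\s,\kappa_\e$ of a join decomposition are idempotent co-closure operators with $\kappa_\s S$ the largest $F_\s$-cluster below $S$. I will also use repeatedly the fact that, by the conventions of this section, a \emph{left join decomposition} is a single pair $(\kappa_\s,\kappa_\e)$ which is simultaneously a decomposition admitting $\mathfrak{t}_1$ as a decomposing term and admitting $\mathfrak{t}_\lor$ as a decomposing term.

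For (first)$\Rightarrow$(second) I would unpack the decomposing term $\mathfrak{t}_1$: it gives $S=\kappa_\s S$ for every cluster $S$, so $\kappa_\s$ is the identity operator and hence $F_\s=F_{\kappa_\s}=F$. Since (D1) forces $F_\s$ to be conormal, $F$ is conormal, which is exactly the parenthetical assertion; and being a left join decomposition it is in particular a join decomposition, so the second condition holds. For (second)$\Rightarrow$(third), from $F=F_\s$ and (D1) I immediately get that $F$ is conormal. Theorem~\ref{LemE}(i) tells me that $\kappa_\s S$ is the largest $F_\s$-cluster below $S$; but every cluster is an $F_\s$-cluster because $F=F_\s$, so $\kappa_\s S=S$ and $\kappa_\s$ is the identity. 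The same theorem gives that $\kappa_\e$ is an idempotent co-closure operator, while (D1) gives that $F_\e=F_{\kappa_\e}$ is a normal orean form, completing the third condition.

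The step that closes the loop is (third)$\Rightarrow$(first), and it is where I must verify both decomposing terms at once. First I check (D1): since $\kappa_\s$ is the identity, $F_\s=F$, which is a conormal orean form by hypothesis, and $F_\e$ is a normal orean form by hypothesis; both $\kappa_\s$ and $\kappa_\e$ are idempotent (the identity trivially, $\kappa_\e$ as a co-closure operator). Then $\mathfrak{t}_1$ is a decomposing term because $S=\kappa_\s S$ directly, so $(\kappa_\s,\kappa_\e)$ is a left decomposition. Finally $\mathfrak{t}_\lor$ is also a decomposing term: since $\kappa_\e$ is a co-closure operator we have $\kappa_\e S\le_X S$, whence $\kappa_\s S\vee_X\kappa_\e S=S\vee_X\kappa_\e S=S$. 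Thus the pair is simultaneously a left and a join decomposition, i.e.\ a left join decomposition.

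I expect no serious obstacle here: the lemma is essentially a matching-up of the characterisation of left decompositions (Theorem~\ref{ThmAI}) with the co-closure description of join decompositions (Theorem~\ref{LemE}(i)). The only point demanding care is the bookkeeping around what \emph{left join decomposition} means --- namely that one must produce (or consume) both decomposing terms $\mathfrak{t}_1$ and $\mathfrak{t}_\lor$ rather than a single one --- together with checking idempotence of the operators, both of which are routine.
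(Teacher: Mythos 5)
Your proof is correct and takes essentially the same route as the paper, whose entire proof is the remark that the lemma ``follows easily from Theorem~\ref{ThmAI} and Lemma~\ref{LemE}''; your cycle of implications is precisely the routine unpacking of those two results that the paper leaves to the reader. The one point needing care --- that a left join decomposition means both $\mathfrak{t}_1$ and $\mathfrak{t}_\lor$ serve as decomposing terms, so that in (third)$\Rightarrow$(first) one must verify $S=\kappa_\s S$ and $S=\kappa_\s S\vee\kappa_\e S$ separately (the latter via the co-closure inequality $\kappa_\e S\le S$) --- is handled correctly.
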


\begin{proof}
This follows easily from Theorem~\ref{ThmAI} and Lemma \ref{LemE}.
\end{proof}

\begin{lemma}\label{LemAF}
For a pair $(\kappa_\s,\kappa_\e)$ of operators and an orean form $F$ the following conditions are equivalent:
\begin{itemize}
\item $(\kappa_\s,\kappa_\e)$ is a right join decomposition of $F$.

\item $(\kappa_\s,\kappa_\e)$ is a join decomposition of $F$ and $F=F_\e$ (consequently, $F$ is normal).

\item $F$ is normal, $\kappa_\e$ is an identity operator of $F$ and $\kappa_\s$ is an idempotent co-closure operator on $F$ such that $F_\s$ is a conormal orean form.   
\end{itemize}
\end{lemma}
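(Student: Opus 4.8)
The plan is to establish the three equivalences by running the cycle (first)$\Rightarrow$(second)$\Rightarrow$(third)$\Rightarrow$(first), following the same pattern used for the left-handed counterpart in Lemma~\ref{LemAG} but with the roles of $\kappa_\s$ and $\kappa_\e$ interchanged and the meet replaced throughout by a join. Before starting, I would unpack the terminology: by the naming convention, a \emph{right join decomposition} is a decomposition (Definition~\ref{defB}) for which both $\mathfrak{t}_2$ and $\mathfrak{t}_\lor$ serve as decomposing terms, i.e.\ both $S=\kappa_\e(S)$ and $S=\kappa_\s(S)\vee_X\kappa_\e(S)$ hold for every cluster $S\subseteq_F X$.

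For the first implication, I would observe that $S=\kappa_\e(S)$ forces $\kappa_\e$ to be the identity operator, so $F_\e=F_{\kappa_\e}=F$; since $(\kappa_\s,\kappa_\e)$ is in particular a join decomposition and, by (D1), $F_\e$ is a normal orean form, this yields the second condition (and normality of $F$). The second implication is where Theorem~\ref{LemE}(i) does the work: in any join decomposition both $\kappa_\s$ and $\kappa_\e$ are idempotent co-closure operators; the hypothesis $F=F_\e$ then says every cluster is $\kappa_\e$-closed, so $\kappa_\e$ is the identity, while $\kappa_\s$ remains an idempotent co-closure operator with $F_\s=F_{\kappa_\s}$ a conormal orean form by (D1), giving the third condition.

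For the closing implication (third)$\Rightarrow$(first), I would verify the two axioms of Definition~\ref{defB} directly. Axiom (D1) is immediate: $F_\e=F_{\kappa_\e}=F$ is a normal orean form since $F$ is normal, and $F_\s=F_{\kappa_\s}$ is conormal orean by hypothesis. For (D2) I would check that both $\mathfrak{t}_2$ and $\mathfrak{t}_\lor$ are decomposing terms: $\mathfrak{t}_2^F(\kappa_\s(S),\kappa_\e(S))=\kappa_\e(S)=S$ because $\kappa_\e$ is the identity, and $\mathfrak{t}_\lor^F(\kappa_\s(S),\kappa_\e(S))=\kappa_\s(S)\vee_X S=S$ because the co-closure inequality $S\ge_X\kappa_\s(S)$ absorbs the first term. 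Since both operators are idempotent (the identity trivially, $\kappa_\s$ by hypothesis), this exhibits $(\kappa_\s,\kappa_\e)$ as a decomposition which is simultaneously right and join, as required.

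There is no serious obstacle here; the argument is essentially the dual-flavoured twin of Lemma~\ref{LemAG}, and could alternatively be derived wholesale from the dual of Theorem~\ref{ThmAI} (which characterizes right decompositions) together with Theorem~\ref{LemE}(i). The one point demanding care is the bookkeeping of which operator collapses to the identity: in the right-handed setting it is $\kappa_\e$ that becomes the identity while $\kappa_\s$ carries the co-closure structure, the exact mirror image of the left-handed Lemma~\ref{LemAG}.
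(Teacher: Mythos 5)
Your proposal is correct and takes essentially the same route as the paper, whose entire proof is the one-line citation of Theorem~\ref{LemE} together with the dual of Theorem~\ref{ThmAI} --- precisely the combination you spell out in detail and explicitly name as the alternative derivation. Your cycle of implications, with the correct bookkeeping that $\kappa_\e$ collapses to the identity while $\kappa_\s$ carries the co-closure structure, is just the honest unpacking of what the paper declares ``easy.''
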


\begin{proof}
This follows easily from Lemmas~\ref{LemE} and the dual of Theorem~\ref{ThmAI}.  
\end{proof}

\begin{theorem}\label{ThmQ}
  For an orean form $F$, the following conditions are equivalent:
  \begin{itemize}
    \item [(i)] $F$ has a left exact meet decomposition.
    
    \item[(ii)] $F$ is conormal and has an idempotent closure operator for which the closed clusters are the normal clusters.
    
    \item[(iii)] $F$ is conormal and each cluster has normal exterior.
    
    \item[(iv)] $F$ is conormal and has exact meet decomposition.
  \end{itemize}
\end{theorem}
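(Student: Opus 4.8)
The plan is to treat (ii) and (iii) as the conceptual core and route all implications through them. I would prove the cycle $(\mathrm{iii})\Rightarrow(\mathrm{i})\Rightarrow(\mathrm{iv})\Rightarrow(\mathrm{iii})$ together with the separate equivalence $(\mathrm{ii})\Leftrightarrow(\mathrm{iii})$; the unifying object throughout is the assignment $\kappa\colon S\mapsto\ncl S$ sending a cluster to its normal exterior.

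First I would settle $(\mathrm{ii})\Leftrightarrow(\mathrm{iii})$. For $(\mathrm{ii})\Rightarrow(\mathrm{iii})$, if $\kappa$ is an idempotent closure operator with $F_\kappa=F_\n$, then by Remark~\ref{RemK}(i) the cluster $\kappa S$ is the smallest closed cluster above $S$, i.e.\ the smallest normal cluster above $S$; hence $\ncl S=\kappa S$ exists for every $S$. For $(\mathrm{iii})\Rightarrow(\mathrm{ii})$ I would verify that $\kappa\colon S\mapsto\ncl S$ is an idempotent closure operator: the inequality $\kappa S\ge S$ and idempotency are immediate from the definition of normal exterior, while monotonicity ($A_2\ge_f A_1\Rightarrow \ncl{A_2}\ge_f \ncl{A_1}$) follows because the inverse image of a normal cluster is normal (if $N=\bot\cdot g$ then $N\cdot f=\bot\cdot(gf)$ by the associativity law of Remark~\ref{RemC}), so that $\ncl{A_2}\cdot f$ is a normal cluster above $A_1$ and therefore dominates $\ncl{A_1}$. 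Its closed clusters are exactly the normal ones, i.e.\ $F_\kappa=F_\n$.

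Next, $(\mathrm{iii})\Rightarrow(\mathrm{i})$. Using the operator $\kappa=\ncl{-}$ just built, I would propose the pair $(1_{\mathbb B},\kappa)$ as a left exact meet decomposition. Condition (D1) holds since $F_\s=F$ is conormal by hypothesis, while $F_\e=F_\kappa=F_\n$ is orean by Theorem~\ref{ThmA} and hence normal by the dual of Lemma~\ref{LemM}; both identities $S=\kappa_\s S$ and $S=\kappa_\s S\wedge\kappa_\e S=S\wedge\ncl S$ hold, so $\mathfrak t_1$ and $\mathfrak t_\wedge$ are both decomposing terms. The work is in exactness. The inclusion $F_\s\to F$ is the identity, hence conormal; since the closed clusters of $\kappa$ are precisely the normal clusters, the dual of Lemma~\ref{LemZ} gives both that $\kappa$ is normal and that the inclusion $F_\e=F_\kappa\to F$ is normal. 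Then $\tau_\s=1$ is trivially binormal, and $\tau_\e\colon F\to F_\kappa$ is binormal by Lemma~\ref{LemA} (the clause covering normal $\kappa$). Finally $(F_\s,F_\e)$ is a semiexact pair by Remark~\ref{RemT}. This is the step I expect to be the main obstacle, as it requires assembling Theorem~\ref{ThmA}, the duals of Lemmas~\ref{LemM} and \ref{LemZ}, and Lemma~\ref{LemA} in the right order.

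Finally I would close the cycle. The implication $(\mathrm{i})\Rightarrow(\mathrm{iv})$ is immediate: a left exact meet decomposition is in particular an exact meet decomposition, and conormality of $F$ follows since a left decomposition forces $\kappa_\s=1$, whence $F_\s=F$, which (D1) requires to be conormal. For $(\mathrm{iv})\Rightarrow(\mathrm{iii})$, an exact decomposition satisfies $F_\s=F_\c$ and $F_\e=F_\n$ by Remark~\ref{RemT}; by the dual of Theorem~\ref{ThmZ}(i) the projection $\kappa_\e S$ is the smallest $F_\e$-cluster above $S$, i.e.\ the smallest normal cluster above $S$, so $\ncl S=\kappa_\e S$ exists for every $S$, giving (iii) together with the assumed conormality of $F$.
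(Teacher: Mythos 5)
Your proposal is correct, but it takes a genuinely different route from the paper's. The paper proves the single cycle (i)$\implies$(ii)$\implies$(iii)$\implies$(iv)$\implies$(i), and its two substantive implications are obtained by citation of heavy machinery: (iii)$\implies$(iv) is deduced from the dual of Theorem~\ref{ThmM} (the characterization of exact join decompositions, whose conditions (ii) and (iii) trivialize under conormality, exactly as your construction implicitly re-derives), and (iv)$\implies$(i) from the duals of Lemma~\ref{LemAF} and Theorem~\ref{ThmM}, the point being that under conormality $F_\s=F_\c=F$ forces $\kappa_\s$ to be the identity, so the exact meet decomposition is automatically left. You instead prove (ii)$\iff$(iii) and the cycle (iii)$\implies$(i)$\implies$(iv)$\implies$(iii), and at the crux (iii)$\implies$(i) you bypass Theorem~\ref{ThmM} entirely by exhibiting the pair $(1_\mathbb{B},\ncl{(-)})$ and verifying (D1), (D2) and exactness by hand via Theorem~\ref{ThmA}, the duals of Lemmas~\ref{LemM} and~\ref{LemZ}, and Lemma~\ref{LemA} --- all of which are assembled correctly, including the monotonicity of $S\mapsto\ncl{S}$ from stability of normal clusters under inverse images and the minimality of the normal exterior. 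What the paper's approach buys is brevity at the top level, since the nontrivial operator-theoretic checking is amortized into the already-proved Theorem~\ref{ThmM}; what yours buys is a self-contained argument that makes the closure operator $\ncl{(-)}$ and the decomposition explicit, which is arguably more illuminating here because the general conditions of the dual of Theorem~\ref{ThmM} degenerate under conormality and the citation hides how little is actually needed. Both arguments are sound, and your coverage of the equivalences is complete.
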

\begin{proof}
  (i)$\implies$(ii): Let $F$ have an exact left meet decomposition $F= F\land F_\e$. Then $F$ is conormal and $F_\e $ is the subform of $F$ consisting of normal clusters by Remark~\ref{RemT} and the dual of Lemma~\ref{LemAF}. The rest of (ii) follows from Theorem~\ref{LemE}.

  (ii)$\implies$(iii) is trivial.

  (iii)$\implies$(iv) by the dual of Theorem~\ref{ThmM}.
  
  (iv)$\implies$(i) by the duals of Lemma~\ref{LemAF} and Theorem~\ref{ThmM}.
\end{proof}

\begin{remark}\label{RemG}
All examples of forms mentioned in Example~\ref{Ex0} are forms admitting exact left meet decomposition. This follows from an easy corollary of Theorem~\ref{ThmQ} stating that in general, an orean form of subobjects in a pointed category having cokernels always has an exact left meet decomposition. Normal clusters in this case are given by the usual normal subobjects. In fact, a pointed category with finite limits and colimits has a noetherian form admitting (a not necessarily left) exact meet decomposition if and only if the category is semi-abelian. Indeed, by Theorem~\ref{ThmM}, for such noetherian form, inverse images of conormal clusters are conormal. Furthermore, pointedness easily gives that the bottom clusters are conormal (this follows from the fact that the top cluster of the zero object is forced to be normal by exact meet decomposition, and since the zero object only has one quotient, it must be the bottom cluster). Then, by Theorem~2.17 in \cite{vanniekerk19}, the category is semi-abelian. Conversely, if a category is semi-abelian then, as recalled already in Example~\ref{Ex0}, its orean form $F$ of subobjects is noetherian. A semi-abelian category is pointed and has cokernels, so by our earlier remark, $F$ admits exact meet decomposition.   
\end{remark}

\begin{theorem}\label{ThmW}
  A conormal orean form $F$ always has a left join decomposition $F=F\vee F^\bot$. This decomposition is semiexact if and only if it is exact and if and only if $F$ is antinormal.
\end{theorem}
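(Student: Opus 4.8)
The plan is to split the statement into an existence claim and an equivalence claim, and to reduce each to the machinery already developed for left join decompositions and for isoforms.

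For existence, I would exhibit the pair $(\kappa_\s,\kappa_\e)$ explicitly: take $\kappa_\s=1_{\mathbb{B}}$ and let $\kappa_\e$ send every cluster $S\subseteq_F X$ to $\bot^F_X$. The routine step is to check that $\kappa_\e$ is an idempotent co-closure operator: monotonicity amounts to $\bot^F_Y\ge_f\bot^F_X$, which holds because $f\cdot\bot^F_X=\bot^F_Y$ by Remark~\ref{RemC}, while the co-closure inequality $S\ge\kappa_\e S$ and idempotence are immediate. Its closed clusters are exactly the bottom clusters, so $F_{\kappa_\e}=F^\bot$, which is an isoform by Lemma~\ref{LemR} and hence a normal orean form by Lemma~\ref{LemX}. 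With $F$ conormal by hypothesis, the third characterization in Lemma~\ref{LemAG} then certifies that $(\kappa_\s,\kappa_\e)$ is a left join decomposition; by the naming convention this is exactly $F=F\vee F^\bot$.

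For the equivalence, I would first record two facts that strip the problem down. The decomposition projections are $\tau_\s=1$ (binormal, and the inclusion $F_\s\to F$ is the identity, hence conormal) and $\tau_\e\colon S\mapsto\bot$. The key observation is that $\tau_\e$ is \emph{automatically} binormal: in the isoform $F^\bot$ the image and the kernel of any morphism are the unique cluster of the relevant object, which is its bottom $F$-cluster, so both $\tau_\e(\im^F f)=\im^{F^\bot}f$ and $\tau_\e(\ker^F g)=\ker^{F^\bot}g$ hold. Consequently, by Remark~\ref{RemT} the decomposition is \emph{exact} if and only if the remaining inclusion $F^\bot\to F$ is normal, and (since $\tau_\s,\tau_\e$ are already binormal) it is \emph{semiexact} if and only if $(F,F^\bot)$ is a semiexact pair. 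I would then identify both conditions with antinormality: applying Lemma~\ref{LemW} with the isoform $G=F^\bot$ and the conormal form $F$ gives that $(F,F^\bot)$ is a semiexact pair precisely when $F$ is antinormal, so semiexact $\iff$ antinormal; and the inclusion $F^\bot\to F$ of orean forms is normal iff every normal $F$-cluster is an $F^\bot$-cluster, by the dual of Lemma~\ref{LemAA}, which (since bottom clusters are always normal) says exactly that the normal clusters are the bottom clusters, i.e. that $F$ is antinormal, so exact $\iff$ antinormal. Together with the general implication that exact decompositions are semiexact (Remark~\ref{RemT}), this closes the three-way equivalence.

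The step I expect to need the most care is the automatic binormality of $\tau_\e$ and, relatedly, the correct reading of images and kernels inside the isoform $F^\bot$, where top, bottom, and ``the unique cluster'' all coincide. Pinning this down is what lets both the semiexact and the exact conditions collapse onto the single hypothesis of antinormality; after that, the argument is a bookkeeping assembly of Lemmas~\ref{LemAG}, \ref{LemW} and \ref{LemAA} together with Remark~\ref{RemT}.
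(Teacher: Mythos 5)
Your proof is correct, but it takes a different route from the paper's. The paper obtains the decomposition $F=F\vee F^\bot$ by \emph{transport}: it applies Theorem~\ref{ThmY} to the pair $(F,F^\bot)$ to get the canonical join decomposition $F\times F^\bot=(F\times F^\bot)\vee(F^\bot\times F^\bot)$, carries it across the isomorphism given by the first product projection (Lemma~\ref{LemS}), and then inherits all the structural facts from Theorem~\ref{ThmY} wholesale --- binormality of the projections from \ref{ThmY}(iv) (via Lemma~\ref{LemT}), ``semiexact iff $(F,F^\bot)$ is a semiexact pair'', and ``exact iff $F$ is antinormal'' from \ref{ThmY}(v)--(vi) --- before closing, exactly as you do, with Lemma~\ref{LemW} and Lemma~\ref{LemR}. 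You instead build the decomposition in situ: you exhibit $(1,\ S\mapsto\bot)$ directly, verify that $S\mapsto\bot$ is an idempotent co-closure operator with $F_{\kappa_\e}=F^\bot$ (monotonicity from $f\cdot\bot=\bot$ in Remark~\ref{RemC}), invoke Lemma~\ref{LemAG}(iii), prove binormality of $\tau_\e$ by hand from the fact that in the isoform $F^\bot$ image and kernel are the unique (bottom) cluster, and derive ``exact iff antinormal'' via the dual of Lemma~\ref{LemAA} together with the observation that bottom clusters are always normal. Each approach buys something: the paper's is shorter on verification because the canonical product decomposition has already been analysed, and it exhibits this theorem as a specialisation of Theorem~\ref{ThmY}; yours is more elementary and self-contained, avoids the product machinery and Lemma~\ref{LemS} entirely, and makes visible exactly where each hypothesis is used --- in particular your explicit check that $\tau_\e$ is automatically binormal (which you rightly flag as the delicate point) is the direct analogue of what the paper gets for free from Lemma~\ref{LemT}. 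All the individual steps you cite (Lemma~\ref{LemAG}(iii), Lemma~\ref{LemR}, Lemma~\ref{LemX}, Lemma~\ref{LemW}, the dual of Lemma~\ref{LemAA}, Remark~\ref{RemT}) are applied within their stated hypotheses, so there is no gap.
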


\begin{proof} 
If $F$ is conormal, then the join decomposition of Theorem~\ref{ThmY} applied to the pair $(F,F^\bot)$ becomes: 
\[F\times F^\bot=(F\times F^\bot)\vee (F^\bot\times F^\bot).\]
Via the first product projection $F\times F^\bot\to F$, which is an isomorphism (Lemma~\ref{LemS}), the join decomposition above produces the join decomposition $F=F\vee F^\bot$. It is a left decomposition by Lemma~\ref{LemAG}. The properties of the former decomposition attested by Theorem~\ref{ThmY} carry over to the latter decomposition to give:
\begin{itemize}
    \item The decomposition $F=F\vee F^\bot$ is semiexact if and only if $(F,F^\bot)$ is a semiexact pair.
    
    \item The decomposition $F=F\vee F^\bot$ is exact if and only if $F$ is antinormal (since $F^\bot$ is already an isoform --- Lemma~\ref{LemR}).
\end{itemize}
This leaves us to apply Lemma \ref{LemW} (along with Lemma~\ref{LemR}) to conclude that $(F,F^\bot)$ is a semiexact pair if and only if $F$ is antinormal.  
\end{proof}

\begin{theorem}\label{ThmX}
  A normal orean form $F$ always has a right join decomposition $F=F^\bot\vee F$. This decomposition is semiexact if and only if $F$ is anticonormal. It is exact if and only if $F$ is an isoform.
\end{theorem}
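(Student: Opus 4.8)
The plan is to mirror the proof of Theorem~\ref{ThmW}, the conormal counterpart, but with the two factors exchanged. It is worth noting at the outset that Theorem~\ref{ThmX} is \emph{not} the formal dual of Theorem~\ref{ThmW}: dualising a left join decomposition produces a right \emph{meet} decomposition, so the present statement genuinely requires re-running the argument rather than quoting duality. The engine is again Theorem~\ref{ThmY}, applied this time to the pair $(F^\bot,F)$ instead of $(F,F^\bot)$.

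First I would record that, since $F$ is normal and $F^\bot$ is an isoform by Lemma~\ref{LemR} (hence in particular a conormal orean form, and antibinormal), the hypotheses of Theorem~\ref{ThmY} are met for $(F_\s,F_\e)=(F^\bot,F)$. That theorem then yields the join decomposition
\[F^\bot\times F=(F^\bot\times F^\bot)\vee(F^\bot\times F),\]
where I have used that the bottom-subform of the isoform $F^\bot$ is again $F^\bot$, i.e.\ $(F^\bot)^\bot=F^\bot$, so that $F_\s^\bot=F^\bot$. Next I would transport this decomposition along the second product projection $\varrho_2\colon F^\bot\times F\to F$, which is an isomorphism by Lemma~\ref{LemS} precisely because $F^\bot$ is an isoform. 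Under $\varrho_2$ the first summand maps onto the subform $F^\bot$ of bottom clusters of $F$ and the second onto $F$ itself, producing the join decomposition $F=F^\bot\vee F$; since its $F_\e$-component is all of $F$ and $F$ is normal, Lemma~\ref{LemAF} confirms that it is a right join decomposition.

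It remains to transfer the two characterisations supplied by Theorem~\ref{ThmY} across the isomorphism $\varrho_2$. For exactness, Theorem~\ref{ThmY} requires $F_\e=F$ to be an isoform and $F_\s=F^\bot$ to be antinormal; the latter is automatic by Lemma~\ref{LemR}, so the decomposition is exact if and only if $F$ is an isoform. For semiexactness, Theorem~\ref{ThmY} gives that the decomposition is semiexact if and only if $(F^\bot,F)$ is a semiexact pair. The main obstacle is the final identification of this last condition with anticonormality of $F$, and here care with orientation is essential: Lemma~\ref{LemW} is stated for a pair of the form (conormal form, isoform), whereas here the isoform $F^\bot$ sits in the \emph{first} slot and the normal form $F$ in the second. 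One must therefore invoke the formal dual of Lemma~\ref{LemW}, keeping track of the fact that duality sends an isoform to an isoform, a conormal form to a normal form, and, crucially, the property `antinormal' to `anticonormal'. The dual statement then reads that $(F^\bot,F)$ is a semiexact pair if and only if $F$ is anticonormal, which combined with the exactness criterion above completes the proof.
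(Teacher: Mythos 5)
Your proof is correct and follows essentially the same route as the paper's: apply Theorem~\ref{ThmY} to the pair $(F^\bot,F)$, transport the resulting decomposition of $F^\bot\times F$ along the second product projection (an isomorphism by Lemma~\ref{LemS}), identify it as a right decomposition via Lemma~\ref{LemAF}, and settle semiexactness via the dual of Lemma~\ref{LemW} together with Lemma~\ref{LemR}. Your opening observation that Theorem~\ref{ThmX} is not the formal dual of Theorem~\ref{ThmW} (duality turns a left join decomposition into a right meet decomposition) is accurate and correctly explains why the argument must be re-run rather than quoted by duality.
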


\begin{proof}
If $F$ is normal, then the join decomposition of Theorem~\ref{ThmY} applied to the pair $(F^\bot,F)$ becomes: 
\[F^\bot\times F=(F^\bot\times F^\bot)\vee (F^\bot\times F).\]
Via the second product projection $F^\bot\times F\to F$, which is an isomorphism (Lemma~\ref{LemS}), the join decomposition above produces the join decomposition $F=F^\bot\vee F$. It is a right decomposition by Lemma~\ref{LemAF}. The properties of the former decomposition attested by Theorem~\ref{ThmY} carry over to the latter decomposition to give:
\begin{itemize}
    \item The decomposition $F=F^\bot\vee F$ is semiexact if and only if $(F^\bot,F)$ is a semiexact pair.
    
    \item The decomposition $F=F^\bot\vee F$ is exact if and only if $F$ is an isoform (since $F^\bot$ is already antinormal --- Lemmas~\ref{LemR}).
\end{itemize}
This leaves us to apply the dual of Lemma~\ref{LemW} (along with Lemmas~\ref{LemR}) to conclude that $(F^\bot,F)$ is a semiexact pair if and only if $F$ is anticonormal.
\end{proof}

\begin{theorem}\label{ThmP}
  For an orean form $F\colon\mathbb{B}\to\mathbb{C}$, the following conditions are equivalent:
  \begin{itemize}
    \item[(i)] $F$ is binormal.
    
    \item[(ii)] $(1_\mathbb{B},1_\mathbb{B})$ is an exact, left, right, meet and join decomposition of $F$.
    
    \item[(iii)] $F$ has a left decomposition as well as a right decomposition.
    
    \item[(iv)] $F$ has an exact join decomposition and an exact meet decomposition.
    
    \item[(v)] $(1_\mathbb{B},1_\mathbb{B})$ is a decomposition of $F$.
  \end{itemize}
  Moreover, when $F\colon\mathbb{B}\to\mathbb{C}$ is binormal,
  $(1_\mathbb{B},1_\mathbb{B})$ is:
  \begin{itemize}
  \item[(a)] the only decomposition of $F$ that is both a left and a right decomposition;
  
  \item[(b)] the only exact decomposition of $F$.
  \end{itemize}
\end{theorem}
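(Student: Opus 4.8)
The plan is to establish the five conditions equivalent through the cycle (i)$\Rightarrow$(ii), (ii)$\Rightarrow$(iii), (ii)$\Rightarrow$(iv), (ii)$\Rightarrow$(v), together with the three return implications (iii)$\Rightarrow$(i), (v)$\Rightarrow$(i) and (iv)$\Rightarrow$(i). Here (ii) is the natural hub: once $(1_\mathbb{B},1_\mathbb{B})$ is shown to be an exact decomposition that is simultaneously left, right, meet and join, all three implications out of (ii) are read off directly, so the content lies entirely in the implications back into (i), of which only (iv)$\Rightarrow$(i) is substantial.

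For (i)$\Rightarrow$(ii) I would take $\kappa_\s=\kappa_\e=1_\mathbb{B}$. Binormality of $F$ gives $F_\c=F=F_\n$, so $F_{\kappa_\s}=F$ is a conormal orean form and $F_{\kappa_\e}=F$ a normal orean form, settling (D1); (D2) holds simultaneously for each of $\mathfrak{t}_1,\mathfrak{t}_2,\mathfrak{t}_\land,\mathfrak{t}_\lor$, since $\mathfrak{t}(S,S)=S$ in every fibre, so $(1_\mathbb{B},1_\mathbb{B})$ is at once a left, right, meet and join decomposition. Exactness is then immediate: the projections $\tau_\s,\tau_\e$ and the subform inclusions $F_\s\to F$, $F_\e\to F$ are all identity operators, hence binormal by Remark~\ref{RemO}, and $(F,F)$ is trivially a semiexact pair (take $\alpha=\beta=1_\mathbb{B}$). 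Conversely, (v)$\Rightarrow$(i) is forced by (D1): if $\kappa_\s=\kappa_\e=1_\mathbb{B}$ then $F_{\kappa_\s}=F_{\kappa_\e}=F$, so $F$ is both conormal and normal; and (iii)$\Rightarrow$(i) follows from Theorem~\ref{ThmAI} (a left decomposition forces $F$ conormal) together with its dual (a right decomposition forces $F$ normal).

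The hard part will be (iv)$\Rightarrow$(i), which I would handle by a squeezing argument on images. By Remark~\ref{RemT} both the exact join and the exact meet decomposition have $F_\e=F_\n$, and by Theorem~\ref{ThmZ} (and its dual) the corresponding projections $F\to F_\n$ are $S\mapsto\nccl{S}$ for the join decomposition and $S\mapsto\ncl{S}$ for the meet decomposition. Both projections are binormal (this is precisely the exactness hypothesis), hence conormal, so evaluating at an image yields $\nccl{\im^F f}=\im^{F_\n}f=\ncl{\im^F f}$. Since $\nccl{\im^F f}\le \im^F f\le \ncl{\im^F f}$, the cluster $\im^F f$ is squeezed between two equal normal clusters and is therefore itself normal; thus every conormal cluster is normal. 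Now for an arbitrary cluster $S$, the exact join decomposition gives $S=\cccl{S}\vee\nccl{S}$, where $\cccl{S}$ is conormal (hence normal by the previous step) and $\nccl{S}$ is normal; as joins of normal clusters are normal (Theorem~\ref{ThmM}), $S$ is normal, so $F$ is a normal form. Since condition (iv) is self-dual (duality interchanges exact join and exact meet decompositions), applying the same conclusion to $F^\op$ shows that $F$ is conormal, whence $F$ is binormal.

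Finally, (a) and (b) are short uniqueness arguments. For (a), a decomposition that is both left and right must admit both $\mathfrak{t}_1$ and $\mathfrak{t}_2$ as decomposing terms, forcing $\kappa_\s S=S=\kappa_\e S$ for all $S$, i.e.\ $(\kappa_\s,\kappa_\e)=(1_\mathbb{B},1_\mathbb{B})$; existence is guaranteed by (ii). For (b), if $(\kappa_\s,\kappa_\e)$ is any exact decomposition then $F_{\kappa_\s}=F_\c$ and $F_{\kappa_\e}=F_\n$ by Remark~\ref{RemT}, and binormality gives $F_\c=F=F_\n$, so every cluster is $\kappa_\s$- and $\kappa_\e$-closed, again forcing $\kappa_\s=\kappa_\e=1_\mathbb{B}$. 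The only step requiring genuine care is the identification of the two projections with $\nccl{\cdot}$ and $\ncl{\cdot}$ and the observation that their binormality is exactly what exactness supplies; everything else is formal.
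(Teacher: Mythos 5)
Your proof is correct, and its skeleton---hub at (ii), easy returns from (iii) and (v), all the substance in (iv)$\Rightarrow$(i)---matches the paper's; the one genuine divergence is the mechanism for (iv)$\Rightarrow$(i). The paper gets ``conormal implies normal'' from Theorem~\ref{ThmM}: the exact join decomposition makes direct images of normal clusters normal, the exact meet decomposition forces top clusters to be normal, so every image $f\cdot\top$ is normal; dually normal clusters are conormal, and binormality then follows in one stroke since the two classes coincide, giving $\tau_\s K=\tau_\e K$ and hence $K=\tau_\s K\vee\tau_\e K=\tau_\s K$ for every cluster. You instead exploit conormality of the two binormal projections onto $F_\n$ (identified via Theorem~\ref{ThmZ} and its dual as $\nccl{(-)}$ and $\ncl{(-)}$) to squeeze $\im^F f$ between $\nccl{\im^F f}=\im^\n f=\ncl{\im^F f}$; this is a clean alternative, with the small silent ingredient that both decompositions yield the \emph{same} subform $F_\e=F_\n$ of $F$ (Remark~\ref{RemT}), whose orean structure is intrinsic to the subform, so $\im^\n f$ is unambiguous---harmless, but worth a word. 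You then need the join formula together with closure of $F_\n$ under joins (last part of Theorem~\ref{ThmM}) to get $F$ normal, and a final appeal to self-duality of (iv) for conormality, whereas the paper applies duality one step earlier, at the level of ``conormal implies normal''; the two placements are interchangeable. Your handling of (i)$\Rightarrow$(ii), (v)$\Rightarrow$(i), (iii)$\Rightarrow$(i) via Theorem~\ref{ThmAI} and its dual, and of (a) and (b) via Remark~\ref{RemT}, coincides with what the paper leaves as ``easy''/``obvious'', and is carried out correctly.
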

\begin{proof} The implications (i)$\implies$(ii) is easy, while (ii)$\implies$(iii),  (ii)$\implies$(iv),  (ii)$\implies$(v) are trivial. The implications (v)$\implies$(i) and (iii)$\implies$(i) are also easy. We prove (iv)$\implies$(i). If $F$ has an exact join decomposition, then direct images of normal clusters are normal by Theorem~\ref{ThmM}. If $F$ also has an exact meet decomposition, then top clusters are all normal.  Consequently, all conormal clusters are normal.  Dually, all normal clusters are also conormal. Then in each presentation $K=\tau_\s K\lor\tau_\e K$, we have $\tau_\s K=\tau_\e K$ and so $K=\tau_\s K=\tau_\e K$. This means that every cluster is both normal and conormal, and so $F$ is binormal.

As for the last claim in the theorem, (a) is obvious and (b) follows from Remark~\ref{RemT}. 
\end{proof}

\begin{theorem}\label{ThmAK}
An orean form $F$ has at most one exact decomposition. It is a left decomposition if and only if $(F_\c,F_\n)$ is a left exact pair.
\end{theorem}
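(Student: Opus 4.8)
The plan is to first pin down the subforms and then argue according to the decomposing term. By Remark~\ref{RemT}, any exact decomposition $(\kappa_\s,\kappa_\e)$ of $F$ necessarily has $F_\s=F_\c$ and $F_\e=F_\n$, so the only freedom lies in the idempotent operators $\kappa_\s,\kappa_\e$ together with the choice of decomposing term $\mathfrak{t}$. I would split into three cases according to whether $F$ is conormal, normal, or neither. If $F$ is conormal, then $F_\c=F$ forces $\kappa_\s=1_\mathbb{B}$, so $(\kappa_\s,\kappa_\e)$ is a left (exact) decomposition and Theorem~\ref{ThmAI} makes it the unique pair $(1_\mathbb{B},\beta\alpha)$; dually if $F$ is normal. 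If $F$ is neither conormal nor normal, then the decomposing term cannot be $\mathfrak{t}_1$ or $\mathfrak{t}_2$ (these force conormality, resp.\ normality, via $\kappa_\s=1$ or $\kappa_\e=1$), nor $\mathfrak{t}_\top$ or $\mathfrak{t}_\bot$ (these force $F$ to be an isoform, hence conormal, by Theorem~\ref{ThmAJ}); so $\mathfrak{t}$ is $\mathfrak{t}_\lor$ or $\mathfrak{t}_\land$, making the decomposition an exact join or an exact meet decomposition. No exact decomposition of such $F$ can be of both kinds, since that would make $F$ binormal by Theorem~\ref{ThmP}(iv); hence all exact decompositions of $F$ are of a single kind, and uniqueness follows from Theorem~\ref{ThmM} (or its dual). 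This establishes that $F$ has at most one exact decomposition.

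For the second statement I assume throughout that the unique exact decomposition $D=(\kappa_\s,\kappa_\e)$ exists (the claim is vacuous otherwise, which matters: for the form of additive subgroups of rings, $(F_\c,F_\n)$ is a left exact pair yet, by Example~\ref{ExP}, no exact decomposition exists at all). For the forward implication, if $D$ is a left decomposition then its decomposing term is $\mathfrak{t}_1$, so $\kappa_\s=1_\mathbb{B}$ and every cluster is $\kappa_\s$-closed, i.e.\ conormal; thus $F=F_\c$ and $D$ is a left exact decomposition. Theorem~\ref{ThmAI} then yields that $(F,F_\n)=(F_\c,F_\n)$ is a left exact pair.

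The hard part is the converse: assuming $(F_\c,F_\n)$ is a left exact pair, I must show $D$ is a left decomposition, for which it suffices to prove $F$ is conormal (and then invoke the conormal case of the uniqueness argument). By Lemma~\ref{LemN} the operators $\alpha\colon F_\c\to F_\n$ and $\beta\colon F_\n\to F_\c$ of the semiexact pair are uniquely determined, and by Remark~\ref{RemW} they are the restrictions $\alpha=\tau_\e\,|_{F_\c}$ and $\beta=\tau_\s\,|_{F_\n}$ of the decomposition projections; the left-exactness hypothesis then reads $\alpha\beta=1_{F_\n}$, that is, $\tau_\e(\tau_\s R)=R$ for every normal cluster $R$. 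Feeding this into the classification, in the join case $\tau_\s R$ is the largest conormal cluster below $R$ and $\tau_\e$ returns the largest normal cluster below its argument (Theorem~\ref{ThmZ}(i)), so $\tau_\e(\tau_\s R)\le R$, and equality forces $R\le\tau_\s R$, i.e.\ $R$ is itself conormal; since joins of conormal clusters are conormal (Theorem~\ref{ThmM}) and every cluster decomposes as the join of a conormal and a normal one, $F$ is conormal. The meet case is dual, and the left, right, and nullary cases give conormality (or binormality) immediately. The main obstacle is precisely this computation showing that $\alpha\beta=1_{F_\n}$ collapses $F$ onto its conormal part; once $F$ is seen to be conormal, the already-established uniqueness identifies $D$ with the left exact decomposition.
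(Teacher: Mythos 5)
Your proposal is correct in substance but takes a genuinely different route from the paper's, with one step whose stated justification is wrong and needs repair. The paper organizes its proof as a case analysis on the \emph{type} of the decomposition (nullary, left, right, join, meet), and for the converse of the second claim in the binary cases works directly with the operators: from $\alpha\beta=1_{F_\n}$ it derives the chain $\kappa_\s\kappa_\e(C)\le\kappa_\e(C)=\alpha\beta\kappa_\e(C)=\kappa_\e\kappa_\s\kappa_\e(C)\le\kappa_\s\kappa_\e(C)$ (and its order-reversed analogue in the meet case) to conclude $\kappa_\s=1_\mathbb{B}$ outright. You instead split on whether $F$ is conormal, normal, or neither: the reduction of the first two cases to Theorem~\ref{ThmAI} and its dual is sound, since by Remark~\ref{RemT} exactness pins $F_{\kappa_\s}=F_\c$ and $F_{\kappa_\e}=F_\n$, so $F=F_\c$ does force $\kappa_\s=1_\mathbb{B}$ and hence makes any exact decomposition a left one; and in the remaining case your elimination of nullary and unary terms plus Theorems~\ref{ThmP} and \ref{ThmM} correctly yields uniqueness. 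For the converse of the second claim, your argument that $\tau_\e\tau_\s R=R$ squeezes every normal cluster into being conormal, followed by conormality of $F$ via closure of conormal clusters under joins (Theorem~\ref{ThmM}), is a valid alternative to the paper's computation. Your observation that the second claim is genuinely conditional on existence --- with $(F_\c,F_\n)$ a left exact pair for additive subgroups of rings while Example~\ref{ExP} shows no exact decomposition exists --- is correct (for the non-unital category $\mathbf{Rng}$, where the ideal generated by an ideal is itself) and is a worthwhile point the paper does not make explicit.

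The step you must fix is the sentence ``The meet case is dual.'' It is not: form duality swaps conormal with normal, join with meet, \emph{and left exactness with right exactness}, so the dual of your join-case statement reads ``meet decomposition together with $(F_\c,F_\n)$ \emph{right} exact implies \emph{right} decomposition'' --- not what you need. The paper flags precisely this, proving the meet case separately and remarking that it ``does not follow by duality, although the argument will be similar.'' Fortunately your argument does transcribe with the order reversed while the hypothesis $\alpha\beta=1_{F_\n}$ is held fixed: in an exact meet decomposition, $\tau_\s R$ is the \emph{smallest} conormal cluster above $R$ and $\tau_\e$ returns the smallest normal cluster above its argument (dual of Theorem~\ref{LemE}(i)), so $R=\tau_\e\tau_\s R\ge\tau_\s R\ge R$ again forces $\tau_\s R=R$, making every normal cluster conormal; then $S=\tau_\s S\wedge\tau_\e S$ is a meet of two conormal clusters, which is conormal by the dual of Theorem~\ref{ThmM}, so $F$ is conormal and your conormal-case analysis applies. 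So this is a mislabelled justification rather than a failing step, but as written the appeal to duality is incorrect and the above substitute argument (or the paper's direct $\kappa_\s\kappa_\e$ computation) is required.
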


\begin{proof}
Let $(\kappa_\s,\kappa_\e)$ be an exact decomposition of $F$. Then $F_\s=F_\c$ and $F_\e=F_\n$ by Remark~\ref{RemT}. By Theorem~\ref{ThmAJ}, if $(\kappa_\s,\kappa_\e)$ is a nullary decomposition, then $F$ is an isoform and it has no other decomposition. Moreover, by the same theorem, the exact decomposition is a left decomposition and since $F=F_\s=F_\c=F_\e=F_\n$ thanks to Lemma~\ref{LemX}, the pair $(F_\c,F_\n)$ is a left exact pair (it is, in fact, a biexact pair). So we can dismiss the nullary case entirely. 

Suppose $(\kappa_\s,\kappa_\e)$ is a unary decomposition. First we prove uniqueness. $(\kappa_\s,\kappa_\e)$ is either a left or a right decomposition. Suppose it is a left decomposition. Thanks to Theorem~\ref{ThmAI}, if $F$ has a possibly different exact decomposition, it can either be a right decomposition or a binary decomposition. If it is a right decomposition, then $F$ is binormal by Theorem~\ref{ThmP}. After this, applying Theorem~\ref{ThmAI} and its dual, as well as Lemma~\ref{LemN}, we can easily get that the two decompositions coincide. If $F$ has a different exact binary decomposition, then we can apply Theorem~\ref{ThmM} and its dual to conclude that this decomposition too matches with $(\kappa_\s,\kappa_\e)$ (once again, relying on Lemma~\ref{LemN}). By duality, if $(\kappa_\s,\kappa_\e)$ is a right decomposition, then it is again unique.

Now, consider again the case when $(\kappa_\s,\kappa_\e)$ is a left decomposition of $F$. Then it follows from Theorem~\ref{ThmAI} that $F=F_\c$ and the pair $(F,F_\n)$ is a left exact pair. The second claim of the theorem then follows trivially, in the case under consideration. Next, consider the case when $(\kappa_\s,\kappa_\e)$ is a right decomposition of $F$. Then by the dual of what we have just established, $(F_\c,F_\n)$ is a right exact pair. So, by the dual of Theorem~\ref{ThmAI}, $F=F_\n$ and the conormal operator $F_\c\to F$ is the subform inclusion. This means that the pair $(F_\c,F_\n)$ is a left exact pair if and only if $F_\c=F_\n=F$. But thanks to Theorem~\ref{ThmP}, the right exact decomposition $(\kappa_\s,\kappa_\e)$ is left exact if and only if $F_\c=F_\n=F$ as well. So the last claim of the theorem holds in this case too.  

Consider now the case when $(\kappa_\s,\kappa_\e)$ is a join decomposition. By what we have shown already, there can be no other exact nullary or exact unary decomposition. By Theorem~\ref{ThmM}, there can be no other exact join decomposition as well. If there is an exact meet decomposition, then, by Theorem~\ref{ThmP}, $F$ is binormal and both decompositions are given by the same pair of identity operators. The case when $(\kappa_\s,\kappa_\e)$ is a meet decomposition follows dually.

Finally, we prove the last claim of the theorem in the case when $(\kappa_\s,\kappa_\e)$ is a join decomposition. After this we will prove it in the case when $(\kappa_\s,\kappa_\e)$ is a meet decomposition (this does not follow by duality, although the argument will be similar).

Suppose $(\kappa_\s,\kappa_\e)$ is an exact join decomposition of $F$. Then $F_\s=F_\c$, $F_\e=F_\n$ and the operators $\kappa_\s$ and $\kappa_\e$ are (idempotent) co-closure operators, by Theorem~\ref{ThmM} and Lemma~\ref{LemE}. By Theorem~\ref{ThmAI}, $(\kappa_\s,\kappa_\e)$ is a left decomposition if and only if $\kappa_\s$ is an identity operator. On the other hand, the pair $(F_\c,F_\n)$ is semiexact by Remark~\ref{RemT}. So $(F_\c,F_\n)$ is a left exact pair if and only if the conormal operator $\alpha\colon F_\c\to F_\n$ is a left inverse of the normal operator $\beta\colon F_\n\to F_\c$. By Remark~\ref{RemO} and Lemma~\ref{LemN} with its dual, $\alpha$ and $\beta$ are given by $\alpha(A)=\kappa_\e(A)$ and $\beta(R)=\kappa_\s(R)$. If $\kappa_\s$ is an identity operator, then $\alpha(\beta(R))=\kappa_\e(R)=R$ and so, $\alpha$ is a left inverse of $\beta$ (actually, it follows anyway from Theorem~\ref{ThmAI} that if $(\kappa_\s,\kappa_\e)$ is a left decomposition then $(F_\c,F_\n)$ is a left exact pair). Conversely, if $\alpha$ is a left inverse of $\beta$, then for any $F$-cluster $C$, we have  \[\kappa_\s\kappa_\e(C)\le \kappa_\e(C)=\alpha\beta\kappa_\e(C)=\kappa_\e\kappa_\s\kappa_\e(C)\le \kappa_\s\kappa_\e(C).\]
This implies $\kappa_\s\kappa_\e(C)=\kappa_\e(C)$. Then, $\kappa_\e(C)=\kappa_\s\kappa_\e(C)\le \kappa_\s(C)$ and so
$C=\kappa_\s(C)\vee \kappa_\e(C)=\kappa_\s(C)$, proving that $\kappa_\s$ is an identity operator.

Suppose now $(\kappa_\s,\kappa_\e)$ is an exact meet decomposition of $F$. Then $F_\s=F_\c$, $F_\e=F_\n$ and the operators $\kappa_\s$ and $\kappa_\e$ are (idempotent) closure operators, by the duals of Theorem~\ref{ThmM} and Lemma~\ref{LemE}. By Theorem~\ref{ThmAI}, $(\kappa_\s,\kappa_\e)$ is a left decomposition if and only if $\kappa_\s$ is an identity operator. On the other hand, the pair $(F_\c,F_\n)$ is semiexact by Remark~\ref{RemT}. So $(F_\c,F_\n)$ is a left exact pair if and only if the conormal operator $\alpha\colon F_\c\to F_\n$ is a left inverse of the normal operator $\beta\colon F_\n\to F_\c$. By Remark~\ref{RemO} and Lemma~\ref{LemN} with its dual, $\alpha$ and $\beta$ are given by $\alpha(A)=\kappa_\e(A)$ and $\beta(R)=\kappa_\s(R)$. If $\kappa_\s$ is an identity operator, then $\alpha(\beta(R))=\kappa_\e(R)=R$ and so, $\alpha$ is a left inverse of $\beta$ (as before, it follows anyway from Theorem~\ref{ThmAI} that if $(\kappa_\s,\kappa_\e)$ is a left decomposition then $(F_\c,F_\n)$ is a left exact pair). Conversely, if $\alpha$ is a left inverse of $\beta$, then for any $F$-cluster $C$, we have  \[\kappa_\s\kappa_\e(C)\ge \kappa_\e(C)=\alpha\beta\kappa_\e(C)=\kappa_\e\kappa_\s\kappa_\e(C)\ge \kappa_\s\kappa_\e(C).\]
This implies $\kappa_\s\kappa_\e(C)=\kappa_\e(C)$. Then, $\kappa_\e(C)=\kappa_\s\kappa_\e(C)\ge \kappa_\s(C)$ and so
$C=\kappa_\s(C)\land \kappa_\e(C)=\kappa_\s(C)$, proving that $\kappa_\s$ is an identity operator.
\end{proof}

\begin{theorem}\label{ThmI}
  Let $\mathbb{C}$ be a category.
  \begin{itemize}
  \item[(i)] 
  Let $F$ be an orean form over $\mathbb{C}$ having a semiexact decomposition $(\kappa_\s,\kappa_\e)$. Then $F_\s$ is isomorphic to $F_\c$ and $F_\e$ is isomorphic to $F_\n$. So, $F$ is strongly orean. Moreover, $F$ satisfies (N2) if and only if $(F_\s ,F_\e )$ is an orean factorization.
  
  \item[(ii)] For any orean factorization $(F_\s,F_\e)$ of $\mathbb{C}$, there exists a form $F'$  satisfying (N2) with semiexact join decomposition $F'=F'_\s \vee F'_\e $ and isomorphisms $F'_\s\approx F_\s$ and $F'_\e\approx F_\e $. Namely, $F'=F_\s \times F_\e$ is such a form.
  \end{itemize}
\end{theorem}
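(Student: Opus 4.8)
The plan is to reduce the entire theorem to the single assertion that in a semiexact decomposition $(\kappa_\s,\kappa_\e)$ of $F$ one has isomorphisms $F_\s\approx F_\c$ and $F_\e\approx F_\n$. Once these are in hand, the rest of (i) follows formally: since $F_\s$ and $F_\e$ are orean by the definition of a decomposition, Remark~\ref{RemO} gives that $F_\c$ and $F_\n$ are orean, i.e. $F$ is strongly orean; and Theorem~\ref{ThmT} then gives that (N2) holds for $F$ if and only if $(F_\c,F_\n)$ is an orean factorization, which by Theorem~\ref{ThmS}, applied in both directions using the isomorphisms $F_\c\approx F_\s$ and $F_\n\approx F_\e$, is equivalent to $(F_\s,F_\e)$ being an orean factorization.

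The crux, and what I expect to be the main obstacle, is producing the isomorphism $F_\c\approx F_\s$. I would take $\Phi=\tau_\s|_{F_\c}\colon F_\c\to F_\s$ and exhibit an inverse. The decisive computation is that for a conormal cluster $C=\im^F f$ one has $\tau_\e C=\alpha(\tau_\s C)$, where $\alpha\colon F_\s\to F_\e$ is the conormal operator supplied by the semiexact pair (unique by Lemma~\ref{LemN}). This drops out of unwinding conormality: since $\tau_\s$ and $\tau_\e$ are conormal (being binormal) one has $\tau_\s C=\im^{F_\s}f$ and $\tau_\e C=\im^{F_\e}f$, while conormality of $\alpha$ gives $\alpha(\im^{F_\s}f)=\im^{F_\e}f$. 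Feeding this into the decomposing-term identity $C=\mathfrak{t}^F_X(\tau_\s C,\tau_\e C)$ collapses it to $C=\mathfrak{t}^F_X(\tau_\s C,\alpha\,\tau_\s C)$, so $C$ is recovered from $\tau_\s C$ alone.

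This immediately yields the inverse. I would define $\Psi\colon F_\s\to F$ by $\Psi(A)=\mathfrak{t}^F(A,\alpha A)$, reading $A$ and $\alpha A$ as $F$-clusters via the subform inclusions; this is an operator because each binary lattice term induces an operator $F\times F\to F$ (joins and meets by Example~\ref{ExaO}, projections and the constants $\top,\bot$ being plainly monotone), and $\alpha$ together with the inclusions are operators. Its values are conormal, so $\Psi$ factors as an operator $F_\s\to F_\c$. The collapsed identity says exactly $\Psi\Phi=1_{F_\c}$; and $\Phi$ is surjective on clusters, since every $F_\s$-cluster is $\im^{F_\s}g=\tau_\s(\im^F g)$ as $F_\s$ is conormal, whence $\Phi\Psi=1_{F_\s}$ as well. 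Thus $\Phi$ is an isomorphism of forms, and $F_\e\approx F_\n$ follows by duality, completing (i).

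For (ii) I would set $F'=F_\s\times F_\e$. By Theorem~\ref{ThmF} the orean factorization $(F_\s,F_\e)$ is a semiexact pair, so by Theorem~\ref{ThmY} its canonical join decomposition $F_\s\times F_\e=(F_\s\times F_\e^\bot)\vee(F_\s^\bot\times F_\e)$ is semiexact, with subforms $F'_\s=F_\s\times F_\e^\bot$ and $F'_\e=F_\s^\bot\times F_\e$. By Lemmas~\ref{LemR} and~\ref{LemS} the product projections restrict to isomorphisms $F'_\s\approx F_\s$ and $F'_\e\approx F_\e$ (each of $F_\e^\bot$ and $F_\s^\bot$ being an isoform). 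Finally, part (i) applies to $F'$: it is strongly orean, and it satisfies (N2) if and only if $(F'_\s,F'_\e)$ is an orean factorization; the latter holds by Theorem~\ref{ThmS}, since $(F_\s,F_\e)$ is an orean factorization and $F'_\s\approx F_\s$, $F'_\e\approx F_\e$. Hence $F'=F_\s\times F_\e$ satisfies (N2) and carries the required semiexact join decomposition.
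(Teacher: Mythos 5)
Your proposal is correct and takes essentially the same route as the paper's proof: part (ii) is the paper's argument almost verbatim (Theorem~\ref{ThmF}, the canonical join decomposition of Theorem~\ref{ThmY}, Lemma~\ref{LemS}, then an appeal to part (i)), and in part (i) your key identity $\tau_\e C=\alpha(\tau_\s C)$ for conormal $C$, combined with the decomposing-term identity, is exactly the content the paper packages as fullness of $(\tau_\s,\tau_\e)$ when proving $\im^F f\le \im^F g\iff \im^\s f\le \im^\s g$. Your explicit inverse $\Psi(A)=\mathfrak{t}^F(A,\alpha A)$ is a constructive rephrasing of that same step, and your use of Theorems~\ref{ThmT} and \ref{ThmS} for the (N2) equivalence matches the paper's appeal to Lemma~\ref{LemJ} and Remark~\ref{RemO}.
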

\begin{proof} 
To prove (i), define an operator $F_\s\to F_\c$ by $\im^\s f\mapsto \im^F f$. That such operator exists and moreover, is an isomorphism, is a consequence of the following equivalence, which we will now establish: $\im^Ff\le\im^Fg\iff\im^\s f\le\im^\s g$.
Consider any two morphisms $f$ and $g$ (with the same codomain).  If $\im^Ff\le\im^Fg$, then by applying the conormal operator $\tau_\s $, we get $\im^\s f\le\im^\s g$.  If $\im^\s f\le\im^\s g$, then by applying the conormal operator $F_\s\to F_\e$, we get $\im^\e f\le\im^\e g$. By definition of decomposition, the operator $(\tau_\s,\tau_\e)\colon F\to F_\s\times F_\e$ is full. But since both $\tau_\s$ and $\tau_\e$ are conormal, we always have $(\tau_\s,\tau_\e) \im^F h=(\im^\s h,\im^\s h)$. So $\im^\s f\le\im^\s g$ and $\im^\e f\le\im^\e g$ together give $\im^Ff\le\im^Fg$. This proves that there is an isomorphism $F_\s\approx F_\c$. Dually, there is an isomorphism $F_\e\approx F_\n$. To get the last part of (i), apply Lemma~\ref{LemJ} (and Remark~\ref{RemO}) and its dual.

To prove (ii), consider an orean factorization $(F_\s,F_\e)$. By Theorem~\ref{ThmF}, $(F_\s,F_\e)$ is a semiexact pair. Let $F'=F_\s\times F_\e$ and consider the canonical join decomposition of the product, $F'=F'_\s\lor F'_\e$. By Lemma~\ref{LemS}, we have isomorphisms $F'_\s\approx F_\s$ and $F'_\e\approx F_\e$ and so $(F'_\s,F'_\e)$ is also a semiexact pair. By Theorem~\ref{ThmY}, the join decomposition $F'=F'_\s\lor F'_\e$ is semiexact. Moreover, it follows from Lemma~\ref{LemJ} that $(F'_\s,F'_\e)$ is an orean factorization, and so $F'$ satisfies (N2) by the first part of the present theorem.
\end{proof}

\subsection*{Exact decomposition of noetherian forms} Finally, we study exact decompositions of noetherian forms.

\begin{theorem}\label{ThmAH}
Let $F$ be a noetherian form. The following conditions are equivalent:
\begin{itemize}
    \item[(i)] $F$ has a left exact decomposition.
    
    \item[(ii)] $F$ is a strongly orean conormal form. 
    
    \item[(iii)] $F$ is a conormal form in which the largest clusters have normal interiors, while direct images of normal clusters have normal exteriors.
\end{itemize}
When $F$ satisfies these conditions, meets of normal $F$-clusters are normal. 
\end{theorem}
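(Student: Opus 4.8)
The plan is to route everything through Theorem~\ref{ThmAI}, which tells us that $F$ has a left exact decomposition if and only if $(F,F_\n)$ is a left exact pair, and then to run the cycle (i)$\Rightarrow$(ii), (ii)$\Leftrightarrow$(iii), (ii)$\Rightarrow$(i), extracting the meet assertion as the tool needed for (iii)$\Rightarrow$(ii). Throughout, noetherianity of $F$ supplies (N1), (N2), (N3) and the existence of embeddings of conormal clusters and quotients of normal clusters, while conormality gives $F_\c=F$. For (i)$\Rightarrow$(ii): a left exact decomposition makes $(F,F_\n)$ a left exact pair by Theorem~\ref{ThmAI}, and a left exact pair is in particular a semiexact pair, so $F$ is a conormal orean form and $F_\n$ is an orean form; since $F_\c=F$ is trivially orean, $F$ is strongly orean and conormal.

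For (ii)$\Leftrightarrow$(iii) I would use the dual of Lemma~\ref{LemM}, by which $F_\n$ is orean exactly when $\nccl{\top^F_X}$ exists, $\ncl{A\lor^F B}$ and $\nccl{A\land^F B}$ exist for normal $A,B$, and $\ncl{f\cdot^F R}$ exists for normal $R$. Under (ii) all four hold, and the first and last are precisely the two existence requirements of (iii), giving (ii)$\Rightarrow$(iii). For (iii)$\Rightarrow$(ii) the first and third bullets of the dual of Lemma~\ref{LemM} come straight from (iii); for the middle bullet, $\ncl{A\lor^F B}=A\lor^F B$ because joins of normal clusters are normal by (N3), while $\nccl{A\land^F B}=A\land^F B$ reduces to the final assertion of the theorem, which I prove next using only (iii). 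Thus the key lemma to isolate is that meets of normal clusters are normal.

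To prove that meets of normal clusters are normal, take normal clusters $R,S$ in $X$. Since $F$ is conormal, $S$ has an embedding $\iota_S\colon M\to X$ with $\im^F\iota_S=S$, and the inverse image $R\cdot^F\iota_S$ is normal, being a kernel (by the associativity law $(U\cdot g)\cdot f=U\cdot(gf)$ of Remark~\ref{RemC}). The meet formula of (N1) gives
\[\iota_S\cdot^F(R\cdot^F\iota_S)=R\land^F\im^F\iota_S=R\land^F S,\]
exhibiting $R\land^F S$ as a direct image of the normal cluster $R\cdot^F\iota_S$, so by (iii) its normal exterior $\ncl{R\land^F S}$ exists. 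But $R$ and $S$ are normal clusters above $R\land^F S$, whence $\ncl{R\land^F S}\le^F R$ and $\ncl{R\land^F S}\le^F S$, giving $\ncl{R\land^F S}\le^F R\land^F S$; as the reverse inequality is automatic, $R\land^F S=\ncl{R\land^F S}$ is normal.

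The main obstacle is (ii)$\Rightarrow$(i), i.e.\ producing the left exact pair. Given (ii), $F$ is strongly orean and satisfies (N2), so by Theorem~\ref{ThmT} the pair $(F_\c,F_\n)=(F,F_\n)$ is an orean factorization, and by Theorem~\ref{ThmF} it is a semiexact pair with $\alpha(A)=\im^\n\iota_A$ and $\beta(R)=\ker^F\pi^\n_R$. Here $\beta(R)=R$ (kernels agree in $F_\n$ and $F$ by the dual of Lemma~\ref{LemAA}), so it remains to check $\alpha\beta=1_{F_\n}$, i.e.\ $\im^\n\iota_R=R$ for every normal $R$. The decisive point is that the top cluster of the domain of an embedding of a normal cluster is itself normal: writing $m=\iota_R\colon M\to X$ and $\pi^\n_R$ for the $F$-quotient of $R$ (Theorem~\ref{ThmT}), the computation
\[\ker^F(\pi^\n_R\circ m)=(\ker^F\pi^\n_R)\cdot^F m=R\cdot^F m=(m\cdot^F\top^F_M)\cdot^F m=\top^F_M\lor^F\ker^F m=\top^F_M\]
(using (N1) and $\ker^F m=\bot$ for an embedding) exhibits $\top^F_M$ as a kernel, hence normal, so $\top^\n_M=\nccl{\top^F_M}=\top^F_M$. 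Then, evaluating the direct image in $F_\n$ via the dual of Lemma~\ref{LemM},
\[\alpha(R)=\im^\n m=m\cdot^\n\top^\n_M=\ncl{m\cdot^F\top^F_M}=\ncl{R}=R,\]
so $\alpha\beta=1_{F_\n}$ and $(F,F_\n)$ is a left exact pair; Theorem~\ref{ThmAI} then yields a left exact decomposition. I expect the most error-prone part to be the bookkeeping of which image, kernel, and top cluster is taken in $F$ versus $F_\n$, together with the repeated appeals to Theorem~\ref{ThmT} identifying embeddings, quotients, and kernels across the two forms; conceptually, however, the crux is the normality of $\top^F_M$ established above.
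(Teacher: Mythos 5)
Your proposal is correct, and the core computations coincide with the paper's, but your organization of the cycle is genuinely different in one step and worth comparing. The paper runs (i)$\Rightarrow$(ii)$\Rightarrow$(iii)$\Rightarrow$(i), and its proof of (iii)$\Rightarrow$(i) has three components: (a) $F_\n$ is orean, via the meet-of-normals lemma, which you prove identically up to swapping the roles of the two clusters (the paper writes $R\land S=\iota_R\cdot(S\cdot\iota_R)$ where you write $\iota_S\cdot(R\cdot\iota_S)$); (b) the assignment $\im^F f\mapsto\im^\n f$ is well defined and monotone, which the paper verifies by hand using the factorizations $f=\iota^\c_{\im^\c f}\circ\pi^\n_{\ker^\n f}$ and Lemma~\ref{LemK}; and (c) $\alpha(R)=R$ for normal $R$. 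You instead close the cycle through (ii)$\Rightarrow$(i) and dispense with step (b) entirely: Theorem~\ref{ThmT} (strong oreanness plus (N2) make $(F,F_\n)$ an orean factorization) followed by Theorem~\ref{ThmF} hands you the conormal $\alpha$ and normal $\beta$ ready-made, so only $\alpha\beta=1_{F_\n}$ remains to check — a legitimate and somewhat cleaner route, since the content of the paper's (b) is exactly what Theorem~\ref{ThmF} packages, and there is no circularity (Theorems~\ref{ThmT}, \ref{ThmF} and \ref{ThmAI} all precede this result and do not depend on it). Your version of (c) also differs in detail: where the paper observes in one line that the top cluster of the domain of $\iota_R$ equals $R\cdot^F\iota_R$, an inverse image of a normal cluster and hence normal, you exhibit it as the kernel $\ker^F(\pi^\n_R\circ\iota_R)$ via the join formula of (N1) and Lemma~\ref{LemK}; both derivations are valid, and from there your evaluation $\alpha(R)=\ncl{\iota_R\cdot^F\top^F_M}=\ncl{R}=R$ via the dual of Lemma~\ref{LemM} matches the paper's exactly.
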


\begin{proof}
(i)$\implies$(ii) by Theorem~\ref{ThmAI}. (ii)$\implies$(iii) by the dual of Lemma~\ref{LemM}. In fact, these implications hold for any orean form. We prove (iii)$\implies$(i). Suppose (iii) holds. We first prove that meets of normal $F$-clusters are normal. Let $R$ and $S$ be normal clusters. Since $F$ is conormal, $R$ is conormal. Then $R\land S=\iota_R\cdot(S\cdot\iota_R)$. Since inverse image of a normal cluster is normal, this equality means that $R\land S$ is a direct image of a normal cluster. Then $R\land S$ must have normal exterior $\ncl{R\land S}\ge R\land S$. But at the same time, $R\ge \ncl{R\land S}$ and $S\ge \ncl{R\land S}$, since both $R$ and $S$ are normal and larger than $R\land S$. Then $\ncl{R\land S}=R\land S$, and so $R\land S$ is normal. 

Now, in light of Theorem~\ref{ThmAI}, to show (i), it is sufficient to prove the following:
\begin{itemize}
    \item[(a)] The subform $F_\n$ of normal $F$-clusters is orean.
    
    \item[(b)] There is a conormal operator $\alpha\colon F\to F_\n$.
    
    \item[(c)] $\alpha$ is a left inverse of the subform inclusion $F_\n\to F$.
\end{itemize}
We have (a) by the dual of Lemma~\ref{LemM} as well as the fact that join of normal clusters is normal (axiom (N3)), along with the fact that meet of normal clusters is normal (as we have established above). Consequently, $F$ is strongly orean (this will need a bit further below). Next, we prove (b). Since $F$ is conormal, a conormal operator $\alpha\colon F\to F_\n$, if it exists, is defined by the equality $\alpha(\im^F f)=\im^\n f$. This equality will define an operator if and only if $\im^F f\ge \im^F g$ always implies $\im^\n f\ge \im^\n g$. Suppose $\im^F f\ge \im^F g$. Decompose $f$ and $g$ as
\[f=\iota^F_{\im^F f}\circ \pi^\n_{\ker^F f},\quad g=\iota^F_{\im^F g}\circ \pi^\n_{\ker^F g}.\]
Thanks to Theorem~\ref{ThmT}, we can rewrite these decompositions as
\[f=\iota^\c_{\im^\c f}\circ \pi^\n_{\ker^\n f},\quad g=\iota^\c_{\im^\c g}\circ \pi^\n_{\ker^\n g},\]
where the embeddings and quotients come from the orean factorization $(F_\c,F_\n)=(F,F_\n)$. Since $\im^\c f=\im^F f\ge \im^F g=\im^\c g$, we get that $\iota^\c_{\im^\c g}=\iota^\c_{\im^\c f}\circ h$ for some morphism $h$. Then:
\begin{align*}
   \im^\n f &=\iota^\c_{\im^\c f}\cdot^\n (\pi^\n_{\ker^\n f}\cdot^\n \top^\n)\\
   &=\iota^\c_{\im^\c f}\cdot^\n \top^\n &\textrm{(Lemma~\ref{LemK})}\\
   &\ge \iota^\c_{\im^\c f}\cdot^\n (h\cdot^\n (\pi^\n_{\ker^\n g} \cdot^\n\top^\n))\\
   &=\im^\n g.
\end{align*}
Finally, to prove (c), we must show that for any normal cluster $R$, we have $\alpha(R)=R$. Since $R=\im^F\iota^F_R$, we have $\alpha(R)=\alpha(\im^F\iota^F_R)=\im^\n\iota^F_R$. By the dual of Lemma~\ref{LemM}, \[\im^\n\iota^F_R=\ncl{\iota^F_R\cdot^F \nccl{\top^F}}.\] Notice that in the domain of $\iota^F_R$, the largest cluster is the inverse image of $R$ along $\iota^F_R$, and so it is normal. We then get:
\[\alpha(R)=\im^\n\iota^F_R=\ncl{\im^F\iota^F_R}=\ncl{R}=R.\qedhere\]
\end{proof}

\begin{theorem}\label{ThmAO} For an orean form $F$ over a pointed category $\mathbb{C}$, the following conditions are equivalent:
\begin{itemize}
    \item[(i)] $\mathbb{C}$ is a Puppe-Mitchell exact category (or equivalently, a Grandis exact category) and $F$ is its form of normal subobjects. 
    
    \item[(ii)] $F$ is a binormal noetherian form.
    
    \item[(iii)] $F$ is a normal noetherian form admitting an exact meet decomposition.
    
    \item[(iv)] $F$ is a noetherian form admitting a right exact meet decomposition.
    
    \item[(v)] $F$ is an orean form satisfying (N2) and admitting a right exact meet decomposition.
    
    \item[(vi)] $F$ is a normal form satisfying (N2) and admitting an exact meet decomposition.
    
    \item[(vii)] $F$ is a binormal form satisfying (N2).    
 \end{itemize}
\end{theorem}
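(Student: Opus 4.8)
The plan is to treat (i)$\iff$(ii)$\iff$(vii) as the conceptual core and then organize the four decomposition conditions (iii)--(vi) into a single implication cycle returning to (vii). For the core, (ii)$\iff$(vii) is immediate from Remark~\ref{RemF}: a binormal orean form is noetherian precisely when it satisfies (N2), and a binormal form is orean by definition, so ``binormal noetherian'' and ``binormal satisfying (N2)'' coincide. The equivalence (i)$\iff$(ii) is the pointed case of Remark~\ref{RemF}: over a pointed $\mathbb{C}$ there is a unique ideal of null morphisms, $(\mathbb{C},\mathcal{N})$ is Grandis exact exactly when $\mathbb{C}$ is Puppe--Mitchell exact, and the corresponding (unique up to isomorphism) binormal noetherian form is the form of normal subobjects; conversely any binormal noetherian form over $\mathbb{C}$ arises this way.

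For the cycle I would run (vii)$\implies$(iii)$\implies$(iv)$\implies$(v)$\implies$(vi)$\implies$(vii), the first four being routine. Given (vii), Remark~\ref{RemF} makes $F$ noetherian and in particular normal, and by Theorem~\ref{ThmP} the pair $(1_{\mathbb{B}},1_{\mathbb{B}})$ is simultaneously an exact meet and join decomposition, giving (iii). For (iii)$\implies$(iv): since $F$ is normal the normal-exterior projection $\tau_\e$ is the identity, so the exact meet decomposition is automatically a \emph{right} decomposition (dual of Lemma~\ref{LemAG}), and normal noetherian is in particular noetherian. For (iv)$\implies$(v) one only notes that noetherian implies orean and (N2). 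For (v)$\implies$(vi): a right meet decomposition forces $F=F_\e$ to be normal (dual of Lemma~\ref{LemAG}), and a right exact meet decomposition is in particular an exact meet decomposition.

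The substantive step, and the main obstacle, is (vi)$\implies$(vii): I must show that a normal form satisfying (N2) and carrying an exact meet decomposition is in fact conormal, hence binormal. The first move is to observe, via Remark~\ref{RemT}, that the decomposition forces $F$ to be strongly orean with $F_\s=F_\c$ and $F_\e=F_\n=F$, and that, being a right decomposition, it makes $(F_\c,F_\n)$ a right exact pair by Theorem~\ref{ThmAK}; by Remark~\ref{RemW} the conormal operator $\alpha\colon F_\c\to F$ is then the subform inclusion while $\beta=\tau_\s$ is its binormal retraction, sending $S$ to $\ccl{S}$. Next, since (N2) holds, Theorem~\ref{ThmI}(i) upgrades the semiexact pair $(F_\c,F_\n)$ to an \emph{orean factorization}, so its embeddings and quotients form a proper factorization system $(\mathcal{E},\mathcal{M})$ in the sense of Theorem~\ref{ThmV}.

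The plan is then to exploit pointedness: by Remark~\ref{RemV} the operators $\alpha,\beta$ are computed as $Z$-cokernel and $Z$-kernel, and the right-exactness of $(F_\c,F_\n)$ (equivalently, conormality of $\beta$, from the dual of Lemma~\ref{LemAH}) translates, via the analysis following Theorem~\ref{ThmF}, into the statement that every morphism of $\mathcal{M}$ is a kernel. The crux is to promote this to binormality of $\alpha$ as well --- equivalently, that every morphism of $\mathcal{E}$ is a cokernel --- whereupon the category is Puppe--Mitchell exact, $\mathcal{M}$ is the class of all monomorphisms, and $F$ is the form of normal subobjects, yielding (i) and hence (vii). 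I expect this promotion to be the genuinely hard point, because right-exactness alone, though symmetric in appearance, is not enough; the extra leverage must come from normality of $F$ together with the binormality of $\tau_\s$, which, as a normal operator on the normal form $F$, preserves inverse images and meets by the dual of Lemma~\ref{LemN}. Feeding an arbitrary cluster $S=\ker^F e$ through the normality of $\tau_\s$ gives $\ccl{S}=\ker^{F_\c}e$, and the remaining task is to force this conormal cluster to equal $S$, collapsing $F_\c$ onto $F$; it is precisely this identification of $\ccl{S}$ with $S$ that I anticipate will require the most care.
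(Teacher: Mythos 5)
Your global architecture is sound, and on the routine implications you essentially track the paper, which runs the single cycle (i)$\Rightarrow$(ii)$\Rightarrow\cdots\Rightarrow$(vii)$\Rightarrow$(i), getting (ii)$\Rightarrow$(iii) from Theorem~\ref{ThmP} and (iii)$\Rightarrow$(iv) from the dual of Theorem~\ref{ThmAH} with Remark~\ref{RemT} and Theorem~\ref{ThmAK}; your shortcut for (iii)$\Rightarrow$(iv) --- exactness forces $F_\e=F_\n=F$, hence $\kappa_\e$ is the identity and the meet decomposition is a right decomposition by the dual of Lemma~\ref{LemAG} --- is legitimate and arguably cleaner. Your setup of the pivotal step (vi)$\Rightarrow$(vii) is also exactly the paper's: $F_\e=F_\n=F$, the conormal operator $\alpha\colon F_\c\to F$ is the subform inclusion, $\beta=\tau_\s$ sends $S$ to $\ccl{S}$, and (N2) upgrades $(F_\c,F)$ to an orean factorization via Theorem~\ref{ThmI}(i).

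The decisive step, however, is missing: you correctly isolate ``force $\ccl{S}=S$'' as the crux and then leave it unproved, while your intended detour --- promote $\alpha$ to a normal operator, conclude Puppe--Mitchell exactness, and only then recover (i) and (vii) --- overshoots what is needed and makes the crux look harder than it is. The paper closes the gap with a two-sided squeeze. One direction is free from the meet decomposition: $\beta(R)=\ccl{R}$ is the \emph{smallest} conormal cluster above $R$, so $\alpha\beta(R)\ge R$. The other direction is where pointedness must be used concretely, via Remark~\ref{RemV}: $\beta(R)$ is represented by the $Z$-kernel $m$ of $\pi_R$, and $\alpha\beta(R)=\im^\e m$ is the cluster whose quotient is the cokernel of $m$, i.e., of $\ker\pi_R$; since $\pi_R$ kills its kernel, it factors through that cokernel, whence $\alpha\beta(R)\le R$ in $F=F_\e$. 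Together these give $\ccl{R}=R$ for every cluster, so $F=F_\c$ is binormal and (vii) follows, with (i) then coming from Remark~\ref{RemF}. Alternatively, your own computation $\ccl{S}=\ker^\c e$ (for $S=\ker^F e$, by normality of $\tau_\s$) could be closed by first showing pointedness makes $\bot^F$ conormal (the zero object $Z$ has a single $F$-cluster, since every split epimorphism is an $F_\e$-quotient and any $F_\e$-quotient of $Z$ splits; then $\im^F z=z\cdot^F\bot_Z=\bot_X$ for $z\colon Z\to X$), so $\bot^\c=\bot^F$ and Lemma~\ref{LemM} gives $\ker^\c e=\cccl{\ker^F e}\le S$, again yielding $\ccl{S}=S$. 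Some such explicit use of pointedness is indispensable --- without it (vi) does not imply (vii), as Example~\ref{Ex3} indicates in the non-pointed setting --- and it is precisely this ingredient that your proposal announces but does not supply.
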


\begin{proof}
(i)$\implies$(ii): The form of normal subobjects in any Grandis exact category is a binormal noetherian form (see Remark~\ref{RemF}). 

(ii)$\implies$(iii) by Theorem~\ref{ThmP}. 

(iii)$\implies$(iv) follows from the dual of Theorem~\ref{ThmAH} and Remark~\ref{RemT}, as well as Theorem~\ref{ThmAK}.

(iv)$\implies$(v) is trivial.

(v)$\implies$(vi) follows from the definition of right decomposition.

(vi)$\implies$(vii): Suppose (vi) holds. Let $F_\s$ denote the subform of $F$ consisting of conormal clusters. Then the exact meet decomposition of $F$ is given by $F=F\land F_\s$, by Theorem~\ref{ThmM}. By the dual of Theorem~\ref{ThmI}, $(F_\s,F)$ is an orean factorization. By Remark~\ref{RemV}, the conormal operator $\alpha\colon F_s\to F$ and the normal operator $\beta\colon F\to F_\s$ are given by cokernel and kernel constructions in a pointed category. This easily implies that for any $F$-cluster $R$ we have $\alpha\beta (R)\le R$. On the other hand, since $F=F\land F_\s$ is an exact meet decomposition, we must have $\alpha\beta (R)\ge R$ for any $F$-cluster $R$, and moreover, $\beta\alpha (A)=A$ for any $F_\s$-cluster $A$. This gives that $F=F_\s$ (since $\alpha$ is an inclusion of forms), and hence $F$ is binormal, which proves (vii).

(vii)$\implies$(i) by Remark~\ref{RemF}.
\end{proof}

\begin{example}\label{Ex3}
Any category, where every morphism is a monomorphism and in addition, has finite meets and joins of subobjects, has an orean factorization $(F_\s,F_\e)$, where $F_\e$ is an isoform (see Remark~\ref{RemM}). Then $F_\s\times F_\e$ satisfies (N2) by Theorem~\ref{ThmI}. However, we have $F_\s\approx F_\s\times F_\e$ by Lemma~\ref{LemS}. So $F_\s$ satisfies (N2), by Remark~\ref{RemO}. Interestingly, $F_\s$ not only satisfies (N2), but also (N1) and (N3) and so is a noetherian form. Indeed, (N1) follows from Lemma~\ref{LemK}, while (N3) holds trivially since the form $F_\s$ is conormal and is antinormal (thanks to Lemma~\ref{LemW} and Theorem~\ref{ThmF}). This last remark together with Theorem~\ref{ThmW} and the dual of Theorem~\ref{ThmX} give that $F_\s$ has an exact join decomposition and a semiexact meet decomposition. Among such categories is the category $\mathbf{Fld}$ of fields. Any bounded lattice seen as a category is also such. Groupoids are such categories too. In fact, they are precisely those among such categories where $F_\s$ is also an isoform (as we know already from Remark~\ref{RemM}), and this will be the case if and only if $F_\s$ has an exact meet decomposition (apply Theorem~\ref{ThmX}), or equivalently, $F_\s$ is normal (apply Lemma~\ref{LemX}). The dual of this form (in the case when the category is neither pointed and nor is a groupoid, e.g., a non-trivial bounded lattice) thus gives a counterexample to the equivalence of the conditions (ii) and (iii) in Theorem~\ref{ThmAO} for a non-pointed category.
\end{example}

\begin{theorem}\label{ThmAL}
Let $F$ be a noetherian form. The following conditions are equivalent:
\begin{itemize}
    \item[(i)] $F$ has left exact join decomposition.
    
    \item[(ii)] $F$ is a strongly orean conormal form in which direct images of normal clusters are normal. 
    
    \item[(iii)] $F$ is a conormal form in which the largest clusters have normal interiors and direct images of normal clusters are normal.
\end{itemize}
When $F$ satisfies these conditions, $F$-clusters smaller than a normal cluster are normal. 
\end{theorem}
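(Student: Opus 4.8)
The plan is to prove the cycle (i)$\implies$(ii)$\implies$(iii)$\implies$(i), extracting the final assertion along the way. For (i)$\implies$(ii): a left exact join decomposition is in particular a left decomposition, so by Theorem~\ref{ThmAI} the form $F$ is conormal and $\kappa_\s$ is the identity operator; being exact, Remark~\ref{RemT} gives $F_\s=F_\c$, $F_\e=F_\n$ and that $F$ is strongly orean; being a join decomposition, the ``moreover'' part of Theorem~\ref{ThmM} yields that direct images of normal clusters are normal. For (ii)$\implies$(iii): strong oreanness gives that $F_\n$ is orean, so by the dual of Lemma~\ref{LemM} the normal interior $\nccl{\top^F_X}$ of every top cluster exists, which is exactly the statement that the largest clusters have normal interiors; the normality of direct images of normal clusters carries over verbatim.

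The substance of the theorem is (iii)$\implies$(i). The key lemma, which is also the advertised final assertion, is: \emph{every cluster below a normal cluster is normal}. To prove it, let $R$ be normal and $S\le R$. Since $F$ is conormal, $S$ is conormal and admits an embedding $\iota_S\colon M\to X$ with $\im^F\iota_S=S$. The meet formula of (N1) applied to $\iota_S$ gives $\iota_S\cdot(R\cdot\iota_S)=R\wedge\im^F\iota_S=R\wedge S=S$. Now $R\cdot\iota_S$ is an inverse image of a kernel, hence normal, so $S=\iota_S\cdot(R\cdot\iota_S)$ exhibits $S$ as a direct image of a normal cluster, which is normal by (iii). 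This is the one genuinely load-bearing step; everything afterwards is bookkeeping around it.

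With the lemma in hand I would verify the three hypotheses of Theorem~\ref{ThmM}, which then delivers an exact join decomposition. First, normal interiors exist for every cluster $S$: using the embedding $\iota_S$ and the normal interior $\nccl{\top_M}$ (available by (iii)), the cluster $\iota_S\cdot\nccl{\top_M}$ is normal and below $S$, and the (N1) computation $R=\iota_S\cdot(R\cdot\iota_S)\le\iota_S\cdot\nccl{\top_M}$ for any normal $R\le S$ shows it is the largest such, i.e.\ $\nccl{S}=\iota_S\cdot\nccl{\top_M}$. This settles condition~(i) of Theorem~\ref{ThmM}, the conormal summand being $S$ itself as $F$ is conormal. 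Conditions~(ii) and~(iii) of Theorem~\ref{ThmM}, namely $\nccl{S}\cdot f=\nccl{S\cdot f}$ and $f\cdot\nccl{\top}=\nccl{\im^F f}$, follow by the same pattern: each side is normal and below the relevant target by (iii) together with the stability of normality under inverse images, while the reverse inequalities come from the Galois identities $(f\cdot T)\cdot f\ge T$ and $f\cdot(T\cdot f)\le T$ combined with the (N1) meet formula $f\cdot(T\cdot f)=T\wedge\im^F f$. Theorem~\ref{ThmM} then yields the exact join decomposition $F=F_\c\vee F_\n=F\vee F_\n$; since $F$ is conormal the largest conormal cluster below $S$ is $S$, so $\kappa_\s$ is the identity and the decomposition is simultaneously a left decomposition. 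Hence $F$ has a left exact join decomposition, proving (i). The main obstacle is isolating and proving the key lemma; by contrast the verification of the Theorem~\ref{ThmM} hypotheses is routine once normality is known to be inherited downward and preserved by direct images.
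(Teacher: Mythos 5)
Your proof is correct, but it takes a genuinely different route through the paper's machinery for the hard direction (iii)$\implies$(i). The paper observes that your hypothesis (iii) implies condition (iii) of Theorem~\ref{ThmAH} (a normal direct image trivially has a normal exterior, namely itself), invokes that theorem to obtain a left exact decomposition $(1,\kappa_\e)$, and then upgrades it to a join decomposition by a single computation: for $S=\im^F f$, the dual of Lemma~\ref{LemM} gives $\tau_\e S=\im^\n f=\ncl{f\cdot^F\nccl{\top^F}}=f\cdot^F\nccl{\top^F}\le\im^F f=S$, the middle equality using normality of direct images of normal clusters. You instead bypass Theorem~\ref{ThmAH} entirely and verify the three conditions of Theorem~\ref{ThmM} directly, with the explicit construction $\nccl{S}=\iota_S\cdot\nccl{\top_M}$ doing the work (your verification of this construction, and of conditions (ii) and (iii) of Theorem~\ref{ThmM} via the Galois identities and the (N1) meet formula, is sound), then recover leftness at the end from conormality forcing $\kappa_\s$ to be the identity. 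Your route is more self-contained relative to Theorem~\ref{ThmM} and makes the normal-interior formula explicit; the paper's route is shorter because Theorem~\ref{ThmAH} has already done the heavy lifting (in particular, constructing the conormal operator $F\to F_\n$ and proving that meets of normal clusters are normal, facts you do not need). Two small remarks. First, for the final assertion the paper avoids (N1): from $N\ge C=\im f$ it gets $N\cdot f=\top$, so the top of the domain is normal (inverse images of normal clusters are always normal) and $C=f\cdot\top$ is normal by (iii); your argument via the embedding $\iota_S$ and the meet formula $\iota_S\cdot(R\cdot\iota_S)=R\wedge S=S$ is equally valid, just marginally heavier. Second, your billing of that lemma as \emph{the} load-bearing step is a slight overstatement: your own verification of Theorem~\ref{ThmM}'s hypotheses never actually uses it --- it only needs that inverse images of normal clusters are normal, plus (iii) and (N1) --- so the lemma is really just the theorem's final assertion, which must be proved regardless, rather than an input to the decomposition.
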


\begin{proof}
(i)$\implies$(ii) follows from Theorem~\ref{ThmM} and Theorem~\ref{ThmAH}, while (ii)$\implies$(iii) follows from Theorem~\ref{ThmAH}. Since (iii) implies \ref{ThmAH}(iii), to prove (iii)$\implies$(i) it suffices to show that (iii) implies that the left exact decomposition of $F$ guaranteed by Theorem~\ref{ThmAH} is a join decomposition. It follows from Theorem~\ref{ThmAI} that this left exact decomposition is the pair $(\kappa_\s,\kappa_\e)$, where $\kappa_\s$ is the identity operator of $F$ and $\kappa_\e$ is the composite of the unique conormal operator $\tau_\e\colon F=F_\c\to F_\n$ with the subform inclusion $F_\n\to F$ (with $F_\n$ being orean). To show that this is a join decomposition of $F$, we need to establish that the formula $S=S\lor \tau_\e S$ holds for any cluster $S$. In other words, that we always have $S\ge \tau_\e S$. Since $F$ and $\tau_\e$ are conormal, the values of $\tau_\e$ are given by $\tau_\e (\im^F f)=\im^n f$. So let $S=\im^F f$. According to Lemma~\ref{LemM}, $\im^n f=\ncl{f\cdot^F \nccl{\top^F}}$. Taking into account that direct images of normal clusters are normal by (iii), we get $\tau_\e (S)=\im^n f=f\cdot^F \nccl{\top^F}\le f\cdot^F \top^F=\im^F f=S$, as desired.

To prove the last statement of the theorem, consider clusters $N\ge C$, where $N$ is a normal cluster. Since $F$ is conormal, we have $C=\im f$, for some morphism $f$. We then have $N\cdot f=\top$ because of $N\ge C$. So the top cluster in the domain of $f$ is normal (inverse images of normal clusters are always normal). By (iii), this makes $C=\im f=f\cdot\top$ normal.
\end{proof}

\begin{theorem}
  \label{thmAA}
  Let $F$ be a noetherian form having exact join decomposition $F=F_\s\vee F_\e$. Then $F_\s=F_\c$, $F_\e=F_\n$ and the corresponding decomposition projections are given by $\tau_\s K=\cccl{K}$, $\tau_\e K=\nccl{K}$. Moreover, $(F_\s,F_\e)$ is then an orean factorization and hence a semiexact pair, and the values of the conormal operator $\alpha\colon F_\s\to F_\e$ and the normal operator $\beta\colon F_\e\to F_\s$ are given by $\alpha(A)=\nccl{A}$ and $\beta(R)=\cccl{R}$. We furthermore have: 
  \begin{itemize}
    \item[(i)]
    For any $F$-cluster $K$, we have
    \[\tau_\s K=(\tau_\s K)\ast (\tau_\e K)\quad\textrm{and}\quad\tau_\e K\ge\alpha(\tau_\s K).\]
    
    \item[(ii)] 
    If $A$ is a conormal $F$-cluster and $R$ is a normal $F$-cluster such that $A=A\ast R$ and $R\ge\alpha(A)$, then
    \[\tau_\s (A\lor^F R)=A\quad\textrm{and}\quad\tau_\e (A\lor^F R)=R.\]
    
    \item[(iii)] For $A$ and $R$ as above, we have
    \[(A\lor^F R)\cdot^F f=(A\cdot^\s f)\lor^F (R\cdot^\e  f),\]
    for any morphism $f$ whose codomain is the object in which $A$ and $R$ are clusters.

    \item[(iv)] $F_\s$ and $F_\e$ are both closed in $F$ under binary meets as well as under binary joins. Furthermore, a meet $R\land^F C$ of $F$-clusters is normal as soon as $R$ is.
      
    \item[(v)] The operators $\tau_\s$, $\tau_\e$,
    $\alpha$ and $\beta$ all preserve binary meets.
  \end{itemize}
\end{theorem}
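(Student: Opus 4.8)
The plan is to first settle the identifications that precede the numbered items and then treat (i)--(v) in turn, working throughout with the orean factorization $(F_\c,F_\n)$ that the hypotheses supply. Since an exact decomposition is in particular semiexact, Remark~\ref{RemT} forces $F_\s=F_\c$ and $F_\e=F_\n$ and shows $F$ is strongly orean; Theorem~\ref{LemE}(i) then identifies the projections with the largest--conormal--below and largest--normal--below operators, i.e.\ $\tau_\s K=\cccl K$ and $\tau_\e K=\nccl K$. Because $F$ is noetherian it satisfies (N2), so Theorem~\ref{ThmT} (equivalently Theorem~\ref{ThmI}(i)) makes $(F_\c,F_\n)$ an orean factorization, which by Theorem~\ref{ThmF} is a semiexact pair; finally Remark~\ref{RemW} exhibits $\alpha$ and $\beta$ as the restrictions of $\tau_\e$ and $\tau_\s$, giving $\alpha(A)=\nccl A$ and $\beta(R)=\cccl R$.

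For (i) and (ii) the key tool is Lemma~\ref{inverseimageinteriorlem}, whose first part gives $A\ast R=\cccl{A\vee^F R}$ for conormal $A$ and normal $R$ relative to $(F_\c,F_\n)$. Applying this with $A=\tau_\s K=\cccl K$ and $R=\tau_\e K=\nccl K$ and using $K=\cccl K\vee^F\nccl K$ yields $\tau_\s K\ast\tau_\e K=\cccl K=\tau_\s K$, while $\nccl{\cccl K}\le\nccl K$ gives $\tau_\e K\ge\alpha(\tau_\s K)$, proving (i). For (ii) the same formula gives $\tau_\s(A\vee^F R)=\cccl{A\vee^F R}=A\ast R=A$ at once; to obtain $\tau_\e(A\vee^F R)=R$ I set $N=\nccl{A\vee^F R}$, observe by (i) that the pair $(A,N)$ satisfies $A=A\ast N$ and $N\ge\alpha(A)$, note $A\vee^F N=A\vee^F R$, and then feed both admissible pairs $(A,R)$ and $(A,N)$ into the injectivity half of Lemma~\ref{inverseimageinteriorlem} to force $R\le N$ and $N\le R$.

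For (iii) I would analyse $T=(A\vee^F R)\cdot^F f$ through its own decomposition $T=\cccl T\vee^F\nccl T$. Using the Galois counit $f\cdot^F(S\cdot^F f)\le S$ together with the facts that direct images carry conormal clusters to conormal clusters and (by Theorem~\ref{ThmM}) normal clusters to normal clusters, and evaluating $\cccl{A\vee^F R}=A$, $\nccl{A\vee^F R}=R$ via (ii), I expect $\cccl T=\cccl{A\cdot^F f}=A\cdot^\s f$ and $\nccl T=R\cdot^F f=R\cdot^\e f$, the rewritings coming from Lemma~\ref{LemM} and its dual since inverse images of normal clusters are normal. Summing the two interiors gives the stated formula.

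The main obstacle is the meet statement underlying (iv) and (v): that $R\land^F C$ is normal whenever $R$ is. My plan is to show the conormal interior $P=\cccl{R\land^F C}$ is itself normal. As $P$ is conormal and $F$ is noetherian it has an embedding $f=\iota^F_P$ with $\im^F f=f\cdot^F\top^F=P$; from $R\ge P=\im^F f$ the Galois connection gives $R\cdot^F f=\top^F$ in the domain of $f$, and this top cluster is normal because inverse images of normal clusters are normal. Hence $P=f\cdot^F(R\cdot^F f)$ is a direct image of a normal cluster, so normal by Theorem~\ref{ThmM}; being $\le R\land^F C$ it lies below $\nccl{R\land^F C}$, whence $R\land^F C=\nccl{R\land^F C}$ is normal. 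With this established, (iv) is immediate: $F_\c$ is closed under meets by (N3) and under joins by Theorem~\ref{ThmM}, while $F_\n$ is closed under meets by the above and under joins by Theorem~\ref{ThmM}. Finally (v) reduces to meet-preservation of the interiors, $\cccl{K\land^F L}=\cccl K\land^F\cccl L$ (using that a meet of conormal clusters is conormal) and $\nccl{K\land^F L}=\nccl K\land^F\nccl L$ (using that a meet of normal clusters is normal), with $\alpha$ and $\beta$ handled as the corresponding restrictions of $\tau_\e$ and $\tau_\s$.
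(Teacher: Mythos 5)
Your proposal is correct and follows essentially the same route as the paper's proof: the opening identifications come from Remark~\ref{RemT}, Theorem~\ref{LemE}, Theorem~\ref{ThmT} (or Theorem~\ref{ThmI}), Theorem~\ref{ThmF} and Remark~\ref{RemW} exactly as in the paper; parts (i) and (ii) use the two halves of Lemma~\ref{inverseimageinteriorlem} in the same way (including the double application of its second part to the pairs $(A,R)$ and $(A,\tau_\e(A\lor^F R))$); and (iii)--(v) rest on the same structural facts (Theorem~\ref{ThmM}, Lemma~\ref{LemM} and its dual, (N1)/(N3), and the dual of Remark~\ref{RemK}). The only deviations are cosmetic and both sound: in (iii) you characterize $\cccl{(A\lor^F R)\cdot^F f}$ and $\nccl{(A\lor^F R)\cdot^F f}$ directly via the Galois unit/counit rather than the paper's monotonicity sandwich with $\tau_\s$ and $\tau_\e$, and in (iv) you prove directly that $\cccl{R\land^F C}$ is normal (embedding of the interior, inverse image of $R$ giving the normal top cluster, direct image normal by Theorem~\ref{ThmM}), thereby fusing the paper's intermediate step---that a meet of a conormal cluster with a normal cluster is normal---with its application to $\tau_\s(R\land^F C)$.
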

\begin{proof} All equalities in the second sentence of the theorem follow from Theorem~\ref{ThmM} and Theorem~\ref{LemE}. By Theorem~\ref{ThmT} (or, alternatively, by Theorem~\ref{ThmI}), $(F_\s,F_\e)$ is an orean factorization. It is a semiexact pair thanks to Theorem~\ref{ThmF}, and so both $\alpha$ and $\beta$ exist. Thanks to Lemma~\ref{LemN},  Remark~\ref{RemO}, and the definition of exact join decomposition, $\alpha$ is given by the composite of the subform inclusion $F_\s\to F$ with $\tau_\e$. Hence, $\alpha(A)=\tau_\e(A)=\nccl{A}$. Similarly, $\beta(R)=\tau_\s(R)=\cccl{R}$. In the rest of the proof, we will use these facts without referring to them.

\textit{Proof of (i).} The definition of join decomposition, together with Lemma~\ref{inverseimageinteriorlem}, gives:
  \[\tau_\s K=\cccl{K}=\cccl{(\tau_\s K)\lor^F (\tau_\e K)}=(\tau_\s K)\ast (\tau_\e K).\]
  We also have $\tau_\e K=\nccl{K}\ge \nccl{\tau_\s K}= \alpha(\tau_\s K)$. 
  
  \textit{Proof of (ii).} Suppose $A$ is a conormal $F$-cluster and $R$ is a normal $F$-cluster such that $A=A\ast R$ and $R\ge\alpha(A)$. Then $\tau_\s (A\lor^F R)=A\ast R=A$ by Lemma~\ref{inverseimageinteriorlem}. It remains to show that $\tau_\e (A\lor^F R)=R$. Note that we have
    \[A\lor^F R=\tau_\s(A\lor^F R)\lor^F \tau_\e(A\lor^F R)=A\lor^F \tau_\e(A\lor^F R).\]
  The desired equality then follows from the second part of Lemma~\ref{inverseimageinteriorlem} (applied twice), as long as we can show that $A=A\ast \tau_\e(A\lor^F R)$ and $\tau_\e(A\lor^F R)\ge\alpha(A)$. By the first part of Lemma~\ref{inverseimageinteriorlem}, 
    \[A\ast \tau_\e(A\lor^F R)=\cccl{A\lor^F \tau_\e(A\lor^F R)}=\cccl{A\lor^F R}=A\ast R=A.\]
  Lastly, $\tau_\e(A\lor^F R)=\nccl{A\lor^F R}\ge \nccl{A}=\alpha(A)$.
  
  \textit{Proof of (iii).}  Applying (ii), we get:
    \[A\cdot^F f=\tau_\s (A\lor^F R)\cdot^F f\ge\tau_\s ((A\lor^F R)\cdot^F f)\ge\tau_\s (A\cdot^F f).\]
  Then
  \[\tau_\s (A\cdot^F f)=\cccl{A\cdot^F f}\ge \tau_\s((A\lor^F R)\cdot^F f).\]
  So along with Lemma~\ref{LemM} we have:
    \[\tau_\s ((A\lor^F R)\cdot^F f)=\tau_\s (A\cdot^F f)=A\cdot^\s f.\]
  A similar argument will give $\tau_\e ((A\lor^F R)\cdot^F f)=\tau_\e (R\cdot^F f)=R\cdot^\e f$. The equality in (iii) then follows by the definition of join decomposition.
  
  \textit{Proof of (iv).} That the meet of two conormal clusters is conormal and the join of two normal clusters is normal is stated in (N3). By Theorem~\ref{ThmM}, join of two conormal clusters is conormal. That the meet of a conormal cluster and a normal cluster is normal follows from (N1), the fact that any conormal cluster is the image of its embedding (which exists by (N2)), and the fact that direct and inverse images of normal clusters are normal (inverse images of normal clusters are always normal, while direct images are normal thanks to Theorem~\ref{ThmM}). We now prove that the meet of any cluster with a normal cluster is normal. Consider a meet $R\land^F C$ where $R$ is normal. Since $\tau_\s(R\land^F C)$ can be presented as a meet of a conormal cluster and a normal one, e.g.,  $\tau_\s(R\land^F C)=\tau_\s(R\land^F C)\land^F R$, we get that $\tau_\s(R\land^F C)$ is normal and hence, $\tau_\s(R\land^F C)\le \nccl{R\land^F C}=\tau_\e(R\land^F C)$. By definition of join decomposition we then get $R\land^F C=\tau_\s(R\land^F C)\vee^F \tau_\e(R\land^F C)=\tau_\e(R\land^F C)=\nccl{R\land^F C}$, and so $R\land^F C$ is normal. 
  
  \textit{Proof of (v).} Thanks to Theorem~\ref{LemE}, by (iv) and the dual of Remark~\ref{RemK}, we have that the subform inclusions $F_\s\to F$ and $F_\e\to F$, as well as the operators $\tau_\s$ and $\tau_\e$, preserve binary meets. Then so must $\alpha$ and $\beta$. 
\end{proof}

\begin{theorem}\label{ThmAB}
Consider two noetherian forms $F$ and $G$ over the same category, both having exact join decomposition. There are isomorphisms $F_\c \cong G_\c $ and $F_\n \cong G_\n $ if and only if there is an isomorphism $F\cong G$.
\end{theorem}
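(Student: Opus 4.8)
The plan is to prove the two implications separately, the forward one being routine and the reverse one carrying all the content.

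For the forward direction, suppose $\psi\colon F\to G$ is an isomorphism of forms. By Remark~\ref{RemO} it is binormal, so it preserves images and kernels; hence it maps conormal $F$-clusters to conormal $G$-clusters and normal $F$-clusters to normal $G$-clusters, and the same holds for $\psi^{-1}$. Thus $\psi$ restricts to bijections between the respective hulls, and since the restriction of an isomorphism of forms again preserves the relations $\ge_f$, these restrictions are isomorphisms $F_\c\cong G_\c$ and $F_\n\cong G_\n$.

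For the reverse direction I would first represent each form concretely. Since $F$ has exact join decomposition, Theorem~\ref{thmAA} gives $F=F_\c\vee F_\n$ with $(F_\c,F_\n)$ an orean factorization, decomposition projections $\tau_\s K=\cccl{K}$, $\tau_\e K=\nccl{K}$, and conormal operator $\alpha^F\colon F_\c\to F_\n$. By Theorem~\ref{ThmZ}(iii) the map $K\mapsto(\tau_\s K,\tau_\e K)$ is an isomorphism of $F$ onto the closed subform $(F_\c\times F_\n)_{\tau\nu}$. Combining parts (i) and (ii) of Theorem~\ref{thmAA}, the closed clusters are precisely the pairs $(A,R)$ with $A$ conormal, $R$ normal, satisfying $A=A\ast^F R$ and $R\ge^\n\alpha^F(A)$ (the Wyler join taken relative to $(F_\c,F_\n)$). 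The analogous statements hold for $G$, with operator $\alpha^G$ and Wyler join $\ast^G$.

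The heart of the argument is to show that the product isomorphism $\Phi=\phi_\c\times\phi_\n\colon F_\c\times F_\n\to G_\c\times G_\n$ carries the closed clusters of $F$ onto those of $G$, i.e.\ preserves both defining conditions. Preservation of $\alpha$ is automatic: $\phi_\n\circ\alpha^F\circ\phi_\c^{-1}$ is a composite of binormal (hence conormal) isomorphisms with the conormal operator $\alpha^F$, so it is a conormal operator $G_\c\to G_\n$, and by uniqueness (Lemma~\ref{LemN}) it must equal $\alpha^G$; hence $\phi_\n(\alpha^F(A))=\alpha^G(\phi_\c(A))$ and, $\phi_\n$ being an order isomorphism, $R\ge^\n\alpha^F(A)$ if and only if $\phi_\n(R)\ge^\n\alpha^G(\phi_\c(A))$. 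Preservation of the Wyler join is the main obstacle, and it is here that I would use that all four forms live over the same base category $\mathbb{C}$: by the dual of Lemma~\ref{LemJ}, the $F_\n$-quotient $\pi^{F_\n}_R$ is the very same morphism of $\mathbb{C}$ as the $G_\n$-quotient $\pi^{G_\n}_{\phi_\n(R)}$, and since $\phi_\c$ is binormal it commutes with direct and inverse images along that morphism. Unwinding the definition $A\ast^F R=(\pi^{F_\n}_R\cdot^\c A)\cdot^\c\pi^{F_\n}_R$ then yields $\phi_\c(A\ast^F R)=\phi_\c(A)\ast^G\phi_\n(R)$, so $A=A\ast^F R$ if and only if $\phi_\c(A)=\phi_\c(A)\ast^G\phi_\n(R)$.

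With both conditions preserved, $\Phi$ restricts to an isomorphism between the closed subform of $F_\c\times F_\n$ and the closed subform of $G_\c\times G_\n$ supplied by Theorem~\ref{ThmZ}(iii). Composing with the two representation isomorphisms from that theorem produces the desired isomorphism $F\cong G$. I expect the only delicate point to be the bookkeeping in the Wyler-join computation; the conceptual obstacle --- that separate isomorphisms of the hulls need not \emph{a priori} respect the interaction between a conormal and a normal cluster --- is resolved precisely by the shared base category, which forces the relevant embedding and quotient morphisms to coincide.
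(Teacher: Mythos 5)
Your proof is correct and takes essentially the same route as the paper's: the paper likewise works with the orean factorizations $(F_\c,F_\n)$ and $(G_\c,G_\n)$ supplied by Theorem~\ref{thmAA}, uses the dual of Lemma~\ref{LemJ} to identify the quotient morphism $\pi_R$ across the two forms, deduces $\gamma_\c(A\ast R)=(\gamma_\c A)\ast(\gamma_\n R)$ and the $\alpha$-compatibility via the uniqueness in Lemma~\ref{LemN}, and then applies Theorem~\ref{thmAA}(i)--(ii) exactly as you do. The only difference is packaging: the paper writes the isomorphism as the explicit composite $\nu^G\circ(\gamma_\c\times\gamma_\n)\circ\tau^F$ and checks directly that the round trip is the identity, whereas you transport the closed subform of $F_\c\times F_\n$ given by Theorem~\ref{ThmZ}(iii) along $\gamma_\c\times\gamma_\n$ --- the same computation in a slightly more structural formulation (your attribution of the preservation of direct and inverse images to binormality alone should strictly be to Remark~\ref{RemO}, since $\phi_\c$ is an isomorphism, but this does not affect the argument).
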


\begin{proof} It follows from Remark~\ref{RemO} that an isomorphism $F\cong G$ will restrict to isomorphisms $F_\c \cong G_\c $ and $F_\n \cong G_\n $, which proves the `if part' of the theorem. In what follows we prove the `only if part', where we use Remark~\ref{RemO} extensively, withought explicitly referring to it.  

Let $\gamma_\c\colon F_\c\to G_\c$ and $\gamma_\n\colon F_\n\to G_\n$ denote the two isomorphisms (they are unique, by Lemma~\ref{LemN}). Consider the operators
\begin{align*}
  \tau^F\colon F\to F_\c \times F_\n ,\quad & K\mapsto (\cccl{K},\nccl{K}),\\
  \nu^F\colon F_\c \times F_\n \to F,\quad & (A,R)\mapsto A\lor^F R,
\end{align*}
and similar ones, $\tau^G$ and $\nu^G$, for $G$.  With these operators, we construct operators 
\begin{align*}
  \nu^G\circ (\gamma_\c\times\gamma_\n)\circ \tau^F\colon & F\to G\\
  \nu^F\circ (\gamma_\c^{-1}\times\gamma_\n^{-1})\circ \tau^G\colon & G\to F.
\end{align*}
Since there is a unique isomorphism between two normal/conormal forms (see Lemma~\ref{LemN}), these operators are actually build symmetrically to each other. So, to show that they are inverses of each other, it is sufficient to prove that 
    \[\nu^F\circ (\gamma_\c^{-1}\times\gamma_\n^{-1})\circ \tau^G\circ\nu^G\circ (\gamma_\c\times\gamma_\n)\circ \tau^F\]
is the identity operator.  Consider any $F$-cluster and write $A=\cccl{K}$ and $R=\nccl{K}$.  We have:
  \[(\nu^G\circ (\gamma_\c\times\gamma_\n)\circ \tau^F)(K)=\gamma_\s A\lor^G\gamma_\e R.\]
Since $\gamma_\n$ is an isomorphism, thanks to the dual of Lemma~\ref{LemJ}, an $F$-quotient of $R$ is the same as a $G$-quotient of $\gamma_\n R$ --- let us denote it by $\pi_R$. Furthermore, by dual of Lemma~\ref{LemJ} again along with the fact that $F=F_\c\vee F_\n$ is the exact join decomposition of $F$ (Theorem~\ref{ThmM}), $\pi_R$ is also a $F_\n$-quotient of $R$ and similarly, $\pi_R$ is also a $G_\n$-quotient of $\gamma_\n R$. At this point, we employ Theorem~\ref{thmAA}, which allows us to use Wyler join. For the sake of brevity, let us not distinguish in notation between various constructions relative to the form $F$ and relative to the form $G$, including Wyler join for these two forms (more precisely, for the pair $(F_\c,F_\n)$ and for the pair $(G_\c,G_\n)$). We then have the following, where the last equality is by Theorem~\ref{thmAA}(i):
  \[(\gamma_\c A)\ast(\gamma_\n R)=(\pi_R\cdot^\c\gamma_\c A)\cdot ^\c\pi_R=\gamma_\c((\pi_R\cdot^\c A)\cdot^\c\pi_R)=\gamma_\c(A\ast R)=\gamma_\c A.\]
Thanks to Lemma~\ref{LemN} and Theorem~\ref{thmAA}(i) we further have:
  \[\gamma_\n R\ge^G \gamma_\n\alpha A=\alpha\gamma_\c A.\]
Then, by Theorem~\ref{thmAA}(ii),
  \[\cccl{\gamma_\c A\lor^G \gamma_\n R}=\gamma_\c A\quad\textrm{and}\quad
  \nccl{\gamma_\c A\lor^G \gamma_\n R}=\gamma_\n R.\]
These observations bring us to the following direct computation:
\begin{align*}
  \nu^F (\gamma_\c^{-1}\times\gamma_\n^{-1}(\tau^G(\nu^G(\gamma_\c\times\gamma_\e( \tau^F(K))))))
  &=\nu^F (\gamma_\c^{-1}\times\gamma_\n^{-1}(\tau^G(\nu^G(\gamma_\c\times\gamma_\n (A,R)))))\\
  &=\nu^F (\gamma_\c^{-1}\times\gamma_\n^{-1}(\tau^G(\nu^G( \gamma_\c A,\gamma_\n R))))\\
  &=\nu^F (\gamma_\c^{-1}\times\gamma_\n^{-1}(\tau^G( \gamma_\c A\vee^G\gamma_\n R)))\\ &=
  \nu^F (\gamma_\c^{-1}\times\gamma_\n^{-1}(\gamma_\c A,\gamma_\n R))\\
  &=\nu^F(A,R)\\
  &=A\vee^F R\\
  &=K.\qedhere
\end{align*}
\end{proof}

\begin{theorem}\label{ThmH}
  Given a pair $(F_\s ,F_\e )$ of forms over a category $\mathbb{C}$, there is a noetherian form $G$ over $\mathbb{C}$ having an exact join decomposition $G=G_\s\vee G_\e$, with $G_\s $ and $G_\e$ isomorphic to $F_\s $ and $F_\e $, respectively, if and only if the pair $(F_\s ,F_\e )$ is an orean factorization of $\mathbb{C}$ satisfying the following conditions (where $\alpha$ is the conormal operator $F_\s\to F_\e$ and $\beta$ is the normal operator $F_\e\to F_\s$):
  \begin{itemize}
    \item[(i)] $F_\e $ satisfies (N1).  Furthermore, for any morphism $f\colon X\to Y$, $F_\s$-cluster $A$ in $Y$ and $F_\e$-cluster $R$ in $Y$ such that $A\ast R=A$, $R\ge^\e \alpha(A)$, $\im^\s f\ge^\s A$ and $\im^\e f\ge^\e R$, we have $(f\cdot^\s (A\cdot^\s f))\ast R=A$.

    \item[(ii)] The conormal operator $\alpha\colon F_\s \to F_\e $ preserves binary meets. Furthermore, the formulas
    $A=A\ast\alpha(A)$ and
    $R\ge^\e \alpha\beta(R)$ hold for any $F_\s$-cluster $A$ and 
    $F_\e$-cluster $R$.
  \end{itemize} 
  Moreover, when $G$ exists, it is isomorphic to the subform of $F_\s\times F_\e$ consisting of those pairs $(A,R)$ for which $A\ast R=A$ and $\alpha(A)\le R$.
\end{theorem}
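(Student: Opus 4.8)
The plan is to prove the biconditional in two directions and to let the ``moreover'' clause fall out of the sufficiency construction, where $G$ will be built as the closed clusters of an explicit idempotent closure operator whose fixed points are exactly the pairs $(A,R)$ with $A\ast R=A$ and $\alpha(A)\le^\e R$.

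\emph{Necessity.} Suppose a noetherian $G$ with exact join decomposition $G=G_\s\vee G_\e$, and isomorphisms $G_\s\approx F_\s$, $G_\e\approx F_\e$, exists. I would first apply Theorem~\ref{thmAA}: it gives $G_\s=G_\c$, $G_\e=G_\n$, that $(G_\s,G_\e)$ is an orean factorization (hence a semiexact pair with operators $\alpha,\beta$), and all the structural identities listed there. Since the given isomorphisms are binormal and unique (Remark~\ref{RemO}, Lemma~\ref{LemN}) they intertwine the whole orean-factorization structure, so by Theorem~\ref{ThmS} the pair $(F_\s,F_\e)$ is itself an orean factorization and every statement about $\alpha,\beta,\ast$, meets and images transports verbatim. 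Conditions (ii) are then read off directly: $A=A\ast\alpha(A)$ is Theorem~\ref{thmAA}(i) for a conormal $A$ (where $\tau_\s A=A$, $\tau_\e A=\alpha(A)$), meet-preservation of $\alpha$ is Theorem~\ref{thmAA}(v), and $R\ge^\e\alpha\beta(R)$ holds because $\beta(R)=\cccl{R}\le R$ forces $\alpha\beta(R)=\nccl{\cccl{R}}\le R$. For condition (i), the Wyler identity comes from Theorem~\ref{thmAA}(iii) together with the last law of Lemma~\ref{alphaastlemma}, while (N1) for $F_\e$ reduces to (N1) for $G_\n$: on normal clusters, direct images, inverse images, joins and kernels of $G_\n$ coincide with those of $G$ (inverse images of normal clusters are always normal, direct images of normal clusters are normal by Theorem~\ref{ThmM}, and $\bot^G=\ker^G 1$ is normal), so (N1) of the noetherian form $G$ descends.

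\emph{Sufficiency, the construction.} Assuming $(F_\s,F_\e)$ is an orean factorization satisfying (i) and (ii), I would define $\kappa\colon F_\s\times F_\e\to F_\s\times F_\e$ by $\kappa(A,R)=(A\ast R,\ \alpha(A)\vee^\e R)$. This is an operator, being assembled from the Wyler-join operator of Theorem~\ref{ThmR}, the conormal operator $\alpha$, the fibrewise join, and the product projections; it is a closure operator since $A\ast R\ge A$ (Remark~\ref{RemP}) and $\alpha(A)\vee^\e R\ge R$. The crucial point is idempotence: condition (ii) gives $A=A\ast\alpha(A)$, which by the join-formula part of Lemma~\ref{alphaastlemma} (licensed by (N1) for $F_\e$ from (i)) is equivalent to $A\ast R=A\ast(\alpha(A)\vee^\e R)$ and yields $\alpha(A)\vee^\e R=\alpha(A\ast R)\vee^\e R$; substituting these into $\kappa\kappa$ collapses it to $\kappa$. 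Its fixed clusters are exactly the pairs with $A\ast R=A$ and $\alpha(A)\le^\e R$, so $G:=(F_\s\times F_\e)_\kappa$ is the subform named in the ``moreover'' clause, and Theorem~\ref{ThmA} gives that $G$ is orean, with meets and inverse images computed as in the product and joins and direct images obtained by closing with $\kappa$.

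\emph{Sufficiency, decomposition and axioms.} I would record the two coherence identities $\im^\e h=\alpha(\im^\s h)$ and $\ker^\s h=\beta(\ker^\e h)$, obtained from $h=\iota_{\im^\s h}\circ\pi_{\ker^\e h}$ via Lemma~\ref{LemK} and Theorem~\ref{ThmF}, together with the derived law $\beta\alpha(A)\le A$ (from $\beta\alpha(A)=\bot^\s\ast\alpha(A)\le A\ast\alpha(A)=A$ using Lemma~\ref{LemL} and monotonicity of $\ast$). Since $G$-images are closures of component-wise images, the first identity gives $\im^G h=\kappa(\im^\s h,\alpha(\im^\s h))=(\im^\s h,\alpha(\im^\s h))$, whence the conormal clusters of $G$ are precisely $G_\s:=\{(A,\alpha(A))\}\approx F_\s$ (the first projection being monotone both ways because $\alpha$ is an operator), so $G_\s$ is conormal and $G_\s\to G$ is conormal. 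I would then exhibit decomposition operators $\kappa_\s(A,R)=(A,\alpha(A))$ and $\kappa_\e(A,R)=(\beta(R),R)$ (landing in $G$ by (ii) and $\beta(R)\le A$), verify they are idempotent co-closure operators with $(A,R)=\kappa_\s(A,R)\vee^G\kappa_\e(A,R)$, obtaining a join decomposition, transport the orean-factorization property from $(F_\s,F_\e)$ to $(G_\c,G_\n)$ by Theorem~\ref{ThmS} to get (N2) via Theorem~\ref{ThmI} and Theorem~\ref{ThmT}, derive (N1) by reducing to quotients through Theorem~\ref{ThmAE} and invoking the Wyler identity of (i), and obtain (N3) and the exactness of the decomposition from the meet-preservation of (ii) via Theorem~\ref{ThmM}.

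\emph{The obstacle.} I expect the genuine difficulty to lie entirely on the sufficiency side, in pinning down the \emph{normal} clusters of $G$ and promoting the join decomposition from semiexact to exact. The subtlety is that $\alpha$ need not preserve bottom clusters, so $\bot^G=(\bot^\s,\alpha(\bot^\s))$ and the normal clusters of $G$ do not automatically take the naive shape $(\beta(R),R)$; the derived law $\beta\alpha(A)\le A$ tames the bottom cluster, but the general identification and the verification of (N1) for the mixed clusters $(A,R)$ under arbitrary morphisms are calculation-heavy and require the full force of condition (i) (beyond mere (N1) for $F_\e$) together with the pullback description in Lemma~\ref{LemAC} and the interior formula and cancellation property in Lemma~\ref{inverseimageinteriorlem}, exactly as these feed the identifications in Theorem~\ref{thmAA}.
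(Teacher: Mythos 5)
Your skeleton is the paper's: the same closure operator $\kappa(A,R)=(A\ast R,\,\alpha(A)\vee^\e R)$ on $F_\s\times F_\e$, shown idempotent via Lemma~\ref{alphaastlemma}, with $G=(F_\s\times F_\e)_\kappa$ and the same identification of conormal clusters $(A,\alpha(A))$ and normal clusters $(\beta(R),R)$; the necessity direction through Theorem~\ref{thmAA} also matches. But three steps, as written, do not go through. First, you cannot ``derive (N1) by reducing to quotients through Theorem~\ref{ThmAE}'': that theorem concerns the conormal component $F_\s$ of an orean factorization, and $G$ is neither conormal nor normal, so its hypotheses are not met; even an adaptation of its proof would need (N1)-type formulas for $G$ at embeddings and quotients separately, which is not free. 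The paper instead verifies (N1) for $G$ directly through Remark~\ref{RemB}, computing $f\cdot^G((A,R)\cdot^G f)$ and $(f\cdot^G(A,R))\cdot^G f$ from the component formulas supplied by Theorem~\ref{ThmA}, with the second half of condition (i) doing exactly the work here (note Remark~\ref{RemI}: the meet formula of (N1) for $F_\s$ would suffice for (i), but it is not among the hypotheses). Second, the ``moreover'' clause does not simply fall out of the sufficiency construction: the construction produces \emph{one} noetherian form with exact join decomposition whose hulls are isomorphic to $F_\s,F_\e$; to conclude that an \emph{arbitrary} such $G$ is isomorphic to the constructed subform you need uniqueness, i.e., Theorem~\ref{ThmAB} (equivalently, you must upgrade your necessity analysis to show that $K\mapsto(\cccl{K},\nccl{K})$ is a full bijection onto the closed pairs). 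Your sketch asserts neither.

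Third, in the necessity direction your route to the second identity of (i) --- Theorem~\ref{thmAA}(iii) plus the last law of Lemma~\ref{alphaastlemma} --- stalls. Applying that law with $A'=A\cdot^\s f$ and $R'=R\cdot^\e f$ yields $((f\cdot^\s(A\cdot^\s f))\ast R)\cdot^\s f=A'\ast R'$, an equation about inverse images in the domain; without the meet formula of (N1) for $F_\s$ (not available) you cannot cancel $\cdot^\s f$ to recover $(f\cdot^\s(A\cdot^\s f))\ast R=A$. The paper's computation avoids this by working in the ambient noetherian form $F$, where (N1) does hold: $(f\cdot^\s(A\cdot^\s f))\ast R=\cccl{(f\cdot^\s(A\cdot^\s f))\vee^F R}=\cccl{f\cdot^F((A\vee^F R)\cdot^F f)}=\cccl{(A\vee^F R)\wedge^F \im^F f}=\cccl{A\vee^F R}=A$, using Lemma~\ref{inverseimageinteriorlem} at both ends and Theorem~\ref{thmAA}(iii) in the middle. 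Finally, a reassurance: your flagged ``obstacle'' about $\alpha$ failing to preserve bottom clusters is vacuous under the stated hypotheses --- since $\beta$ is normal we have $\bot^\s=\beta(\bot^\e)$, so the law $R\ge^\e\alpha\beta(R)$ of (ii) at $R=\bot^\e$ forces $\alpha(\bot^\s)=\bot^\e$ (this is made explicit in Theorem~\ref{ThmAC}); hence $\bot^G=(\bot^\s,\bot^\e)$, kernels compute as $\ker^G f=(\beta(\ker^\e f),\ker^\e f)$, and the normal $G$-clusters take exactly the shape $(\beta(R),R)$ you hoped for.
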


\begin{proof}
We first prove the `if' part of the theorem. Let $(F_\s ,F_\e )$ be an orean factorization of a category $\mathbb{C}$, satisfying the conditions (i,ii) of Theorem~\ref{ThmH}. 
Consider the operator $\kappa\colon F_\s\times F_\e\to F_\s\times F_\e$ defined by $(A,R)\mapsto (A\ast R,\alpha(A)\lor R)$. Monotonicity of this operator follows from monotonicity of join (Example~\ref{ExaO}), of Wyler join (Theorem~\ref{ThmR}), and of $\alpha$ (Theorem~\ref{ThmF}). By  Lemma~\ref{alphaastlemma}, (i,ii) and Remark~\ref{RemP}, we have:
  \begin{align*}
    \kappa\kappa(A,R)&=\kappa(A\ast R,\alpha(A)\lor R)\\
    &=((A\ast R)\ast (\alpha(A)\lor R),\alpha(A\ast R)\lor \alpha(A)\lor R)\\
    &=((A\ast R)\ast(\alpha(A\ast R)\lor R),\alpha(A)\lor R)\\
    &=((A\ast R)\ast R, \alpha(A)\lor R)\\
    &=(A\ast R, \alpha(A)\lor R)\\
    &=\kappa(A,R)\\
    &\ge (A,R).
  \end{align*}
This shows that $\kappa$ is an idempotent closure operator on $F_\s\times F_\e$. 
The closed clusters for $\kappa$ are those pairs $(A,R)$, where $A\ast R=A$ and $\alpha(A)\le R$. Note that thanks to Lemma~\ref{LemL}, for such a pair we further have 
  \[\beta(R)=\bot\ast R\le A\ast R=A,\]
 which we will use a few times below.

Let $G=(F_\s \times F_\e )_\kappa$.  In what follows we make use of Theorem~\ref{ThmA} several times. Firstly, it gives us that $G$ is an orean form and finite meets and inverse images of $G$-clusters are computed component-wise (i.e., as in $F_\s \times F_\e$). Furthermore, thanks to it (along with Remark~\ref{RemL}), we have:
\begin{align*}
  f\cdot^G(A,R) &=\kappa(f\cdot^\s A,f\cdot^\e R)\\
  &=((f\cdot^\s A) \ast (f\cdot^\e R), \alpha(f\cdot^\s A)\lor f\cdot^\e R)\\
  &=((f\cdot^\s A) \ast (f\cdot^\e R), f\cdot^\e\alpha(A)\lor f\cdot^\e R) & \textrm{(Theorem~\ref{LemN})}\\
  &=((f\cdot^\s A)\ast (f\cdot^\e R), f\cdot^\e (\alpha(A)\lor R)).\\
  &=((f\cdot^\s A)\ast (f\cdot^\e R), f\cdot^\e R).
\end{align*}
Using (ii) and conormality of $\alpha$, we further have:
\begin{align*}
  \im^G f & =f\cdot^G (\top^\s,\top^\e)\\
  &=((\im^\s f)\ast (\im^\e f),\im^\e f)\\
  &=((\im^\s f)\ast\alpha(\im^\s f),\alpha(\im^\s f))\\
  &=(\im^\s f, \alpha(\im^\s f)).
\end{align*}
Thus, the conormal $G$-clusters are pairs of the form $(A,\alpha(A))$. Next, thanks to Lemma~\ref{LemL} and (ii) (as well as Remark \ref{RemN}) we get that the bottom clusters for $G$ are given by the pairs
  \begin{align*} 
  \kappa(\bot^\s,\bot^\e)
  &=(\bot^\s \ast \bot^\e , \alpha(\bot^\s)\lor \bot^\e)\\
    &=(\beta(\bot^\e),\alpha\beta(\bot^\e)\lor \bot^\e)\\
    &=(\bot^\s, \bot^\e)\end{align*}
(note that we could also get $\bot^\s\ast\bot^\e=\bot^\s$ directly from Remark~\ref{RemP}). Using normality of $\beta$, we then get:
\begin{align*}
  \ker^G f &= (\bot^\s,\bot^\e )\cdot^Gf\\
  &=(\ker^\s f,\ker^\e f)\\&=(\beta(\ker^\e f),\ker^\e f).
\end{align*}
Thus, the normal $G$-clusters are pairs of the form $(\beta(R),R)$. Notice also that for a $G$-cluster $(A,R)$, we have:
\begin{align*}
  (A,\alpha(A))\lor^G(\beta(R),R)&=\kappa(A\vee^\s\beta(R),\alpha(A)\lor^\e R)\\
  &=((A\lor^\s\beta(R))\ast(\alpha(A)\lor^\e R), \alpha(A\lor^\s \beta(R))\lor^\e\alpha(A)\lor^\e R)\\
  &=(A\ast R,\alpha(A)\lor^\e R)\\
  &=(A,R).
\end{align*}  
By the shape of conormal and normal $G$-clusters established above, it is clear that for any $G$-cluster $(A,R)$, the cluster $(A,\alpha(A))$ is the largest conormal cluster below it and $(\beta(R), R)$ is the largest normal cluster below it. This fact together with the equality established above leaves us to prove validity of conditions (ii) and (iii) in Theorem~\ref{ThmM}, in order to be able to conclude that $G$ has an exact join decomposition. To prove \ref{ThmM}(ii) for $G$, consider a $G$-cluster $(B,S)$ in an object $Y$ and a morphism $f\colon X\to Y$. The largest normal $G$-cluster below $(B,S)$ is $(\beta(S),S)$. Using Lemma~\ref{LemN}, we get \[(\beta(S),S)\cdot^G f=(\beta(S)\cdot^\s f,S\cdot^\e f)=(\beta(S\cdot^\e f),S\cdot^\e f),\] which is the largest normal $G$-cluster below $(B\cdot^\s f,S\cdot^\e f)=(B,S)\cdot^G f$. Next, we prove \ref{ThmM}(iii). The largest normal $G$-cluster in the domain of $f$ is $(\beta(\top^\e),\top^\e)$.
We have:
  \begin{align*}
  \beta(f\cdot^\e \top^\e ) &=\beta(f\cdot^\e \top^\e )\ast (f\cdot^\e \top^\e) & \textrm{(Remark~\ref{RemP})}\\ &\ge (f\cdot^\s \beta(\top^\e ))\ast (f\cdot^\e \top^\e) \\ &\ge \bot^\s \ast (f\cdot^\e \top^\e)\\  &=\beta(f\cdot^\e \top^\e ),\end{align*}
This gives that
  \begin{align*}
  f\cdot^G(\beta(\top^\e ),\top^\e ) &=((f\cdot^\s \beta(\top^\e ))\ast (f\cdot^\e \top^\e),f\cdot^\e \top^\e )\\
  &=(\beta(f\cdot^\e \top^\e ),f\cdot^\e \top^\e )\\
  &=(\beta(\im^\e f),\im^\e f)\\
  &=(\beta(\im^\e f),\alpha(\im^\s f))
  \end{align*}
is the largest normal $G$-cluster below $\im^G f$. 

By Theorem~\ref{ThmM}, $G$ has an exact join decomposition $G=G_\s\lor G_\e$, where $G_\s$ is the subform of $G$ consisting of conormal $G$-clusters and $G_\e$ is the subform of $G$ consisting of normal $G$-clusters. By the shape of normal and conormal clusters for $G$, it is clear that we have isomorphisms $F_\s\approx G_\s$ and $F_\e\approx G_\e$ given by the mappings $A\mapsto (A,\alpha(A))$ and $R\mapsto (\beta(R),R)$.  
    
Finally, we will conclude the proof of the `if' part of the theorem by showing that $G$ is a noetherian form. $G$ satisfies (N2) thanks to Theorems~\ref{ThmI} and \ref{ThmS}. By (ii), $\alpha$ preserves meets, which easily implies that the meet of conormal $G$-clusters is conormal, and also by Theorem~\ref{ThmM}, the join of normal $G$-clusters is normal.  Thus $G$ satisfies (N3) as well.
We now want to prove that $G$ satisfies (N1). For this we will rely on Remark~\ref{RemB}. Consider a $G$-cluster $(A,R)$ below $\im^G f=(\im^\s f,\alpha(\im^\s f))=(\im^\s f,\im^\e f)$.  Applying (i), we get:
\begin{align*}
  f\cdot^G((A,R)\cdot^G f)
  &=\kappa(f\cdot^\s (A\cdot^\s f),f\cdot^\e (R\cdot^\e f))\\
  &=((f\cdot^\s (A\cdot^\s f))\ast (f\cdot^\e (R\cdot^\e f)), f\cdot^\e (R\cdot^\e f))\\
  &=((f\cdot^\s (A\cdot^\s f))\ast R,R)\\
  &=(A,R).
\end{align*}
So the meet formula of (N1) holds.  To show the join formula of (N1), consider any morphism $f$ and $G$-cluster $(A,R)$ above $\ker^G f=(\beta(\ker^\e f),\ker^\e f)=(\ker^\s f,\ker^\e f)$.  
Thanks to (i) and Lemma~\ref{alphaastlemma}, we have:
\begin{align*}
  (f\cdot^G(A,R))\cdot^G f &=(\kappa(f\cdot^\s A,f\cdot^\e R))\cdot^G f\\ &=(((f\cdot^\s A)\ast (f\cdot^\e  R))\cdot^\s f, (f\cdot^\e  R)\cdot^\e f)\\
    &=(A\ast R, R).\\
  &=(A, R).
\end{align*}  
We have thus proved that $G$ is a noetherian form. 

Next, we prove the `only if part' of the theorem. For this, it suffices to show that for any noetherian form $F$ having an exact joint decomposition $F=F_\s\vee F_\e$, the pair $(F_\s,F_\e)$ is an orean factorization for which the conditions (i-ii) of Theorem~\ref{ThmH} hold. This is because of Theorem~\ref{ThmS} as well as invariance of (i-ii) under isomorphisms of forms. So consider a noetherian form $F$ having an exact joint decomposition $F=F_\s\vee F_\e$. We will make use of Theorem~\ref{thmAA}, without explicitly referring to it. In particular, we readily get that $(F_\s,F_\e)$ is an orean factorization. We now want to prove that the conditions (i-ii) hold for the pair $(F_\s,F_\e)$. In what follows, in addition to Theorem~\ref{thmAA}, we will regularly make use of the fact that direct images of normal $F$-clusters are normal (which is attested by Theorem~\ref{ThmM}), and hence direct images in $F_\e$ are computed as in $F$, thanks to the dual of Lemma~\ref{LemM}. Note that the inverse images too are computed in $F_\e$ as in $F$. We then readily get that the join formula of (N1) holds for $F_\e$:
\begin{align*}
    (f\cdot^\e R)\cdot^\e f &= (f\cdot^F R)\cdot^\e f\\
    &=(f\cdot^F R)\cdot^F f\\
    &=R\vee^F \ker^F f\\
    &=R\vee^\e \ker^\e f.
\end{align*}
To prove the meet formula of (N1) for $F_\e$, we make use Remark~\ref{RemB}. Suppose $\im^\e f\ge^\e R$. Note that we have \[\im^\e f=f\cdot^\e \top^\e=f\cdot^\e \nccl{\top^F}=f\cdot^F \nccl{\top^F}=\nccl{\im^F f},\] where the last equality is due to Theorem~\ref{ThmM}(iii). So $\im^F f\ge^\e R$ and we have:
\begin{align*}
    f\cdot^\e (R\cdot^\e f) &=f\cdot^\e (R\cdot^F f) \\
    &=f\cdot^F (R\cdot^F f) \\
    &=R.
\end{align*}
To prove the second part of  (i), assume $R$ is a normal $F$-cluster and $A$ is a conormal $F$-cluster, such that $A\ast R=A$, $\alpha(A)\le R$. Assume furthermore that $f$ is a morphism such that $A\le\im^\s f(=\im^F f)$ and $R\le\im^\e f(\le \im^F f)$.  Then:
\begin{align*}
  (f\cdot^\s (A\cdot^\s f))\ast R&=\cccl{(f\cdot^\s (A\cdot^\s f))\lor^F R} &\textrm{(Lemma~\ref{inverseimageinteriorlem})}\\
  &=\cccl{(f\cdot^F(A\cdot^\s  f))\lor^F (f\cdot^F (R\cdot^F f))}\\
  &=\cccl{f\cdot^F((A\cdot^\s f)\lor^F (R\cdot^\e f))}\\
  &=\cccl{f\cdot^F((A\lor^F R)\cdot^F f)} &\textrm{(Theorem~\ref{thmAA})}\\
  &=\cccl{(A\lor^F R)\land^F \im^F f}\\
  &=\cccl{A\lor^F R}\\
  &=A & \textrm{(Lemma~\ref{inverseimageinteriorlem})}
\end{align*}
As for (ii), we just need to apply (i) and (v) from Theorem~\ref{thmAA}. This completes the proof of the `only if part'.

The last statement of the theorem follows from what we showed in the proof of the `if part' and from Theorem~\ref{ThmAB}. This completes the proof.
\end{proof}

\begin{remark}\label{RemI}
  In view of Remark~\ref{RemB}, the condition (i) in Theorem~\ref{ThmH} is a consequence of the following condition:
  \begin{itemize}
    \item[(i$'$)] $F_\e $ satisfies (N1) and $F_\s$ satisfies the meet formula in (N1).
  \end{itemize}
\end{remark}

\begin{remark}
  Note that Theorem~\ref{ThmH} provides a characterization of all noetherian forms admitting exact join decomposition, as starting with such a form $F$ we can consider the pair $(F_\s,F_\e)$ of its subforms of conormal/normal clusters, and then apply the theorem (along with Theorem~\ref{ThmM}). 
\end{remark}

\begin{remark}
While working on this paper,
Theorem~\ref{ThmH} went through a number of simplifications. The earlier versions had more requirements than those included in the current (i) and (ii). In the following theorem we show that these requirements can be simplified further.
\end{remark}

\begin{theorem}\label{ThmAC}
  An orean factorization $(F_\e,F_\s)$ of a category $\mathbb{C}$ satisfies the conditions (i) and (ii) of Theorem~\ref{ThmH}, if and only if it satisfied the following conditions: 
  \begin{itemize}
    \item[(i)] $F_\e$ satisfies (N1).  Furthermore, for any $F_\e$-projection $e$ and $F_\s$-cluster $A$ in the codomain of $e$, we have $(e\cdot^\s (A\cdot^\s e))\ast \alpha(A)=A\ast \alpha(A)=A$.

    \item[(ii)] The conormal operator $\alpha\colon F_\s \to F_\e $ preserves binary meets and bottom clusters.
  \end{itemize} 
\end{theorem}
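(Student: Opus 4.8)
The plan is to prove the equivalence by first discarding the two requirements common to both lists --- that $F_\e$ satisfies (N1) and that $\alpha$ preserves binary meets --- and then matching the remaining clauses. I note at the outset that the law $A=A\ast\alpha(A)$ (call it (L)) occurs in both lists: it is stated outright in condition (ii) of Theorem~\ref{ThmH}, and it is precisely the second equality $A\ast\alpha(A)=A$ of condition (i) here. So (L) may be treated as common ground. What is left to reconcile is, on the Theorem~\ref{ThmH} side, the general-morphism Wyler-join identity together with $R\ge^\e\alpha\beta(R)$, and, on the present side, the projection-only Wyler-join identity together with preservation of bottom clusters by $\alpha$.

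First I would dispatch the bottom-cluster clause. Using $\beta(R)=\ker^\s\pi^\e_R=\bot^\s\cdot^\s\pi^\e_R$ (Theorem~\ref{ThmF}) and the fact that $\alpha$ preserves direct images (Lemma~\ref{LemN}, as $F_\s$ is conormal), consider $g=\pi^\e_R\circ\iota^\s_{\beta(R)}$. Since $\im^\s g=\pi^\e_R\cdot^\s(\bot^\s\cdot^\s\pi^\e_R)=\bot^\s$, applying $\alpha$ gives $\im^\e g=\alpha(\im^\s g)=\alpha(\bot^\s)$; on the other hand $\im^\e g=\pi^\e_R\cdot^\e\alpha\beta(R)$, and by the Galois connection (Remark~\ref{RemC}) the inequality $R\ge^\e\alpha\beta(R)$ is equivalent to $\pi^\e_R\cdot^\e\alpha\beta(R)=\bot^\e$. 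Hence the clause ``$R\ge^\e\alpha\beta(R)$ for all $R$'' holds if and only if $\alpha(\bot^\s)=\bot^\e$; the forward direction is immediate on taking $R=\bot^\e$ (where $\beta(\bot^\e)=\bot^\s$ by Remark~\ref{RemP}), and the converse is the computation just given.

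It remains to match the two Wyler-join identities. The direction from Theorem~\ref{ThmH} to the present conditions is easy: specialising the general identity to $f=e$ an $F_\e$-projection and $R=\alpha(A)$, the image hypotheses $\im^\s f\ge^\s A$ and $\im^\e f\ge^\e R$ become automatic, since an $F_\e$-projection has $\im^\s e=\top^\s$ and $\im^\e e=\top^\e$ (Lemma~\ref{LemK}), while $A\ast\alpha(A)=A$ holds by (L). For the reverse direction I would imitate the reduction of Theorem~\ref{ThmAE}: factor an arbitrary $f$ as $f=me$ with $m=\iota^\s_{\im^\s f}$ and $e=\pi^\e_{\ker^\e f}$, and pull $A,R$ back along $m$ to $A'=A\cdot^\s m$ and $R'=R\cdot^\e m$. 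The embedding meet-formula of Lemma~\ref{LemK} and (N1) for $F_\e$ give $m\cdot^\s A'=A$ and $m\cdot^\e R'=R$ (using $A\le^\s\im^\s m$ and $R\le^\e\im^\e f=\im^\e m$); the pull-through law of Lemma~\ref{alphaastlemma} applied to the embedding $m$ then yields $A'\ast R'=(A\ast R)\cdot^\s m=A\cdot^\s m=A'$, and, since $\alpha$ preserves direct images, $m\cdot^\e\alpha(A')=\alpha(m\cdot^\s A')=\alpha(A)\le^\e R$, whence $R'\ge^\e\alpha(A')$. With $C'=e\cdot^\s(A'\cdot^\s e)\le^\s A'$, condition (i) here gives $C'\ast\alpha(A')=A'$, and monotonicity of the Wyler join (Theorem~\ref{ThmR}) upgrades this to $C'\ast R'=A'$.

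The main obstacle is the final transport of this identity forward along the embedding $m$, namely the formula $(m\cdot^\s C')\ast R=m\cdot^\s(C'\ast R')$. I would derive it by applying the pull-through law once more to get $\big((m\cdot^\s C')\ast R\big)\cdot^\s m=C'\ast R'$, then composing with $m\cdot^\s(-)$ and invoking the embedding meet-formula; the one genuinely delicate point is that this forces us to know $(m\cdot^\s C')\ast R\le^\s\im^\s m$, which is where the hypothesis $R\le^\e\im^\e f=\alpha(\im^\s m)$ is spent: from (L) and the identity $A\ast R=A\ast(\alpha(A)\vee^\e R)$ of Lemma~\ref{alphaastlemma} one gets $\im^\s m\ast R=\im^\s m$, and monotonicity of $\ast$ then bounds $(m\cdot^\s C')\ast R$ by $\im^\s m$. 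Since $f\cdot^\s(A\cdot^\s f)=m\cdot^\s C'$ by associativity of inverse and direct images, the transport formula gives $(f\cdot^\s(A\cdot^\s f))\ast R=m\cdot^\s(C'\ast R')=m\cdot^\s A'=A$, which is exactly the general Wyler-join identity of Theorem~\ref{ThmH}, completing the equivalence.
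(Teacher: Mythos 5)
Your proof is correct, and it departs from the paper's own argument in two places in ways worth recording. For the bottom-cluster clause, the paper only matches preservation of bottom clusters by $\alpha$ with the single instance of $R\ge^\e\alpha\beta(R)$ at $R=\bot^\e$, and then appeals to an inspection of the proof of the `if part' of Theorem~\ref{ThmH} to note that only that instance is used there; your computation with $g=\pi^\e_R\circ\iota^\s_{\beta(R)}$, which yields $\pi^\e_R\cdot^\e\alpha\beta(R)=\im^\e g=\alpha(\bot^\s)$ for \emph{every} $R$, shows directly that bottom preservation implies the full law in any orean factorization, so your equivalence of the two condition sets is self-contained and does not route through the existence statement of Theorem~\ref{ThmH} --- a genuine, if small, strengthening. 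For the Wyler-join clause, both arguments share the skeleton of factoring $f=me$ with $m=\iota^\s_{\im^\s f}$ and invoking the projection case at $A'=A\cdot^\s m$; but where the paper transfers back to $f$ via Lemma~\ref{LemAC}, exploiting that its criterion ``$\pi^\e_{\top^\e}\circ\iota^\s_{B\cdot^\s\iota^\s_A}$ is an $F_\e$-quotient'' for $B\ast R=A$ does not mention $R$, you transfer back purely computationally: two applications of the pull-through law of Lemma~\ref{alphaastlemma} (legitimate since $\ker^\e m=\bot^\e$ by Lemma~\ref{LemK}), the sandwich $A'=C'\ast\alpha(A')\le C'\ast R'\le A'\ast R'=A'$ from monotonicity of $\ast$, and the bound $(m\cdot^\s C')\ast R\le \im^\s m\ast R=\im^\s m$, which you correctly extract from the law $A=A\ast\alpha(A)$ together with the absorption law $A\ast R=A\ast(\alpha(A)\vee^\e R)$ and the hypothesis $R\le^\e\im^\e f=\alpha(\im^\s m)$ --- exactly the place where that hypothesis must be spent. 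The paper's route is more geometric and reuses the pullback--pushout machinery that Section~\ref{SecK} needs anyway for the bicategorical translation; yours stays entirely inside the Galois-connection calculus, avoids the diagram and Lemma~\ref{LemAC} altogether, and produces the stronger statement about $\alpha\beta$ as a by-product.
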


\begin{proof}
To prove this theorem, it suffices to observe the following, in all of which $(F_\e,F_\s)$ is assumed to be an orean factorization:
\begin{itemize}
    \item[(a)] The second part of \ref{ThmAC}(i) is equivalent to the conjunction of the second part of \ref{ThmH}(i) and the law $A\ast\alpha(A)=A$ in \ref{ThmH}(ii).
    
    \item[(b)] Preservation of the bottom clusters by $\alpha$ is equivalent to the law $R\ge^\e \alpha\beta(R)$ required in the case when $R$ is a bottom cluster, which was the only instance of this law used in the proof of the `if part' of Theorem~\ref{ThmH}.
\end{itemize}
The first part of (b) is a rather obvious consequence of the fact that $\beta$ preserves bottom clusters (Remark~\ref{RemN}). The second part of (b) can be established by an inspection of the proof of the `if part' of Theorem~\ref{ThmH}. To prove (a), we first show that the second part of \ref{ThmAC}(i) follows from the second part of \ref{ThmH}(i) and the law $A\ast\alpha(A)=A$ in \ref{ThmH}(ii). 

Suppose the second part of \ref{ThmH}(i) and the law $A\ast\alpha(A)=A$ from \ref{ThmH}(ii) hold. Consider an $F_\e$-projection $e$ and $F_\s$-cluster $A$ in the codomain of $e$. We want to show that $(e\cdot^\s (A\cdot^\s e))\ast\alpha(A)=A$. Apply the second part of \ref{ThmH}(i) in the case when $f=e$ and $R=\alpha(A)$. We then get: $\im^\s e\ge^\s \alpha(A)$ and $\im^\e e\ge^\e \alpha(A)$ imply $(e\cdot^\s (A\cdot^\s e))\ast \alpha(A)=A$. So we just need to establish the inequalities $\im^\s e\ge^\s \alpha(A)$ and $\im^\e e\ge^\e \alpha(A)$. But we do have these inequalities, thanks to Lemma~\ref{LemK}.

Suppose now the second part of \ref{ThmAC}(i) holds. Then we get, at once, the law $A\ast\alpha(A)$ in 149(ii). Consider a morphism $f\colon X\to Y$, $F_\s$-cluster $A$ in $Y$ and $F_\e$-cluster $R$ in $Y$ such that $A\ast R=A$, $R\ge^\e \alpha(A)$, $\im^\s f\ge^\s A$ and $\im^\e f\ge^\e R$. The factorization $f=\iota^\s_{\im^\s f}\circ \pi^\e_{\ker^\e f}$ and the fact that $\im^\s f\ge^\s A$ along with diagram chasing and applying Lemma~\ref{LemK} several times can be used to check that we have the following commutative diagram, where $B=f\cdot^\s(A\cdot^\s f)$:
\[\xymatrix@=70pt{\bullet\ar[r]^-{\pi^\e_{(\ker^\e f)\cdot^\e \iota^\s_{A\cdot^\s f}}}\ar[d]_-{\iota^\s_{A\cdot^\s f}} & \bullet\ar[d]_-{\iota^\s_{\pi^\e_{\ker^\e f}\cdot^\s (A\cdot^\s f)}} \ar[r]^-{\iota^\s_{B\cdot^\s\iota^\s_A}} & \bullet\ar[d]^-{\iota^\s_A}\ar[dl]^-{\iota^\s_{A\cdot^\s\iota^\s_{\im^\s f}}}\\ \bullet\ar[r]_-{\pi^\e_{\ker^\e f}} & \bullet\ar[r]_-{\iota^\s_{\im^\s f}} & \bullet }\]
By Lemma~\ref{LemAC}, to show that $B\ast R=A$, it suffices to show that the composite $\pi^\e_{\top^\e}\circ \iota^\s_{B\cdot^\s\iota^\s_A}$ is an $F_\e$-quotient. Set $A'=A\cdot^\s\iota^\s_{\im^\s f}$ and $B'=e\cdot^\s(A'\cdot^\s e)$, where $e=\pi^\e_{\ker^\e f}$. Then $B'\cdot^\s\iota^\s_{A'}=B\cdot^\s\iota^\s_{A}$.
Since, by \ref{ThmAC}(i), \[(e\cdot^\s (A'\cdot^\s e))\ast \alpha(A')=A'\ast \alpha(A')=A',\]
we get (by Lemma~\ref{LemAC}) that the composite $\pi^\e_{\top^\e}\circ \iota^\s_{B\cdot^\s\iota^\s_A}=\pi^\e_{\top^\e}\circ \iota^\s_{B'\cdot^\s\iota^\s_{A'}}$ is indeed an $F_\e$-quotient. This proves that the second part of 149(i) holds as well. The proof of (a), and hence of the entire theorem, is now complete.
\end{proof}

\begin{example}
  Theorem~\ref{ThmH} invites the following question: can a category have at least two non-isomorphic noetherian forms admitting exact join decomposition? In other words, can a category have more than one factorization system such that the corresponding orean factorization satisfies the conditions (i-ii) in the theorem? The answer is yes, and here is a simple example of such category (which, by the way, has all limits and colimits). Consider the $2$-chain regarded as a category:
    \[\xymatrix{0\ar[r]^-{f} & 1\ar[r]^-{g} & 2 }\]
  It has a noetherian form admitting conormal exact join decomposition thanks to Example~\ref{Ex3}. This form can be pictured via the direct image mappings as follows: 
    \[\xymatrix{ &  & \top_2\\ & \top_1\ar@{|->}[r] & \bullet\ar@{-}[u]  \\ \bot_0=\top_0 \ar@{|->}[r] &  \bot_1\ar@{|->}[r]\ar@{-}[u] & \bot_2\ar@{-}[u] \\ 0\ar[r]_-{f} & 1\ar[r]_-{g} & 2 }\]
  The corresponding factorization system $(\mathcal{E},\mathcal{M})$ has no non-identity morphisms in $\mathcal{E}$  and $f,g\in \mathcal{M}$. 
  The following display describes clusters and the direct images maps for the form over the same category, which is also noetherian and admits non-conormal non-normal exact join decomposition: 
    \[\xymatrix{ & \top_1\ar@{|->}[rd] & \\ & \bullet\ar@{|->}[rd]\ar@{-}[u] & \top_2  \\ \bot_0=\top_0 \ar@{|->}[r] &  \bot_1\ar@{|->}[r]\ar@{-}[u] & \bot_2\ar@{-}[u] \\ 0\ar[r]_-{f} & 1\ar[r]_-{g} & 2 }\]
  The corresponding factorization system $(\mathcal{E},\mathcal{M})$ has $g\in \mathcal{E}$  and $f,gf\in \mathcal{M}$.
\end{example}  

\begin{example}\label{ExaQ}
  Recall that a category is `balanced' when in the category, every morphism that is both an epimorphism and a monomorphism is automatically an isomorphism. A balanced category has at most one proper factorization system: the one given by the pair $(\mathcal{E},\mathcal{M})$, where $\mathcal{E}$ is the class of all epimorphisms and $\mathcal{M}$ is the class of all monomorphisms. In view of Theorems~\ref{ThmH} and \ref{ThmV}, a balanced category can have, up to isomorphism, at most one noetherian form admitting exact join decomposition. The category of sets is balanced. The conditions of Theorem~\ref{ThmH} for the form $F_\s$ of subsets and the form $F_\e$ of equivalence relations can be easily verified after Remark~\ref{RemI} and Examples~\ref{ExaR} and \ref{ExaAA}. By application of Theorem~\ref{ThmH} it then follows that the category of sets has (up to an isomorphism, unique) noetherian form having exact join decomposition, for which the clusters in a set $X$ are given by pairs $(A,R)$ where $R$ is an equivalence relation on $X$ and $A$ is either the empty set or one of the equivalence classes of $R$. In this form, $(B,S)\ge_f (A,R)$ if and only if $f(A)\subseteq B$ and $f(x)Sf(y)$ for all $xRy$.
\end{example}

\begin{theorem}\label{ThmAF}
Consider an orean factorization $(F_\s,F_\e)$ of a category $\mathbb{C}$, such that (N1) holds for the form $F_\e$. The following conditions are then equivalent (where $\alpha$ denotes the conormal operator $\alpha\colon F_\s\to F_\e$):
\begin{itemize}
    \item[(a)] $\alpha$ preserves binary meets.
    
    \item[(b)] Given a pullback \[\xymatrix@!=15pt{ & W\ar[dl]_-{a}\ar[dr]^-{b} & \\ X\ar[dr]_-{c} & & Y\ar[dl]^-{d} \\ & Z & }\]
    of $F_\s$-embeddings, we have $b\cdot^\e(R\cdot^\e a)=(c\cdot^\e R)\cdot^\e d$ for any $F_\e$-cluster $R$ in $X$. 
    
    \item[(c)] Given a pullback 
    of $F_\s$-embeddings as shown above, we have $b\cdot^\e(\top^\e_X\cdot^\e a)=(c\cdot^\e \top^\e_X)\cdot^\e d$.     
    
    \item[(d)] $\alpha$ preserves inverse images along each $F_\s$-embedding.
\end{itemize}
\end{theorem}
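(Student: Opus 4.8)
The plan is to prove the four conditions equivalent by first settling $(a)\iff(d)$ directly, and then closing the cycle $(d)\implies(b)\implies(c)\implies(d)$; since $(b)\implies(c)$ is merely the special case $R=\top^\e_X$ (noting $\top^\e_X\cdot^\e a=\top^\e_W$ by Remark~\ref{RemC}), the genuine work lives in three implications. Throughout I will use that $\alpha$ is conormal, so that $\alpha(\im^\s f)=\im^\e f$ for every morphism $f$ and, by Lemma~\ref{LemN}, $\alpha(f\cdot^\s A)=f\cdot^\e\alpha(A)$; that an $F_\s$-embedding $m$ satisfies $\ker^\e m=\bot^\e$ by Lemma~\ref{LemK}, whence (N1) for $F_\e$ (our standing hypothesis) gives both the cancellation $(m\cdot^\e R)\cdot^\e m=R$ and the identity $m\cdot^\e(T\cdot^\e m)=T\wedge^\e\im^\e m$; and the analogous meet formula $m\cdot^\s(A\cdot^\s m)=\im^\s m\wedge^\s A$ in $F_\s$ from Lemma~\ref{LemK}.

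For $(a)\iff(d)$: taking $m=\iota^\s_A$ in the $F_\s$-meet formula gives $A\wedge^\s B=m\cdot^\s(B\cdot^\s m)$; applying $\alpha$ and then (d) rewrites $\alpha(A\wedge^\s B)$ as $m\cdot^\e(\alpha(B)\cdot^\e m)=\alpha(B)\wedge^\e\im^\e m=\alpha(A)\wedge^\e\alpha(B)$, which is $(d)\implies(a)$. Conversely, applying $\alpha$ to $m\cdot^\s(A\cdot^\s m)=\im^\s m\wedge^\s A$ for an arbitrary $F_\s$-embedding $m$ and invoking (a) yields $m\cdot^\e\alpha(A\cdot^\s m)=m\cdot^\e(\alpha(A)\cdot^\e m)$; taking inverse image along $m$ and cancelling (legitimate since $\ker^\e m=\bot^\e$) produces (d).

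For $(c)\implies(d)$: given an $F_\s$-embedding $m$ and an $F_\s$-cluster $A$, form the pullback of $\iota^\s_A$ along $m$; by Lemma~\ref{LemP} both legs are $F_\s$-embeddings, one of them, $b$, being an embedding of $A\cdot^\s m$, so $\im^\s b=A\cdot^\s m$. Condition (c) for this square reads $\im^\e b=(\im^\e\iota^\s_A)\cdot^\e m$, which after expressing images through $\alpha$ is exactly $\alpha(A\cdot^\s m)=\alpha(A)\cdot^\e m$, i.e.\ (d).

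The main obstacle is $(d)\implies(b)$. For a pullback square of $F_\s$-embeddings $a,b,c,d$ and an $F_\e$-cluster $R$ in $X$, I will establish that the two sides $(c\cdot^\e R)\cdot^\e d$ and $b\cdot^\e(R\cdot^\e a)$ agree by showing they have the same inverse image along $b$ and both lie below $\im^\e b$. Using functoriality of inverse images (Remark~\ref{RemC}), the commutativity $db=ca$, and the cancellation $(c\cdot^\e R)\cdot^\e c=R$, one computes $((c\cdot^\e R)\cdot^\e d)\cdot^\e b=R\cdot^\e a=(b\cdot^\e(R\cdot^\e a))\cdot^\e b$. The second cluster is automatically $\le^\e\im^\e b$; for the first I call on (d): from $c\cdot^\e R\le^\e\im^\e c=\alpha(\im^\s c)$ I get $(c\cdot^\e R)\cdot^\e d\le^\e\alpha(\im^\s c)\cdot^\e d=\alpha((\im^\s c)\cdot^\s d)=\alpha(\im^\s b)=\im^\e b$, the middle step being (d) and the identity $(\im^\s c)\cdot^\s d=\im^\s b$ coming from Lemma~\ref{LemP}. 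Since inverse image along $b$ is injective on the down-set of $\im^\e b$ (as $b\cdot^\e(T\cdot^\e b)=T\wedge^\e\im^\e b$ equals $T$ there), the two sides coincide. The delicate point is precisely this appeal to (d) to secure the bound $(c\cdot^\e R)\cdot^\e d\le^\e\im^\e b$: without it, inverse image along $b$ is not injective on the clusters in play, and the Beck--Chevalley identity (b) can genuinely fail.
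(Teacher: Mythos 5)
Your proof is correct, but it is organized differently from the paper's. The paper proves the cycle (a)$\implies$(b)$\implies$(c)$\implies$(d)$\implies$(a): its main implication is (a)$\implies$(b), established by a direct equational chain that rewrites $b\cdot^\e(R\cdot^\e a)$ through the composite embedding $ca=db$ of $\im^\s c\wedge^\s\im^\s d$, with (a) supplying the crucial identity $\im^\e(ca)=\im^\e c\wedge^\e\im^\e d$; and its (d)$\implies$(a) step again goes through a pullback construction. You instead prove (a)$\iff$(d) purely order-theoretically (both directions from the meet formula of Lemma~\ref{LemK} for embeddings, Lemma~\ref{LemN}, and the (N1) cancellation $(m\cdot^\e S)\cdot^\e m=S$ available since $\ker^\e m=\bot^\e$, with no pullback needed), and you make (d)$\implies$(b) the main implication, proved by a separation argument: both sides have the same inverse image along $b$, both lie below $\im^\e b$, and $T\mapsto T\cdot^\e b$ is injective on the down-set of $\im^\e b$ by the (N1) meet formula. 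I checked the details: the bound $(c\cdot^\e R)\cdot^\e d\le^\e\alpha(\im^\s c)\cdot^\e d=\alpha(\im^\s b)=\im^\e b$ is where (d) genuinely enters, exactly as you flag, and $\im^\s b=(\im^\s c)\cdot^\s d$ is legitimately supplied by Lemma~\ref{LemP} since $F_\s$ is conormal; the shared step (c)$\implies$(d) is essentially identical to the paper's. What your route buys is a cleaner isolation of the logical core --- the Beck--Chevalley identity is derived from the seemingly weakest condition (d) by pure Galois-connection bookkeeping, and (a)$\iff$(d) needs no categorical input at all --- whereas the paper's single equational chain for (a)$\implies$(b) is more computational but displays in one place how meet-preservation of $\alpha$ governs the image of the composed embedding. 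Both arguments draw on the same toolkit (Lemmas~\ref{LemK}, \ref{LemP}, \ref{LemN}, Theorem~\ref{ThmF}, and the standing hypothesis that (N1) holds for $F_\e$), so no gap remains.
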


\begin{proof} $\alpha$ exists by Theorem~\ref{ThmF}.

(a)$\implies$(b): Consider a pullback as in the statement of (b) above, where $c$ and $d$ are $F_\s$-embeddings. Then $a$ and $b$ are also embeddings by Lemma~\ref{LemP}. In particular, if $A,B,C,D$ denotes images of $a,b,c,d$, respectively, then $a$ is an embedding of $A=D\cdot^\s c$ and $b$ is an embedding of $B=C\cdot^\s d$. 
Thanks to Lemma~\ref{LemK}, the composite $ca=db$ is an embedding of \[c\cdot^\s A=c\cdot^\s (D\cdot^\s c)=C\land^\s D=d\cdot^\s (C\cdot^\s d)=d\cdot^\s B.\]
Suppose (a) holds. Then \[\im^\e ac=\alpha(C\land^\s D)=\alpha(C)\land^\e \alpha(D)=\im^\e c\land^\e \im^\e d.\]
Thanks to Lemma~\ref{LemK} and (N1) for $F_\e$, given an $F_\e$-cluster $R$ in $X$, we have:
\begin{align*}
    b\cdot^\e(R\cdot^\e a) &= b\cdot^\e(((c\cdot^\e R)\cdot^\e c)\cdot^\e a)\\
    &=b\cdot^\e((c\cdot^\e R)\cdot^\e ca)\\
    &=(d\cdot^\e (b\cdot^\e((c\cdot^\e R)\cdot^\e ca)))\cdot^\e d\\
    &=(db\cdot^\e((c\cdot^\e R)\cdot^\e ca))\cdot^\e d\\
    &=(ca\cdot^\e((c\cdot^\e R)\cdot^\e ca))\cdot^\e d\\
    &=((c\cdot^\e R)\wedge^\e \im^\e ca)\land^\e d\\
    &=((c\cdot^\e R)\land^\e \im^\e c\land^\e \im^\e d)\cdot^\e d\\
    &=((c\cdot^\e R)\land^\e \im^\e d)\cdot^\e d\\ 
    &=(c\cdot^\e R)\cdot^\e d\land^\e \top^\e_Y\\ 
    &=(c\cdot^\e R)\cdot^\e d
\end{align*}

(b)$\implies$(c) is trivial.

(c)$\implies$(d): Suppose (c) holds. Consider an $F_\s$-embedding $d\colon Y\to Z$ and an $F_\s$-cluster $C$ in $Z$. We want to prove that $\alpha( C\cdot^\s d)=\alpha(C)\cdot^\e d$. Let $c$ denote an $F_\s$-embedding of $C$. By Lemma~\ref{LemP}, there is a pullback as displayed in (b), where $b$ is an $F_\s$-embedding of $B=C\cdot^\s d$. Applying Lemma~\ref{LemK}, we can conclude that (a) is also an $F_\s$-embedding. Then, by (c) and Theorem~\ref{ThmF}, we have:
\[\alpha( C\cdot^\s d)=b\cdot^\e\top^\e_W=b\cdot^\e(\top^\e_X\cdot^\e a)=(c\cdot^\e \top^\e_X)\cdot^\e d=\alpha(C)\cdot^\e d.\]

(d)$\implies$(a): Suppose (d) holds. To prove (a), consider two conormal $F_\s$-clusters $C$ and $D$ in an object $Z$. Let $c$ denote an $F_\s$-embedding of $C$ and let $d$ denote an $F_\s$-embedding of $D$. Using Lemma~\ref{LemP}, we can build a pullback as shown in (b), where $b$ is an embedding of $B=C\cdot^\e d$. By Lemma~\ref{LemK}, the composite $db=ac$ is an embedding of $d\cdot^\s B=C\land^\s D$.  
By (d) we then have:
\begin{align*}
\alpha(C\land^\s D) &= db\cdot^\e \top^\e_W\\
&=d\cdot^\e (b\cdot^\e \top^\e_W)\\
&=d\cdot^\e \alpha(B)\\
&=d\cdot^\e \alpha(C\cdot^\s d)\\
&=d\cdot^\e (\alpha(C)\cdot^\e d)\\
&=\alpha(C)\land^\e \im^\e d\\
&=\alpha(C)\land^\e \alpha(D) & \textrm{(Theorem~\ref{ThmF})} & \qedhere
\end{align*}
\end{proof}

\section{Bicategories}\label{SecK}

\subsection*{Bicategories underlying emd-noetherian forms} By an \emph{emd-noetherian} form we mean a noetherian form admitting an exact meet decomposition. In this section we describe these forms in a language that originates from \cite{M50}. 

Inspired by the terminology from \cite{M50,I58}, let us call a triple $(\mathbb{C},\mathcal{E},\mathcal{M})$, where $\mathbb{C}$ is a category and $\mathcal{E}$, $\mathcal{M}$ are classes of morphisms in $\mathbb{C}$, a \emph{bicategory}. Note that there is an entirely different notion of bicategory in higher dimensional category theory, introduced first in \cite{Ben67}. A bicategory in the sense of Isbell \cite{I58} is our bicategory, such that $(\mathcal{E},\mathcal{M})$ is a proper factorization system \cite{FK72}. In this section we are interested in bicategories where $\mathcal{E}$ and $\mathcal{M}$ are classes of quotients and embeddings, respectively, for some noetherian form $F$ over $\mathbb{C}$. By the results obtained in \cite{GJ19} it follows that all such bicategories are Isbell bicategories. 

In a general bicategory $(\mathbb{C},\mathcal{E},\mathcal{M})$, a \emph{right morphism} refers to a morphism from the class $\mathcal{E}$. It is displayed as an arrow with a double arrow head:
\[\xymatrix{\bullet\ar@{->>}[r] & \bullet}\]
A \emph{left morphism} refers to a morphism from the class $\mathcal{M}$, denoted as:
\[\xymatrix{\bullet\ar@{>->}[r] & \bullet}\]
By a \emph{terminal right morphism} we mean a right morphism $e\colon X\to Y$ such that for any other quotient $e'\colon X\to Y'$, there is a unique morphism $u\colon Y\to Y'$ such that $ue'=e$. On a diagram, we mark a terminal right morphism with an asterisk:
\[\xymatrix{\bullet\ar@{->>}[r]|-{*} & \bullet}\] The universal property of the terminal right morphism can thus be visualized as:
\[\xymatrix{\bullet\ar@{->>}[r]|-{*}\ar@{->>}[dr]_-{} & \bullet\\ & \bullet\ar@{..>}[u]_-{\exists!}}\]
The dual property, where the class $\mathcal{E}$ is replaced with the class $\mathcal{M}$, defines an \emph{initial left morphism}:
\[\xymatrix{\bullet\ar@{<-<}[r]|-{*}\ar@{<-<}[dr]_-{} & \bullet\\ & \bullet\ar@{<..}[u]_-{\exists!}}\]

Given a bicategory $(\mathbb{C},\mathcal{E},\mathcal{M})$, the \emph{dual bicategory} is the bicategory $(\mathbb{C}^\mathsf{op},\mathcal{M},\mathcal{E})$. Dual of a statement inside a bicategory is, accordingly, the usual categorical dual where the roles of morphism classes $\mathcal{E}$ and $\mathcal{M}$ are swapped around.

When we talk of a prenoetherian form of a bicategory $(\mathbb{C},\mathcal{E},\mathcal{M})$, we necessarily refer to a noetherian form $F$ over $\mathbb{C}$, such that $\mathcal{E}$ is the class of $F$-quotients and $\mathcal{M}$ is the class of $F$-embeddings. Note that if $F$ is a prenoetherian form of a bicategory, then $F^\mathsf{op}$ will be a prenoetherian form of the dual bicategory, and vice versa.

By an \emph{orean bicategory} we mean a bicategory $(\mathbb{C},\mathcal{E},\mathcal{M})$ such that $(\mathcal{E},\mathcal{M})$ is a proper factorization system corresponding to an orean factorization by Theorem~\ref{ThmV}. This notion comes very close to Mac~Lane's `lattice-ordered category' \cite{M50}. In light of Theorem~\ref{ThmV}, a bicategory is orean if and only if we have:
\begin{itemize}
    \item[(B0)] The classes of right and left morphisms form a proper factorization system $(\mathcal{E},\mathcal{M})$ such that for any object $X$, the posets of $\mathcal{E}$-quotients and $\mathcal{M}$-subobjects are lattices and in addition, along arbitrary morphisms, pushouts of right morphisms as well as pullbacks of left morphisms exist (it follows from the factorization system axioms that $\mathcal{M}$ is stable under pullbacks along arbitrary morphisms and $\mathcal{E}$ is stable under pushouts along arbitrary morphisms). 
\end{itemize}
Since in any factorization system, the intersection $\mathcal{E}\cap\mathcal{M}$ is the class of isomorphisms, an orean bicategory will have its underlying category balanced if and only if all epimorphisms are right morphisms and all monomorphisms are left morphisms. We refer to such orean bicategories as \emph{balanced orean bicategories}.

Note that the dual of an orean bicategory is an orean bicategory. Similarly, the notion of a balanced orean bicategory is self-dual. 

In a bicategory $(\mathbb{C},\mathcal{E},\mathcal{M})$, when we speak of \emph{subobjects}, we mean $\mathcal{M}$-subobjects, and when we speak of \emph{quotients}, we mean $\mathcal{E}$-quotients.

We will make use of, without reference, the following properties of orean bicategories, which are in fact well known properties of proper factorization systems:
\begin{itemize}
    \item Every left morphism is a monomorphism and dually, every right morphism is an epimorphism.
    
    \item Every split monomorphism is a left morphism. Dually, every split epimorphism is a right morphism.
    
    \item A morphism is an isomorphism if and only if it is both a left and a right morphism.
    
    \item Composites of left morphisms are left morphisms and dually, composites of right morphisms are right morphisms.
    
    \item If a composite $fg$ is a left morphism, then so is $g$. Dually, if a composite $fg$ is a right morphism, then so is $f$.
    
    \item If $fe=mg$, where $e$ is a right morphism and $m$ is a left morphism, then $g=de$ and $f=md$, for a unique morphism $d$. If furthermore $f$ is a left morphism and $g$ is a right morphism, then $d$ is an isomorphism.
    
    \item Any morphism $f$ factors as $f=me$ where $e$ is a right morphism and $m$ is a left morphism.
    
    \item Left morphisms are stable under pullbacks and right morphisms are stable under pushouts (as remarked already in the formulation of (B0)).
\end{itemize}
The following property of initial left morphisms and terminal right morphisms in an orean bicategory will be freely used as well, without reference. This property is close to obvious, once the properties above are taken into account.
\begin{itemize}
    \item Any morphism whose codomain is the domain of an initial left morphism is necessarily a right morphism. Dually, any morphism whose domain is the codomain of a terminal right morphism is necessarily a left morphism.
\end{itemize}
This suggests the following notions. An object is said to be \emph{left trivial} if it is the domain of an initial left morphism. The dual notion is that of a \emph{right trivial} object: it is an object that appears as a codomain of a terminal right morphism. The property above says that any morphism into a left trivial object is a right morphism and dually, any morphism out of a right trivial object is a left morphism. It is not difficult to see that, under (B0), these properties characterize left and right trivial objects:
\begin{itemize}
    \item An object $X$ is left trivial if and only if any morphism with codomain $X$ is a right morphism. Dually, $X$ is right trivial if and only if any morphism with domain $X$ is a left morphism.
\end{itemize}
We also have another characterization:
\begin{itemize}
    \item An object $X$ is left trivial if and only if any left morphism with codomain $X$ is an isomorphism. Dually, $X$ is right trivial if and only if any right morphism with domain $X$ is an isomorphism.
\end{itemize}

Our goal now is to translate the duals of conditions (i) and (ii) in Theorem~\ref{ThmAC} in purely bicategorical terms. In this process, we rely on how the orean structure of the forms of quotients and subobject (e.g., direct and inverse images) are obtained using factorization of a morphism $f=me$ into a right morphism $e$ followed by a left morphism $m$, as well as using pushouts and pullback of left and right morphisms (see Examples~\ref{ExaH} and \ref{ExaF}). In particular:
\begin{itemize}
    \item Relative to the form of subobjects, the inverse image of an subobject, represented by a left morphism $m$, along a morphism $f$, is given by the subobject represented by a pullback $m'$ of $m$ along $f$.
    
    \item Relative to the form of subobjects, the direct image of an subobject, represented by a left morphism $m$, along a morphism $f$, is given by the subobject represented by a morphism $m'$ in the factorization $fm=m'e$ of the composite $fm$ into a right morphism $e$ followed by a left morphism $m'$.
    
    \item Relative to the form of subobjects, the meet of two subobjects represented by left morphisms $m$ and $m'$ is given by the composite $mm''$, where $m''$ is a pullback of $m'$ along $m$. 
    
    \item The largest subobject of an object $X$ is the subobject of $X$ represented by the identity morphism $1_X$, which the smallest subobject of $X$ is given by the subobject represented by the initial left morphism.  
    
    \item Relative to the form of quotients, the inverse image of an quotient, represented by a right morphism $e$, along a morphism $f$, is given by the quotient represented by a morphism $e'$ in the factorization $ef=me'$ of the composite $ef$ into a right morphism $e'$ followed by a left morphism $m$.  
    
    \item Relative to the form of quotients, the direct image of an quotient, represented by a right morphism $e$, along a morphism $f$, is given by the quotient represented by a pushout $e'$ of $e$ along $f$.

    \item Relative to the form of quotients, the join of two quotients represented by right morphisms $e$ and $e'$ is given by the composite $ee''$, where $e''$ is a pushout of $e'$ along $e$.
    
    \item The smallest quotient of an object $X$ is the quotient of $X$ represented by the identity morphism $1_X$, while the largest quotient of $X$ is given by the quotient represented by the terminal right morphism.
\end{itemize}

In light of these remarks along with Theorem~\ref{ThmAE} and Remark~\ref{RemB}, we get that for an orean bicategory, the form  $F_\s$ of subobjects satisfies (N1) if and only if the following holds:
\begin{itemize}
    \item[(B1)] Pullback of a right morphism along a left morphism is always a right morphism and given a commutative diagram
    \[\xymatrix{ & \bullet\ar@{>->}[d]\ar@{->>}[r] & \bullet\ar@{>->}[d] \\ & \bullet\ar@{->>}[r] & \bullet \\ \bullet\ar@{>->}[ur]\ar@{->>}[r]\ar@/^10pt/[uur] & \bullet\ar@{>->}[ur]|-{*} & }\]
    where the parallelogram is a pullback, the square must also be a pullback.
\end{itemize}
Under (B0), this property has the following easy reformulation:
\begin{itemize}
    \item[(B1$'$)] Pullback of a right morphism along a left morphism is always a right morphism and given a commutative diagram
    \[\xymatrix{\bullet\ar@{->>}[d]\ar@{>->}[r] & \bullet\ar@{->>}[d]\ar@{>->}[r] & \bullet\ar@{->>}[d] \\ \bullet\ar@{>->}[r] & \bullet\ar@{>->}[r] & \bullet}\]
    if the outer rectangle is a pullback then the square on the right is a pullback as well (equivalently, the outer rectangle is a pullback if and only if the two squares are). 
\end{itemize}
When (B0) and the first part (B1) holds, the law $R= \beta(R)\astop R$ from the dual of (i) in Theorem~\ref{ThmAC}, for the pair $(F_\s,F_\e)$, where $F_\s$ is as before and $F_\e$ is the form of quotients, is equivalent to the following property:
\begin{itemize}
    \item[(B2)] Any pullback
    \[\xymatrix{ \bullet\ar@{->>}[d]\ar@{>->}[r] & \bullet\ar@{->>}[d] \\ \bullet\ar@{>->}[r]|-{*} & \bullet}\]
    is a pushout. 
\end{itemize}
This is actually easily equivalent to the following stronger property (the equivalence does not require (B1)):
\begin{itemize}
    \item[(B2$'$)] Any pullback
    \[\xymatrix{ \bullet\ar@{->>}[d]\ar@{>->}[r] & \bullet\ar@{->>}[d] \\ \bullet\ar@{>->}[r] & \bullet}\]
    is a pushout. 
\end{itemize}
Thanks to the dual of Lemma~\ref{LemAC}, the dual of the remaining part of (i) in Theorem~\ref{ThmAC} for the same pair $(F_\s,F_\e)$ as above, under (B0) and (B2), can be equivalently reformulated as follows:
\begin{itemize}
    \item[(B3)] In a commutative diagram
    \[\xymatrix{  & \bullet\ar@{->>}[d]\ar@{>->}[r] & \bullet\ar@{->>}[d] \\ \bullet\ar@{>->}[r]|-{*} & \bullet\ar[r] & \bullet }\]
    where the right square is a pushout, the composite of bottom two morphisms is a left morphism. 
\end{itemize}
Thanks to the dual of Lemma~\ref{LemAD}, under (B0), preservation of top clusters by the normal operator $\beta\colon F_\e\to F_\s$ is equivalent to:
\begin{itemize}
\item[(B4)] Every right trivial object is left trivial.
\end{itemize}
Finally, thanks to Theorem~\ref{ThmAF}, under (B0) and (B1), preservation of binary joins by $\beta$ is equivalent to:
\begin{itemize}
\item[(B5)] For a commutative diagram
    \[\xymatrix{ & \bullet\ar@{->>}[dl]\ar@{>->}[dr]\ar[rrrr] & & & & \bullet\ar@{>->}[dl]\ar@{->>}[dr] & \\ \bullet\ar@{>->}[dr]|-{*} & & \bullet\ar@{->>}[dl]\ar@{->>}[rr] & & \bullet\ar@{->>}[dr] & & \bullet\ar@{>->}[dl]|-{*} \\  & \bullet\ar@{->>}[rrrr] & & & & \bullet & }\] the top horizontal arrow is a right morphism, provided the diamonds are pullbacks and the bottom trapezium is a pushout. 
\end{itemize}
The condition above has an equivalent reformulation (under just (B0)):
\begin{itemize}
\item[(B5$'$)] For a commutative diagram
    \[\xymatrix{ & \bullet\ar@{->>}[dl]\ar@{>->}[dr]\ar@{->>}[rrrr] & & & & \bullet\ar@{>->}[dl]\ar@{->>}[dr] & \\ \bullet\ar@{>->}[dr]|-{*} & & \bullet\ar@{->>}[dl]\ar@{->>}[rr] & & \bullet\ar@{->>}[dr] & & \bullet\ar@{>->}[dl]|-{*} \\  & \bullet\ar@{->>}[rrrr] & & & & \bullet & }\] the right diamond is a pullback provided the left diamond is a pullback and the bottom trapezium is a pushout. 
\end{itemize}
Theorem~\ref{ThmH} actually gives an explicit computation of the noetherian form over a bicategory having an exact meet decomposition. Thanks to the dual of Lemma~\ref{LemAC}, it can be presented as a subform of the form of subquotients (i.e., the form of $(\mathcal{E},\mathcal{M})$-subquotients from Example~\ref{ExaAC}, where $\mathcal{E}$ is the class of right morphisms and $\mathcal{M}$ is the class of left morphisms), given by those subquotients $[e,m]$ where the pushout of $m$ along $e$ is an initial left morphism:
\[\xymatrix{ \bullet\ar@{->>}[d]_-{e}\ar@{>->}[r]^-{m} & X\ar@{->>}[d] \\ \bullet\ar@{>->}[r]|-{*} & \bullet}\]
Altogether, along with Theorem~\ref{ThmH} and \ref{ThmAC}, we obtain:

\begin{theorem}\label{ThmAD}
  There is a bijection between isomorphism classes of emd-noetherian forms over a category (i.e., noetherian forms admitting exact meet decomposition), and bicategory structures satisfying (B0-5). Under this bijection: in the bicategory structure corresponding to an emd-noetherian form, right morphisms are quotients and left morphisms are embeddings for the noetherian form; for a given bicategory, the subform of the form of subquotients given by those subquotients $[e,m]$ where the pushout of $m$ along $e$ is an initial left morphism is a noetherian form corresponding to the bicategory structure.
\end{theorem}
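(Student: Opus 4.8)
The plan is to read the whole statement through duality, since an emd-noetherian form (exact \emph{meet} decomposition) is the formal dual of the exact-join-decomposition forms classified in Theorems~\ref{ThmH}, \ref{ThmAC} and \ref{ThmAB}. Nearly all of the substantive work has already been carried out in the discussion preceding the theorem, where each of (B1)--(B5) was matched with a form-theoretic condition; so the proof is mainly an assembly, and I would organise it around the two maps of the asserted bijection, verifying afterwards that they are mutually inverse on isomorphism classes.

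First, from forms to bicategories. Given an emd-noetherian form $F$, Remark~\ref{RemT} forces $F_\s=F_\c$ and $F_\e=F_\n$ in its (unique, by Theorem~\ref{ThmAK}) exact meet decomposition, and Theorem~\ref{ThmT} gives that $(F_\c,F_\n)$ is an orean factorization. By Theorem~\ref{ThmV} this determines a proper factorization system $(\mathcal{E},\mathcal{M})$ enjoying exactly the pullback, pushout and lattice-completeness properties packaged in (B0), with $\mathcal{M}$ the class of $F$-embeddings and $\mathcal{E}$ the class of $F$-quotients. To see that the resulting bicategory satisfies (B1)--(B5), I would invoke the dual of Theorem~\ref{ThmAC}: since $F$ is noetherian and admits an exact meet decomposition, the dual of Theorem~\ref{ThmH} shows that $(F_\c,F_\n)$ satisfies the dual of conditions (i)--(ii) of Theorem~\ref{ThmAC}, and the preceding discussion has established, step by step, that these translate into (B1) (via Theorem~\ref{ThmAE} and Remark~\ref{RemB}), into (B2)--(B3) (the Wyler-meet law $R=\beta(R)\astop R$ and the dual of Lemma~\ref{LemAC}), into (B4) (preservation of top clusters by $\beta$, the dual of Lemma~\ref{LemAD}) and into (B5) (preservation of binary joins by $\beta$, via Theorem~\ref{ThmAF}).

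Second, from bicategories to forms. For a bicategory satisfying (B0)--(B5), condition (B0) yields an orean factorization $(F_\s,F_\e)$ by Theorem~\ref{ThmV}, and (B1)--(B5) are by construction the bicategorical restatements of the dual of (i)--(ii) of Theorem~\ref{ThmAC}; hence by the dual of Theorem~\ref{ThmAC} the dual of (i)--(ii) of Theorem~\ref{ThmH} holds, so the dual of Theorem~\ref{ThmH} produces a noetherian form $G$ with exact meet decomposition whose conormal and normal hulls reproduce $\mathcal{M}$ and $\mathcal{E}$. The explicit model claimed in the statement is the dualization of the final clause of Theorem~\ref{ThmH}: via the dual of Lemma~\ref{LemAC}, the closed clusters of the relevant idempotent co-closure operator on the form of $(\mathcal{E},\mathcal{M})$-subquotients (Example~\ref{ExaAC}) are exactly those subquotients $[e,m]$ for which the pushout of $m$ along $e$ is an initial left morphism, and this is a noetherian form for the given bicategory.

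Finally, I would check the two maps invert each other. By Remark~\ref{RemO} and Lemma~\ref{LemJ} isomorphic forms share their embeddings and quotients, so the bicategory attached to $F$ is an isomorphism invariant; conversely the dual of Theorem~\ref{ThmAB} shows that an emd-noetherian form is determined up to isomorphism by the pair $(F_\c,F_\n)$, hence by the factorization system, which gives injectivity, while the construction of $G$ supplies surjectivity and shows the round trip returns the original bicategory structure. The main obstacle is not this bookkeeping but the faithfulness of the diagrammatic translation itself: the real care lies in confirming the reformulations (B1$'$)$\Leftrightarrow$(B1), (B2$'$)$\Leftrightarrow$(B2) and (B5$'$)$\Leftrightarrow$(B5), and in checking that representing embeddings and quotients by left and right morphisms is genuinely compatible with how images, kernels, and direct and inverse images are computed through factorizations, pullbacks and pushouts, so that each (Bi) captures its analytic counterpart with nothing lost and nothing spurious added.
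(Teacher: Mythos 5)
Your proposal is correct and follows essentially the same route as the paper: Theorem~\ref{ThmAD} is obtained there precisely by assembling the duals of Theorems~\ref{ThmH} and \ref{ThmAC} with the section's term-by-term translation of their conditions into (B0--5) (via Theorem~\ref{ThmV} for (B0), Theorem~\ref{ThmAE} with Remark~\ref{RemB} for (B1), the Wyler-meet law and the dual of Lemma~\ref{LemAC} for (B2)--(B3), the dual of Lemma~\ref{LemAD} for (B4), and Theorem~\ref{ThmAF} for (B5)), with the subquotient presentation coming from the dual of Lemma~\ref{LemAC} and uniqueness up to isomorphism from the dual of Theorem~\ref{ThmAB}. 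Your explicit organisation of the two maps and the round-trip check merely makes visible bookkeeping that the paper leaves implicit in its ``altogether, we obtain'' assembly.
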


The following technical theorem explores behaviour of initial and terminal objects in the context of an orean bicategory satisfying (B4). It will be used in the theorem that follows, which enables one to establish that the dual of any topos has an emd-noetherian form (see Example~\ref{ExaAE}).

\begin{theorem}\label{ThmAN}
If an orean bicategory has a terminal object $T$, then $T$ is a right trivial object. In this case, the following conditions are equivalent:
\begin{itemize}
    \item[(i)] (B4) holds.
    
    \item[(ii)] All terminal objects are left trivial.
    
    \item[(iii)] Every morphism whose codomain is a terminal object is a right morphism.
    
    \item[(iv)] Right trivial objects are the same as terminal objects.
\end{itemize}
When these conditions hold, terminal right morphisms with domain $X$ are precisely the morphisms whose codomain is a terminal object, and every morphism whose domain is a terminal object is an initial left morphism. 
If the bicategory also has an initial object $I$, then (B4) is further equivalent for the unique morphism $I\to T$ to be a right morphism. When there is an initial object (irrespective of whether there is a terminal object or not and whether (B4) holds), an initial left morphism $m\colon X\to Y$ can be obtained by factorizing the morphism $i\colon I\to Y$ as $i=me$, where $e$ is a right morphism, $m$ is a left morphism, and $I$ is an initial object.
\end{theorem}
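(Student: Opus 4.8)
The plan is to run everything through the two characterizations of trivial objects recorded just before the theorem: under (B0), an object $X$ is right trivial exactly when every right morphism out of $X$ is an isomorphism (equivalently, every morphism out of $X$ is a left morphism), and dually $X$ is left trivial exactly when every left morphism into $X$ is an isomorphism (equivalently, every morphism into $X$ is a right morphism). For the opening claim, given a terminal $T$ and a right morphism $e\colon T\to Z$, the unique $t\colon Z\to T$ satisfies $te=1_T$ by terminality, so $e$ is a split monomorphism; being also an epimorphism, it is an isomorphism. Hence every right morphism out of $T$ is an isomorphism and $T$ is right trivial.

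For the four equivalences I would run the cycle (i)$\Rightarrow$(ii)$\Rightarrow$(iv)$\Rightarrow$(iii)$\Rightarrow$(ii)$\Rightarrow$(i). Here (ii)$\iff$(iii) is immediate from the left-trivial characterization applied to terminal objects, and (i)$\Rightarrow$(ii) combines (B4) with the opening claim (terminal objects are right trivial, hence left trivial). The crux is a bridging lemma: under (ii), every right trivial object $X$ is terminal, because the unique map $t\colon X\to T$ is a left morphism (as $X$ is right trivial) and a right morphism (as $T$ is left trivial by (ii)), hence an isomorphism; with the opening claim this yields (ii)$\Rightarrow$(iv). For (iv)$\Rightarrow$(iii) I factor a map $f\colon W\to T$ into a terminal object as $f=me$ with $e$ a right morphism and $m\colon Z\to T$ a left morphism; any right morphism $g\colon Z\to V$ then satisfies $m=ug$ for the unique $u\colon V\to T$, which forces $g$ to be a left morphism and hence an isomorphism, so $Z$ is right trivial, thus terminal by (iv), so $m$ is an isomorphism and $f=me$ is a right morphism. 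Finally (ii)$\Rightarrow$(i) is automatic: (ii) gives (iv), so any right trivial object is terminal and therefore left trivial by (ii), which is precisely (B4).

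I expect the main obstacle to be the two descriptions valid once the conditions hold. That terminal right morphisms are exactly the morphisms with terminal codomain splits into: the codomain of a terminal right morphism is right trivial, hence terminal by (iv); and any $f\colon X\to T$ with $T$ terminal is the largest quotient of $X$, since for any quotient $e'\colon X\to Y'$ the unique $u\colon Y'\to T$ is the unique map with $ue'=f$. The subtler claim is that every $f\colon T\to Y$ out of a terminal object is an initial left morphism: such $f$ is a left morphism because $T$ is right trivial, and writing $m\colon Z\to Y$ for the initial left morphism (which represents $\bot_Y$), minimality gives $v\colon Z\to T$ with $fv=m$; now $v$ is a right morphism because $T$ is left trivial (the conditions hold) and a left morphism because $m=fv$ is, hence an isomorphism, so $f=mv^{-1}$ represents $\bot_Y$ and is itself an initial left morphism (initial left morphisms being closed under precomposition with isomorphisms, dually to Remark~\ref{RemJ}).

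For the statements about an initial object $I$, if $I$ and $T$ both exist then (iii) immediately makes $I\to T$ a right morphism, while conversely, if $\iota\colon I\to T$ is a right morphism, then for any $f\colon W\to T$ the unique $j\colon I\to W$ gives $\iota=fj$, so $f$ is a right morphism (a composite $fg$ being a right morphism forces its outer factor $f$ to be one), establishing (iii) and hence (B4). Lastly, for the factorization claim, which needs neither a terminal object nor (B4), I would factor the unique $i\colon I\to Y$ as $i=me$; since $I$ is initial, any subobject of $I$ is at once a split epimorphism and a monomorphism, hence an isomorphism, so $\top_I=\bot_I$, and therefore the left factor $m$ represents $\im^\s i=i\cdot^\s\top_I=i\cdot^\s\bot_I=\bot_Y$ (using $f\cdot\bot=\bot$ from Remark~\ref{RemC}), i.e.\ $m$ is an initial left morphism.
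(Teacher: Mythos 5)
Your proposal is correct and follows essentially the same route as the paper: the split mono/epi argument for the opening claim, the trivial-object characterizations for the cycle of equivalences (yours ordered (i)$\Rightarrow$(ii)$\Rightarrow$(iv)$\Rightarrow$(iii)$\Rightarrow$(ii)$\Rightarrow$(i) versus the paper's (i)$\Rightarrow$(ii)$\Rightarrow$(iii)$\Rightarrow$(iv)$\Rightarrow$(i), with the same ingredients), and the same factorization arguments for the remaining claims. The only substantive difference is that you spell out the final statement about initial objects --- via every left morphism into $I$ being a split epimorphism, so that $\top_I=\bot_I$ and $\im^\s i=i\cdot^\s\bot_I=\bot_Y$ --- which the paper dismisses as ``easily established''; your fill-in is valid.
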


\begin{proof}
Every morphism $f$ going out from a terminal object is a split monomorphism. So, as soon as $f$ is a right morphism (and hence an epimorphism by (B0)), it will be an isomorphism. We get that every terminal object is right trivial. The implication (i)$\implies$(ii) is a trivial consequence of this fact. Consider a morphism $f\colon T$ whose codomain is a terminal object. Decompose it as $f=me$ where $e$ is a right morphism and $m$ is a left morphism. If (ii) holds, then $m$ is an isomorphism and hence $f$ is a right morphism. This shows (ii)$\implies$(iii). Next, we show (iii)$\Rightarrow$(iv). Suppose (iii) holds. Consider a right trivial object $X$. The unique morphism $f\colon X\to T$ to the terminal object must be a right morphism by (iii). Since $X$ is right trivial, $f$ is an isomorphism and so $X$ is terminal. Conversely, a terminal object is always right trivial. Next, we prove (iv)$\implies$(i). Let $X$ be a right trivial object. Consider a left morphism $m\colon W\to X$ and any right morphism $e\colon W\to Y$. Then $m=fe$ for some $f$, when (iii) holds. This implies that $e$ is a left morphism and hence an isomorphism. So $W$ is right trivial. By (iii), $W$ must also be a terminal object and hence $m$ must be an isomorphism. So $X$ is left trivial.

Codomains of terminal right morphisms are the right trivial objects, by definition. It then follows easily that when (ii) and (iv) hold, terminal right morphisms with domain $X$ are precisely the morphisms whose codomain is a terminal object. To show that under (B4), any morphism whose domain is a terminal object is an initial left morphism, consider a morphism $m\colon T\to X$. Firstly, note that since $m$ is a split mono, it is a left morphism. The initial left morphism with codomain $X$ must than factor through $m$, via some left morphism. This left morphism must be an isomorphism by (ii). So, $m$ itself is an initial left morphism.

Suppose now $I$ is an initial object. The condition that the unique morphism $I\to T$ is a right morphism is easily equivalent to (iii). 

The last statement of the lemma can be easily established by properties of left and right morphisms in an orean bicategory. 
\end{proof}

\begin{theorem}\label{ThmAG}
Consider balanced orean bicategories $\mathbb{C}$ and $\mathbb{C}'$, the first of which has finite colimits, terminal object, and pullbacks. If there is a faithful functor $T\colon \mathbb{C}\to\mathbb{C}'$ which preserves finite colimits, terminal object, monomorphisms, as well as pullbacks of epimorphisms along arbitrary morphisms and $\mathbb{C}'$ has an emd-noetherian form then $\mathbb{C}$ has an emd-noetherian form.
\end{theorem}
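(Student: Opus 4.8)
The plan is to reduce the statement to the purely combinatorial axioms of an orean bicategory. By Theorem~\ref{ThmAD}, an emd-noetherian form over a category is exactly the data of a bicategory structure satisfying (B0--5); since $\mathbb{C}$ is assumed to be a balanced orean bicategory, (B0) already holds, so it suffices to verify (B1--5) for $\mathbb{C}$ using that they hold for $\mathbb{C}'$ (which has an emd-noetherian form, hence satisfies (B0--5) by the same theorem). Throughout I would use that in a \emph{balanced} orean bicategory the left morphisms are precisely the monomorphisms and the right morphisms are precisely the epimorphisms, both in $\mathbb{C}$ and in $\mathbb{C}'$.

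First I would record the behaviour of $T$. Being faithful, $T$ reflects monomorphisms and epimorphisms; by hypothesis it preserves monomorphisms, and since it preserves finite colimits it preserves epimorphisms (an epimorphism is detected by its cokernel pair, which is a pushout). Hence $f$ is mono, respectively epi, if and only if $Tf$ is, and because $\mathbb{C}$ is balanced this upgrades to: $T$ \emph{reflects isomorphisms}. Moreover $T$ preserves the initial object (the empty colimit) and, by hypothesis, the terminal object; combining preservation of both with preservation of epi-mono factorizations, one sees via the last statement of Theorem~\ref{ThmAN} that $T$ preserves initial left morphisms (the left part of the factorization of the morphism out of the initial object). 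Finally $T$ preserves all pushouts (finite colimits) and, crucially, every pullback of an epimorphism along an arbitrary morphism (hypothesis).

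The technical heart is two reflection lemmas obtained from these facts by the comparison-morphism argument. Given a commutative square in $\mathbb{C}$ whose defining cospan has an epimorphic leg, form its genuine pullback $P$ in $\mathbb{C}$; then $T$ preserves $P$, so if $T$ of the square is a pullback in $\mathbb{C}'$ the comparison map $c$ has $Tc$ an isomorphism between two pullbacks of the same cospan, whence $c$ is an isomorphism by reflection of isos and the square is a pullback in $\mathbb{C}$. Dually, since $T$ preserves all pushouts, a commutative square in $\mathbb{C}$ is a pushout as soon as $T$ of it is a pushout in $\mathbb{C}'$. I would then verify (B1--5) uniformly: transport the hypotheses of each axiom along $T$ (all pullbacks occurring in (B1), (B2), (B5) have an epimorphic leg, hence are preserved, as are the pushouts, the monomorphisms, the epimorphisms and the initial left morphisms involved), apply the corresponding axiom in $\mathbb{C}'$, and reflect the conclusion — a ``mono'' conclusion (the first clause of (B1), and (B3)) or an ``epi'' conclusion ((B5) in its unprimed form) directly by reflection of monos/epis, and a ``pullback'' conclusion (the second clause of (B1$'$)) or a ``pushout'' conclusion ((B2$'$)) by the two reflection lemmas. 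For (B4) I would instead invoke Theorem~\ref{ThmAN}: as $\mathbb{C}$ has both an initial and a terminal object, (B4) is equivalent to the unique morphism $I\to T$ being a right morphism; applying the functor and using preservation of initial and terminal objects, this maps to the unique initial-to-terminal morphism of $\mathbb{C}'$, which is a right morphism by Theorem~\ref{ThmAN} applied to $\mathbb{C}'$, so reflection of epimorphisms closes the case.

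The main obstacle, and the reason the hypotheses are stated exactly as they are, is that a faithful structure-preserving functor does not reflect limits or colimits in general. What rescues the argument is the conjunction of (a) $\mathbb{C}$ being balanced, which promotes reflection of monos and epis to reflection of isomorphisms, and (b) the precise preservation hypotheses (all finite colimits, together with pullbacks of epimorphisms), which are tailored so that every pullback and every pushout appearing in (B1--5) is genuinely preserved by $T$, making the relevant comparison morphism invertible and then reflectable. Concretely, I would check diagram by diagram that each pullback cospan occurring in (B1), (B2) and (B5) carries an epimorphic leg — for instance in (B5$'$) both the left and the right diamonds have a cospan of the form (epimorphism, monomorphism) — since this is the only point at which the special shape of the axioms is used.
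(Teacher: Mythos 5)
Your proposal is correct and follows essentially the same route as the paper's proof: reduce via Theorem~\ref{ThmAD} to transferring (B0--5) along $T$, show that $T$ preserves epimorphisms (from finite colimits) and reflects monomorphisms, epimorphisms and isomorphisms, hence reflects the relevant pullbacks and pushouts by comparison morphisms, and handle (B4) and initial left morphisms via Theorem~\ref{ThmAN}. The only (harmless) divergence is your shortcut for reflection of isomorphisms --- faithfulness reflects monos and epis, and balancedness of $\mathbb{C}$ upgrades this to isomorphisms --- where the paper instead factorizes $f=me$ and uses preservation of pullbacks of epimorphisms to show $e$ is monic; your version is slightly more direct, and your explicit check that every pullback occurring in (B1), (B2) and (B5) has an epimorphic leg spells out detail the paper compresses into ``the preservation/reflection properties of $T$ ensure the same is true for each of (B0-5).''
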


\begin{proof}
Preservation of finite colimits implies preservation of epimorphisms, thanks to a well known and easily provable fact that a morphism $e\colon X\to Y$ is an epimorphisms if and only if the square
\[\xymatrix{X\ar[r]^-{e}\ar[d]_-{e} & Y\ar[d]^-{1_Y}\\ Y\ar[r]_-{1_Y} & Y}\]
is a pushout. Moreover, the same fact can be used to show that $T$ reflects epimorphisms, due to the fact that $T$ is also faithful. Next, we prove that $T$ reflects isomorphisms. Consider a morphism $f$ such that $T(f)$ is an isomorphism. Decompose $f$ as $f=me$, where $e$ is an epimorphism and $m$ is a monomorphism. Then $T(f)$ is an isomorphism, and since $T$ preserves both monomorphisms and epimorphisms, both $T(m)$ and $T(e)$ are isomorphisms. In particular, $T(m)$ is an epimorphism. Hence, $m$ is an epimorphism, and consequently, an isomorphism. Since $T$ must preserve a pullback of $e$ along itself ($T$ preserves pullbacks of epimorphisms along arbitrary morphisms), by faithfulness of $T$ and the fact that $T(e)$ is a monomorphism, we get that $e$ is a monomorphism (using an argument dual to the one that shows that $T$ reflects epimorphisms). Since $e$ is both an epimorphism and a monomorphism, it must be an isomorphism. Therefore, $f=me$ is an isomorphism. This completes the proof that $T$ reflects isomorphisms. It then follows that $T$ reflects monomorphisms, epimorphisms, terminal objects as well as all finite colimits, and pullbacks of epimorphisms. Thanks to Theorem~\ref{ThmAN}, this furthermore gives that $T$ preserves and reflects initial left morphisms and terminal right morphisms. Thanks to Theorem~\ref{ThmAN} again, if (B4) holds for $\mathbb{C}'$, then it holds for $\mathbb{C}$ as well. The preservation/reflection properties of $T$ ensure that the same is true for each of (B0-5). So the desired result follows by an application of Theorem~\ref{ThmAD}.    
\end{proof}

\begin{example}\label{ExaAE}
We know from Example~\ref{ExaQ} that the category of sets is a balanced orean bicategory having a noetherian form admitting exact join decomposition. It is well known that just like sets, an elementary topos is a balanced orean bicategory. It turns out that, furthermore, any topos has a noetherian form admitting exact join decomposition. For a small topos, this is immediate from the dual of Theorem~\ref{ThmAG} above and Theorem~3.24 in \cite{Fre72}, according to which, every small topos has a faithful functor $T$ to a power of the category of sets, which preserves finite limits, coproducts, epimorphisms and pushouts of monomorphisms. An intrinsic proof that for a (not necessarily small) topos the duals of (B0-5) hold is left as an exercise. Note that (B0), (B2$'$), and the first part of (B1$'$) are well known properties of the dual of a topos. Also, in light of Theorem~\ref{ThmAN}, the dual of (B4) follows from a well known property of a topos that any morphism into an initial object is an isomorphism.  
\end{example}

\begin{remark}\label{RemX}
In view of the intimate link between noetherian forms and semi-abelian categories, and the fact that a semi-abelian category is the same as a pointed Barr exact protomodular category having binary sums, one may wonder whether the dual of a topos has a noetherian form admitting exact meet decomposition because so does, in general, the bicategory of a Barr exact protomodular category (having binary sums) where left morphisms are monomorphisms and right morphisms are regular epimorphisms (recall from \cite{Bou04} that the dual of a topos is such bicategory). By Theorem~\ref{ThmAD}, this is not the case, since the bicategory of rings with identity does not satisfy (B3). For instance, in the following instance of the diagram from (B3),
\[\xymatrix{  & \mathbb{Z}\ar@{->>}[d]\ar@{>->}[r]^-{\subseteq} & \mathbb{Q}\ar@{->>}[d] \\ \mathbb{Z}_2\ar@{>->}[r]|-{*} & \mathbb{Z}_2\ar[r] & 0 }\]
the square is a pushout, while the composite of bottom two morphisms is not a monomorphism. 
\end{remark}

\subsection*{Left exact bicategories} A \emph{left/right exact bicategory} is an orean bicategory such that the corresponding orean factorization is a left/right exact pair. The dual of a left exact bicategory is a right exact bicategory and vice versa.

\begin{theorem}\label{ThmAP} A bicategory is left exact if and only if it is an orean bicategory satisfying (B2) (or equivalently, (B2$'$)) and the dual of (B4). Furthermore, in a left exact bicategory having an initial object, a morphism is a right morphism if and only if it is a regular epimorphism, and it is a left morphism if and only if it is a monomorphism.
\end{theorem}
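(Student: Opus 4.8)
The plan is to unwind the definition. A \emph{left exact bicategory} is by definition an orean bicategory whose associated orean factorization $(F_\s,F_\e)$ (Theorem~\ref{ThmV}) is a left exact pair, and since every orean factorization is automatically a semiexact pair (Theorem~\ref{ThmF}), the content of the first assertion is exactly the equivalence, \emph{for orean bicategories}, between the equation $\alpha\beta=1_{F_\e}$ (left exactness) and the conjunction of (B2) and the dual of (B4). I would first dispose of the dual of (B4): by Lemma~\ref{LemAD} it is equivalent to $\alpha$ preserving bottom clusters, and this is forced by left exactness, since Lemma~\ref{LemAH} makes $\alpha$ binormal and a normal operator preserves bottom clusters by Remark~\ref{RemN}. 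As the dual of (B4) is already one of the two hypotheses on the right-hand side, the only genuine work is to link (B2) with $\alpha\beta=1_{F_\e}$.

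For that link I would fix a right morphism $e=\pi^\e_R\colon A\to B$ and form the pullback of the initial left morphism $m\colon C\to B$ (representing $\bot^\s_B$) along $e$, producing a left morphism $m'\colon P\to A$ and a morphism $e'\colon P\to C$. The leg $m'$ is a left morphism (pullbacks of left morphisms are left morphisms) and is an embedding of $\bot^\s_B\cdot^\s e=\ker^\s e=\beta(R)$ by Lemma~\ref{LemP}; moreover $C$ is left trivial, so $e'$ is automatically a right morphism (any morphism into a left trivial object is a right morphism), and the square has exactly the shape appearing in (B2). The dual of (B4) makes $C$ right trivial, which forces $e'$ to be the terminal right morphism out of $P$, i.e. $e'$ represents $\top^\e_P$; hence the pushout of $e'$ along $m'$ represents the direct image $m'\cdot^\e\top^\e_P=\im^\e m'=\alpha(\beta(R))$. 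Thus, with the dual of (B4) in force, the square above is a pushout if and only if $\alpha\beta(R)=R$. Running this over all $e$ gives both implications at once: if $\alpha\beta=1_{F_\e}$ then every such pullback is a pushout, which is (B2) (equivalently (B2$'$)); and if (B2) holds then $\alpha\beta(R)=R$ for every $R$, so $(F_\s,F_\e)$ is a left exact pair. The main obstacle here is the bookkeeping identifying the abstract direct image $\alpha\beta(R)$ with the concrete pushout of the pullback legs, and checking that ``the pushout of $e'$ along $m'$ is $e$'' matches the pushout property of the \emph{original} square on the nose; this rests on the triviality analysis of $C$ via the dual of Theorem~\ref{ThmAN}, and care must be taken that no (B1)-type hypothesis is smuggled in.

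For the two remaining clauses I would again use the dual of Theorem~\ref{ThmAN}: in a left exact bicategory the dual of (B4) holds, so in the presence of an initial object $I$ the left trivial objects are precisely the initial objects, and the initial left morphism into any $Y$ is the canonical $I\to Y$. Given a right morphism $e\colon A\to B$, consider the pullback of $I\to B$ along $e$, with legs $m'\colon P\to A$ and $e''\colon P\to I$; since $I$ is left trivial, $e''$ is a right morphism, so (B2) applies and the square is a pushout. Because $I$ is initial we have $A\amalg I\cong A$, and under this identification the pushout of $m'$ and $e''$ is the coequalizer of the pair $m',\,i_A e''\colon P\rightrightarrows A$, where $i_A\colon I\to A$ is the unique morphism; hence $e$ is a regular epimorphism. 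The converse inclusion, that every regular epimorphism lies in $\mathcal{E}$, is the standard factorization-system fact valid in any orean bicategory: factoring a coequalizer $q=m_0e_0$, the monic $m_0$ is coequalized away, splits, and so is an isomorphism.

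Finally, $\mathcal{M}$ is always contained in the monomorphisms. Conversely, given a monomorphism $m$, factor it as $m=m_0 e_0$ with $e_0\in\mathcal{E}$ and $m_0\in\mathcal{M}$; then $e_0$ is monic, but $e_0\in\mathcal{E}$ is a regular epimorphism by the previous paragraph, and a regular epimorphism that is also a monomorphism is an isomorphism, whence $m\cong m_0\in\mathcal{M}$. The only delicate point in this last part is the identification of the pushout along a map into $I$ with a coequalizer, which is precisely where the initial object (via $A\amalg I\cong A$) enters.
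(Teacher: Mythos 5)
Your proposal is correct and takes essentially the same route as the paper: the paper likewise translates $\alpha\beta=1_{F_\e}$ into a pullback--pushout condition (its intermediate (B2$^*$), with $\beta(R)$ computed by pulling back the initial left morphism along $\pi^\e_R$ and $\alpha\beta(R)$ by pushing out the terminal right morphism), and proves the regular-epimorphism clause by exactly your pullback-of-$I\to B$/coequalizer construction. The only immaterial divergence is that you obtain the dual of (B4) from left exactness via Lemmas~\ref{LemAD}, \ref{LemAH} and Remark~\ref{RemN}, whereas the paper extracts it as the $e=1_X$ instance of its translated condition before massaging that condition into (B2).
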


\begin{proof} Let $(F_\s,F_\e)$ be an orean factorization corresponding to an orean bicategory.
That $(F_\s,F_\e)$ is a semiexact pair we have by Theorem~\ref{ThmF}. Let $\alpha$ denote the conormal operator $\alpha\colon F_\s\to F_\e$ and let $\beta$ denote the normal operator $\beta\colon F_\e\to F_\s$. That $\alpha$ is a left inverse of $\beta$ translates to the following condition on the bicategory:
\begin{itemize}
    \item[(B2$^*$)] For each pullback     \[\xymatrix{ \bullet\ar@{->>}[d]_-{s}\ar@{>->}[r]^-{m} & \bullet\ar@{->>}[d]^-{e} \\ \bullet\ar@{>->}[r]|-{*}_{i} & \bullet}\] 
    there is a pushout
 \[\xymatrix{ \bullet\ar@{->>}[d]|-{*}_-{t}\ar@{>->}[r]^-{m} & \bullet\ar@{->>}[d]^-{e} \\ \bullet\ar[r] & \bullet}\]    
\end{itemize}
In the case when $e$ is an identity morphism, (B2$^*$) states that for a given initial left morphism $i$ with domain $X$, the terminal right morphism $t$ (an hence any right morphism) with domain $X$ is an isomorphism. So, it states every left trivial object is right trivial, i.e., the dual of (B4). On the other hand, it is not difficult to see that under the dual of (B4), any right morphism whose codomain is a left trivial object, must be a terminal right morphism. So, under the dual of (B4), the right morphism $s$ in (B2$^*$) is also terminal. With this, it is not hard to see that (B2$^*$) becomes equivalent to (B2).

It is not difficult to see that regular epimorphisms are right morphisms in any orean bicategory.
Suppose now both (B2) and the dual of (B4) hold, and an initial object exists. Thanks to the dual of Theorem~\ref{ThmAN}, the domain of every initial left morphism is an initial object. Consider a right morphism $e\colon X\to Y$. By (B2), the pullback     
\[\xymatrix{ \bullet\ar[d]_-{s}\ar@{>->}[r]^-{m} & X\ar@{->>}[d]^-{e} \\ I\ar@{>->}[r]|-{*}_{i} & Y}\]
is a pushout. Since $I$ is an initial object, there is a morphism $j\colon I\to X$ such that $ej=i$. It is then easy to see that $e$ is a coequalizer of $m$ and $js$.

The coincidence of the class of right morphisms with the class of regular epimorphisms implies coincidence of the class of left morphisms with the class of monomorphisms.
\end{proof}

\begin{theorem}\label{ThmAQ}
  There is a bijection between isomorphism classes of orean forms $F$ over a category $\mathbb{C}$, having left exact decomposition and satisfying (N2), and left exact bicategory structures on $\mathbb{C}$. Under this bijection, in the bicategory structure corresponding to $F$, right morphisms are $F$-quotients and left morphisms are $F$-embeddings. For a given left exact bicategory, the corresponding form $F$ is the form of subobjects of the bicategory. Finally, a left exact bicategory satisfies (B1) and (B5) if and only if the corresponding form $F$ is a noetherian form.
\end{theorem}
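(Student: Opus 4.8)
The plan is to obtain everything by restricting, to the left exact case, the correspondence between orean factorizations and proper factorization systems of Theorem~\ref{ThmV}, and then to read off the noetherian condition from the bicategorical translations carried out just before Theorem~\ref{ThmAD}; well-definedness on isomorphism classes is guaranteed throughout by Remark~\ref{RemO} and Lemma~\ref{LemJ}, which tell us that isomorphic orean forms have the same classes of embeddings and of quotients. For the forward map, take an orean form $F$ with left exact decomposition and satisfying (N2). Theorem~\ref{ThmAI} shows that $F$ is conormal and that $(F,F_\n)$ is a left exact pair; by Remark~\ref{RemT} the form $F$ is strongly orean, so Theorem~\ref{ThmI}(i) together with (N2) shows that $(F,F_\n)$ is in fact an orean factorization. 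Applying Theorem~\ref{ThmV} to it yields a proper factorization system $(\mathcal{E},\mathcal{M})$ in which $\mathcal{M}$ is the class of $F$-embeddings and $\mathcal{E}$ is the class of $F_\n$-quotients; by Theorem~\ref{ThmT} the latter coincide with the $F$-quotients. The triple $(\mathbb{C},\mathcal{E},\mathcal{M})$ is then an orean bicategory, and since $(F,F_\n)$ is a left exact pair it is a left exact bicategory by definition, which also records the asserted descriptions of right and left morphisms.

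For the inverse, let $(\mathbb{C},\mathcal{E},\mathcal{M})$ be a left exact bicategory, so its orean factorization $(F_\s,F_\e)$ is a left exact pair; set $F=F_\s$, the form of subobjects of the bicategory (Theorem~\ref{ThmV}(i)). By Lemma~\ref{LemAH} the form $F_\e$ is isomorphic to $(F_\s)_\n$ and $(F_\s,(F_\s)_\n)$ is again a left exact pair, so Theorem~\ref{ThmAI} endows $F_\s$ with a left exact decomposition; Theorem~\ref{ThmS} promotes $(F_\s,(F_\s)_\n)$ to an orean factorization, whence Theorem~\ref{ThmI}(i) forces (N2) on $F_\s$. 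The two assignments are mutually inverse: starting from $F$, the form of $\mathcal{M}$-subobjects for $\mathcal{M}$ its class of embeddings recovers $F$ up to isomorphism (first paragraph of Section~\ref{secH}); starting from $(\mathcal{E},\mathcal{M})$, the embeddings of $F_\s$ return $\mathcal{M}$ and the quotients of $(F_\s)_\n\approx F_\e$ return $\mathcal{E}$ (Theorem~\ref{ThmV}).

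It remains to prove the noetherian characterization. The form $F=F_\s$ is already orean, conormal, strongly orean and satisfies (N2), so it is noetherian precisely when it satisfies (N1) and (N3). By the analysis preceding Theorem~\ref{ThmAD}, validity of (N1) for $F_\s$ is equivalent to (B1). For a conormal form the meet-of-conormals half of (N3) is vacuous, so (N3) reduces to the statement that joins of normal clusters are normal. Via the order-isomorphism $F_\e\to(F_\s)_\n$ of Lemma~\ref{LemAH}, the normal operator $\beta$ factors as this isomorphism followed by the inclusion $(F_\s)_\n\hookrightarrow F_\s$; hence $\beta$ preserves binary joins if and only if that inclusion does, i.e.\ if and only if joins of normal clusters are normal, which is exactly (N3). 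Under (B0) and (B1) the analysis preceding Theorem~\ref{ThmAD} (through Theorem~\ref{ThmAF}) identifies preservation of binary joins by $\beta$ with (B5). Since the equivalence between (N3) and join-preservation by $\beta$ needs no further hypothesis, the concluding biconditional survives even when (B1) fails, giving that $F_\s$ is noetherian if and only if (B1) and (B5) both hold.

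The main obstacle I anticipate is bookkeeping rather than a single hard estimate: one must keep the three isomorphic avatars of the normal part --- $F_\n$, $F_\e$, and the image of $\beta$ --- carefully aligned, confirm that (N2) is genuinely forced in the backward direction (and not merely available), and verify that the equivalence between (N3) and join-preservation by $\beta$ is unconditional so that the final biconditional is correct in every case. Everything else is a direct assembly of Theorems~\ref{ThmAI}, \ref{ThmI}, \ref{ThmV}, \ref{ThmT}, \ref{ThmS}, Lemma~\ref{LemAH}, and the bicategorical dictionary built up before Theorem~\ref{ThmAD}.
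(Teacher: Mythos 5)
Your proposal is correct and takes essentially the same route as the paper's own proof, which likewise assembles the bijection from Theorem~\ref{ThmAI}, Theorem~\ref{ThmT}/Theorem~\ref{ThmI}, Theorem~\ref{ThmV} and Lemma~\ref{LemAH}, and obtains the equivalences with (B1) and (B5) from the bicategorical dictionary preceding Theorem~\ref{ThmAD} together with the dual of Theorem~\ref{ThmAF}. The only cosmetic divergence is the final step: the paper reformulates (N3) via Remark~\ref{RemD} as closure of normal clusters under direct images along quotients and uses the direct-image clause of (the dual of) Theorem~\ref{ThmAF}, whereas you use the join half of (N3) and the join-preservation clause directly --- two equivalent faces of the same theorem --- and your observation that the concluding biconditional holds vacuously when (B1) fails (both sides then being false) is sound.
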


\begin{proof} 
To establish the stated bijection, it suffices to prove the following:
\begin{itemize}
    \item[(a)] For any orean form $F$ having left exact decomposition and satisfying (N2), the bicategory where left morphisms are $F$-embeddings and right morphisms are $F$-quotients, is a left exact bicategory.
    
    \item[(b)] In (a), $F$ is isomorphic to the form of subobjects of the bicategory.
    
    \item[(c)] For a left exact bicategory, the form $F$ of subobjects satisfies (N2) and has left exact decomposition.
    
    \item[(d)] In (c), left morphisms are the same as $F$-embeddings and right morphisms are the same as $F$-quotients.
    
    \item[(e)] In (c), the bicategory satisfies (B1) if and only if $F$ satisfies (N1).
    
    \item[(f)] In (c), if the bicategory satisfies (B1), then it satisfies (B5) if and only if $F$ satisfies (N3).
\end{itemize}

According to Theorem~\ref{ThmAI}, orean forms having left exact decomposition are the same as conormal strongly orean forms $F$ such that the subform inclusion $F_\n\to F$ has a conormal left inverse. Consider such form $F$ over a category $\mathbb{C}$, which furthermore satisfies (N2). By the dual of Lemma~\ref{LemM}, the form $F_\n$ is normal and the subform inclusion $F_\n\to F$ is normal. So the pair $(F,F_\n)$ is right exact. At the same time, by Theorem~\ref{ThmT}, the pair $(F,F_\n)$ is an orean factorization. Moreover, the class $\mathcal{E}$ of $F$-quotients is the same as the class of $F_\n$-quotients. Via Theorem~\ref{ThmV} we then get that the bicategory $(\mathbb{C},\mathcal{E},\mathcal{M})$, where $\mathcal{M}$ denotes the class of $F$-embeddings, satisfies (B0). We thus get a left exact bicategory, which proves (a). Moreover, since $F=F_\c$, the form $F$ is isomorphic to the form $F_\s$ of subobjects of this bicategory by Remark~\ref{RemJ}, thus proving (b). In fact, furthermore, $F_\n$ is isomorphic to the form $F_\e$ of right quotients of the bicategory, by the dual of Remark~\ref{RemJ}.

Let us now start with a left exact bicategory.
Let $F$ denote the form of subobjects and let $F_\e$ denote the form of quotients of the bicategory. By Theorem~\ref{ThmV}, $(F,F_\e)$ is an orean factorization, which is a left exact pair (by the definition of a left exact bicategory), and furthermore, left morphisms are precisely the $F$-embeddings, while right morphisms are precisely the $F_\e$-quotients. It then follows from Lemma~\ref{LemAH} that $F$ has left exact decomposition, $F_\e$ is isomorphic to $F_\n$, and by applying Theorem~\ref{ThmI}, that $F$ satisfies (N2). This proves (c), as well as (d). 

We have (e) by the discussion at the start of this section. 

Consider a left exact bicategory satisfying (B1) and let $F$ be as in (c). By (c) and (e), $F$ satisfies (N1) and (N2). Then, by Remark~\ref{RemD}, $F$ satisfies (N3) if and only if direct images of normal $F$-clusters along right morphisms are normal. By what we have established when proving (c) and (d) above, and the discussion at the start of this section and the dual of Theorem~\ref{ThmAF}, we know that (B5) is equivalent to the subform inclusion $F_\n\to F$ preserving direct images along right morphisms. By the dual of Lemma~\ref{LemM}, this becomes equivalent to requiring that for any right morphism $e\colon X\to Y$ and a normal cluster $N$ in $X$, we have $e\cdot^\s N=\ncl{e\cdot^\s N}$. But this is exactly the same as to say that direct images of normal $F$-clusters along right morphisms are normal. This proves (f).
\end{proof}

\begin{theorem}\label{ThmAT}
The bijection of Theorem~\ref{ThmAQ} restricts to a bijection between isomorphism classes of noetherian forms $F$ over a category $\mathbb{C}$, having left exact join decomposition, and left exact bicategory structures on $\mathbb{C}$ satisfying the duals of (B2), (B3), and the dual of the following condition: 
\begin{itemize}
\item[(B1a)] pullback of a right morphism along a left morphism is always a right morphism. 
\end{itemize}
Such bicategory structures are the same as bicategory structures satisfying (B0), the dual of (B1a), (B2$'$) and its dual, and the duals of (B3) and (B4). Furthermore, such bicategory structures satisfy duals of (B1) and (B5), and they also satisfy (B3). Finally, a category having an initial object has at most one such bicategory structure and when it exists, left morphisms are monomorphisms and right morphisms are regular epimorphisms, while existence of such bicategory structure is equivalent to every morphism with initial object as the domain being a monomorphism and (B0), (B2$'$) and its dual, and duals of (B1a) and (B3) being true for monomorphisms as left morphisms and regular epimorphisms as right morphisms.
\end{theorem}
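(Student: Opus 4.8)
The plan is to obtain Theorem~\ref{ThmAT} as a restriction of the bijection of Theorem~\ref{ThmAQ}, which already matches orean forms $F$ with left exact decomposition satisfying (N2) against left exact bicategories, $F$ being the form of subobjects. On the form side, the extra hypotheses singling out \emph{noetherian with left exact join decomposition} are, by Theorem~\ref{ThmAH} and Theorem~\ref{ThmAL}, that $F$ be noetherian (i.e.\ (N1) and (N3) on top of (N2)) and that direct images of normal clusters be normal. Rather than verifying (N1) and (N3) separately, I would package them through Theorem~\ref{ThmH}, in the simplified form of Theorem~\ref{ThmAC}: applied to the orean factorization $(F_\s,F_\e)=(\text{subobjects},\text{quotients})$, these conditions are precisely what produces a noetherian form with exact join decomposition whose conormal and normal hulls are $F_\s$ and $F_\e$, and this decomposition is a \emph{left} one exactly when $(F_\s,F_\e)$ is a left exact pair, i.e.\ when the bicategory is left exact (Theorem~\ref{ThmAK} and Theorem~\ref{ThmAP}). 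This reduces the entire statement to translating the conditions of Theorem~\ref{ThmAC} into bicategorical terms.

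For that translation I would exploit the way (B0)--(B5) were built: those axioms are the bicategorical renderings of the \emph{duals} of the conditions of Theorem~\ref{ThmAC}. Reading the same dictionary the other way, the conditions of Theorem~\ref{ThmAC} themselves render as the \emph{duals} of (B1)--(B5): ``$F_\e$ satisfies (N1)'' becomes the dual of (B1) (the axiom (N1) being self-dual), the law $A\ast\alpha(A)=A$ becomes the dual of (B2), the remaining clause of \ref{ThmAC}(i) becomes the dual of (B3) via the dual of Lemma~\ref{LemAC}, ``$\alpha$ preserves bottom clusters'' becomes the dual of (B4) via the dual of Lemma~\ref{LemAD}, and ``$\alpha$ preserves binary meets'' becomes the dual of (B5) via Theorem~\ref{ThmAF}. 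Combined with left exactness (which by Theorem~\ref{ThmAP} amounts to (B0), (B2) and the dual of (B4)), this yields a preliminary equivalence: the form of subobjects is noetherian with left exact join decomposition if and only if the bicategory satisfies (B0), the duals of (B1)--(B5), and (B2).

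The heart of the proof, and the main obstacle, is to show that this list collapses to the stated one---left exactness plus the duals of (B2), (B3) and (B1a)---equivalently, that under left exactness the conditions dual~(B2), dual~(B3), dual~(B1a) already force dual~(B1) and dual~(B5), and moreover (B3). I would argue this on the form side, where Theorem~\ref{ThmAP} supplies the rigid description (right morphisms are regular epimorphisms and left morphisms are monomorphisms once an initial object is present). Dual~(B1) is dual~(B1a) together with a pushout-pasting clause; by the dual of Theorem~\ref{ThmAE}, (N1) for $F_\e$ reduces to its validity on $F_\e$-embeddings, which I would extract from the dual of (B2) and the dual of (B1a) by a direct chase. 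Granting (N1) for $F_\e$, Theorem~\ref{ThmAF} converts preservation of binary meets by $\alpha$ (the content of dual~(B5)) into preservation of inverse images along $F_\s$-embeddings, which is supplied by the pullback-of-embeddings computation in that theorem fed by the dual of (B2); and (B3) itself drops out of the dual of (B3) through Remark~\ref{RemI}. This is the opposite-handed analogue of the simplification carried out in the proof of Theorem~\ref{ThmAC}, and is where the genuine diagram chasing resides.

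For the final assertion I would specialise to a category with an initial object. Uniqueness is immediate: any qualifying structure is a left exact bicategory, so by Theorem~\ref{ThmAP} its left class is exactly the monomorphisms and its right class exactly the regular epimorphisms, leaving no freedom. The same identification, together with the dual of Theorem~\ref{ThmAN} (which places an initial object at the domain of every initial left morphism and makes every morphism out of the initial object an initial left morphism, hence a monomorphism), shows that existence forces the condition that every morphism with initial domain be a monomorphism. Conversely, granting that condition, I would take $\mathcal{M}$ to be the monomorphisms and $\mathcal{E}$ the regular epimorphisms, observe that the axioms required of these classes---(B0), (B2$'$) and its dual, and the duals of (B1a) and (B3)---are exactly the listed hypotheses, and then invoke the equivalence established above to produce the desired noetherian form with left exact join decomposition.
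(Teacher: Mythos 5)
Your overall architecture coincides with the paper's own proof: reduce via Theorem~\ref{ThmAQ} to showing that a left exact bicategory satisfying the duals of (B1a), (B2) and (B3) is precisely one whose form of subobjects is noetherian with left exact join decomposition; run the translation dictionary behind (the dual of) Theorem~\ref{ThmAD} (i.e.\ Theorems~\ref{ThmH} and \ref{ThmAC} rendered through (B0--5)); obtain left-handedness of the decomposition from Theorems~\ref{ThmAK} and \ref{ThmAP}; derive the duals of (B1) and (B5) from the remaining hypotheses; and settle the initial-object statement by the second part of Theorem~\ref{ThmAP} together with the dual of Theorem~\ref{ThmAN}. Where the paper performs the collapse by ``easy arguments using basic properties of pullbacks and pushouts'' directly in the bicategory, you propose the form-side equivalents (the dual of Theorem~\ref{ThmAE} for (N1) on $F_\e$, and Theorem~\ref{ThmAF} for meet-preservation by $\alpha$); both leave the same diagram chases unexecuted, so this is stylistic rather than substantive. (Two small wobbles: the dual of Theorem~\ref{ThmAE} reduces (N1) for $F_\e$ to its validity on $F_\s$-embeddings, not ``$F_\e$-embeddings''; and your appeal to the rigid mono/regular-epi description from Theorem~\ref{ThmAP} is legitimate only in the final, initial-object part of the statement, not in the general collapse, which must be carried out without an initial object.)

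The one genuinely wrong step is the claim that ``(B3) itself drops out of the dual of (B3) through Remark~\ref{RemI}.'' Remark~\ref{RemI} merely weakens condition (i) of Theorem~\ref{ThmH} to (i$'$), and to use it (or its dual) in the direction you need would require knowing that $F_\s$ satisfies (N1) --- that is, (B1) itself --- which is not available here (only its dual is among the conclusions); and the dual of (B3) does not by itself yield (B3), the two conditions being independent in general. The correct derivation, the one the paper uses, is from the dual of (B4), which holds by left exactness (Theorem~\ref{ThmAP}): in the (B3) diagram the domain of the initial left morphism is left trivial, hence right trivial by the dual of (B4), hence every morphism out of it --- in particular the composite of the two bottom morphisms --- is a left morphism. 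With this repair your proof agrees with the paper's.
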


\begin{proof} To establish the first statement of the theorem, we must prove that a left exact bicategory satisfies the duals of (B1a), (B2) and (B3), if and only if the form $F$ of its subobjects is a noetherian form having left exact join decomposition. Applying the dual of Theorem~\ref{ThmAD}, we only need to prove that a left exact bicategory satisfying duals of (B1a), (B2) and (B3), also satisfies duals of (B1), (B4) and (B5). By Theorem~\ref{ThmAP}, left exactness is equivalent to (B2) and the dual of (B4). It then suffices to show that (B2$'$) and its dual, along with (B1a), imply duals of (B1) and (B5$'$), which can be established by easy arguments using basic properties of pullbacks and pushouts (and noting that the first part of (B1) is just (B1a)). This completes the proof of the first statement of the theorem.

The second statement of the theorem is by Theorem~\ref{ThmAP}. The first part of the third statement has already been established. That (B3) holds is an easy consequence of the dual of (B4). For the last statement, apply the second statement of the current theorem, the second part of Theorem~\ref{ThmAP} and the dual of Theorem~\ref{ThmAN}.
\end{proof}

\begin{theorem}\label{ThmAU}
The bijection of Theorem~\ref{ThmAQ} restricts to a bijection between isomorphism classes of noetherian forms $F$ over a category $\mathbb{C}$, having left exact meet decomposition, and left exact bicategory structures on $\mathbb{C}$ satisfying (B1), (B2), (B4) and its dual, and (B5). A category having initial or terminal object has at most one such bicategory structure given by monomorphisms as left morphisms and regular epimorphisms as right morphisms, and such structure exists if and only if the category is pointed, and (B0), (B1), (B2) and (B5) hold with monomorphisms as left morphisms and regular epimorphisms as right morphisms, in which case the conditions (B1), (B2) and (B5) can be equivalently reformulated as follows (where arrows with tail represent monomorphisms and arrows with a double head represent regular epimorphisms):
\begin{itemize}
    \item[(B1*)] Pullback of a regular epimorphism along a monomorphism is always a regular epimorphism and a commutative diagram
    \[\xymatrix{ \bullet\ar@{>->}[d]_-{m}\ar@{->>}[r] & \bullet\ar@{>->}[d] \\ \bullet\ar@{->>}[r]_{e} & \bullet}\] is a pullback as soon as the kernel of $e$ factors through $m$.
    
    \item[(B2*)] Every regular epimorphism is a cokernel of its kernel.
    
    \item[(B5*)] Given a commutative diagram 
     \[\xymatrix{ \bullet\ar@{>->}[d]_-{m}\ar@{->>}[r] & \bullet\ar@{>->}[d]^-{m'} \\ \bullet\ar@{->>}[r] & \bullet}\] the morphism $m'$ is a kernel provided $m$ is.
\end{itemize}
\end{theorem}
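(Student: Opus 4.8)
The plan is to derive Theorem~\ref{ThmAU} by refining the bijection of Theorem~\ref{ThmAQ}. That theorem already matches isomorphism classes of orean forms having left exact decomposition and satisfying (N2) with left exact bicategory structures, and it isolates the noetherian ones as exactly those for which the bicategory satisfies (B1) and (B5); moreover a left exact bicategory automatically satisfies (B2) and the dual of (B4) by Theorem~\ref{ThmAP}. Hence it suffices to prove one extra equivalence: for a noetherian form $F$ with left exact decomposition (equivalently, a left exact bicategory satisfying (B1) and (B5)), the \emph{necessarily unique} (Theorem~\ref{ThmAK}) left exact decomposition is a \emph{meet} decomposition if and only if the corresponding bicategory satisfies (B4). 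Granting this crux, the claimed bijection between noetherian forms having left exact meet decomposition and left exact bicategories satisfying (B1), (B2), (B4) and its dual, and (B5) is immediate, since (B2) and the dual of (B4) come for free from left exactness.

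To prove the crux, recall that a left exact decomposition forces $F$ to be conormal with $F_\e=F_\n$, decomposition projection $\tau_\e=\alpha$ equal to the unique conormal operator $F\to F_\n$, and $\beta\colon F_\n\to F$ the subform inclusion (Theorem~\ref{ThmAI}). By the dual of Lemma~\ref{LemAD} (as recorded before Theorem~\ref{ThmAD}), (B4) is equivalent to $\beta$ preserving top clusters, and since $\beta$ is a subform inclusion this says precisely that the top cluster of each object is normal. Since $F$ is conormal, Theorem~\ref{ThmQ} says $F$ has a left exact meet decomposition exactly when every cluster has a normal exterior. I would close the loop by identifying these two conditions: the implication ``normal exteriors exist $\Rightarrow$ top normal'' is immediate because $\ncl{\top^F}\ge\top^F$ forces equality, while for the converse I take any $K=\im^F f$, observe that $\alpha(K)=\im^\n f$ is normal, and check, using that the top cluster on the domain of $f$ is now normal, that $\im^\n f$ is the least normal cluster above $K$, so $\alpha(K)=\ncl{K}$. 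Thus $\kappa_\e=\beta\alpha$ is a closure operator and the decomposition is a meet decomposition, completing the crux.

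For the statements about initial and terminal objects, I would pass to the bicategory, which now satisfies both (B4) and its dual, so that left trivial and right trivial objects coincide. Suppose first that $\mathbb{C}$ has a terminal object $T$. For any $X$ the initial left morphism $\iota^\s_{\bot^\s_X}\colon X_0\to X$ exists, and its domain $X_0$ is left trivial, hence right trivial by the dual of (B4); since $T$ is terminal, Theorem~\ref{ThmAN}(iv) gives $X_0\cong T$, yielding a morphism $T\to X$, and any two such are initial left morphisms (Theorem~\ref{ThmAN}) representing the same subobject, hence equal because $T$ is terminal. Thus $T$ is a zero object and $\mathbb{C}$ is pointed; the initial-object case is symmetric, using the dual of Theorem~\ref{ThmAN} together with (B4). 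Uniqueness of the structure follows from Theorem~\ref{ThmAP}, which forces left morphisms to be the monomorphisms and right morphisms the regular epimorphisms once an initial object is present. Conversely, in any pointed category an object has a unique subobject, respectively quotient, precisely when it is a zero object, so (B4) and its dual hold automatically for the monomorphism/regular-epimorphism pair; hence existence of the structure reduces to (B0), (B1), (B2) and (B5) for that pair.

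Finally, the reformulations come from rewriting (B1), (B2) and (B5) in pointed terms through the dictionary in which an initial left morphism with codomain $Y$ is the zero subobject $0\to Y$, a terminal right morphism is a zero quotient, and pulling back, respectively pushing out, along these computes kernels, respectively cokernels. Under this translation (B2) becomes (B2*) ``every regular epimorphism is the cokernel of its kernel'' (the pullback of a regular epimorphism along $0\to Y$ is its kernel, and (B2) demands that square be a pushout), (B1) becomes (B1*) once ``the left square is a pullback'' is rephrased as ``the kernel of $e$ factors through $m$'', and (B5) becomes (B5*) ``$m'$ is a kernel whenever $m$ is'', using the dual of Lemma~\ref{LemAC}. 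I expect the main obstacle to be the crux equivalence of the second paragraph: identifying $\alpha(K)$ with the normal exterior $\ncl{K}$ under the hypothesis that top clusters are normal requires a genuine minimality argument for $\im^\n f$ and is the step where the noetherian and left-exact structure is really used, whereas the bicategorical translations of the last paragraph, though numerous, are routine once this dictionary is in place.
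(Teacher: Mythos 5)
Your proposal is correct, and for the main bijection it takes a genuinely different route from the paper. The paper obtains the first statement by combining Theorem~\ref{ThmAD} (the full correspondence between emd-noetherian forms and bicategories satisfying (B0-5), which rests on the heavy machinery of Theorems~\ref{ThmH} and \ref{ThmAC}) with Theorems~\ref{ThmAP} and \ref{ThmAQ} and the observation that the dual of (B4) implies (B3); you instead stay entirely on the Theorem~\ref{ThmAQ} side and prove directly that, for a noetherian form with left exact decomposition, (B4) holds for the associated bicategory if and only if that (necessarily unique, by Theorem~\ref{ThmAK}) decomposition is a meet decomposition --- via the dual of Lemma~\ref{LemAD} ((B4) $\iff$ $\beta$ preserves top clusters $\iff$ top clusters are normal, since $\beta$ is here the subform inclusion $F_\n\to F$ by Theorem~\ref{ThmAI}) and Theorem~\ref{ThmQ}. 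This crux is sound, and the step you flag as the ``main obstacle'' is in fact immediate: since the left exact decomposition forces $F_\n$ to be orean, the dual of Lemma~\ref{LemM} gives $\im^\n f=\ncl{f\cdot^F\nccl{\top^F}}$, which under normality of top clusters is exactly $\ncl{\im^F f}$, with existence guaranteed by the same lemma; no separate minimality argument is needed, and monotonicity concerns are absorbed by the already-proven Theorem~\ref{ThmQ}. The remaining parts of your argument (pointedness from (B4) and its dual via Theorem~\ref{ThmAN} and its dual, uniqueness of the structure via the second part of Theorem~\ref{ThmAP}, the characterization of zero objects as the objects with no proper subobjects or regular quotients, and the pointed-category dictionary translating (B1), (B2), (B5) into (B1*), (B2*), (B5*)) follow the same lines as the paper, which indeed leaves the translations as routine verifications exactly at the level of detail you give. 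What each approach buys: the paper's proof is a three-line corollary once Theorem~\ref{ThmAD} is in hand, while yours avoids invoking that theorem altogether, is lighter overall, and makes explicit the conceptual content of (B4) in this setting, namely that top clusters are normal.
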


\begin{proof}
To get the first statement of the theorem, apply Theorems~\ref{ThmAD}, \ref{ThmAP} and \ref{ThmAQ} along with a simple observation that the dual of (B4) implies (B3) (as noted already in the proof of Theorem~\ref{ThmAT}). If either a terminal or initial object exists, then an easy argument shows that (B4) with its dual imply that the category is pointed, so the first part of the second statement of the theorem follows from Theorem~\ref{ThmAP}. The `only if' part of the second part of the second statement is immediate, while the `if' part follows from the fact that in a pointed category, objects which do not have proper subobjects, as well as objects which do not have proper regular quotients, are precisely the zero objects. That in a pointed orean bicategory where left morphisms are monomorphisms and right morphisms are regular epimorphisms, (B1), (B2) and (B5) reformulate to (B1*), (B2*) and (B5*), respectively, can be easily verified. 
\end{proof}

\begin{remark}
The last part of Theorem~\ref{ThmAU} reveals an intimate link between the axioms (B0-5) and the ``old-style'' axioms for a semi-abelian category from \cite{JMT02}. As we will see in Theorem~\ref{ThmAM}, the concept analyzed in Theorem~\ref{ThmAU} becomes that of a semi-abelian category once the existences of finite products and sums is required.
\end{remark}

\subsection*{Semi-abelian and abelian categories} Here we point out various characterizations of semi-abelian and abelian categories based on properties of forms investigated in this paper.

By Theorem~\ref{ThmAK} and Remark~\ref{RemT}, every noetherian form $F$ having exact decomposition is strongly orean and has unique exact decomposition, where $F_\s=F_\c$ and $F_\e=F_\n$. This decomposition is:
\begin{itemize}
    \item a left decomposition, if and only if $F$ is conormal (according to Theorem~\ref{ThmAH}) --- conormal noetherian forms having exact decomposition are precisely the strongly orean conormal noetherian forms according to Theorem~\ref{ThmAH};
    
    \item a right decomposition, if and only if $F$ is normal (by the dual of the above) --- normal noetherian forms having exact decomposition are precisely the strongly orean normal noetherian forms.
\end{itemize}
As a result of Theorems~\ref{ThmAL}, \ref{ThmQ} and \ref{ThmP}, we then have:
\begin{itemize}
    \item a form is a conormal noetherian form having exact join decomposition if and only if it is a strongly orean conormal noetherian form in which normal clusters are stable under direct images; dually, a form is a normal emd-noetherian form if and only if it is a strongly orean normal noetherian form in which conormal clusters are stable under inverse images;
    
    \item a form is a conormal emd-noetherian form if and only if it is a conormal noetherian form having normal exteriors; dually, a form is a normal noetherian form having exact join decomposition if and only if it is a normal noetherian form having conormal interiors; 
    
    \item a noetherian form has dual properties of having exact meet decomposition and exact join decomposition if and only if it is binormal.
\end{itemize}
We will use some of these observations in what follows.

\begin{theorem}\label{ThmAM}
A category $\mathbb{C}$ having finite products and sums has, up to isomorphism, at most one conormal emd-noetherian form, and the following conditions are equivalent:
\begin{itemize}
    \item[(i)] $\mathbb{C}$ has a conormal emd-noetherian form.
    
    \item[(ii)] $\mathbb{C}$ is a pointed category having an emd-noetherian form.
    
    \item[(iii)] $\mathbb{C}$ is a pointed category having a conormal noetherian form having exact decomposition.

    \item[(iv)] $\mathbb{C}$ is a pointed category having a conormal noetherian form.
    
    \item[(v)] $\mathbb{C}$ is a pointed category having a noetherian form such that inverse images of conormal clusters are conormal and bottom clusters are conormal.
    
    \item[(vi)] $\mathbb{C}$ is a semi-abelian category in the sense of \cite{JMT02}. 
\end{itemize}
\end{theorem}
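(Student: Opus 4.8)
The plan is to use the bicategorical machinery of Theorem~\ref{ThmAU} for the uniqueness clause and for the passage to pointedness, to route the remaining equivalences among (i)--(v) through the form-theoretic results already in hand, and finally to identify the resulting structure with semi-abelianness via the characterisation underlying Remark~\ref{RemG}.

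\textbf{Uniqueness and pointedness.} First I would note that by Theorem~\ref{ThmQ} a \emph{conormal} form with exact meet decomposition automatically has a \emph{left} exact meet decomposition, and conversely a left exact meet decomposition forces conormality; hence conormal emd-noetherian forms over $\mathbb{C}$ coincide with noetherian forms admitting left exact meet decomposition. Theorem~\ref{ThmAU} then supplies a bijection between their isomorphism classes and a prescribed class of left exact bicategory structures on $\mathbb{C}$. Since $\mathbb{C}$ has finite products it has a terminal object, so the final clause of Theorem~\ref{ThmAU} gives at most one such bicategory structure; this yields the uniqueness assertion and, at the same time, shows that the mere existence of such a structure forces $\mathbb{C}$ to be pointed. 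As a conormal emd-noetherian form is in particular an emd-noetherian form, this already delivers (i)$\implies$(ii).

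\textbf{The easy links.} The implication (iii)$\implies$(iv) is trivial, and (iv)$\implies$(v) holds because in a conormal form every cluster is conormal, so the two side conditions in (v) are satisfied vacuously. For (i)$\implies$(iii) one observes that an exact meet decomposition is an exact decomposition and that pointedness is already available from (i)$\implies$(ii). For (ii)$\implies$(v) I would apply Theorem~\ref{ThmM} to the opposite form, whose exact join decomposition is dual to the exact meet decomposition of $F$: this gives that inverse images of conormal clusters are conormal, while pointedness forces each bottom cluster to be the image of a zero morphism and hence conormal, exactly as argued in Remark~\ref{RemG}. These, together with the cycle-closing implications below, make (i)--(vi) mutually equivalent.

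\textbf{(vi)$\implies$(i).} If $\mathbb{C}$ is semi-abelian then it is pointed with finite limits and colimits; its form of subobjects is conormal by Example~\ref{ExaAD} and noetherian by Example~\ref{Ex0}, and, having cokernels, it admits an exact left meet decomposition by Remark~\ref{RemG}. This is a conormal emd-noetherian form, giving (i).

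\textbf{(v)$\implies$(vi) --- the main obstacle.} The difficulty is that (v) presupposes only finite products and sums, whereas semi-abelianness in the sense of \cite{JMT02} demands all finite limits and colimits, so these must first be manufactured. The orean factorization underlying the noetherian form of (v) provides pullbacks of embeddings and pushouts of quotients along arbitrary morphisms (condition (B0)); pulling back the split-mono diagonal $\Delta_Z\colon Z\to Z\times Z$ (a left morphism) along $(f\pi_1,g\pi_2)$ produces an arbitrary pullback, so finite products together with these pullbacks yield all finite limits, and dually pushing out the split-epi codiagonal yields all finite colimits from finite sums. With finite limits and colimits in place, $\mathbb{C}$ is pointed and carries a noetherian form satisfying the two conormality conditions of (v), whence Theorem~2.17 of \cite{vanniekerk19}, invoked precisely as in Remark~\ref{RemG} and ultimately resting on Theorem~9.1 of \cite{J14}, identifies $\mathbb{C}$ as semi-abelian. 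The step most likely to require care is exactly this passage: verifying that the bicategorical axioms (B1$^*$), (B2$^*$), (B5$^*$) of Theorem~\ref{ThmAU} amount, once finite products and sums are present, to Barr exactness together with protomodularity, which is the content anticipated in the remark following Theorem~\ref{ThmAU}.
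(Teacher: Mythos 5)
Most of your architecture agrees with the paper's proof: the paper also gets uniqueness from the bicategorical correspondence (citing Theorems~\ref{ThmAP} and~\ref{ThmAQ}, where you route through Theorem~\ref{ThmQ} and Theorem~\ref{ThmAU} --- equivalent machinery), it proves pointedness by a direct form-theoretic argument instead (showing the unique morphism $I\to T$ is an isomorphism via Lemma~\ref{LemV}, using conormality of $\bot_I$ and normality of $\top_T$ coming from normal exteriors), and your treatments of (iii)$\implies$(iv)$\implies$(v), (ii)$\implies$(v) via the dual of Theorem~\ref{ThmM} and Remark~\ref{RemG}, and (vi)$\implies$(i) via normal exteriors in a semi-abelian category all match the paper. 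The implication graph you assemble does close the cycle. The genuine problem is in your (v)$\implies$(vi).

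Your ``dually pushing out the split-epi codiagonal'' step fails, because the hypotheses of (v) are not self-dual. The pullback half is fine: Lemma~\ref{LemP} reduces pullbacks of embeddings along arbitrary morphisms to conormality of inverse images of conormal clusters, which (v) supplies. But the dual of Lemma~\ref{LemP} requires the \emph{direct image of a normal cluster along an arbitrary morphism to be normal}, and (v) gives nothing of the sort; under (N1)--(N3) one only gets normality of direct images of normal clusters along \emph{quotients} (Remark~\ref{RemD}). Concretely, the form of subgroups of groups satisfies (v), yet the direct image of a normal cluster along an embedding (a normal subgroup of a subgroup, included into the ambient group) is generally not normal; the pushout of a quotient along an embedding is then the quotient of the \emph{normal closure}, and the existence of such normal exteriors under bare (v) is exactly the nontrivial content that cannot be obtained ``by duality'' (any attempt to build them as kernels of cokernels is circular, since cokernels are the pushouts you are trying to construct). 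In fact the detour is unnecessary: the paper disposes of (v)$\iff$(vi) with a one-line citation of Theorem~2.17 of \cite{vanniekerk19}, which is stated at exactly the generality of (v), i.e., for pointed categories with finite products and sums; Remark~\ref{RemG} merely \emph{applies} that theorem in a setting where finite limits and colimits happen to be available, and you appear to have read the stronger ambient hypotheses of the remark into the theorem itself. Your closing sentence about reducing (B1*), (B2*), (B5*) to Barr exactness plus protomodularity is likewise off-target for this implication, since those axioms characterize left exact meet decomposition, a condition strictly stronger than the hypotheses of (v).
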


\begin{proof} Uniqueness of a conormal emd-noetherian form follows from Theorems~\ref{ThmAP} and \ref{ThmAQ}.
(i)$\implies$(ii) and (i)$\implies$(iii): Suppose $\mathbb{C}$ has a conormal emd-noetherian form. To prove (ii) as well as (iii), it suffices to show that $\mathbb{C}$ is a pointed category. Let $I$ and $T$ denote, respectively, an initial and a terminal object of $\mathbb{C}$. Consider the unique morphism $f\colon I\to T$. We will show that $f$ is an isomorphism. Decompose $f$ as $f=\iota_{\ker f}\circ \pi_{\im f}$. In view of Lemma~\ref{LemV}, to show that $f$ is an isomorphism, it is sufficient (actually, equivalent) to show that $\ker f$ is a bottom cluster in $I$ and $\im f$ is a top cluster in $T$. Since $I$ is an initial object, the embedding $\iota_{\bot_I}$ (which exists, since the form is conormal) is an isomorphism. Then by Lemma~\ref{LemV} again, $\bot_I=\top_I$. Consequently, $\ker f=\bot_I$. Dual argument works in $T$, where $\top_T$ must be normal in view of the fact that in a conormal emd-noetherian form clusters have normal exteriors. 

(iii)$\implies$(iv)$\implies$(v) are trivial.

(ii)$\implies$(v): Suppose $\mathbb{C}$ is a pointed category having an emd-noetherian form $F$. Then $F$ is a noetherian form in which inverse images of conormal clusters are conormal, by the dual of Theorem~\ref{ThmM}. Furthermore, since normal exteriors exist, the top clusters are normal. In particular, the top cluster of the zero object $Z$ is normal. This implies that $\top_Z=\bot_Z$, which can be established by a similar argument to the one needed in the proof of (i)$\implies$(ii) and (i)$\implies$(iii) above. It is then easy to see why the bottom cluster in every object will be conormal.

(v)$\iff$(vi) is already known: it is Theorem~2.17 in \cite{vanniekerk19}.

For (vi)$\implies$(i) we will show that if $\mathbb{C}$ is a semi-abelian category, then its form $F$ of subobjects is a conormal emd-noetherian form. That the form $F$ of subobjects of a semi-abelian category is a conormal noetherian form is already known from \cite{JW16b,J14}. This leaves us to show that $F$ has normal exteriors. It is not hard to show that the normal exterior of a subobject represented by a monomorphism $m$ will be the subobject represented by the usual categorical kernel of the cokernel of $m$. 
\end{proof}

\begin{theorem}
For a category $\mathbb{C}$ having finite products and sums, the following conditions are equivalent:
\begin{itemize}
    \item[(i)] $\mathbb{C}$ has a binormal noetherian form.
    
    \item[(ii)] $\mathbb{C}$ has a binormal noetherian form having exact decomposition.
    
    \item[(iii)] $\mathbb{C}$ has an emd-noetherian form having exact join decomposition
    
    \item[(iv)] $\mathbb{C}$ is a pointed category having normal emd-noetherian form.
    
    \item[(v)] $\mathbb{C}$ is an abelian category.
\end{itemize}
\end{theorem}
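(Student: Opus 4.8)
The plan is to reduce everything to Theorem~\ref{ThmAO}, which already characterizes abelian-type forms over a \emph{pointed} base; the genuine work is therefore extracting pointedness of $\mathbb{C}$ from the hypotheses. I would organize the argument around the cycle (v)$\implies$(i)$\implies$(iv)$\implies$(v), together with the separate chain (i)$\iff$(ii)$\iff$(iii). For the latter, Theorem~\ref{ThmP} does all the work: it says a noetherian form is binormal precisely when $(1_\mathbb{B},1_\mathbb{B})$ is simultaneously an exact meet and an exact join decomposition. In particular a binormal noetherian form automatically carries the identity exact decomposition, so (i)$\implies$(ii) and (i)$\implies$(iii), while (ii)$\implies$(i) is trivial; and (iii) asserts that $F$ is a noetherian form possessing both an exact meet and an exact join decomposition, so Theorem~\ref{ThmP}(iv)$\implies$(i) yields binormality, giving (iii)$\implies$(i).

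For (v)$\implies$(i): an abelian category is pointed and Puppe--Mitchell exact, so applying the implication (i)$\implies$(ii) of Theorem~\ref{ThmAO} to the form of normal subobjects produces a binormal noetherian form over $\mathbb{C}$. Conversely, for (iv)$\implies$(v): a normal noetherian form admitting an exact meet decomposition is exactly a normal emd-noetherian form, so Theorem~\ref{ThmAO} (implication (iii)$\implies$(i)) shows that the pointed category $\mathbb{C}$ is Puppe--Mitchell exact; since $\mathbb{C}$ has finite products and sums by hypothesis, the facts recalled in Remark~\ref{RemF} identify Puppe--Mitchell exact categories with products and sums as abelian categories, delivering (v).

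The crux is (i)$\implies$(iv), and within it the claim that $\mathbb{C}$ is pointed. A binormal form is both normal and conormal, and a binormal noetherian form is emd-noetherian via the identity exact meet decomposition of Theorem~\ref{ThmP}, so the only nontrivial point is producing a zero object. Using that finite sums give an initial object $I$ and finite products a terminal object $T$, I would show that the unique morphism $f\colon I\to T$ is an isomorphism, mimicking the $I\to T$ argument in the proof of Theorem~\ref{ThmAM}. Concretely, since the form is conormal the embedding $\iota_{\bot_I}$ exists; initiality of $I$ supplies a right inverse and hence makes $\iota_{\bot_I}$ a split epimorphism, so being a monomorphism by Remark~\ref{RemJ} it is an isomorphism, and Lemma~\ref{LemV} then forces $\bot_I=\top_I$, whence $\ker f=\bot_I$. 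Dually, normality supplies the quotient $\pi_{\top_T}$, which terminality of $T$ makes a split monomorphism and therefore (being an epimorphism) an isomorphism, so $\bot_T=\top_T$ and $\im f=\top_T$. Factoring $f=\iota_{\im f}\circ\pi_{\ker f}$ via (N2) exhibits $f$ as a composite of two isomorphisms by Lemma~\ref{LemV}, so $I\cong T$ is a zero object and $\mathbb{C}$ is pointed. With pointedness secured, the given binormal noetherian form is in particular a normal emd-noetherian form over a pointed category, which is exactly (iv).

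The main obstacle, as indicated, is this pointedness step: everything else is bookkeeping against Theorems~\ref{ThmP} and~\ref{ThmAO}, whereas pointedness must be coaxed out of the bare existence of finite products and sums by the split-epi/split-mono trick above. One subtlety to watch is the interface with Theorem~\ref{ThmAO}, which is stated for a fixed orean form over a \emph{given} pointed category: in (i)$\implies$(iv) I must first establish pointedness and only then invoke it, whereas in (iv)$\implies$(v) and (v)$\implies$(i) pointedness is part of the hypothesis or of the definition of abelian, so Theorem~\ref{ThmAO} applies directly.
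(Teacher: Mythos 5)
Your proposal is correct and follows essentially the same route as the paper, which chains (i)$\implies$(ii)$\implies$(iii)$\implies$(iv)$\implies$(v)$\implies$(i) using exactly the ingredients you cite: Theorem~\ref{ThmP} for the binormal/decomposition equivalences, Theorem~\ref{ThmAO} plus the Puppe--Mitchell-with-products-and-sums fact for reaching abelianness, and Remark~\ref{RemF} for (v)$\implies$(i). The only difference is organizational: where the paper obtains pointedness in (iii)$\implies$(iv) by citing Theorem~\ref{ThmAM}, you inline the same $I\to T$ split-epi/split-mono argument from that theorem's proof into (i)$\implies$(iv), which is harmless (indeed slightly cleaner, since binormality makes $\top_T$ normal directly rather than via normal exteriors).
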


\begin{proof}
(i)$\implies$(ii) and (ii)$\implies$(iii) are by Theorem~\ref{ThmP}. (iii)$\implies$(iv) is by Theorems~\ref{ThmP} and \ref{ThmAM}. (iv)$\implies$(v) is by Theorem~\ref{ThmAO} and the well known fact that a Puppe-Mitchell exact category having finite products and sums is the same as an abelian category (see, e.g., \cite{P62,M65,G12}). (v)$\implies$(i) is by the same fact along with Remark~\ref{RemF}. 
\end{proof}

\subsection*{Optimal noetherian forms} The use of a noetherian form over a bicategory $\mathbb{C}$ is that it allows one to prove homomorphism theorems in $\mathbb{C}$, whose formulation only requires the bicategory structure of $\mathbb{C}$. Thus, two noetherian forms over the same bicategory will yield same homomorphism theorems. This motivates the question of finding the most `optimal' noetherian form over a bicategory.   

\begin{definition}
A noetherian form over a bicategory is said to be \emph{optimal} when it is isomorphic to a subform of any other noetherian form over the same bicategory.
\end{definition} 

\begin{theorem}\label{ThmAS}
Every noetherian form $G$ over a bicategory having an exact join decomposition is optimal. Dually, every emd-noetherian form is optimal.
\end{theorem}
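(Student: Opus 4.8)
The plan is to realize $G$ explicitly and then produce a retraction onto it from an arbitrary competitor $F$. First I would fix the normal form of $G$: by Theorem~\ref{ThmM} an exact join decomposition of $G$ forces $G_\s=G_\c$ and $G_\e=G_\n$, and by Theorem~\ref{ThmH} we have $G\cong H_G:=\{(A,R)\in G_\c\times G_\n\mid A\ast R=A,\ \alpha(A)\le R\}$. Now let $F$ be any noetherian form over the same bicategory $(\mathbb{C},\mathcal{E},\mathcal{M})$, so that $\mathcal{M}$ is the class of both $F$- and $G$-embeddings and $\mathcal{E}$ that of $F$- and $G$-quotients. By Remark~\ref{RemJ}, $F_\c$ and $G_\c$ are both isomorphic to the form of $\mathcal{M}$-subobjects and $F_\n,G_\n$ to the form of $\mathcal{E}$-quotients; in particular $F_\c,F_\n$ are orean, so $F$ is strongly orean and, by Theorem~\ref{ThmT}, $(F_\c,F_\n)$ is the orean factorization attached (via Theorem~\ref{ThmV}) to the same factorization system as $(G_\c,G_\n)$. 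Lemma~\ref{LemN} gives unique isomorphisms $\gamma_\c\colon G_\c\to F_\c$ and $\gamma_\n\colon G_\n\to F_\n$, which are binormal (Remark~\ref{RemO}); since Wyler join and the operators $\alpha,\beta$ are assembled from embeddings, quotients, images and the orean operations, all matched by Lemma~\ref{LemJ} and Remark~\ref{RemO}, these isomorphisms intertwine the structure, e.g. $\gamma_\c(A\ast^G R)=\gamma_\c A\ast^F\gamma_\n R$ and $\gamma_\n\alpha^G=\alpha^F\gamma_\c$. Hence $\gamma_\c\times\gamma_\n$ carries $H_G$ isomorphically onto $H_F:=\{(A,R)\in F_\c\times F_\n\mid A\ast^F R=A,\ \alpha^F(A)\le R\}$, so that $G\cong H_F$.

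It then remains to embed $H_F$ as a subform of $F$. I would do this by a retraction, invoking the co-closure dual of Remark~\ref{RemK}(iii) rather than verifying fullness directly. Consider the join operator $\tau\colon H_F\to F$, $(A,R)\mapsto A\vee^F R$ (this is an operator, being the join operator of Example~\ref{ExaO} restricted along subform inclusions, monotone because direct image preserves joins by Remark~\ref{RemC}). I would build a left inverse $\nu\colon F\to H_F$ sending $K$ to the pair $(\cccl{K},\nccl{K})$ of its conormal and normal interiors. Granting that $\nu$ is a well-defined operator into $H_F$, one has $\tau\nu(K)=\cccl{K}\vee^F\nccl{K}\le K$, so $\tau\nu$ is an idempotent co-closure operator, and $\nu\tau=\mathrm{id}_{H_F}$: for a valid pair, $\cccl{A\vee^F R}=A\ast^F R=A$ by the first part of Lemma~\ref{inverseimageinteriorlem}, while $\nccl{A\vee^F R}=R$ follows from the argument of Theorem~\ref{thmAA}(ii) transcribed to $F$. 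Indeed $A\vee^F R=A\vee^F\nccl{A\vee^F R}$, both $(A,R)$ and $(A,\nccl{A\vee^F R})$ are valid pairs (using $\cccl{A\vee^F\nccl{A\vee^F R}}=\cccl{A\vee^F R}=A$ and $\alpha(A)\le R\le\nccl{A\vee^F R}$), so the second part of Lemma~\ref{inverseimageinteriorlem} forces $\nccl{A\vee^F R}=R$. With $\nu$ a left inverse of $\tau$ satisfying $\tau\nu\le\mathrm{id}$, the dual of Remark~\ref{RemK}(iii) gives that $\tau$ restricts to an isomorphism $H_F\to F_{\tau\nu}$ onto the subform of $\tau\nu$-closed clusters. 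Thus $G\cong H_F$ is isomorphic to a subform of $F$, which is exactly optimality.

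The hard part will be showing that $\nu$ is a genuine operator $F\to H_F$: one must establish that in the noetherian form $F$ the interiors $\cccl{K}$ and $\nccl{K}$ exist for every cluster $K$, that $K\mapsto(\cccl{K},\nccl{K})$ is monotone over every morphism, and that the resulting pair always satisfies $\cccl{K}\ast^F\nccl{K}=\cccl{K}$ and $\alpha(\cccl{K})\le\nccl{K}$, so that $\nu$ indeed lands in $H_F$. This is where the axioms (N1)--(N3) for $F$, strong oreanness, and the Wyler-join identities of Lemma~\ref{inverseimageinteriorlem} and Theorem~\ref{thmAA} must all be brought to bear. The payoff of routing through Remark~\ref{RemK}(iii) is that, once $\nu$ is known to be an operator, the fullness of the comparison $\tau$ is delivered for free: a direct check would stumble precisely on the normal component, since $f\cdot^F R$ need not be normal in a general noetherian $F$, and the monotonicity of $\nu$ over morphisms is exactly what repairs this.

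Finally, the second assertion follows by duality. An emd-noetherian form over $\mathbb{C}$ is the same thing as a noetherian form admitting an exact join decomposition over the dual bicategory $(\mathbb{C}^\op,\mathcal{M},\mathcal{E})$, because duality interchanges exact meet and exact join decompositions; and the property of being optimal is self-dual, being phrased purely in terms of subform inclusions, which dualize to subform inclusions. Applying the statement already proved to $\mathbb{C}^\op$ therefore yields that every emd-noetherian form is optimal.
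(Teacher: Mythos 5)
Your first move is sound and coincides with the paper's: via Theorems~\ref{ThmM} and \ref{ThmH} (and the unique isomorphisms of Lemma~\ref{LemN}), $G$ is isomorphic to the subform $H_F$ of $F_\c\times F_\n$ consisting of pairs $(A,R)$ with $A\ast R=A$ and $\alpha(A)\le R$. The genuine gap is in how you place $H_F$ inside $F$. The paper's proof composes the full operator $G\to F_\c\times F_\n$ with the subform inclusion into $F\times F$ and the join operator of Example~\ref{ExaO}; every constituent is already an operator, so monotonicity over arbitrary morphisms comes for free, and the only remaining point is fullness of $(A,R)\mapsto A\vee^F R$, which is supplied by the order-reflection in Lemma~\ref{inverseimageinteriorlem} --- a lemma whose hypotheses (strongly orean and noetherian) the competitor $F$ satisfies automatically; Lemma~\ref{LemY} then finishes. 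You instead route through a map in the opposite direction, $\nu\colon F\to H_F$, $K\mapsto(\cccl{K},\nccl{K})$, and this is precisely where the argument fails: an arbitrary noetherian competitor $F$ need not admit conormal and normal interiors for every cluster, and even granting existence, monotonicity of $\nu$ is false in general. Concretely, since $H_F$ is a subform of $F_\c\times F_\n$, monotonicity of the normal component means $f\cdot^F\nccl{K_1}\le\nccl{K_2}$ whenever $f\cdot^F K_1\le K_2$; taking $K_1$ normal and $K_2=f\cdot^F K_1$ forces $f\cdot^F K_1=\nccl{f\cdot^F K_1}$, i.e., direct images of normal $F$-clusters must be normal. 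That is one of the properties Theorem~\ref{ThmM} extracts from an exact join decomposition --- which $G$ has but $F$, by hypothesis, need not (for the form of subgroups of groups, for instance, it fails outright). Your closing remark that ``the monotonicity of $\nu$ over morphisms is exactly what repairs'' the non-normality of $f\cdot^F R$ is circular: that monotonicity is the unproved claim. A smaller but related slip: the membership condition $\alpha(\cccl{K})\le\nccl{K}$ uses $\alpha(A)=\nccl{A}$, which is Theorem~\ref{thmAA} and again presupposes exact join decomposition of the ambient form; for a general $F$ one only has $\alpha(A)=\im^\n\iota_A$.

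The parts you do verify fibrewise are fine ($\nu\tau=\mathrm{id}_{H_F}$ via the two parts of Lemma~\ref{inverseimageinteriorlem}, and the concluding duality reduction for emd-noetherian forms), and the appeal to the dual of Remark~\ref{RemK}(iii) would work \emph{if} $\nu$ were an operator. But since you explicitly defer that point (``granting that $\nu$ is a well-defined operator''), the proposal as written does not prove the theorem, and the deferred step is not a routine verification --- it asks of the arbitrary competitor $F$ structure that only the distinguished form $G$ is known to possess. The repair is the paper's: abandon any retraction $F\to H_F$ and instead prove fullness of the forward comparison $G\to F$, $(A,R)\mapsto A\vee^F R$, directly from Lemma~\ref{inverseimageinteriorlem}, whose second part reflects the order on valid pairs from their joins in $F$.
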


\begin{proof}[Proof (sketch).] 
Let $F$ and $G$ be noetherian forms over a bicategory, and so left morphisms are the same as $F$-embeddings as well as $G$-embeddings, and right morphisms are the same as $F$-quotients as well as $G$-quotients. $G_\c$ and $F_\c$ are isomorphic to the form of $\mathcal{M}$-subobjects and $G_\n$ and $F_\n$ are isomorphic to the form of $\mathcal{E}$-quotients, by Remark~\ref{RemJ} and its dual. Suppose furthermore that $G$ has an exact join decomposition. Then by Theorem~\ref{ThmH}, $(F_\c,F_\n)$ is an orean factorization and $G$ is isomorphic to a subform of $F_\c\times F_\n$ consisting of those clusters $(A,R)$ where $A\ast R=A$ and $\alpha(A)\le R$. Let $\delta$ denote the corresponding full operator $G\to F_\c\times F_\n$. Consider the composite
\[\xymatrix{ G\ar[r]^-{\delta} & F_\c\times F_\n\ar[r] & F\times F\ar[r]^-{\lor} & F}\]
where the middle operator is a subform inclusion and the one on the right comes from Example~\ref{ExaO}. By Lemma~\ref{inverseimageinteriorlem}, we obtain a full operator $G\to F$. By Lemma~\ref{LemY}, we get that $G$ is isomorphic to a subform of $F$.
\end{proof}

\begin{example}
It is not difficult to construct an example of a bicategory $\mathbb{C}$ and an infinite sequence
\[\xymatrix{ \dots\ar[r]^-{\tau_{i-1}} & F_i\ar[r]^-{\tau_{i}} & F_{i+1}\ar[r]^-{\tau_{i+1}} & \dots }\]
of subform inclusions, where each $F_i$ is an optimal noetherian form over $\mathbb{C}$ and no $\tau_i$ is an identity operator (so all subform inclusions are proper). In fact, $\mathbb{C}$ will have all finite limits and each $F_i$ will have a left exact join decomposition. Set $\mathbb{C}$ to be the linearly ordered set $[0,1]$ seen as a category, with the bicategory structure given as follows: all morphisms are left morphisms and only identity morphisms are the right morphisms. Consider the following form $F$ over $\mathbb{C}$. For each $r\in [0,1]$, clusters in $r$ are all non-negative real numbers, while for a morphism $f\colon r\to s$ and clusters $a$ in $r$ and $b$ in $s$, 
\[b\geqslant_f a\quad\Leftrightarrow\quad a\leqslant b,\]
where the `$\leqslant$' on the right hand side represents the usual inequality of real numbers. Now, for each integer $i\in\mathbb{Z}$, define $F_i$ to be the subform of $F$ where for each object $r$ the clusters are only those non-negative real numbers $a$ that satisfy $a\leqslant 2^ir$. We then obtain an infinite sequence of subform inclusions as shown above. Now, for each $i$, the mapping $a\mapsto 2^{-i}a$, clearly defines an isomorphism of forms $F_i\to F_0$. So to show that each $F_i$ is a noetherian having a left exact join decomposition (and hence is optimal, by Theorem~\ref{ThmAS}), it suffices to show that such is $F_0$. However, it is not difficult to see that $F_0$ is in fact isomorphic to the form of subobjects of our bicategory $\mathbb{C}$. So by Theorem~\ref{ThmAT}, it suffices to show that the bicategory $\mathbb{C}$ satisfies (B0), the dual of (B1a), (B2$'$) and its dual, and the dual of (B3). That each of these conditions hold can be easily confirmed.
\end{example}

\section{Conclusion}

Ordinary subobjects in a semi-abelian category serve as clusters for a noetherian form over it, while for Grandis exact categories, it is the `normal subobjects' defined relative to an ideal of null morphisms that serve as clusters for a noetherian form over its underlying category. As it is well known, the dual of the category of pointed sets is a semi-abelian category, while the category of sets and partial bijections is Grandis exact. So both of these categories admit a noetherian form. In the case of pointed sets, the noetherian form is the form of quotients (i.e., clusters are given by quotients, or equivalently, equivalence relations). The form of quotients is no longer noetherian for the category of sets (this deficiency of the category of sets was the backbone of research carried out in \cite{vZ17}). Neither is the form of subsets of sets noetherian. The work presented in this paper originated with an attempt to prove that the category of sets does not have a noetherian form at all. In the end, we proved the opposite: \emph{the category of sets does have a noetherian form}. In particular, we found that the form for which clusters in a set $X$ are given by pairs $(S,E)$, where $E$ is an equivalence relation on $X$ and $S$ is a set of equivalence classes (in other words, the form of subquotients in the dual of the category of sets), is noetherian. In fact, it turned out that a smaller form where $S$ is either empty or a singleton, is still a noetherian form. We then wanted to identify intrinsic properties of this particular noetherian form, in order to understand what distinguishes it form from the duals of noetherian forms of subobjects over semi-abelian categories, and specifically, the noetherian form of quotients of the category of pointed sets, which seems to be the obvious pointed version of our noetherian form over the category of sets (the base point forces $S$ to reduce to the singleton consisting of the equivalence class of the base point). The present paper reports on the results obtained in this investigation, which can be summarized as follows:
\begin{itemize}
  \item not only is it possible to identify a reasonable list of intrinsic properties of the noetherian form above, whose duals are shared with the noetherian form of subobjects of any semi-abelian category (and hence these properties are shared with the noetherian form of quotients of the category of pointed sets),
    
  \item but moreover, up to an isomorphism, these intrinsic properties determine a unique noetherian form over the category of sets (the one described above), as well as the category of pointed sets, in which case we recover the noetherian form of quotients;

  \item and finally, semi-abelian categories are precisely those pointed categories having finite products and coproducts that admit a noetherian form having properties dual to those that we identified. 
\end{itemize}
Note that the last two points imply that we have a way of thinking of any semi-abelian category as well as the dual of the category of sets as the same type of a category, but this is certainly not the first such unification: for instance, both of these categories are Barr exact \cite{Bar71} and protomodular \cite{Bou91}.

The intrinsic properties of the special noetherian form of the category of sets that we found are gathered together under the concept of an \emph{exact join decomposition} of a form introduced and studied in Section~\ref{SecG} of this paper. The sections preceding that section give a reasonably self-contained account of a preliminary material on forms, most of which is needed for formulating and establishing the new results of this paper. Although this material can be considered largely known from \cite{J14,JW14,JW16,JW16b,DAGJ17,GJ19,vanniekerk19}, there are also some new ideas and new auxiliary concepts introduced there, such as the discussion of analogy between forms and monoidal category structures in Section~\ref{secA} and the notion of an \emph{orean factorization} in Section~\ref{secH}, which is conceptually the same as a proper factorization system $(\mathcal{E},\mathcal{M})$ \cite{I58,FK72} admitting pullbacks of morphisms in $\mathcal{M}$, pushouts of morphisms in $\mathcal{E}$, as well as finite meets and joins of $\mathcal{M}$-subobjects and $\mathcal{E}$-quotients. In an orean factorization, such a structure $(\mathcal{E},\mathcal{M})$ is seen in terms of a pair $(F_\e,F_\s)$ of forms, where $F_\e$ is the form of $\mathcal{E}$-quotients and $F_\s$ is the form of $\mathcal{M}$-subobjects. The relevance of factorization systems in the study of noetherian forms is that they give the data of epimorphisms and monomorphisms that would be involved in the statement of homomorphism theorems that can be established with the use of a noetherian form. The corresponding forms $F_\e$ and $F_\s$ are then the `subforms' of the original form consisting of clusters that are, respectively, images and kernels (as defined at the start of the Introduction) relative to the form.

Let us interlude with a note that we tried our best to keep the paper as self-contained as possible, and in light of that, knowledge of factorization systems is not essential in how orean factorizations are used later on in the paper. However, when clarifying the link between orean factorizations and factorization systems (Theorem~\ref{ThmV}) in Section~\ref{secH} and in related remarks in the same section, we rely on a fairly good insight to factorization systems (that can be obtained from \cite{CJKP97}, \cite{FK72}) as well as on the results linking them with forms from \cite{JW14}. In Section~\ref{SecK}, where we end up working in a category equipped with a (proper) factorization system, we do provide a recollection of some known properties of a (proper) factorization system.

The culminating point in the paper is Theorem~\ref{ThmH} in Section~\ref{SecG} (see also Theorem~\ref{ThmAC}), which gives a characterization of noetherian forms admitting exact join decomposition in terms of the underlying orean factorization $(F_\s,F_\e)$. The same theorem states that a given orean factorization arises from at most one (up to an isomorphism) noetherian form having exact join decomposition. The second bullet-point above is a simple consequence of this theorem (see Example~\ref{ExaQ}). The third bullet-point, on the other hand, is a corollary of Theorem~2.17 in \cite{vanniekerk19}, as explained in Remark~\ref{RemG} of the present paper (see also Theorem~\ref{ThmAM}). The proof of Theorem~\ref{ThmH}, which is not short, would have been longer had we not extracted some of its pieces that have been embedded in the preparatory material built up gradually throughout the paper. Work on this theorem was a productive back and forth exchange between the authors. The final improvement to the theorem was made when the paper was nearly finished; for a couple of reasons, we decided to formulate this last improvement as a separate theorem (Theorem~\ref{ThmAC}). One of these reasons is that the methodology for arriving to this second theorem, Theorem~\ref{ThmAC}, was slightly different to the methodology of arriving to Theorem~\ref{ThmH}: in the first case, we tried to stick to the form-theoretic thinking as much as possible; the simplification obtained in Theorem~\ref{ThmH}, which is still form-theoretic in its formulation, was reached after trying to illustrate the form-theoretic conditions from Theorem~\ref{ThmAC} in the language of factorization systems.

In Section~\ref{SecK}, we give a complete illustration of the form-theoretic characterization of noetherian forms admitting exact join decomposition, established in Theorem~\ref{ThmH} (and refined in Theorem~\ref{ThmAC}), in the language of a category equipped with a proper factorization system. Our immediate intension in this section is to arrive to the conclusion that not only the category of sets, but any elementary topos has a noetherian form admitting exact join decomposition (see Theorem~\ref{ThmAG} and Example~\ref{ExaAE}). The same section should be helpful to compare the properties of a noetherian form admitting exact join decomposition with (duals of) non-pointed generalizations of exactness properties of semi-abelian categories, which we leave for a future work. We do note, however, that while the usual epi-regular mono factorization systems of toposes and duals of semi-abelian categories both admit noetherian forms having exact join decomposition, the same is not true for the dual of a Barr exact protomodular category, in general. This, despite the fact that the dual of any topos is a Barr exact protomodular category (see \cite{Bou04}) and pointed Barr exact protomodular categories having binary coproducts are the same as semi-abelian categories (see \cite{JMT02}). A counterexample is given by the category of rings with identity (see Remark~\ref{RemX}). This gives a partial negative answer to a question posed to us by James Gray (private communication): the noetherian form that we have identified for a topos cannot be reconstructed in every (bicomplete) Barr exact protomodular category using its regular epi-mono factorization system. His question asks whether in general a Barr exact protomodular category always has a noetherian form. We have not touched on this question in the present paper (the question remains open).

In Section~\ref{SecK}, we also include characterizations of semi-abelian and abelian categories in terms of various properties of noetherian forms that were introduced and studied in this paper as a result of analysing the concept of an exact join decomposition of a noetherian form. Section~\ref{SecK} concludes with an unexpected finding. As an easy application of Theorem~\ref{ThmH}, we show that noetherian forms admitting an exact join decomposition are optimal, in the following sense: a noetherian form $G$ having join exact decomposition is isomorphic to a subform of every other noetherian form $F$ giving rise to the same factorization system $(\mathcal{E},\mathcal{M})$ as $G$ (see Definition~\ref{DefF} for the definition of a subform of a form). 

The use of noetherian forms in deriving homomorphism theorems in a given category is similar to the use of fibrations in topology, where one extends a space with a morphism into it, in order to establish some properties of that space. A noetherian form extends a category with a functor into it, in order to establish homomorphism theorems in the category. This means that the concept of an optimal noetherian form is essential to the use of noetherian forms in practice: producing an optimal noetherian form for a given proper factorization system $(\mathcal{E},\mathcal{M})$ provides as efficient as possible way of extending the category to derive homomorphism theorems stated in terms of the given class $\mathcal{E}$ of epimorphisms and the given class $\mathcal{M}$ of monomorphisms, from homomorphism theorems valid in a general noetherian form. 

In retrospect and in conclusion, we see the present paper as a first step in exploration of optimal noetherian forms. The first obvious question that we have not touched on in this paper is whether there are interesting/useful noetherian forms which are optimal, but which do not admit an exact join decomposition or its dual counterpart, an exact meet decomposition. There is also an interesting question for future exploration pertaining to noetherian forms that admit  exact join decomposition. In the recent work \cite{vn23} it is shown (see Theorem~5.5 there) that the given a category $\mathbb{X}$ having a noetherian form $F$ and a functor $R\colon\mathbb{A}\to\mathbb{X}$ having a left adjoint $L$, the pullback of $F$ along $R$ is a noetherian form over $\mathbb{A}$ if and only if $R$ is monadic and the monad $RL$ preserves pushouts of $\mathcal{E}$-morphisms, where $\mathcal{E}$ is from the factorization system $(\mathcal{E},\mathcal{M})$ associated with the form $F$. Now, the noetherian form over sets that we have identified is the pullback of the noetherian form of quotients of the category of pointed sets along the functor from the category of sets to the category of pointed sets which attaches to every set a base point. This functor is in fact comonadic, and the corresponding comonad preserves pullbacks of injective maps of pointed sets. Since injective maps of pointed sets form the class $\mathcal{M}$ from the factorization system $(\mathcal{E},\mathcal{M})$ 
associated with the noetherian form of quotients, we see that our noetheriam form over the category of sets fits the dual of the result above. So an interesting question for future investigation is: what is the relation between noetherian forms admitting exact meet decomposition and pullbacks of forms of subobjects in semi-abelian categories along monadic functors having the previously mentioned property? Clarifying links with recently introduced ideally exact categories \cite{J24} should certainly be part of this investigation, as ideally exact categories are also defined by means of monadic functors to semi-abelian categories.

\section*{Acknowledgements}

The first author is grateful to Davide Trotta for their discussions about doctrines.

\end{document}